\newcommand{\subsectionruninhead}{\@startsection{subsection}{2}{0mm}
{-\baselineskip}{-0mm}{\bf\large}}
\newcommand{\subsubsectionruninhead}{\@startsection{subsubsection}{3}{0mm}
{-\baselineskip}{-0mm}{\bf\normalsize}}
\newtheorem*{theorem*}{Theorem}
\newtheorem*{proof*}{Proof}
\newtheorem*{proposition*}{Proposition}
\newtheorem*{notation*}{Notation}
\newtheorem*{corollary*}{Corollary}
\newtheorem*{claim*}{Claim}
\newtheorem*{remark*}{Remark}
\newtheorem{theorem}{Theorem}[section]
\newtheorem{proposition}{Proposition}[section]
\newtheorem{corollary}[proposition]{Corollary}
\newtheorem{lemma}[proposition]{Lemma}
\newtheorem{claim}[proposition]{Claim}
\theoremstyle{definition}
\theoremstyle{remark}
\newtheorem{remark}[proposition]{Remark}
\numberwithin{equation}{section}
\def\NN{\mathbb{N}}
\def\CC{\mathbb{C}}
\def\RR{\mathbb{R}}
\def\QQ{\mathbb{Q}}
\def\TT{\mathbb{T}}
\def\ZZ{\mathbb{Z}}
\def\cF{\mathcal{F}}
\def\cC{\mathcal{C}}
\def\cL{\mathcal{L}}
\def\cD{\mathcal{D}}
\def\cR{\mathcal{R}}
\def\cU{\mathcal{U}}
\def\H{{\rm Hol}}
\def\eps{\varepsilon}
\def\e{{\varepsilon}}
\begin{document}
	\title{Joint integrability and spectral rigidity for Anosov diffeomorphisms}
	\author{Andrey Gogolev\footnote{The first author was partially supported by NSF grant DMS-1955564} ~ and ~ Yi Shi\footnote{The second author was partially supported by National Key R\&D Program of China (2021YFA1001900) and NSFC (12071007, 11831001, 12090015).}}
	
	\date{}

	\maketitle

	\begin{abstract}
		Let $f\colon\TT^d\to\TT^d$ be an Anosov diffeomorphism whose linearization $A\in{\rm GL}(d,\ZZ)$ is irreducible. Assume that $f$ is also absolutely partially hyperbolic where a weak stable subbundle is considered as the center subbundle. We show that if the strong stable and  unstable subbundles are jointly integrable, then $f$ is dynamically coherent and all foliations match corresponding linear foliation under the conjugacy to the linearization $A$. Moreover, $f$ admits the finest dominated splitting in weak stable subbundle with dimensions matching those for $A$, and it has spectral rigidity along all these subbundles.
		
		In dimension 4 we are also able to obtain a similar result which allows to group the weak stable and unstable subbundles into a center subbundle and assumes joint integrability of strong stable and unstable subbundles. As an application, we show that for every symplectic diffeomorphism $f\in{\rm Diff}^2_{\omega}(\TT^4)$ which is $C^1$-close to an irreducible non-conformal automorphism $A\in{\rm Sp}(4,\ZZ)$, the extremal subbundles of $f$ are jointly integrable if and only if $f$ is smoothly conjugate to $A$. 
	\end{abstract}
	
	\section{Introduction}\label{sec:introduction}
	
	We explore the interplay between joint integrability of extremal subbundles of an Anosov diffeomorphism and its rigidity properties. Joint integrability is a special property which, more generally, appears in partially hyperbolic dynamics. If $E^s\oplus E^c\oplus E^u$ is a partially hyperbolic splitting of a diffeomorphism $f$ then if $E^s\oplus E^u$ is integrable to a foliation then we say that $f$ has the {\it joint integrability property}. { The joint integrability property is the complete opposite of the accessibility property of partially hyperbolic diffeomorphisms. When dim$E^c=1$, it has been proved \cite{D,HHU} that accessible systems are $C^1$-open and  $C^\infty$-dense in the space of partially hyperbolic diffeomorphisms.} So one does not expect that joint integrability occurs outside some special situations. However, in general, it could be hard to make this expectation precise.
	
	Probably the first appearance of joint integrability was in the work of Plante~\cite{Pl} who proved that joint integrability of stable and unstable foliations of an Anosov flow implies that the flow is a suspension an Anosov diffeomorphism.\footnote{It is still an open problem to decide whether the leaves of the $su$-foliation have to be compact.} In pursuit of stable ergodicity, F. Rodriguez Hertz showed~\cite{FRH} that for small perturbations of certain partially hyperbolic toral automorphisms with two dimensional center joint integrability only happens when the perturbation is conjugate to the automorphism via a conjugacy which is smooth along the center foliation. Gan and Shi~\cite{GS}, in pursuit of ergodicity on $\TT^3$, showed that in an Anosov homotopy class joint integrability also implies existence of a conjugacy which is smooth along the center (weak stable) foliation. Hammerlindl \cite{H} studied what he called $AB$-systems, i.e., partially hyperbolic diffeomorphisms which are leaf conjugate to fiberwise Anosov diffeomorphisms on nilmanifolds over a circle. For $C^2$ conservative $AB$-systems, Hammerlindl showed that joint integrability implies that the $AB$-system is topologically conjugate to a fiberwise Anosov diffeomorphism. Under some further assumptions, the conjugacy is actually smooth along the 1-dimensional center~\cite[Proposition 18]{DWX}.
	%{\color{red} Hammerlindl studied what he called $AB$-systems, i.e. partially hyperbolic diffeomorphisms which leaf conjugate to fiberwise Anosov diffeomorphisms on nilmanifolds over a circle, including systems leaf conjugate to partially hyperbolic toral automorphisms with one dimensional circle center and suspension Anosov flows with tori sections. For $C^2$ conservative $AB$-systems, Hammerlindl showed that joint integrability implies the system topologically conjugates Anosov diffeomorphisms over a rotation on circle. Thus every leaf of $su$-foliation are compact. Later, in their study of centralizer rigidity, Damjanovi{\'c}, Wilkinson and Xu showed~\cite[Proposition 18]{DWX} that for small ergodic perturbations of certain partially hyperbolic toral automorphisms with circle center joint integrability only happens when the perturbation is conjugate to a product with a circle rotation via a conjugacy which is $C^1$-smooth along the center foliation. }

	In this paper we obtain strong rigidity properties from joint integrability in the setting of Anosov diffeomorphisms on tori. More specifically, joint integrability of the strong stable and full unstable subbundles imply existence of fine dominated splitting along the weak stable subbundle as well as Lyapunov exponents rigidity. This builds an equivalence bridge between the geometric rigidity (joint integrability) and dynamical spectral rigidity (Lyapunov exponents rigidity) for Anosov diffeomorphisms on tori.
	
	Indeed, our results provide a new point of view on the rigidity problem for Anosov automorphisms. Normally, to obtain rigidity, various authors assume vanishing of the Lyapunov exponent obstructions at periodic points or for the volume~\cite{dlL, Go08, KS, SY}, joint integrability assumption has a very different nature. Interestingly, most of our result are global, that is, we do not require the Anosov diffeomorphism to be close to the linear model. Joint integrability also appears in other problems in hyperbolic dynamics. For example, recently joint integrability naturally appeared as a tool in higher dimensional smooth classification problem for Anosov diffeomorphisms and flows~\cite{GRH}.

	Yet another source of rigidity is measure theoretic absolute continuity property of the center foliation. In the context of volume preserving partially hyperbolic diffeomorphisms with one dimensional center, Avila, Viana and Wilkinson obtained strong rigidity results from absolute continuity of the center foliation~\cite{AVW1, AVW2}. {  Also, for 3-dimensional Anosov diffeomorphisms with a partially hyperbolic splitting, absolute continuity of the weak (center) foliation implies Lyapunov exponents rigidity along the strong foliation~\cite{Go12}.}
	
	We proceed with precise description of our setup and statements of our results.
	
	\subsection{The setting and global leaf conjugacy}
	Let $f\colon\TT^d\to\TT^d$ be an Anosov diffeomorphism. Denote by $A=f_*:H_1(\TT^d;\ZZ)\to H_1(\TT^d;\ZZ)$ the linear part of $f$, then $A\in{\rm GL}(d,\ZZ)$ gives a hyperbolic automorphism and $f$ is topologically conjugate to $A$  by work of Franks and Manning~\cite{Fr, Mn}. The conjugacy is a homeomorphism $h\colon \TT^d\to\TT^d$  which sends the orbits of $f$ to the orbits of $A$: $h\circ f=A\circ h$.

	Let $T\TT^d=E^s\oplus E^u$ be the hyperbolic splitting for $f$. Assume that $f$ is also {\it absolutely partially hyperbolic} with respect to the splitting:
	$$
	T\TT^d=E^{ss}\oplus E^{ws}\oplus E^u,
	$$
	where $E^s=E^{ss}\oplus E^{ws}$. This means that there exist $0<\mu<1$ and $k\in\NN$, such that
	$$
	\|Df^k|_{E^{ss}(x)}\|<\mu<m\big(Df^k|_{E^{ws}(x)}\big)\leq
	\|Df^k|_{E^{ws}(x)}\|<1<m\big(Df^k|_{E^u(x)}\big),
	\qquad\forall x\in\TT^d.
	$$
	Here $m(\cdot)$ stands for the conorm of linear operators, defined by $m(L)=\|L^{-1}\|^{-1}$.  For the adapted Riemannian metric the above inequalities hold with $k=1$. Hence, from now on we can assume that $k=1$ in the above inequalities, that is
	\begin{equation}
	\label{eq_mu}
	\|Df|_{E^{ss}(x)}\|<\mu<m\big(Df|_{E^{ws}(x)}\big)\leq
	\|Df|_{E^{ws}(x)}\|<1<m\big(Df|_{E^u(x)}\big),
	\qquad\forall x\in\TT^d.
	\tag{$\ast$}
	\end{equation}
	
%	We are interested in dynamical consequences of joint integrability of invariant subbundles which do not typically integrate together, such as extremal stable and unstable subbundles. It is natural to expect that such joint integrability imposes some rigidity on dynamics. We will confirm these expectations with several results. The ``jointly integrable case'' does appear in various problems in hyperbolic dynamics. For example the proof of stable ergodicity by F. Rodriguez Hertz~\cite{FRH} of partially hyperbolic automorphisms arrives at joint integrability property in the case when the partially hyperbolic diffeomorphism is not accessible. Recently, such joint integrability also naturally appeared in higher dimensional smooth classification problem for Anosov diffeomorphisms~\cite{GRH}.

	The following result lays down foundation for rigidity.
	
	\begin{theorem}\label{thm:conjugate}
		Let { $f\in{\rm Diff}^1(\TT^d)$} be an Anosov diffeomorphism with absolutely partially hyperbolic splitting $T\TT^d=E^{ss}\oplus E^{ws}\oplus E^u$ 
		{as above.}
		%with constant $0<\mu<1$ satisfying
		%$$
		%\|Df|_{E^{ss}(x)}\|<\mu<m(Df|_{E^{ws}(x)})\leq
		%\|Df|_{E^{ws}(x)}\|<1<\|Df|_{E^u(x)}\|.
		%$$
		If $E^{ss}\oplus E^u$ is integrable, then
		\begin{enumerate}
			\item The linear part $A\in{\rm GL}(d,\ZZ)$ of $f$ admits a partially hyperbolic splitting with matching dimensions:
			$$
			T\TT^d=L^{ss}\oplus L^{ws}\oplus L^u,
			\qquad {\it and} \qquad
			{\rm dim}L^{\sigma}={\rm dim}E^{\sigma}, \quad\sigma=ss,ws,u.
			$$
			\item Any eigenvalues $\mu^{ss}$ and $\mu^{ws}$ of  $A|_{L^{ss}}$ and  $A|_{L^{ws}}$, respectively, satisfy
			$$
			0<|\mu^{ss}|<\mu<|\mu^{ws}|<1.
			$$
			where $\mu$ is the constant from the definition~$(\ast)$ of partial hyperbolicity of $f$.
			\item Diffeomorphism $f$ is dynamically coherent and the invariant foliations for $f$ match the corresponding invariant foliations for $A$ under the conjugacy:
			$$
			h\big(\cF^{\sigma}\big)=\cL^{\sigma},
			\qquad 
			\sigma=ss,ws,u.
			$$
			Here $\cF^{ss}$,$\cF^{ws}$,$\cF^u$ are $f$-invariant foliations tangent to $E^{ss}$, $E^{ws}$, $E^u$, respectively; and $\cL^{ss}$, $\cL^{ws}$, $\cL^u$ are $A$-invariant foliations tangent to $L^{ss}$, $L^{ws}$, $L^u$, respectively.
		\end{enumerate}
	\end{theorem}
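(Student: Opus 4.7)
My plan is to transfer the joint-integrability data from $f$ to its linearization $A$ via the Franks--Manning conjugacy $h$. Lift everything to the universal cover: $\tilde h\circ\tilde f=A\circ\tilde h$ on $\RR^d$ with $D:=\|\tilde h-\mathrm{id}\|_\infty<\infty$, and denote the coarse hyperbolic splitting of $A$ by $\RR^d=\hat L^s\oplus L^u$. The classical Franks--Manning--Brin argument yields $\tilde h(\tilde\cF^u(x))=\tilde h(x)+L^u$ and $\tilde h(\tilde\cF^s(x))=\tilde h(x)+\hat L^s$, so the extremal foliations of $f$ are linearized by $h$. What remains is to produce the internal splitting $\hat L^s=L^{ss}\oplus L^{ws}$ and match the sub-foliations of $f$ and $A$.

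By joint integrability, $\tilde\cF^{su}(x)=\bigcup_{y\in\tilde\cF^{ss}(x)}\tilde\cF^u(y)$, so
\[
\tilde h\bigl(\tilde\cF^{su}(x)\bigr)=L^u+\tilde h\bigl(\tilde\cF^{ss}(x)\bigr).
\]
Write $\Phi_x:=\tilde h(\tilde\cF^{ss}(x))-\tilde h(x)\subset\hat L^s$. Using $A\circ\tilde h=\tilde h\circ\tilde f$, the family $\{\Phi_x\}$ is equivariant with respect to $\bar A:=A|_{\hat L^s}$, namely $\bar A(\Phi_x)=\Phi_{\tilde f(x)}$. Since $\bar A$ has all eigenvalues of modulus less than $1$, forward iteration shrinks $\Phi_x$ uniformly while backward iteration expands it. The crux is to show that each $\Phi_x$ is a fixed $\bar A$-invariant linear subspace $L^{ss}\subset\hat L^s$. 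The plan is to combine the Brin-type bound $\|\bar A^nv\|\le 2D+C\mu^n\|y-x\|$ for $v\in\Phi_x$ (coming from $y\in\tilde\cF^{ss}(x)$) with a rescaling/tangent-cone argument on the backward orbit $(\tilde f^{-m}x,\tilde f^{-m}y)$: the $\tilde\cF^{ss}$-leaf stretches at rate $\sim\mu^{-m}$, so after normalization the limiting set has directions only in the sum $L^{ss}$ of generalized eigenspaces of $A$ with moduli strictly less than $\mu$. A companion argument rules out eigenvalues of $A$ of modulus exactly $\mu$: such an eigenvector would produce, via $\tilde h^{-1}$, an orbit-pair contracting at rate exactly $\mu$, forbidden by the strict inequalities in $(\ast)$. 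The complementary $\bar A$-invariant subspace $L^{ws}$ is then spanned by generalized eigenspaces with moduli in $(\mu,1)$, giving items (1) and (2). Dimension matching $\dim L^{ss}=\dim E^{ss}$ follows from $\tilde h|_{\tilde\cF^{ss}(x)}$ being a homeomorphism onto $\tilde h(x)+L^{ss}$, and likewise for $L^{ws}$.

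For item (3), set $\cF^\sigma:=h^{-1}(\cL^\sigma)$ for $\sigma=ss,ws,u$, where $\cL^\sigma$ are the linear foliations of $\TT^d$ coming from $L^{ss},L^{ws},L^u$. The second paragraph gives $h(\cF^{ss})=\cL^{ss}$ and $h(\cF^u)=\cL^u$ immediately; for $\cF^{ws}$ one checks tangency to $E^{ws}$: a point $y\in\cF^{ws}(x)$ has $\tilde h(y)-\tilde h(x)\in L^{ws}$, so $\|A^n(\tilde h(y)-\tilde h(x))\|$ contracts at rate in $(\mu,1)$; this transfers through $\tilde h^{-1}$ to force $\|\tilde f^ny-\tilde f^nx\|$ to contract at a rate in the same range, and the dominated splitting $(\ast)$ pins the tangent direction of the leaf to $E^{ws}$. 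This yields dynamical coherence and the matching of all three foliations. The main obstacle is the rescaling argument in the second paragraph: the additive error $2D$ in the Brin bound is the true source of difficulty, and killing it to upgrade to the sharp spectral localization $\Phi_x=L^{ss}$ requires careful use of $\bar A$-equivariance together with the strict spectral gap $\mu$ from $(\ast)$.
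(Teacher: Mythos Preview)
Your proposal has two genuine gaps, and in both places the paper takes a route you have not anticipated.

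\medskip
\textbf{Linearity of $h(\cF^{ss})$.} You try to show $\Phi_x=\tilde h(\tilde\cF^{ss}(x))-\tilde h(x)$ is a fixed linear subspace $L^{ss}$ by a rescaling/tangent-cone argument, and you correctly identify the additive error $2D$ as the obstacle --- but you do not overcome it. The paper bypasses this entirely with a structural lemma (Lemma~\ref{lem:linear-foli}): any $C^0$ foliation of $\TT^d$ subfoliated by a \emph{minimal} linear foliation is itself linear. Since $h(\cF^{su})$ is subfoliated by $h(\cF^u)=\cL^u$, which is minimal, $h(\cF^{su})$ is linear; intersecting with $\cL^s$ gives $h(\cF^{ss})=\cL^{ss}$ linear. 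No spectral localization or rescaling is needed to reach this point, and the eigenvalue bounds of item~(2) are established \emph{afterwards}, once one knows $\cF^{ss}$ is quasi-isometric.

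\medskip
\textbf{Dynamical coherence.} Your plan for item~(3) is to \emph{define} $\cF^{ws}:=h^{-1}(\cL^{ws})$ and then verify tangency to $E^{ws}$ via contraction rates. But $h$ is only a homeomorphism, so $h^{-1}(\cL^{ws})$ is a priori only a $C^0$ foliation with no tangent bundle; knowing that $d(\tilde f^n y,\tilde f^n x)$ contracts at a rate in $(\mu,1)$ does not by itself produce $C^1$ leaves, let alone leaves tangent to $E^{ws}$. The paper instead proceeds: from $h(\cF^{ss})=\cL^{ss}$ one gets that $\cF^{ss}$ is quasi-isometric (Lemma~\ref{lem:quasi-isometric}); since $\cF^u$ is also quasi-isometric, Brin's theorem (Lemma~\ref{lem:Brin}) gives that $E^{ws}\oplus E^u$ is locally uniquely integrable, hence $E^{ws}$ integrates to a genuine $C^1$-leaved foliation $\cF^{ws}$. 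Only then does one apply Lemma~\ref{lem:linear-foli} again (to $h(\cF^{cu})$, subfoliated by $\cL^u$) to conclude $h(\cF^{ws})=\cL^{ws}$. The order matters: integrability of $E^{ws}$ comes from Brin's criterion, not from pulling back a linear foliation.
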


	\begin{remark} We recall that in our context {\it dynamically coherent} property of $f$ means that the distribution $E^{ws}$ integrates to an $f$-invariant foliation $\cF^{ws}$. One could expect that Anosov diffeomorphisms $f\colon\TT^d\to\TT^d$ which are also absolutely partially hyperbolic are always dynamically coherent (without assuming joint integrability). This is only known in dimension 3~\cite{BBI}. We also note that it is not hard to see that joint integrability is a necessary condition for the matching property $h(\cF^{ss})=\cL^{ss}$ to hold.
	\end{remark}
	
	\subsection{The results of spectral rigidity} 	
	
	We begin by stating the results in the case when one of extremal subbundles is the full unstable subbundle. 
	We say that $f$ is {\it irreducible} if its linear part $A\in{\rm GL}(d,\ZZ)$ is irreducible, i.e., the characteristic polynomial of $A$ is irreducible over $\ZZ$. In particular, if $A\in{\rm GL}(d,\ZZ)$ is irreducible, then every eigenspace of $A$ is dense after projecting on the torus $\TT^d$.
	
	For every $A\in{\rm GL}(d,\ZZ)$, an $A$-invariant splitting
	$$
	T\TT^d=L_1\oplus L_2\oplus \cdots\oplus L_m,
	$$
	is the {\it finest dominated splitting} for $A$ if it satisfies
	\begin{itemize}
		\item For every $i=1,\cdots,m$, if both $\mu_i$ and $\mu_i'$ are eigenvalues of $A|_{L_i}$, then $|\mu_i|=|\mu_i'|$.
		\item For every $i=1,\cdots,m-1$, if  $\mu_i$ is an eigenvalue of $A|_{L_i}$ and $\mu_{i+1}$ is an eigenvalue $A|_{L_{i+1}}$,  then $|\mu_i|<|\mu_{i+1}|$.
	\end{itemize}	
	If $L\subset T\TT^d$ is an $A$-invariant subbundle, then we can also consider the finest dominated splitting of $A$ restricted to $L$. 
	When $A$ is as in the Theorem~\ref{thm:conjugate}, we will denote the finest dominated splitting for $A|_{L^{ws}}$ in the following way
	$$
	L^{ws}=L^{ws}_1\oplus\cdots\oplus L^{ws}_k
	$$ 
	It is well-known that for sufficiently small perturbations of $A$ this splitting survives. It turns out this is also the case for large pertubations which have the joint integrability property. And we obtain Lyapunov exponents rigidity along this finest dominated splitting.
	%From now on, for every $f\in{\rm Diff}^{1+}(\TT^d)$ satisfies the assumption in Theorem \ref{thm:conjugate} and $A=f_*\in{\rm GL}(d,\ZZ)$ is the linear part of $f$,
	%we denote 

	%the finest dominated splitting of $A$ in $L^{ws}$.

	\begin{theorem}\label{thm:main}
		Let $f\in{\rm Diff}^2(\TT^d)$ be an irreducible Anosov diffeomorphism with absolutely partially hyperbolic splitting 
		$T\TT^d=E^{ss}\oplus E^{ws}\oplus E^u$.
		If $E^{ss}\oplus E^u$ is integrable and $f$ satisfies the center bunching condition:
		$$
		\|Df|_{E^{ws}(x)}\|~<~
		m\big(Df|_{E^{ws}(x)}\big)\cdot m\big(Df|_{E^u(x)}\big),
		\qquad \forall x\in\TT^d,
		$$
		then 
		\begin{itemize}
			\item $f$ has a dominated splitting on $E^{ws}$ with the same dimensions as the finest dominated splitting for $A|_{L^{ws}}$:
			$$
			E^{ws}=E^{ws}_1\oplus\cdots\oplus E^{ws}_k, 
			\qquad {\it  } \qquad
			{\dim}E^{ws}_i={\dim}L^{ws}_i, \quad\forall i=1,\cdots,k.
			$$
			\item $f$ is spectrally rigid along $E^{ws}_i$ for every $i=1,\cdots,k$. That is, the Lyapunov exponents of $f$ along $E^{ws}_i$ for all ergodic measures are all equal and  satisfy
			$$
			\lambda(E^{ws}_i, f)=\lambda(L^{ws}_i,A),
			\qquad \forall i=1,\cdots,k.
			$$
		\end{itemize}	
	\end{theorem}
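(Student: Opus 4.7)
The plan is to first apply Theorem~\ref{thm:conjugate} to obtain the geometric skeleton, then exploit center bunching to propagate the derivative cocycle rigidly along the $su$-foliation. Theorem~\ref{thm:conjugate} provides the matching linear splitting $T\TT^d=L^{ss}\oplus L^{ws}\oplus L^u$, dynamical coherence of $f$, and the conjugacy $h$ with $h(\cF^\sigma)=\cL^\sigma$ for $\sigma=ss,ws,u$. In particular, the joint foliation $\cF^{su}$ tangent to $E^{ss}\oplus E^u$ corresponds under $h$ to the linear foliation $\cL^{su}$. Since $A$ is irreducible, it admits no proper rational invariant subspace, so $L^{ss}\oplus L^u$ projects densely onto $\TT^d$, every leaf of $\cL^{su}$ is dense, and through the homeomorphism $h$ every leaf of $\cF^{su}$ is dense in $\TT^d$ as well.

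Next, I would upgrade the $su$-holonomies between $\cF^{ws}$-plaques to $C^1$. The center bunching hypothesis
$$\|Df|_{E^{ws}(x)}\|\;<\;m\bigl(Df|_{E^{ws}(x)}\bigr)\cdot m\bigl(Df|_{E^u(x)}\bigr)$$
is precisely the $u$-bunching used in the Pugh--Shub--Wilkinson graph transform to produce $C^1$ $u$-holonomies between local $\cF^{ws}$-leaves; since $E^{ss}$ is strictly dominated by $E^{ws}$ inside $E^s$, the $ss$-holonomies between local $\cF^{ws}$-leaves are automatically $C^1$. Composing inside each $\cF^{su}$-leaf then produces $C^1$ $su$-holonomies $H^{su}_{x,y}$ between $\cF^{ws}$-plaques. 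Because $f$ preserves each of $\cF^{ss}$, $\cF^{ws}$ and $\cF^u$, one obtains the equivariance $f\circ H^{su}_{x,y}=H^{su}_{f(x),f(y)}\circ f$, and differentiating shows that $DH^{su}$ intertwines the derivative cocycle $Df|_{E^{ws}}$ across different $\cF^{ws}$-leaves.

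To produce the splitting $E^{ws}=E^{ws}_1\oplus\cdots\oplus E^{ws}_k$ together with matching exponents, I would first establish the analogue of the finest dominated splitting of $A|_{L^{ws}}$ on one reference $\cF^{ws}$-leaf (by direct comparison with the linear contraction $A|_{\cL^{ws}}$ in the holonomy-smoothed framework) and then propagate it globally via the $C^1$ $su$-holonomies. Density of $\cF^{su}$-leaves and continuity of $DH^{su}$ yield a continuous $Df$-invariant splitting of $E^{ws}$ with matching dimensions; domination is inherited from the linear model through the intertwining, and pointwise growth rates of $Df|_{E^{ws}_i}$ are forced to match $\log|\mu^{ws}_i|$, so every ergodic Lyapunov exponent along $E^{ws}_i$ equals $\lambda(L^{ws}_i,A)$. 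I expect the main obstacle to be executing this transfer rigorously: since the conjugacy $h$ is only Hölder, the linear splitting cannot be pulled back through $h$ directly, and the entire transfer must be routed through the smooth holonomy structure. Ruling out coarsening or refinement of the splitting by invoking irreducibility of $A$, and upgrading equality of growth rates to rigidity of \emph{all} ergodic Lyapunov exponents — rather than merely generic ones — will likely require a Livsic-type cocycle argument combined with the $C^2$ regularity of $f$.
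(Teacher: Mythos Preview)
Your setup is correct through the point where center bunching yields $C^{1+\text{H\"older}}$ holonomies $\H^{su}_{x,y}\colon\cF^{ws}(x)\to\cF^{ws}(y)$ (this is Lemma~\ref{lem:C1-su}). The gap is in the next step. The intertwining $D\H^{su}_{f(x),f(y)}\circ Df|_{E^{ws}(x)}=Df|_{E^{ws}(y)}\circ D\H^{su}_{x,y}$ only relates the $f$-cocycle at $x$ to the $f$-cocycle at $y\in\cF^{su}(x)$; it carries no information about $A$. The phrase ``direct comparison with the linear contraction $A|_{\cL^{ws}}$'' hides exactly the problem you flag: the only bridge to $A$ is $h$, which is merely H\"older along $\cF^{ws}$. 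So there is no mechanism in your outline that produces \emph{any} invariant sub-bundle of $E^{ws}$, let alone one with the dimensions of $L^{ws}_1$. The holonomy intertwining is a cohomology statement about the $f$-cocycle with itself, not a conjugacy to a constant cocycle, and it does not by itself force Lyapunov exponents to be constant across ergodic measures (you would need an invariance-principle argument with additional hypotheses that are not available here).

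The paper builds the splitting entirely from the $f$-side and only afterwards matches it to $A$. The key missing ingredient is a quantitative Diophantine density estimate (Lemma~\ref{lem:Diophantine}): points on $\cF^{ss}(p)$ come polynomially close to any target as a function of leafwise distance. Combined with the $C^1$ regularity of $\cF^{ss}$ inside stable leaves, this forces the \emph{smallest} Lyapunov exponent in $E^{ws}$ to be the same at every periodic point (Proposition~\ref{prop:smallest-Lya-expo}), by a growth-rate comparison that would otherwise blow up the $\cF^{ss}$-holonomy derivative. One then constructs the Pesin strong-stable foliation $\cF_0\subset\cF^{ws}$ associated to this exponent on periodic leaves, and a second Diophantine argument (Proposition~\ref{prop:holonomy-invariant}) shows $h(\cF_0)$ is invariant under all $\cL^{ws}$-translations, hence linear and $A$-invariant. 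Only at this point does the $C^1$ $su$-holonomy enter, to propagate $\cF_0$ from periodic leaves to a global H\"older sub-bundle $E_0$ (Proposition~\ref{prop:foliation}); a uniform spectral gap (Proposition~\ref{prop:second-Lya-expo}) and Kalinin's matrix Liv\v{s}ic theorem then upgrade $E^{ws}=E_0\oplus F_0$ to a dominated splitting. Matching $\lambda_0$ to the $A$-exponent is a separate quasi-isometry argument (Lemma~\ref{lem:k0=ws}). The whole scheme is then iterated on $F_0$. Your proposal has the $C^1$ holonomy and the Liv\v{s}ic step, but is missing the Diophantine density input and the periodic-point growth comparison that actually produce the first sub-bundle.
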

	
	%{\color{cyan} theorem remark or addendum?
	%	\marginpar{\color{red}S: This theorem is not needed any more?}
	%\begin{theorem}\label{thm:main}
	%	Let $f\in{\rm Diff}^{1+\alpha}(\TT^d)$ be an irreducible Anosov diffeomorphism with a uniform dominated splitting
	%	$T\TT^d=E^{sss}\oplus\oplus E^{ss}\oplus E^{ws}\oplus E^{wws} \oplus E^u$.
	%	If $E^{ss}\oplus E^{wws}\oplus E^u$ is integrable, then $f$ has the finest dominated splitting on $E^{ws}$ with the same dimensions as the finest dominated splitting for $A|_{L^{ws}}$:
	%	$$
	%	E^{cs}=E^{ws}_1\oplus\cdots\oplus E^{ws}_k, 
	%	\qquad {\it with } \qquad
	%	{\dim}E^{ws}_i={\dim}L^{ws}_i, \quad\forall i=1,\cdots,k.
	%	$$
	%	Moreover, $f$ has spectrum rigidity on $E^{ws}_i$ for every $i=1,\cdots,k$. That is, the Lyapunov exponents of $f$ in $E^{ws}_i$ for all ergodic measures are equal and  satisfy
	%	$$
	%	\lambda(E^{ws}_i, f)~\equiv~\lambda(L^{ws}_i,A),     ???\lambda(E^{ws}_i, f)=\lambda(L^{ws}_i,A),?????
	%	\qquad \forall i=1,\cdots,k.
	%	$$
	%\end{theorem}
	%}
	\begin{remark}
		Note that the second item in the theorem actually implies that the splitting $E^{ws}=E^{ws}_1\oplus\cdots\oplus E^{ws}_k$ is, in fact, the finest dominated splitting for $f$ along $E^{ws}$.
	\end{remark}
	\begin{remark}\label{rk:ss-preserve}
		In the course of the proof of Theorem \ref{thm:main}, we will show that $E^{ss}\oplus E^u$ being integrable is equivalent to the conjugacy $h$ taking the strong stable foliation $\cF^{ss}$ of $f$ to the strong stable foliation $\cL^{ss}$ of $A$. 
		{ Moreover, if dim$L^{ws}_i=1$ or dim$L^{ws}_i=2$ for every $i=1,\cdots,k$, then the arguments of~\cite{GKS} show that the spectral rigidity of $f$ along $E^{ws}$ is equivalent to the conjugacy $h$ is  $C^{1+{\rm H\"older}}$-smooth along $E^{ws}$.}
	\end{remark}
	\begin{remark}
		It is clear that in the case when $\dim E^{ws}=1$ center bunching condition is automatically satisfied and one still has spectral rigidity along $E^{ws}$.
	\end{remark}
	
	We deduce the following global rigidity for codimension-one Anosov diffeomorphisms.
	%{\color{cyan}
	%\begin{corollary}
	%	Let $f_1, f_2\in{\rm Diff}^{1+\alpha}(\TT^d)$, $\alpha>0$, be a codimension-one Anosov diffeomorphisms with linear part $A$ which has real spectrum. If $f_1$ and $f_2$
	%	are absolutely partially hyperbolic 
	%	$$
	%	T\TT^d=E^{ss}\oplus E^{ws}\oplus E^u
	%	\qquad {\it with} \qquad
	%	{\rm dim}E^{ss}={\rm dim}E^u=1,
	%	$$ 
	%Assume that
	%		 $E^{ss}_{f_i}\oplus E^u_{f_i}$ is integrable, $i=1,2$ and that $f_1$ and $f_2$ have matching periodic data along $E^{ss}$ and $E^u$.
	%Then $f_1$ is $C^1$ conjugate to $f_2$.
	%\end{corollary}
	
	%In the case when $f_2=L$ we have following.
	%}
	
	\begin{corollary}\label{cor:codim1}
		Let $f\in{\rm Diff}^2(\TT^d)$ be a codimension-one Anosov diffeomorphism whose linear part $A$ has real spectrum. If $f$
		is absolutely partially hyperbolic 
		$$
		T\TT^d=E^{ss}\oplus E^{ws}\oplus E^u
		\qquad {\it with} \qquad
		{\rm dim}E^{ss}={\rm dim}E^u=1,
		$$ 
		then the followings are equivalent:
		\begin{itemize}
			\item $E^{ss}\oplus E^u$ is integrable and $f$ and has the same periodic data as its linear part $A$ along $E^{ss}$ and along $E^u$.
			\item $f$ is $C^{1+{\rm H\"older}}$ conjugate to $A$.
		\end{itemize} 
	\end{corollary}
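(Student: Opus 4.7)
The $(\Leftarrow)$ direction is immediate, since a $C^{1+{\rm H\"older}}$ conjugacy transports the linear invariant foliations of $A$ to integrable foliations of $f$ and preserves derivatives along periodic orbits. I focus on $(\Rightarrow)$.

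The first step is the observation that codimension-one Anosov on $\TT^d$ forces the characteristic polynomial of $A$ to be irreducible over $\ZZ$. Indeed, for any nontrivial monic factorization $p_A=p_1p_2$ in $\ZZ[x]$, both constant terms satisfy $p_i(0)\in\ZZ\setminus\{0\}$, while, since only one eigenvalue of $A$ has modulus $>1$, one factor's roots all have modulus $<1$, forcing $|p_i(0)|<1$ and giving a contradiction. Irreducibility over $\ZZ$ implies in particular that all eigenvalues of $A$ are simple, and together with the real spectrum hypothesis this means the finest dominated splitting of $A|_{L^{ws}}$ has pieces of dimension $1$ or $2$ (dimension $2$ only in the exceptional case of a $\pm\lambda$ pair on $L^{ws}$).

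With this setup, the plan is to apply Theorem~\ref{thm:conjugate} to produce the matching partially hyperbolic splitting on $A$, dynamical coherence of $f$, and the foliation matching $h(\cF^\sigma)=\cL^\sigma$; then to invoke Theorem~\ref{thm:main} (irreducibility is supplied above, and center bunching must be verified or circumvented) to transport the finest dominated splitting of $A|_{L^{ws}}$ to $f$ with full spectral rigidity along each $E^{ws}_i$. By the remark following Theorem~\ref{thm:main}, the $1$- or $2$-dimensional sizes of the pieces upgrade spectral rigidity to $C^{1+{\rm H\"older}}$ smoothness of $h$ along $\cF^{ws}$. Separately, the matching of periodic data on the $1$-dimensional bundles $E^{ss}$ and $E^u$ allows one to apply the Livsic cohomology theorem, so that the logarithmic Jacobians of $f$ and $A$ along these bundles are cohomologous via $h$; de la Llave's regularity theorem for one-dimensional hyperbolic foliations then gives that $h$ is $C^{1+{\rm H\"older}}$ along $\cF^{ss}$ and along $\cF^u$. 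A final application of Journ\'e's lemma combines $C^{1+{\rm H\"older}}$ smoothness along the three transverse foliations $\cF^{ss},\cF^{ws},\cF^u$ into $C^{1+{\rm H\"older}}$ smoothness of $h$ globally on $\TT^d$.

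The main obstacle is the verification of center bunching in the appeal to Theorem~\ref{thm:main}: the condition $\|Df|_{E^{ws}(x)}\|<m(Df|_{E^{ws}(x)})\cdot m(Df|_{E^u(x)})$ is not immediate from the hypotheses when $\dim E^{ws}=d-2$ is large, as $Df|_{E^{ws}}$ may be arbitrarily far from conformal. One natural workaround is to first obtain $C^{1+{\rm H\"older}}$ smoothness of $h$ along $\cF^u$ from the matching $E^u$-periodic data (a step logically independent of the appeal to Theorem~\ref{thm:main}), gaining quantitative pointwise control of $\|Df|_{E^u}\|$ which can be played against a distortion bound for $Df|_{E^{ws}}$ coming from absolute partial hyperbolicity; alternatively, one adapts the proof of Theorem~\ref{thm:main} directly to the codimension-one setting, where one-dimensionality of $E^u$ makes the relevant $su$-holonomy considerations significantly simpler.
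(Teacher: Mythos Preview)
Your outline matches the paper's intended derivation from Theorem~\ref{thm:main}, and the point you flag as the ``main obstacle'' --- center bunching --- is exactly what the paper singles out in the remark following the corollary. However, the resolution is cleaner than either of your proposed workarounds and does not require first establishing smoothness of $h$ along $\cF^u$. The periodic data hypothesis on the one-dimensional bundles $E^{ss}$ and $E^u$ allows, via the scalar Liv\v sic theorem, a H\"older change of metric on these bundles so that $\|Df|_{E^{ss}(x)}\|\equiv|\mu^{ss}|$ and $m(Df|_{E^u(x)})\equiv|\mu^u|$ pointwise. The metric on $E^{ws}$ is left unchanged, so from the original absolute partial hyperbolicity (together with item~2 of Theorem~\ref{thm:conjugate}, which gives $|\mu^{ss}|<\mu$) one still has $m(Df|_{E^{ws}(x)})>|\mu^{ss}|$ and $\|Df|_{E^{ws}(x)}\|<1$. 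Now use that $A\in{\rm GL}(d,\ZZ)$ is codimension one with $|\det A|=1$: writing $|\mu^{ss}|\cdot|\mu^u|\cdot\prod_i|\mu^{ws}_i|=1$ and $|\mu^{ws}_i|<1$ forces $|\mu^{ss}|\cdot|\mu^u|>1$, i.e.\ $1/|\mu^{ss}|<|\mu^u|$. Combining,
\[
\frac{\|Df|_{E^{ws}(x)}\|}{m(Df|_{E^{ws}(x)})}<\frac{1}{|\mu^{ss}|}<|\mu^u|=m(Df|_{E^u(x)}),
\]
which is precisely the center bunching inequality. With this in hand, Theorem~\ref{thm:main} applies and the remainder of your argument (Remark~\ref{rk:ss-preserve} for smoothness along $E^{ws}$, the one-dimensional Liv\v sic/de~la~Llave argument along $E^{ss}$ and $E^u$, and Journ\'e to assemble) is correct.

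One small correction: the two-dimensional $L^{ws}_i$ case from a $\pm\lambda$ pair that you mention cannot arise here. If the irreducible characteristic polynomial of $A$ had real roots $\lambda$ and $-\lambda$, it would be even in $x$, forcing the eigenvalue of modulus $>1$ to come with a partner of the same modulus --- contradicting codimension one. So every $L^{ws}_i$ is one-dimensional and Remark~\ref{rk:ss-preserve} applies without caveat.
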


	\begin{remark}
		The center bunching condition in Theorem \ref{thm:main} is automatically satisfied for codimension one Anosov diffeomorphisms in Corollary \ref{cor:codim1} by the assumption in the periodic data along $E^{ss}$ and $E^u$.
		If $f$ is conservative, then the periodic data condition along $E^{ss}$ and $E^u$ can be replaced with the metric entropy condition $h_m(f)=h_m(A)$, see \cite{SY}.
	\end{remark}
	
	\begin{remark}
		{We point out that this is a global rigidity result --- we do not assume that $f$ is $C^1$-close to $A$. This is rare in higher dimensions.}	
		It is also interesting to compare the above corollary to the periodic data rigidity result for irreducible automorphisms with real spectrum~\cite[Theorem A]{Go17}. There smoothness of the conjugacy is being established along all the one-dimensional foliations inductively from weakest to strongest. Consequenctly joint integrability of $E^{ss}\oplus E^u$ can be concluded at the very last step. In a sense, Corollary~\ref{cor:codim1} is a converse where this last step allows us to recover all the previous steps, that is, smoothness of the conjugacy along $E^{ws}$.
	\end{remark}
	
	%{\color{cyan}
		
	%	Further, in the first case one can also bring $f$ to a certain ``standard form.''
	%	\begin{corollary}
	%		Let $f\in{\rm Diff}^{1+\alpha}(\TT^d)$ be a codimension-one Anosov diffeomorphism whose linear part $A$ has real spectrum. If $f$
	%		is absolutely partially hyperbolic and  $E^{ss}\oplus E^u$ is integrable then $f$ is $C^1$ conjugate to a diffeomorphism of the form $g\circ A$, where $g$ is a diffeomorphism which preserves individual leaves of $\cL^{ss}\oplus \cL^u$.
	%	\end{corollary}
		
	%}

	%\vskip5mm
	
	We also get a local rigidity theorem without center bunching assumption in Theorem \ref{thm:main}.
	We say $A\in{\rm GL}(d,\ZZ)$ is \emph{generic} if it satisfies
	\begin{enumerate}
		\item $A$ is hyperbolic, i.e., the spectrum of $A$ is disjoint from the unit circle in $\CC$;
		\item $A$ is irreducible, i.e., its characteristic polynomial is irreducible over $\QQ$;
		\item no three eigenvalues of $A$ have the same absolute value, and if two eigenvalues of $A$ have the same absolute value then they are a pair of
		complex conjugate eigenvalues.
	\end{enumerate}
	It was proved in \cite{GKS} that “most” automorphisms in $A\in{\rm GL}(d,\ZZ)$ are generic, that is, the proportion of generic automorphisms contained in ${\rm GL}(d,\ZZ)$  goes to 1 as $\|A\|\rightarrow+\infty$.

	Let $A\in{\rm GL}(d,\ZZ)$ be a generic automorphism with finest dominated splitting
	$$
	T\TT^d=L^s_1\oplus\cdots\oplus L^s_k\oplus L^u_1\oplus\cdots\oplus L^u_l,
	$$
	then ${\rm dim}L^{s/u}_i\leq 2$ for every $i$. Recall that every $f\in{\rm Diff}^1(\TT^d)$ which is sufficiently $C^1$-close to $A$ is Anosov and admits the dominated splitting 
	$$
	T\TT^d=E^s_1\oplus\cdots\oplus E^s_k\oplus E^u_1\oplus\cdots\oplus E^u_l
	$$
	which is a close to the splitting for $A$ and has matching dimensions of all subbubdles.

	\begin{theorem}\label{thm:local}
		Let $A\in{\rm GL}(d,\ZZ)$ be a generic automorphism and assume that $f\in{\rm Diff}^2(\TT^d)$ is $C^1$-close to $A$ with dominated splitting 
		$$
		T\TT^d=E^s_1\oplus\cdots\oplus E^s_k\oplus E^u_1\oplus\cdots\oplus E^u_l
		$$
		matching the finest dominated splitting for $A$ as above. Assume $k\geq 2$ and $1\leq i<k$, denote by
		$E^s_{(1,i)}=E^s_1\oplus\cdots\oplus E^s_i$. 
		Then the following properties are equivalent
		\begin{enumerate}
			\item The subbundle $E^s_{(1,i)}\oplus E^u$ is integrable.
			\item The subbundle $E^s_i\oplus E^u$ is integrable.
			\item $f$ is spectrally rigid along $E^s_j$ for every $j=i+1,\cdots,k$, i.e., the Lyapunov exponents of $f$ along $E^s_j$ for all ergodic measures are equal and  satisfy
			$$
			\lambda(E^s_j, f)=\lambda(L^s_j,A),
			\qquad \forall j=i+1,\cdots,k.
			$$
			\item Let the homeomorphism $h:\TT^d\rightarrow\TT^d$ be the conjugacy $h\circ f=A\circ h$, then $h$ is $C^{1+{\rm H\"older}}$-smooth along $E^s_j$ and $Dh(E^s_j)=L^s_j$ for every $j=i+1,\cdots,k$.
		\end{enumerate}
	\end{theorem}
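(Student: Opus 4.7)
The plan is to establish the four-way equivalence by proving the cycle $(1)\Rightarrow(3)\Leftrightarrow(4)\Rightarrow(1)$ together with the auxiliary equivalence $(1)\Leftrightarrow(2)$, leaning on Theorems~\ref{thm:conjugate} and~\ref{thm:main} and on the Journ\'e-type rigidity arguments of~\cite{GKS}. The genericity of $A$ is used decisively: each block $L^s_j$ has dimension at most $2$ with eigenvalues of equal modulus, so for $f$ that is $C^1$-close to $A$ the ratio $\|Df|_{E^s_j}\|/m(Df|_{E^s_j})$ is uniformly close to $1$, the fine dominated splitting $E^s_1\oplus\cdots\oplus E^s_k\oplus E^u_1\oplus\cdots\oplus E^u_l$ persists, and center bunching within any \emph{single} block is automatic.

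For $(1)\Rightarrow(3)$, view $f$ as absolutely partially hyperbolic with $E^{ss}=E^s_{(1,i)}$, $E^{ws}=E^s_{(i+1,k)}$, and full unstable $E^u$. Theorem~\ref{thm:conjugate} applies without center bunching and already yields matching of the coarse invariant foliations. Center bunching on the whole $E^{ws}$ may fail, so one adapts the proof of Theorem~\ref{thm:main} block by block: at each stage one uses the joint integrability carried from (1) together with Sternberg-type normal forms along periodic orbits and Livshits cohomology to pin down the Lyapunov exponents on $E^s_j$ (which is one- or two-dimensional) to match those of $A|_{L^s_j}$. The equivalence $(3)\Leftrightarrow(4)$ is Remark~\ref{rk:ss-preserve}, directly from~\cite{GKS}. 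For $(4)\Rightarrow(1)$, Journ\'e's theorem upgrades $h$ to be $C^{1+{\rm H\"older}}$ along the full bundle $E^s_{(i+1,k)}$, sending it to $L^s_{(i+1,k)}$; the pullback $h^{-1}(\cL^{s_{(1,i)},u})$ is then an $f$-invariant topological foliation transverse to $\cF^{s_{(i+1,k)}}$, and its tangent distribution, being continuous, $Df$-invariant, of the correct dimension, and transverse to $E^s_{(i+1,k)}$, must coincide with $E^s_{(1,i)}\oplus E^u$ by uniqueness within the dominated splitting.

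For the auxiliary equivalence $(1)\Leftrightarrow(2)$: in the direction $(2)\Rightarrow(1)$, one saturates the foliation $\cF^{s_i,u}$ by the dominated subfoliation $\cF^{s_{(1,i-1)}}$; since $E^s_{(1,i-1)}$ is strongly dominated over $E^s_i\oplus E^u$, a graph-transform holonomy construction produces an $f$-invariant foliation tangent to $E^s_{(1,i)}\oplus E^u$. In the direction $(1)\Rightarrow(2)$, using $(1)\Rightarrow(4)$ already established, the smoothness of $h$ along $E^s_{(i+1,k)}$ allows one to pull back $\cL^{s_i,u}$ to yield an $f$-invariant foliation tangent to $E^s_i\oplus E^u$. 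The principal obstacle is in $(1)\Rightarrow(3)$: one must adapt the argument of Theorem~\ref{thm:main} to avoid the center bunching hypothesis by exploiting the generic structure of $A$ and the local proximity of $f$, replacing bunching-based estimates with periodic-data arguments. A secondary technical point is verifying that the saturation in $(2)\Rightarrow(1)$ produces an honest foliation rather than merely accessibility classes, which depends crucially on the dominated splitting to control the relevant holonomies.
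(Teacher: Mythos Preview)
Your overall cycle structure is sound, but several of the individual implications have genuine gaps, and your description of the key step $(1)\Rightarrow(3)$ misidentifies the mechanism.

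For $(4)\Rightarrow(1)$ (and your route $(1)\Rightarrow(2)$ via $(4)$): you pull back $\cL^{s_{(1,i)},u}$ through $h$ and claim the result has a continuous tangent distribution equal to $E^s_{(1,i)}\oplus E^u$. But $h$ is only a homeomorphism transverse to $\cF^s_{(i+1,k)}$, so $h^{-1}(\cL^{s_{(1,i)},u})$ is a priori just a topological foliation; there is no reason it should have $C^1$ leaves or a well-defined tangent bundle. The paper avoids this by citing \cite[Proposition~2.2]{GKS}, which gives $h(\cF^s_i)=\cL^s_i$ directly from smoothness of $h$ along $\cF^s_{(i+1,k)}$; then $\cF^s_i$ and $\cF^u$, both being $C^1$ foliations mapped by $h$ to transverse linear foliations, jointly integrate to a $C^1$ foliation with the correct tangent.

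For $(1)\Leftrightarrow(2)$: the paper's Lemma~\ref{lem:i-linear} is much simpler than your saturation/graph-transform route and does not need to go through (4). The point is that if $E^s_i\oplus E^u$ is integrable, then $h(\cF^s_i\oplus\cF^u)$ is subfoliated by the minimal linear $\cL^u$, hence is linear by Lemma~\ref{lem:linear-foli}; intersecting with $\cL^s_{(i,k)}$ forces $h(\cF^s_i)=\cL^s_i$. Then $h(\cF^s_{(1,i)})$ is subfoliated by the minimal linear $\cL^s_i$, so it too is linear and must equal $\cL^s_{(1,i)}$, giving integrability of $E^s_{(1,i)}\oplus E^u$. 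Your saturation argument, by contrast, leaves open exactly the issue you flag: why the saturate is a foliation rather than just accessibility classes.

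For $(1)\Rightarrow(3)$: the paper does \emph{not} use Sternberg normal forms or Livshits cohomology. The engine is Proposition~\ref{prop:smallest-Lya-expo} (constancy over periodic points of the smallest exponent in $E^s_{(i+1,k)}$), proved by a geometric contradiction that exploits the quantitative Diophantine density of $\cL^{ss}$ (Lemma~\ref{lem:Diophantine}) and the $C^1$-smoothness of $\cF^{ss}$ inside stable leaves. When $\dim E^s_{i+1}=2$, Proposition~\ref{prop:det} runs the same argument for the log-determinant to pin down the second exponent. Lemma~\ref{lem:k0=ws} then matches this constant to the linear exponent via a quasi-isometry argument on $\RR^d$. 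Crucially, none of this requires center bunching; the $C^1$ holonomy regularity used comes for free from absolute partial hyperbolicity. Finally, Lemma~\ref{lem:i-linear} also gives $h(\cF^s_{i+1})=\cL^s_{i+1}$, so $E^s_{i+1}\oplus E^u$ is integrable and the induction proceeds. Your sketch gestures at ``adapting Theorem~\ref{thm:main} block by block,'' but the substitute for center bunching is not periodic-data arguments in the usual cohomological sense; it is this Diophantine racing estimate, which you would need to supply.
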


	\subsection{Integrability of extremal bundles in dimension 4} 
	
	The case when the jointly integrable extremal subbundles are proper subbundles of the stable and unstable subbundles appears to be very challenging and requires different techniques. We obtain a result in dimension 4 which takes a particularly nice form in the symplectic setting.
	
	Let $A\in{\rm Sp}(4,\ZZ)$ be a symplectic Anosov automorphism with finest dominated splitting
	$$
	T\TT^4=
	L^{ss}\oplus L^{ws}\oplus L^{wu}\oplus L^{uu}.
	$$
	This is equivalent to $A$ not being conformal in the unstable subbundle.
	It is clear that $L^{ss}\oplus L^{uu}$ and $L^{ws}\oplus L^{wu}$ are $A$-invariant symplectic subbundles. 
	
	Denote by ${\rm Diff}^1_{\omega}(\TT^4)$ the space of $C^1$ diffeomorphisms on $\TT^4$ which preserve the standard symplectic form $\omega$.
	There exists a $C^1$-neighborhood $\cU_{\omega}$ of $A$ in ${\rm Diff}^1_{\omega}(\TT^4)$ such that every $f\in\cU_{\omega}$ admits the finest dominated splitting corresponding to the finest dominated splitting for $A$:
	$$
	T\TT^4=
	E^{ss}\oplus E^{ws}\oplus E^{wu}\oplus E^{uu}.
	$$
	where the subbundles $E^{ss}\oplus E^{uu}$ and $E^{ws}\oplus E^{wu}$ are $Df$-invariant symplectic subbundles. By the invariant manifolds theory~\cite{HPS} we have that that the center symplectic subbundle $E^{ws}\oplus E^{wu}$ is always integrable. 
	We call $E^{ss}\oplus E^{uu}$ \emph{the extremal symplectic subbundle} of $f$. 
	
	We show that the extremal symplectic subbundle of an irreducible $f$ is integrable if and only if $f$ is smoothly conjugate to $A$.
	
	\begin{theorem}\label{thm:T4}
		Let $A\in{\rm Sp}(4,\ZZ)$ be an irreducible non-conformal Anosov automorphism, and let $f\in{\rm Diff}^2_{\omega}(\TT^4)$ be a symplectic diffeomorphism which is sufficiently $C^1$-close to $A$. The extremal symplectic bundle of $f$ is integrable if and only if $f$ is $C^{1+{\rm H\"older}}$-smoothly conjugate to $A$.
	\end{theorem}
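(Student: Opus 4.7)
The ``if'' direction is immediate: any $C^{1+{\rm H\"older}}$-conjugacy $h$ carries $E^{ss}\oplus E^{uu}$ to the $A$-invariant linear bundle $L^{ss}\oplus L^{uu}$, which integrates trivially to a linear $2$-foliation; pulling back shows that the extremal bundle of $f$ integrates.

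For the ``only if'' direction the plan is to reduce to Theorem~\ref{thm:local}. View $T\TT^4 = E^{ss}\oplus(E^{ws}\oplus E^{wu})\oplus E^{uu}$ as an absolutely partially hyperbolic splitting with the \emph{mixed} center $E^c=E^{ws}\oplus E^{wu}$. First I would invoke the four-dimensional extension of Theorem~\ref{thm:conjugate} announced in the abstract, which from integrability of the extremal bundle $E^{ss}\oplus E^{uu}$ produces the leaf matching
\[
h(\cF^{ss}) = \cL^{ss}, \qquad h(\cF^{uu}) = \cL^{uu}, \qquad h(\cF^c) = \cL^c.
\]
By Remark~\ref{rk:ss-preserve} (and its unstable analogue obtained by applying Theorem~\ref{thm:main} to $f^{-1}$), these matchings are equivalent to integrability of the two three-dimensional subbundles $E^{ss}\oplus E^u$ and $E^s\oplus E^{uu}$.

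Now Theorem~\ref{thm:local} applies to $f$ with $k=l=2$ and $i=1$, giving spectral rigidity along $E^{ws}$ and $C^{1+{\rm H\"older}}$-smoothness of $h$ along $\cF^{ws}$ with $Dh(E^{ws})=L^{ws}$. Applying the same theorem to $f^{-1}$, which is $C^1$-close to the generic automorphism $A^{-1}$ and whose extremal bundle is again $E^{ss}\oplus E^{uu}$, yields spectral rigidity along $E^{wu}$ and $C^{1+{\rm H\"older}}$-smoothness of $h$ along $\cF^{wu}$. The symplectic Lyapunov pairings $\lambda(E^{ss},f)+\lambda(E^{uu},f)=0$ and $\lambda(E^{ws},f)+\lambda(E^{wu},f)=0$ valid on every $f$-invariant measure, together with the entropy identity $h_{\rm top}(f)=h_{\rm top}(A)$ and the intrinsic two-dimensional Anosov structure on $\cF^{ss,uu}$-leaves (which is topologically conjugate to the corresponding linear structure via~$h$), then promote spectral rigidity to $E^{ss}$ and $E^{uu}$ as well, after which the arguments of~\cite{GKS} give $C^{1+{\rm H\"older}}$-smoothness of $h$ along each of the four one-dimensional foliations $\cF^{ss},\cF^{ws},\cF^{wu},\cF^{uu}$. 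Iterated applications of Journ\'e's lemma on the transverse pairs $(\cF^{ss},\cF^{ws})$, $(\cF^{wu},\cF^{uu})$, and finally $(\cF^s,\cF^u)$ upgrade this leafwise regularity to a global $C^{1+{\rm H\"older}}$-conjugacy.

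I expect the main obstacle to lie in the first step, i.e., the invocation of the four-dimensional extension of Theorem~\ref{thm:conjugate}. The proof of Theorem~\ref{thm:conjugate} uses in an essential way that the center $E^{ws}$ is uniformly weakly stable, so that unstable-holonomy propagation can be used to match foliations. In the present mixed-center setting the center $E^c=E^{ws}\oplus E^{wu}$ simultaneously contracts and expands, and the construction of the leaf matching must handle both directions of the center in parallel, presumably exploiting the $C^1$-proximity to $A$ and the irreducibility of $A$ to rule out non-linear candidates for the images $h(\cF^{ss,uu})$ and $h(\cF^c)$. A secondary delicate point is the promotion of spectral rigidity from the weak bundles to the strong extremal bundles $E^{ss}, E^{uu}$, where the symplectic pairings alone are insufficient and one must combine them with the entropy and leafwise two-dimensional Anosov structure.
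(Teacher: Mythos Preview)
Your outline correctly identifies the two hard points, but neither is resolved by the mechanisms you propose, and in each case the paper proceeds quite differently.

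\textbf{The foliation matching step.} There is no ``four-dimensional extension of Theorem~\ref{thm:conjugate}'' that directly yields $h(\cF^{ss})=\cL^{ss}$ and $h(\cF^{uu})=\cL^{uu}$ from integrability of $E^{ss}\oplus E^{uu}$; the abstract is referring to Theorem~\ref{thm:T4general}, whose conclusion is spectral rigidity, not leaf matching. The proof of Theorem~\ref{thm:conjugate} relies on $h(\cF^{su})$ being subfoliated by the linear minimal foliation $h(\cF^u)=\cL^u$; in the present situation $\cF^{su}=\cF^{ss}\oplus\cF^{uu}$ is subfoliated by $\cF^{ss}$ and $\cF^{uu}$, neither of which is a priori sent by $h$ to a linear foliation, so Lemma~\ref{lem:linear-foli} does not apply. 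The paper instead establishes Theorem~\ref{thm:T4general} by a completely different route: it builds a solvable $\ZZ\ltimes\ZZ^d$-action on the center leaf $\cF^c(0)$, shows that failure of integrability of both $E^{ws}\oplus E^{uu}$ and $E^{ss}\oplus E^{wu}$ forces a double topological transversality (Proposition~\ref{prop:C2-general}), and then runs a ping-pong argument showing that this transversality contradicts the group relations in $\ZZ\ltimes\ZZ^d$ (Lemma~\ref{lem:FG-solve}). Only after this does one obtain foliation matching via Lemma~\ref{lemma_31} and then spectral rigidity along $E^{ws},E^{wu}$ via Theorem~\ref{thm:local}.

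\textbf{The strong-bundle step.} Your proposed promotion via symplectic pairings plus entropy does not work: the pairing $\lambda(E^{ss})+\lambda(E^{uu})=0$ is one equation in two unknowns, and $h_{\rm top}(f)=h_{\rm top}(A)$ is automatic from the conjugacy, giving no new constraint on an arbitrary ergodic measure. The paper's mechanism is different and specifically symplectic: once $h$ is $C^{1+{\rm H\"older}}$ along $\cF^c$ and matches $\cF^{su}$ with $\cL^{su}$, the restriction $\omega|_{\cF^{su}}$ is a nondegenerate $2$-form invariant under $\cF^c$-holonomy (by symplectic orthogonality of $E^c$ and $E^{su}$), hence a transverse invariant measure for $\cF^c$. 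Pushing forward by $h$ gives a transverse invariant measure for the linear irrational foliation $\cL^c$, which by unique ergodicity must be Lebesgue. This yields absolute continuity of $h$ along $\cF^{ss}$ and $\cF^{uu}$, which is then upgraded to $C^{1+{\rm H\"older}}$ via~\cite[Lemma~2.4]{Go17}, after which Journ\'e's lemma concludes.
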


    \begin{remark}   
    	If  $f\in{\rm Diff}^{\infty}_{\omega}(\TT^4)$ is sufficiently close to $A$ in $C^r$ topology for certain large $r$, then the regularity of the conjugacy can be bootstrapped to $C^{\infty}$ by recent work of Kalinin, Sadovskaya and Wang~\cite{KSW}.
	%{\color{red}
	%Actually, we can prove the same result in the following setting: the irreducible Anosov automorphism $A\in{\rm Sp}(6,\ZZ)$ has dominated  splitting 
    	%$$
    	%T\TT^d=L^{ss}\oplus L^{ws}\oplus L^{wu}\oplus L^{uu}
    	%\qquad \text{and} \qquad {\rm dim}L^{ws}={\rm dim}L^{wu}=2
    	%$$ 
    	%with complex eigenvalues in $L^{ws},L^{wu}$, and $A$ satisfies center bunching condition with respect to the center bundle $L^{ws}\oplus L^{wu}$.
    	%Then for every symplectic diffeomorphism $f\in{\rm Diff}^2_{\omega}(\TT^4)$ being sufficiently $C^1$-close to $A$, the extremal symplectic bundle of $f$ is integrable if and only if $f$ is $C^{1+{\rm H\"older}}$-smoothly conjugate to $A$.}
    \end{remark}

	This theorem can be compared to a result of F. Rodriguez Hertz~\cite{FRH}. Let $A\in{\rm Sp}(4,\ZZ)$ is irreducible and partially hyperbolic with the center subbundle corresponding to a pair of complex eigenvalues with absolute value one. For every $C^{22}$-perturbation $f$ of $A$, if the stable and unstable bundles of $f$ are jointly integrable to $su$-foliation, then $f$ smoothly conjugates to $A$ along the center foliation.
	This result relies on the strong center bunching condition which gives very high regularity of $su$-foliation. Then KAM theory can be applied to the action of $su$-foliation to obtain smoothness of the conjugacy~\cite{FRH}. In Theorem~\ref{thm:T4general}, we don't have any center bunching assumptions. So instead we work with the solvable action induced by the $su$-foliation, see Section~\ref{subsec:solvable}.
	
	In fact, we can prove a more general result.
	\begin{theorem}\label{thm:T4general}
		Let $A\in{\rm GL}(d,\ZZ)$ be an irreducible Anosov automorphism which admits a dominated splitting $T\TT^d=L^{ss}\oplus L^{ws}\oplus L^{wu}\oplus L^{uu}$ with ${\rm dim}L^{ws}={\rm dim}L^{wu}=1$. For every $f\in{\rm Diff}^2(\TT^d)$ which is sufficiently $C^1$-close to $A$, let $T\TT^d=E^{ss}\oplus E^{ws}\oplus E^{wu}\oplus E^{uu}$ be the corresponding $Df$-invariant dominated splitting. Then $E^{ss}\oplus E^{uu}$ is integrable if and only if $f$ has spectral rigidity along $E^{ws}$ and $E^{wu}$, (cf. Theorem \ref{thm:C2}). 
	\end{theorem}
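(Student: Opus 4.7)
The plan is to prove the equivalence in two directions. The reverse implication, that spectral rigidity along $E^{ws}$ and $E^{wu}$ implies integrability of $E^{ss}\oplus E^{uu}$, should follow from now standard techniques: matching of the one-dimensional Lyapunov spectra along the weak bundles, combined with the $C^2$-regularity of $f$ and the methods of~\cite{GKS} cited in Remark~\ref{rk:ss-preserve}, forces the conjugacy $h$ to be $C^{1+{\rm H\"older}}$ along both $\cF^{ws}$ and $\cF^{wu}$. This smoothness yields $Dh(E^{ws})=L^{ws}$ and $Dh(E^{wu})=L^{wu}$, and a transversality argument inside the full stable and full unstable manifolds then identifies $h(\cF^{ss})=\cL^{ss}$ and $h(\cF^{uu})=\cL^{uu}$. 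Pulling the linear foliation $\cL^{ss}\oplus\cL^{uu}$ back by the homeomorphism $h$ produces a continuous foliation tangent to $E^{ss}\oplus E^{uu}$.

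The bulk of the work lies in the forward implication. First I would invoke an adaptation of Theorem~\ref{thm:conjugate} to the absolutely partially hyperbolic splitting $E^{ss}\oplus(E^{ws}\oplus E^{wu})\oplus E^{uu}$ in which the center is two-dimensional and both extremes are strong bundles. Although Theorem~\ref{thm:conjugate} is phrased with the full unstable bundle playing the role of $E^u$, its proof is symmetric in the two extremes and adapts to this setting by running the Franks-Manning comparison argument forward for $\cF^{uu}$ and backward for $\cF^{ss}$. The output is dynamical coherence of the two-dimensional center, a matching splitting $L^{ss}\oplus(L^{ws}\oplus L^{wu})\oplus L^{uu}$ for $A$, and the identifications $h(\cF^{ss})=\cL^{ss}$, $h(\cF^{uu})=\cL^{uu}$, $h(\cF^{ss}\oplus\cF^{uu})=\cL^{ss}\oplus\cL^{uu}$, together with $h(\cF^c)=\cL^c$ for the two-dimensional center foliation.

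With the foliations matched, the task reduces to comparing the Lyapunov exponents of $f$ along $E^{ws}$ and $E^{wu}$ with the eigenvalues of $A$ along $L^{ws}$ and $L^{wu}$. In the absence of center bunching, the holonomies of $\cF^{ss}\oplus\cF^{uu}$ between center leaves are only H\"older, so I cannot simply transport the center dynamics via smooth holonomies as in~\cite{FRH}. Instead I would exploit the solvable action framework of Section~\ref{subsec:solvable}: the commuting holonomies along $\cF^{ss}$ and $\cF^{uu}$, together with the $\ZZ$-action of $f$ which rescales them, generate a faithful action of a solvable Lie group, and $h$ intertwines this action with the algebraic action coming from $A|_{L^{ss}\oplus L^{uu}}$. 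Irreducibility of $A$ then rules out nontrivial intertwining data and forces, via a Livšic-type argument applied to the cocycle measuring the discrepancy between the weak derivatives of $f$ and of $A$ at periodic points, the equality of weak Lyapunov exponents for every ergodic measure.

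The main obstacle is this last step. Without center bunching the classical holonomy-regularity arguments fail, and the rigidity must instead be extracted from the algebraic structure of the solvable action together with the arithmetic irreducibility of $A$. Identifying the right cocycle over the $(\cF^{ss}\oplus\cF^{uu})$-foliated action whose vanishing forces global equality of the weak Lyapunov exponents, and showing that irreducibility rules out any nontrivial cohomology class obstructing this vanishing, will be the technical heart of the proof.
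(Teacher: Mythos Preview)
Your reverse implication is essentially correct and matches the paper: spectral rigidity along the one-dimensional bundles $E^{ws}$ and $E^{wu}$ forces, via the methods of~\cite{GKS}/\cite{GKS1}, the conjugacy $h$ to send $\cF^{ss}$ and $\cF^{uu}$ to $\cL^{ss}$ and $\cL^{uu}$, and pulling back the linear $\cL^{ss}\oplus\cL^{uu}$ gives integrability of $E^{ss}\oplus E^{uu}$. The paper packages this as an appeal to Theorem~\ref{thm:local}.

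Your forward implication, however, has a genuine gap at the step where you claim that Theorem~\ref{thm:conjugate} ``adapts by symmetry'' to yield $h(\cF^{ss})=\cL^{ss}$ and $h(\cF^{uu})=\cL^{uu}$ directly from integrability of $E^{ss}\oplus E^{uu}$. The proof of Theorem~\ref{thm:conjugate} works because $\cF^{su}$ is subfoliated by $\cF^u$, and the Franks--Manning conjugacy automatically sends the \emph{full} unstable foliation $\cF^u$ to the linear $\cL^u$; Lemma~\ref{lem:linear-foli} then forces $h(\cF^{su})$ to be linear. In the present setting $\cF^{su}$ is subfoliated only by $\cF^{ss}$ and $\cF^{uu}$, and \emph{neither} of these is known a priori to have linear $h$-image---that is precisely what you are trying to prove. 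The foliations whose $h$-images are automatically linear in the local setting are $\cF^s$, $\cF^u$, $\cF^{ws}$, $\cF^{wu}$ and the various center-stable/center-unstable foliations, none of which subfoliate $\cF^{su}$. So the subfoliation argument does not run, and your ``adapted Theorem~\ref{thm:conjugate}'' is not available.

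The paper's route is quite different from what you propose after this point. Rather than first matching the strong foliations and then extracting spectral rigidity via a cocycle argument, the paper proceeds by a dichotomy: either one of the auxiliary distributions $E^{ss}\oplus E^{wu}$ or $E^{ws}\oplus E^{uu}$ integrates (Lemma~\ref{lemma_31} then gives the foliation matching, and Theorem~\ref{thm:local} finishes), or neither does. The entire technical content lies in ruling out the second alternative. There the solvable action $\alpha$ of $\Gamma=\ZZ\ltimes\ZZ^d$ on $\cF^c(0)$ is used, but not via cohomology or Liv\v{s}ic theory: instead one shows (Proposition~\ref{prop:C2-general}) that the failure of both auxiliary integrabilities forces some $\bar\alpha(m)$ to put $\cF^{ws}(0)$ and $\cF^{wu}(0)$ into a doubly transverse position, and then a topological ping-pong argument (Section~\ref{subsec:ping-pong}) shows that $F$ and $G=\bar\alpha(m)\circ F\circ\bar\alpha(-m)$ generate a free subgroup, contradicting the explicit solvable relations in $\Gamma$ (Lemma~\ref{lem:FG-solve}). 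This is a 2-dimensional topological argument, not a cocycle-rigidity argument, and it is the reason the result is currently restricted to $\dim E^{ws}=\dim E^{wu}=1$.
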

	%\marginpar{How would we go from $C^1$ to smooth is also non-trivial. Would need 4-dim. version of my bootstrap, which is probably ok, but needs checking.}

	 The symplectic structure in Theorem \ref{thm:T4} is only used to show the spectral rigidity along $E^{ss}$ and $E^{uu}$, which then gives the smooth conjugacy. We prove Theorems~\ref{thm:T4} and~\ref{thm:T4general} in Section~\ref{sec:T4}.

It appears to be very challenging to generalize the results of this section to dimensions $>4$. For example, we do not know how to do it for irreducible symplectic automorphisms of $\TT^6$ with real spectrum. Our methods of proof relies on a ``ping-pong argument'' on the center (weak) leaf $\cF^{ws}\oplus\cF^{wu}$. This argument is very 2-dimensional and doesn't generalize to higher dimensions. Still, bringing some more ideas, such as combinatorics of subspace intersections under the strong unstable holonomy we can handle a 6-dimensional case with complex eigenvalues.

\begin{theorem}\label{thm:C4-complex}
	Let $A\colon\TT^6\to\TT^6$ be an irreducible hyperbolic automorphism which admits a dominated splitting
	$$
	T\TT^6=L^{ss}\oplus L^{ws}\oplus L^{wu}\oplus L^{uu}
	\qquad \text{with} \qquad
	{\rm dim}L^{ws}={\rm dim}L^{wu}=2.
	$$
	Assume that $A$ has complex eigenvalues $\mu^{ws},\bar\mu^{ws}$ in $L^{ws}$, and complex eigenvalues $\mu^{wu},\bar\mu^{wu}$ in $L^{wu}$, and that $A$ satisfies the center bunching conditions:
	$$
	\log|\mu^{ws}|-\log|\mu^{wu}|>\log|\mu^{ss}|,
	\qquad \text{and} \qquad
	\log|\mu^{wu}|-\log|\mu^{ws}|<\log|\mu^{uu}|,
	$$
	where $\mu^{ss},\mu^{uu}$ are eigenvalues of $A$ associated to $L^{ss}, L^{uu}$, respectively.
	
	For every $f\in{\rm Diff}^2(\TT^6)$ which is sufficiently $C^1$-close to $A$, let $T\TT^d=E^{ss}\oplus E^{ws}\oplus E^{wu}\oplus E^{uu}$ be the corresponding dominated splitting. Then $E^{ss}\oplus E^{uu}$ is integrable if and only if $f$ is spectrally rigid along $E^{ws}\oplus E^{wu}$.
\end{theorem}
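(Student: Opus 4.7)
The plan is to mimic the overall strategy used for Theorem~\ref{thm:T4general}, treating the two implications separately and adapting the ping-pong argument to the $6$-dimensional setting via the combinatorial structure provided by the complex rotations in $L^{ws}$ and $L^{wu}$ together with the intersections of $E^{ss}$ and $E^{uu}$ with iterates of the weak subbundles under strong holonomies.

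I would first address the easier direction, ``spectral rigidity along $E^{ws}\oplus E^{wu}$ $\Rightarrow$ integrability of $E^{ss}\oplus E^{uu}$''. Spectral rigidity on $E^{ws}\oplus E^{wu}$ forces the periodic data of $f$ along these $2$-dimensional center weak subbundles to coincide with that of $A$; the techniques of~\cite{GKS} then promote the conjugacy $h$ to be $C^{1+{\rm H\"older}}$-smooth along the center weak foliation and arrange $Dh$ to carry $E^{ws}\oplus E^{wu}$ onto $L^{ws}\oplus L^{wu}$. Since $f$ is $C^1$-close to $A$, a cone-field argument combined with invariance of the dominated splittings shows that $h$ additionally sends the extremal subbundles $E^{ss}$, $E^{uu}$ to $L^{ss}$, $L^{uu}$. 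The affine foliation tangent to $L^{ss}\oplus L^{uu}$ on $\TT^6$ then pulls back by $h$ to a topological foliation whose leaves saturate the (smooth) strong stable and strong unstable foliations of $f$; this yields integrability of $E^{ss}\oplus E^{uu}$.

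The converse direction is the heart of the theorem. Assuming that $E^{ss}\oplus E^{uu}$ is integrable, I would first follow the scheme of Theorem~\ref{thm:T4general} and Section~\ref{subsec:solvable} to build, on each $4$-dimensional center weak leaf of $f$, a solvable group action generated by the return holonomies of the strong stable, strong unstable, and $su$-foliations; via $h$ this action is topologically conjugate to the translation action of the corresponding subgroup on $L^{ws}\oplus L^{wu}$. The center bunching hypotheses $\log|\mu^{ws}|-\log|\mu^{wu}|>\log|\mu^{ss}|$ and $\log|\mu^{wu}|-\log|\mu^{ws}|<\log|\mu^{uu}|$ are used to guarantee that the strong stable and strong unstable holonomies are regular enough along the weak directions for this local action to be analyzed.

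The main obstacle is that the $2$-dimensional ping-pong argument underlying Theorem~\ref{thm:T4general} does not survive literally on a $4$-dimensional center leaf: there is too much room, and a single translation orbit is insufficient to detect a mismatch between $f$ and $A$. To bypass this, I would use the combinatorics of subspace intersections advertised in the introduction, tracking how iterates of $E^{ss}$ and $E^{uu}$ under strong unstable, respectively strong stable, holonomies cut the weak subbundles within each center leaf. Because the eigenvalues of $A$ on $L^{ws}$ and $L^{wu}$ are genuinely complex, the linear model carries a non-trivial rotation on each complex line, and combined with the irreducibility of $A$ this forces the iterated intersection directions to equidistribute angularly. By selecting appropriate strings of holonomies, the problem reduces to a sequence of $2$-dimensional ping-pongs inside complex lines, each of which runs as in the proof of Theorem~\ref{thm:T4general}. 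Matching structure constants on the two sides of the conjugacy $h$ then forces the Lyapunov exponents of $f$ along $E^{ws}$ and $E^{wu}$ to equal those of $A$ for every ergodic measure, establishing the required spectral rigidity.
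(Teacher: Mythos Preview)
The paper explicitly declines to prove this theorem (``We will not include a proof of this theorem in the interests of space''), so there is no argument to compare against.  Your outline is in the spirit of the hint given in the introduction, but it contains a genuine error and is otherwise too vague to constitute a proof.

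The main error is your claim that ``via $h$ this action is topologically conjugate to the translation action of the corresponding subgroup on $L^{ws}\oplus L^{wu}$.''  This is precisely what is \emph{not} known a priori, and is the entire source of difficulty.  As explained in Section~\ref{subsec:solvable}, although $H$ conjugates $F|_{\cF^c(0)}$ to $A|_{\cL^c(0)}$, one does not have $H(\cF^{su})=\cL^{su}$, and therefore the $\Gamma$-action $\alpha$ on $\cF^c(0)$ induced by $\cF^{su}$-holonomy is not known to be conjugate to the affine model on $\cL^c(0)$.  The ping-pong argument exists exactly to extract a contradiction from the solvable relations in $\Gamma$ \emph{without} any such conjugacy; if you had the conjugacy, the result would be trivial.

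For the easy direction, the ``cone-field argument'' is not the right mechanism: $h$ is only a homeomorphism globally, so it does not carry bundles to bundles.  The correct route, paralleling the proofs of Theorems~\ref{thm:local} and~\ref{thm:C2}, is that once $h$ is $C^{1+{\rm H\ddot older}}$ along the center weak foliation, Proposition~2.2 of~\cite{GKS} forces $h(\cF^{ss})=\cL^{ss}$ and $h(\cF^{uu})=\cL^{uu}$ at the level of foliations, and joint integrability of $E^{ss}\oplus E^{uu}$ follows immediately.

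Finally, the phrase ``reduces to a sequence of $2$-dimensional ping-pongs inside complex lines'' is a slogan, not an argument.  In the $4$-dimensional center leaf the weak leaves $\cF^{ws}(0)$ and $\cF^{wu}(0)$ are $2$-planes, and the topological transversality/separation structure driving Sections~6.2--6.3 has no direct analogue.  A real proof must specify which families of $1$-dimensional intersections of holonomy-images of weak planes are being tracked, and explain concretely how the irrational complex rotations on $L^{ws}$ and $L^{wu}$ force those intersections into a configuration incompatible with the relation of Lemma~\ref{lem:FG-solve}.
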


We will not include a proof of this theorem in the interests of space.

	\section{Preliminaries}\label{sec:preli}
	
	We say that a foliation $\cL$ on $\TT^d$ is \emph{a linear foliation} if there exists a linear subspace $V\subset\RR^d$ such that the lifted foliation $\tilde{\cL}$ on $\RR^d$ is given by the translates $x+V, x\in\RR^d$. It is well-known that if one leaf of a linear foliation $\cL$ is dense on $\TT^d$, then every leaf of $\cL$ is dense, i.e., $\cL$ is minimal.
	
	Let $\cL$ and $\cF$ be two foliations on a closed manifold $M$ with $\dim \cL<\dim\cF$. We say \emph{$\cF$ is sub-foliated by $\cL$} if for every $x\in M$ we have $\cL(x)\subset\cF(x)$. Then the restriction $\cL|_{\cF(x)}$ forms a foliation on $\cF(x)$.

	\begin{lemma}\label{lem:linear-foli}
		If $\cF$ is a $C^0$-foliation on $\TT^d$ which is sub-foliated by a minimal linear foliation $\cL$, then $\cF$ is a minimal linear foliation. 
	\end{lemma}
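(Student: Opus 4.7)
The plan is to show that $\cF$ is invariant under all translations of $\TT^d$ and then to deduce linearity by identifying one leaf as a subgroup.

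First, two immediate observations: let $V\subset\RR^d$ be the linear subspace defining $\cL$. Since each leaf of $\cL$ is dense in $\TT^d$ and is contained in the corresponding leaf of $\cF$, every leaf of $\cF$ is dense, so $\cF$ is minimal. Moreover, for every $v\in V$ and $x\in\TT^d$ we have $x+v\in\cL(x)\subset\cF(x)$, so the translation $T_v:y\mapsto y+v$ preserves every $\cF$-leaf setwise.

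The main step is to promote this leaf-wise $V$-invariance to a foliation-preserving action of all of $\TT^d$. Since $\cL$ is minimal, $V\pmod{\ZZ^d}$ is dense in $\TT^d$, so for any $w\in\TT^d$ we choose $v_n\in V$ with $v_n\to w$ in $\TT^d$; then $T_{v_n}\to T_w$ uniformly. Fix $x_0$, a small $\cF$-plaque $P_0$ through $x_0$, and a foliation chart $B'$ of $\cF$ around $T_w(x_0)$. For $n$ large, $T_{v_n}(P_0)\subset B'$, and since $T_{v_n}$ preserves $\cF$-leaves, $T_{v_n}(P_0)$ lies inside the single $\cF$-plaque of $B'$ indexed by the transversal coordinate of $T_{v_n}(x_0)$. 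The $C^0$-continuity of $\cF$ makes these plaques depend continuously on the transversal coordinate, so passing to the Hausdorff limit gives $T_w(P_0)\subset$ (plaque of $B'$ through $T_w(x_0)$) $\subset\cF(T_w(x_0))$. Chaining this local inclusion along a leaf, and running the same argument for $T_{-w}$, yields $T_w(\cF(x))=\cF(T_w(x))$ for every $x$, so $T_w$ permutes the leaves of $\cF$.

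Once $T_w$ permutes leaves for all $w\in\TT^d$, the leaf $F_0:=\cF(0)$ is a subgroup of $\TT^d$: for $w\in F_0$, $T_w(F_0)$ is the leaf through $T_w(0)=w\in F_0$, namely $F_0$, giving closure under addition, and symmetrically under inverses. Lifting this to $\RR^d$, the leaf $\tilde F_0$ of $\tilde\cF$ through $0$ is a connected topological subgroup of $\RR^d$ which is a $k$-dimensional manifold, where $k=\dim\cF$. A standard fact (a tangent-cone argument, or the Gleason-Montgomery-Zippin theorem applied to the torsion-free connected group $\tilde F_0$) identifies any such subgroup with a $k$-dimensional linear subspace $W\subset\RR^d$; then $F_0=\pi(W)$, and by the translation invariance $\cF(x)=x+F_0$ for every $x$, so $\cF$ is the linear foliation by translates of $W$, which is minimal by Step 1.

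The main obstacle is the second step. Since $\cF$-leaves are dense and therefore not closed, the relation of being in the same $\cF$-leaf is not a closed relation, so one cannot directly argue that $\{w\in\TT^d:T_w\text{ preserves }\cF\}$ is closed and contains the dense set of $V$-images. The workaround is to work locally: plaques in a foliation chart are compact and depend continuously on the transversal coordinate in a $C^0$-foliation, so Hausdorff limits of plaques are contained in plaques; the global conclusion is then recovered by chaining plaques within each leaf.
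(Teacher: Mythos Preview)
Your proof is correct and follows essentially the same strategy as the paper's: both use that the dense family of translations $V+\ZZ^d$ preserves the lifted foliation $\tilde\cF$ and then pass to the limit via continuity of plaques to conclude that $\tilde\cF(0)$ is a subgroup of $\RR^d$, hence a linear subspace. The paper works directly on $\RR^d$ and is terser about the continuity step (your careful plaque argument is exactly what the paper compresses into ``by the continuity''), while you take a small detour through $\TT^d$ before lifting; one minor point worth making explicit in your lift is why the single leaf $\tilde F_0$ itself, and not merely $\pi^{-1}(F_0)$, is a subgroup of $\RR^d$.
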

	
	\begin{proof}
		Let $\tilde{\cF}$ and $\tilde{\cL}$ be the lifts of $\cF$ and $\cL$ to $\RR^d$. Let $0\in\RR^d$ be the origin, and let $\pi:\RR^d\rightarrow\TT^d$ be the covering map. Since $\pi(\tilde{\cF}(0))=\cF(\pi(0))\supset\cL(\pi(0))$ is dense in $\TT^d$, we only need to show that $\tilde{\cF}(0)$ is a linear subspace of $\RR^d$.
		
		We proceed to that show $\tilde{\cF}(0)$ is closed under addition. Since $\cL$ is minimal, for any $x,y\in\tilde{\cF}(0)$, there exist $k_m\in\ZZ^d$ and $v_m\in\tilde{\cL}(0)$, such that $k_m+v_m\rightarrow x$ as $m\rightarrow\infty$. Since $\tilde{\cF}$ is sub-foliated by $\tilde{\cL}$, we have
		$$
		y+k_m+v_m\in\tilde{\cF}(y+k_m)=\tilde{\cF}(k_m)=\tilde{\cF}(k_m+v_m).
		$$
		Sending $m\rightarrow\infty$, we obtain $x+y\in\tilde{\cF}(x)=\tilde{\cF}(0)$ by the continuity.
		
		In a similar way, if $x\in\tilde{\cF}(0)$ then $0\in \tilde{\cF}(x)$ and we can find $k_m\in\ZZ^d$ and $v_m\in\tilde{\cL}(0)$, such that $k_m+v_m+x\rightarrow 0$ as $m\to\infty$. Then
		$$
		k_m+v_m+x \in\tilde\cF(k_m+x)=\tilde\cF(k_m)=\tilde\cF(k_m+v_m)
		$$
		Taking the limit we have $0\in \tilde\cF(-x)$ and, hence $-x\in \tilde\cF(0)$.
		We can conclude that $\tilde{\cF}(0)$ is a closed connected subgroup of $\RR^d$. It follows that $\tilde{\cF}(0)$ is a linear subspace of $\RR^d$ and, hence, $\cF$ is a minimal linear foliation on $\TT^d$.
	\end{proof}
	
	Let $\cF$ be a foliation on a closed Riemannian manifold $M$, and $\tilde{\cF}$ be its lift to the universal cover $\tilde{M}$. We say $\cF$ is \emph{quasi-isometric} if there exist $a,b>0$, such that for every $x\in\tilde{M}$ and $y\in\tilde{\cF}(x)$ we have
	$$
	d_{\tilde{\cF}}(x,y)\leq a\cdot d(x,y)+b,
	$$ 
	where  $d(\cdot,\cdot)$ denotes the distance on $\tilde{M}$ induced by the lifted Riemannian metric and  $d_{\tilde{\cF}}(\cdot,\cdot)$ denotes the distance along the leaf of $\tilde{\cF}$ induced by the restriction of the Riemannian metric to the leaves of $\tilde\cF$.

	\begin{lemma}\label{lem:quasi-isometric}
		Let $\cF$ be a $C^0$-foliation on $\TT^d$ with $C^1$-leaves. If there exists a homeomorphism $h:\TT^d\rightarrow\TT^d$ homotopic to identity, such that $h(\cF)$ is a linear foliation, then $\cF$ is quasi-isometric.
	\end{lemma}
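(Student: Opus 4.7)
The plan is to transfer the tautological quasi-isometry of the linear foliation $h(\cF)$ back to $\cF$ via the lift of $h$. Let $\tilde{h}, \tilde{h}^{-1}\colon \RR^d \to \RR^d$ be the lifts determined by the homotopy to the identity; these commute with the $\ZZ^d$-action, hence are uniformly continuous on $\RR^d$ and at uniformly bounded distance $C$ from the identity. The lifted image foliation $\tilde{h}(\tilde{\cF})$ consists of affine translates of a single linear subspace $V \subset \RR^d$, so its intrinsic leaf metric coincides with the Euclidean ambient metric.

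Given $x, y$ on the same leaf of $\tilde{\cF}$, set $p = \tilde{h}(x)$, $q = \tilde{h}(y)$; then $q - p \in V$ and $|q-p| \leq |x-y| + 2C$. Let $\sigma(t) = (1-t)p + tq$ be the straight segment in the affine leaf $p + V$, and set $\gamma = \tilde{h}^{-1} \circ \sigma$, a continuous curve in $\tilde{\cF}(x)$ from $x$ to $y$. Next I would invoke a standard foliation-chart argument: by compactness of $\TT^d$ choose a finite cover by $\cF$-foliation charts with a Lebesgue number $\rho > 0$, and using that leaves are $C^1$ (so that ambient and intrinsic distances within a single plaque are comparable) arrange that every plaque has intrinsic diameter at most some uniform constant $K$. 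By uniform continuity of $\tilde{h}^{-1}$, fix $\delta > 0$ such that any subset of $\RR^d$ of diameter less than $\delta$ is mapped by $\tilde{h}^{-1}$ to a set of diameter less than $\rho$.

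Subdividing $\sigma$ into $N = \lceil |q-p|/\delta \rceil$ equal subsegments and letting $x_i$ denote the corresponding points on $\gamma$, each piece $\gamma([t_i, t_{i+1}])$ is a connected subset of $\tilde{\cF}(x)$ of ambient diameter less than $\rho$. By the Lebesgue-number property it therefore lies inside a single chart, and being connected it lies in a single plaque of that chart. Hence $d_{\tilde{\cF}}(x_i, x_{i+1}) \leq K$, and summing over $i$ gives
\[
d_{\tilde{\cF}}(x, y) \;\leq\; N K \;\leq\; \frac{K}{\delta}\,|q-p| + K \;\leq\; \frac{K}{\delta}\,|x-y| + \Bigl(\frac{2CK}{\delta} + K\Bigr),
\]
which is the required quasi-isometry estimate with $a = K/\delta$ and $b = 2CK/\delta + K$.

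The main technical step I expect to need care with is the claim that a connected piece of a leaf of small ambient diameter is contained in a single plaque; this is standard but relies on plaques being precisely the connected components of chart $\cap$ leaf, in combination with the Lebesgue-number argument. The $C^1$-leaf hypothesis enters only to convert ambient plaque diameters into uniformly bounded intrinsic plaque diameters on the compact torus.
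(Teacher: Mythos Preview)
Your argument is correct and follows a genuinely different, more elementary route than the paper. Both proofs start the same way: lift to $\RR^d$, use that $\tilde h$ is at bounded distance from the identity, and pull back the straight segment $\sigma\subset p+V$ to a continuous curve $\gamma=\tilde h^{-1}\circ\sigma$ in the leaf $\tilde\cF(x)$. From that point the two arguments diverge.

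You estimate $d_{\tilde\cF}(x,y)$ combinatorially: subdivide $\sigma$ by uniform continuity of $\tilde h^{-1}$ so that each arc of $\gamma$ has ambient diameter below a Lebesgue number, hence sits in a single plaque, and then sum the uniform bound $K$ on intrinsic plaque diameters. This uses only the foliation-chart structure and the compactness of $\TT^d$; the $C^1$-leaf hypothesis enters exactly where you say, to guarantee that plaques (being $C^1$ graphs over a fixed $k$-plane with uniformly small slope, once the charts are chosen small enough) have uniformly bounded intrinsic diameter.

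The paper instead first mollifies $h$ along the leaves so that $\gamma$ itself becomes a uniformly $C^1$ curve, then shows via a compactness argument that such curves do not $2\eps_0$-return to themselves, and finally converts this embeddedness into a volume comparison: the $\eps_0$-tube around $\gamma$ has volume $\gtrsim \eps_0^{d-1}\,\mathrm{length}(\gamma)$ on one hand, but sits inside an $R_0$-tube around the straight segment on the other. Comparing the two volume bounds yields the linear estimate on $\mathrm{length}(\gamma)$.

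Your approach is shorter and sidesteps both the mollification step and the non-return/volume machinery; the paper's argument is more geometric and produces as a byproduct the quantitative non-self-intersection of $\tilde h^{-1}$-images of lines, though this extra information is not used elsewhere. The one place in your write-up that deserves an explicit sentence is the uniform bound $K$ on intrinsic plaque diameters: make sure to choose the charts so that $T\cF$ is uniformly close to a fixed $k$-plane on each chart domain, so that each plaque is a $C^1$ graph with bounded derivative and hence bounded intrinsic diameter. You already flag this, and it is indeed standard.
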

	
	\begin{proof}
		First we note that since $\cF$ is a $C^0$-foliation with $C^1$-leaves, the tangent bundle $T\cF\subset T\TT^d$ is a uniformly continuous bundle on $\TT^d$. We can replace $h$ with a homeomorphism $\bar h$ which is $C^0$ close to $h$, $\bar h(\cF)=h(\cF)$ and $\bar h$ is a uniformly $C^1$ diffeomorphism when restricted to the leaves of $\cF$. This can be achieved in a standard way by mollifying $h$ along the leaves of $\cF$ in foliation charts. We denote by $\tilde h$ the lift of $\bar h$. We have that $\tilde h-id_{\RR^d}$ is bounded.
		We will now work on the universal cover $\RR^d$.
		
		For every $x\in\RR^d$ consider the lines $\ell$ which pass through $\tilde h(x)$ and are contained in $\tilde h(\tilde\cF(x))$. For each such $x$ and $\ell$ consider the following quantity 
		which measures how close a leaf of $\tilde\cF$ can return to itself along $\tilde h^{-1}(\ell)$.
		$$
		\cR(x,\ell)=\min\left\{1, \inf_{y\in \tilde h^{-1}(\ell), d_{\tilde{\cF}}(x,y)\ge 1}d(x,y)\right\}
		$$
		Because $\tilde h$ is finite distance away from $id_{\RR^d}$ for points on $\ell$ which are sufficiently far from $x$, their images on $h^{-1}(\ell)$ are also sufficiently far. Hence there exists $c_0>1$ such that 
		$$
		\cR(x,\ell)=\min\left\{1, \inf_{y\in \tilde h^{-1}(\ell), c_0 \ge d_{\tilde{\cF}}(x,y)\ge 1}d(x,y)\right\}
		$$
		From this expression it is clear that $\cR$ is a positive function which is continuous in both $x$ and $\ell$. Further, because $\cR$ is positive and equivariant under the Deck group action we have that $\cR$ is uniformly positive, that is, there exists $\eps_0>0$ such that $\cR(x,\ell)>2\eps_0$.
		
		Now consider any straight segment $[\tilde h(a), \tilde h(b)]\subset \tilde h(\tilde\cF(a))$. Then $\gamma=\tilde h^{-1}([\tilde h(a), \tilde h(b)])$ is a uniformly $C^1$ curve (recall that $\tilde h$ is $C^1$ along the leaves ) which connects $a$ to $b$ inside $\tilde \cF(a)$. Curve $\gamma$ belongs to a pre-image of a line through $\tilde h(a)$. Because of the above discussion $\gamma$ does not return to itself within $2\eps_0$ inside $\tilde \cF(a)$. Since $\tilde \cF(a)\subset \RR^d$, after taking even smaller $\eps_0$ if needed, the same is true in $\RR^d$: curve $\gamma$ does not return to itself within $2\eps_0$ inside $\RR^d$. Hence the map $\tilde h^{-1}:[\tilde h(a),\tilde h(b)]\to\gamma$ can be extended to an injective map of the $\eps_0$-neighborhood of $[\tilde h(a),\tilde h(b)]$ to the $\eps_0$-neighborhood $\cU(\gamma,\eps_0)$ of $\gamma$. Indeed, since $\gamma$ is a $C^1$ curve, we can assume that $\gamma$ is almost linear on the $\eps_0$-scale (otherwise, again, we can replace $\eps_0$ with an even smaller number)  and use normal hyperplanes to $\gamma$ to define such maps of $\eps_0$-neighborhoods. Because this map is injective we have an estimate on the volume
		$$
		{\it vol}(\cU(\gamma,\eps_0))\ge C_1\eps_0^{d-1} {\it length}(\gamma)
		$$
		On the other hand because $\tilde h$ is within bounded distance, say $R_0$, away from identity, we have that $\gamma$ belongs to an $R_0$-neighborhood of $[\tilde h(a), \tilde h(b)]$. And, hence, by choosing slightly larger $R_0$ we also have that $\cU(\gamma,\eps_0)$ belongs to the $R_0$-neighborhood of $[\tilde h(a), \tilde h(b)]$ and we can estimate the volume from above
		$$
		{\it vol}(\cU(\gamma,\eps_0))\le C_2d(\tilde h(a),\tilde h(b)) R_0^{d-1}+V_0
		$$
		where $V_0$ is the volume of a ball of radius $R_0$. Putting these estimates together we have 
		\begin{align*}
		d_{\tilde \cF}(a,b)\le  {\it length}(\gamma) 
		&\le \frac{{\it vol}(\cU(\gamma,\eps_0))}{C_1\eps_0^{d-1}}
		\le\frac{C_2 R_0^{d-1}}{C_1\eps_0^{d-1}}d(\tilde h(a),\tilde h(b)) +\frac{V_0}{C_1\eps_0^{d-1}}\\
		&\le \frac{C_2 R_0^{d-1}}{C_1\eps_0^{d-1}}d(a,b) +\frac{2 C_2 R_0^{d}}{C_1\eps_0^{d-1}}+\frac{V_0}{C_1\eps_0^{d-1}}
		\end{align*}
		which precisely means that $\tilde\cF$ is a quasi-isometric foliation.
	\end{proof}
	
	The following lemma is due to Brin.
	
	\begin{lemma}[Theorem 1, \cite{Br}]\label{lem:Brin}
		Let $f$ be an absolutely partially hyperbolic diffeomorphism on a closed Riemannian manifold $M$. If both stable and unstable foliations of $f$ are quasi-isometric in the universal cover $\tilde{M}$, then the center-unstable and center-stable bundles of $f$ are locally uniquely integrable. In particular, $f$ is dynamically coherent.
	\end{lemma}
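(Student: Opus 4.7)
The plan is to work in the universal cover $\tilde M$ (where the lifted foliations $\tilde\cF^s$ and $\tilde\cF^u$ are assumed quasi-isometric) and construct, for each $x\in\tilde M$, an immersed $C^1$ submanifold $W^{cu}(x)$ tangent to $E^{cu}=E^{ws}\oplus E^u$. Running the same argument for $f^{-1}$ gives $W^{cs}(x)$; the transverse intersection $W^{cu}(x)\cap W^{cs}(x)$ defines a center manifold tangent to $E^{ws}$, and local unique integrability together with dynamical coherence then follows from the construction.

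By the plaque family theorem of Hirsch--Pugh--Shub, $E^{cu}$ admits a locally $f$-invariant family of $C^1$-plaques $\{P^{cu}(x)\}_{x\in M}$ that are generally not unique and need not piece together into a foliation. I would define a candidate leaf through $x$ by taking the unstable saturation
$$
W^{cu}(x)\;:=\;\bigcup_{y\in P^{cu}(x)}\tilde\cF^u(y),
$$
and the heart of the argument is to show that this set is canonical, i.e.\ independent of the choice of plaque and agreeing for points on the same candidate leaf.

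To prove canonicity, suppose $P_1,P_2$ are two $cu$-plaques through $x$, and assume for contradiction that some $y\in P_1$ satisfies $\tilde\cF^u(y)\cap\bigcup_{z\in P_2}\tilde\cF^u(z)=\emptyset$ in a neighborhood of $x$. Pulling back by $f^{-n}$, the transverse offset of $P_1$ and $P_2$ along $E^{ws}$ is contracted (going backward, $E^{ws}$ expands only at the mild rate $m(Df|_{E^{ws}})^{-1}$, which by absolute partial hyperbolicity is dominated by the backward expansion along $E^{u}$ one would need to see separation of unstable leaves), while any $u$-length needed to join $f^{-n}(y)$ to $\tilde\cF^u(f^{-n}(P_2))$ would have to be comparable to the ambient distance by quasi-isometry of $\tilde\cF^u$:
$$
d_{\tilde\cF^u}(a,b)\;\leq\;a\cdot d(a,b)+b.
$$
Choosing $n$ large, this forces $f^{-n}(y)$ and $f^{-n}(P_2)$ into a single plaque neighborhood, where continuity of $E^{cu}$ and uniqueness of the local plaque family up to $C^0$-error make $\tilde\cF^u(f^{-n}(y))$ meet $f^{-n}(P_2)$, contradicting the assumption. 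The dual quasi-isometry of $\tilde\cF^s$ is used to rule out the symmetric pathology where unstable leaves of $P_1$ accumulate onto distinct unstable leaves: without it, a spiraling of stable leaves around a would-be branching could be compatible with all the other estimates.

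Once canonicity is established, $W^{cu}(x)$ is automatically a $C^1$-immersed submanifold---it is the union of $C^1$ unstable leaves indexed continuously by a $cu$-plaque transversal---and it depends only on the leaf, not on $x$. The collection $\{W^{cu}(x)\}$ descends to a foliation $\cF^{cu}$ of $M$, which is $f$-invariant by construction. Repeating everything for $f^{-1}$ yields $\cF^{cs}$, and intersecting proves local unique integrability of both $E^{cu}$ and $E^{cs}$. The main obstacle, as usual in Brin's framework, is the quantitative estimate in paragraph three: quasi-isometry of \emph{both} $\tilde\cF^s$ and $\tilde\cF^u$ is what converts the internal dynamics of the foliations into ambient geometric control on how plaques can separate, and weakening either hypothesis would leave open the possibility of branching.
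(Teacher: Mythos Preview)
The paper does not give its own proof of this lemma; it is quoted verbatim as Theorem~1 of Brin~\cite{Br} and used as a black box. There is therefore nothing in the paper to compare your argument against.

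That said, your sketch differs from Brin's actual route and is not complete as written. Brin does not go through HPS plaque families and unstable saturation. Instead he gives a direct metric characterization of the center-unstable leaf: working in $\tilde M$, one sets
\[
W^{cu}(x)\;=\;\bigl\{\,y\in\tilde M:\ \sup_{n\ge 0}\,\mu^{n}\,d\bigl(f^{-n}(x),f^{-n}(y)\bigr)<\infty\,\bigr\},
\]
where $\mu$ is the constant separating the strong-stable rate from the center rate in the absolute partial hyperbolicity. Quasi-isometry of the \emph{strong stable} foliation is what makes this a genuine dichotomy: if $y\notin W^{cu}(x)$ then the local product decomposition of a short path from $x$ to $y$ has a nontrivial $ss$-piece, backward iterates stretch the $ss$-leaf distance at a rate strictly faster than $\mu^{-n}$, and quasi-isometry transfers this to the ambient distance. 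Thus the sets $W^{cu}(x)$ partition $\tilde M$, are $f$-invariant, and one then checks they are $C^1$ manifolds tangent to $E^{cu}$. The symmetric argument using quasi-isometry of $\tilde\cF^u$ gives $W^{cs}$.

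Your saturation approach could perhaps be completed, but the contradiction argument in your third paragraph is not an argument yet. You have not said what quantity is being estimated or why iterating backward ``forces $f^{-n}(y)$ and $f^{-n}(P_2)$ into a single plaque neighborhood''; the absolute domination you invoke controls rates along subbundles, not the a~priori unknown transverse position of two different plaques. The appeal to quasi-isometry of $\tilde\cF^s$ to rule out ``spiraling'' is a slogan rather than an estimate. (There is also a notational collision: $a,b$ are used both as points and as the quasi-isometry constants in your displayed inequality.) If you want to make the saturation picture rigorous, the cleanest way is to show first that any $C^1$ path tangent to $E^{cu}$ stays inside the metrically defined $W^{cu}(x)$ above, and conversely that $W^{cu}(x)$ is path-connected by such paths; this is essentially Brin's argument rephrased.
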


	\section{Matching to the linear model: proof of Theorem \ref{thm:conjugate}}
	
	\begin{proof}[Proof of Theorem \ref{thm:conjugate}]
		Let $A\in {\rm GL}(d,\ZZ)$ be the linear  part of $f$, then $A$ is Anosov with the hyperbolic splitting $T\TT^d=L^s\oplus L^u$ of matching dimensions, ${\dim}L^s={\dim}E^s$ and ${\dim}L^u={\dim}E^u$. Denote by $\cF^s,\cF^u$ the stable and unstable foliations of $f$, and by $\cL^s,\cL^u$ the stable and unstable foliations of $A$. If $h:\TT^d\rightarrow\TT^d$ is the topological conjugacy $h\circ f=A\circ h$, then
		$$
		h(\cF^s)=\cL^s, \qquad {\rm and} \qquad h(\cF^u)=\cL^u.
		$$ 
		
		By our assumption $E^{ss}\oplus E^u$ is integrable and we let $\cF^{su}$ be the foliation tangent to $E^{ss}\oplus E^u$. Clearly, $\cF^{su}$ is sub-foliated by the unstable foliation $\cF^u$. This implies $h(\cF^{su})$ is sub-foliated by the minimal linear foliation $\cL^u=h(\cF^u)$. Therefore, by Lemma~\ref{lem:linear-foli},  $\cL^{su}=h(\cF^{su})$ is a minimal linear foliation on $\TT^d$. Hence
		$$
		\cF^{ss}=\cF^s\cap\cF^{su}=h^{-1}\big(\cL^s\cap\cL^{su}\big)
		$$
		which is the $(h^{-1})$-image of a linear foliation $\cL^{ss}:=\cL^s\cap\cL^{su}$. By Lemma \ref{lem:quasi-isometric} we have that $\cF^{ss}$ is quasi-isometric.
		
		Since both $\cF^{ss}$ and $\cF^u=h^{-1}(\cL^u)$ are quasi-isometric, and $f$ is absolutely partially hyperbolic, Lemma~\ref{lem:Brin} implies that $E^{ws}\oplus E^u$ is locally uniquely integrable to a foliation $\cF^{cu}$, and $E^{ws}$ is locally uniquely integrable to a foliation $\cF^{ws}=\cF^{cu}\cap \cF^s$. Similarly, since $\cF^{cu}$ is sub-foliated by $\cF^u$ and $\cL^u=h^{-1}(\cF^u)$ is linear and minimal, we have that $\cL^{cu}:=h^{-1}(\cF^{cu})$ is also a minimal linear foliation. Thus
		$$
		\cL^{ws}:=h^{-1}(\cF^{ws})=h^{-1}\big( \cF^{cu}\cap \cF^s \big)=\cL^{cu}\cap\cL^s
		$$
		is a linear foliation, and $\cF^{ws}$ is quasi-isometric.
		
		Since both $\cF^{ss}$ and $\cF^{ws}$ are $f$-invariant and transverse inside $\cF^s$, both linear foliations $\cL^{ss}$ and $\cL^{ws}$ are $A$-invariant
		$$
		A(\cL^{\sigma})=h\circ f\circ h^{-1}(\cL^{\sigma})
		=h\circ f(\cF^{\sigma})=h(\cF^{\sigma})=\cL^{\sigma}, \qquad \sigma={\it ss, ws},
		$$
		and transverse inside $\cL^s$. This implies $A$ admits an invariant linear splitting
		$$
		T\TT^d=L^{ss}\oplus L^{ws}\oplus L^u, \qquad {\rm with} \qquad
		T{\cL^{ss}}=L^{ss}, \quad{\rm and}\quad T{\cL^{ws}}=L^{ws}.
		$$ 
		Now we need to show $L^s=L^{ss}\oplus L^{ws}$ is a dominated splitting. The following claim proves the second item of Theorem \ref{thm:conjugate}. Recall that the constant $\mu$ comes from the definition of partial hyperbolicity~$(\ast)$ for $f$.
		
		\begin{claim}
			We still denote by $A:\RR^d\rightarrow\RR^d$ the lift of $A$ to $\RR^d$, and by $\tilde{\cL}^{ss},\tilde{\cL}^{ws}$ and $\tilde{\cL}^s$ the lifts of foliations $\cL^{ss},\cL^{ws}$ and $\cL^s$ to $\RR^d$. 
			For every $x\in\RR^d$ and every $y\in\tilde{\cL}^s(x)$, we have
			\begin{itemize}
				\item if $y\in\tilde{\cL}^{ws}(x)$, then~
				$\lim_{n\rightarrow-\infty}\mu^{-n}\cdot d\big(A^n(x),A^n(y)\big)=0$;
				\item if $y\in\tilde{\cL}^{ss}(x)$, then~
				$\lim_{n\rightarrow-\infty}\mu^{-n}\cdot d\big(A^n(x),A^n(y)\big)=+\infty$.
			\end{itemize}
			Then is easily follows that for any eigenvalues $\mu^{ss}$ and $\mu^{ws}$ of $A$ associated to invariant subspaces corresponding $L^{ss}$ and $L^{ws}$, respectively, satisfy
			$$
			0<|\mu^{ss}|<\mu<|\mu^{ws}|<1.
			$$
		\end{claim}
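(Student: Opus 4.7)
The plan is to prove both limits directly by transferring the partial hyperbolicity of $f$ to the linear dynamics of $A$ through the conjugacy $h$, and then to extract the eigenvalue bounds by applying the limits to vectors that realize the spectrum of $A|_{L^{ss}}$ and $A|_{L^{ws}}$.

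First I would work on the universal cover. Since $A$ is the linearization of $f$, one has a lift $\tilde h\colon\RR^d\to\RR^d$ of $h$ with $\tilde h\circ\tilde f=A\circ\tilde h$ and $d(\tilde h(x),x)\le R$ for some $R>0$ and all $x$. By compactness of $\TT^d$ and the strict inequalities in $(\ast)$, I can fix constants $0<\nu^*<\mu<\mu^*<1$ with $\|Df|_{E^{ss}}\|\le \nu^*$ and $m(Df|_{E^{ws}})\ge \mu^*$ pointwise. From the matching of foliations already established in the proof, setting $x'=\tilde h^{-1}(x)$ and $y'=\tilde h^{-1}(y)$, if $y\in\tilde{\cL}^{ws}(x)$ then $y'\in\tilde{\cF}^{ws}(x')$, and if $y\in\tilde{\cL}^{ss}(x)$ then $y'\in\tilde{\cF}^{ss}(x')$.

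For the first limit, integrating the leafwise bound $\|Df^{-1}|_{E^{ws}}\|\le(\mu^*)^{-1}$ along paths in $\tilde{\cF}^{ws}$ yields
$$
d_{\tilde{\cF}^{ws}}(\tilde f^{-m}(x'),\tilde f^{-m}(y'))\;\le\;(\mu^*)^{-m}\,d_{\tilde{\cF}^{ws}}(x',y').
$$
Writing $n=-m$, using $d\le d_{\tilde{\cF}^{ws}}$ and the $R$-closeness of $\tilde h$ to the identity, one obtains
$$
\mu^{-n}\,d(A^n(x),A^n(y))\;\le\;(\mu/\mu^*)^{m}\,d_{\tilde{\cF}^{ws}}(x',y')+2R\mu^{m},
$$
which tends to $0$. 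For the second limit, the parallel leafwise estimate coming from $\|Df|_{E^{ss}}\|\le\nu^*$ gives $d_{\tilde{\cF}^{ss}}(\tilde f^{-m}(x'),\tilde f^{-m}(y'))\ge(\nu^*)^{-m}d_{\tilde{\cF}^{ss}}(x',y')$. The main technical point, and the step where genuinely new information is needed, is to convert this leafwise lower bound into an ambient one; this is precisely where the quasi-isometry of $\cF^{ss}$ established earlier in the proof is indispensable, producing $d(\tilde f^{-m}(x'),\tilde f^{-m}(y'))\ge a^{-1}(\nu^*)^{-m}d_{\tilde{\cF}^{ss}}(x',y')-b/a$. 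Correcting by $2R$ shows $\mu^{-n}d(A^n(x),A^n(y))$ grows at rate $(\mu/\nu^*)^m$ with $\mu/\nu^*>1$, hence tends to $+\infty$ whenever $x\neq y$.

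To deduce the eigenvalue inequalities I would test the two limits against vectors that realize the spectrum. For a real eigenvalue $\mu^{ss}$ of $A|_{L^{ss}}$ with eigenvector $v$, setting $y=x+v$ gives $d(A^{-m}x,A^{-m}y)=|\mu^{ss}|^{-m}\|v\|$, so divergence of $\mu^m|\mu^{ss}|^{-m}\|v\|$ to $+\infty$ forces $|\mu^{ss}|<\mu$; complex conjugate pairs and nontrivial Jordan blocks introduce only polynomial factors $m^k$, which are absorbed by the exponential comparison. The symmetric argument on $L^{ws}$ yields $|\mu^{ws}|>\mu$. Finally, $|\mu^{ss}|>0$ is immediate from $A\in{\rm GL}(d,\ZZ)$, and $|\mu^{ws}|<1$ from $L^{ws}\subset L^s$.
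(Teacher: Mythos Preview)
Your proposal is correct and follows essentially the same approach as the paper: lift to $\RR^d$, use the bounded distance of the conjugacy from the identity, transfer the pointwise norm bounds $(\ast)$ to leafwise growth rates along $\tilde{\cF}^{ws}$ and $\tilde{\cF}^{ss}$, and invoke the quasi-isometry of $\cF^{ss}$ (already established) to convert the leafwise lower bound into an ambient one. Your final paragraph on extracting the eigenvalue inequalities via eigenvectors and absorbing polynomial factors from Jordan blocks is slightly more explicit than the paper, which simply asserts that this step ``easily follows''.
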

		
		\begin{proof}[Proof of the Claim]
			Let $F:\RR^d\rightarrow\RR^d$ be a lift of $f$ and let $H:\RR^d\rightarrow\RR^d$ be a lift of the conjugacy satisfying
			$$
			H\circ F=A\circ H, \qquad {\rm and } \qquad 
			H(x+k)=H(x)+k, \quad \forall k\in\ZZ^d.
			$$
			There exists a constant $C>0$ such that $d(H(x),x)<C$ for every $x\in\RR^d$. 
			The constant $\mu$ satisfies
			$$
			0<\sup_{z\in\TT^d}\|Df|_{E^{ss}(z)}\|<\mu<
			\inf_{z\in\TT^d}m(Df|_{E^{ws}(z)})<1.
			$$
			
			We denote by $\tilde{\cF}^{ss},\tilde{\cF}^{ws},\tilde{\cF}^s$ the lifts of $\cF^{ss},\cF^{ws},\cF^s$ to $\RR^d$. For every $x\in\RR^d$ and every $y\in\tilde{\cL}^s(x)$, $y\in\tilde{\cL}^{ws}(x)$ if and only if $H^{-1}(y)\in\tilde{\cF}^{ws}(H^{-1}(x))$. If $y\in\tilde{\cL}^{ws}(x)$, then
			\begin{align*}
			\lim_{n\rightarrow-\infty}\frac{d\big(A^n(x),A^n(y)\big)}{\mu^n}
			&=\lim_{n\rightarrow-\infty}
			\frac{d\big(H\circ F^n\circ H^{-1}(x), H\circ F^n\circ H^{-1}(y)\big)}{\mu^n} \\
			&\leq \lim_{n\rightarrow-\infty}
			\frac{d\big(F^n\circ H^{-1}(x), F^n\circ H^{-1}(y)\big)+2C}{\mu^n} \\
			&\leq \lim_{n\rightarrow-\infty}
			\frac{d_{\tilde{\cF}^{ws}}\big(F^n\circ H^{-1}(x), F^n\circ H^{-1}(y)\big)+2C}{\mu^n}\\
			&\leq \lim_{n\rightarrow-\infty}
			\frac{\big[\inf_{z\in\TT^d}m(Df|_{E^{ws}(z)})\big]^n}{\mu^n}\cdot
			d_{\tilde{\cF}^{ws}}\big(H^{-1}(x),H^{-1}(y)\big)+
			\frac{2C}{\mu^n} \\
			&=0.
			\end{align*}
			
			Recall that $\tilde{\cF}^{ss}$ is quasi-isometric. Let $a$ and $b$ be the  quasi-isometric constants for $\tilde{\cF}^{ss}$. If $y\in\cL^{ss}(x)$, then we have
			\begin{align*}
			\lim_{n\rightarrow-\infty}\frac{d\big(A^n(x),A^n(y)\big)}{\mu^n}
			&=\lim_{n\rightarrow-\infty}
			\frac{d\big(H\circ F^n\circ H^{-1}(x), H\circ F^n\circ H^{-1}(y)\big)}{\mu^n} \\
			&\geq \lim_{n\rightarrow-\infty}\frac{d\big(H\circ F^n\circ H^{-1}(x), H\circ F^n\circ H^{-1}(y)\big)}{\mu^n} \\
			&\geq \lim_{n\rightarrow-\infty}
			\frac{d\big(F^n\circ H^{-1}(x), F^n\circ H^{-1}(y)\big)-2C}{\mu^n} \\
			&\geq \lim_{n\rightarrow-\infty}
			\frac{a^{-1}\cdot d_{\tilde{\cF}^{ss}}\big(F^n\circ H^{-1}(x), F^n\circ H^{-1}(y)\big)-a^{-1}b-2C}{\mu^n} \\
			&\geq \lim_{n\rightarrow-\infty}a^{-1}\cdot
			\frac{\big[\sup_{z\in\TT^d}\|Df|_{E^{ss}(z)}\|\big]^n}{\mu^n}
			\cdot d_{\tilde{\cF}^{ss}}\big(H^{-1}(x),H^{-1}(y)\big) -
			\frac{a^{-1}b+2C}{\mu^n} \\
			&=+\infty.
			\end{align*}
			% 	
			%   	Finally, since $A$ is linear, if $\mu^{ss}$ and $\mu^{ws}$ are eigenvalues of $A$ associated to invariant subspaces defined by $L^{ss}$ and $L^{ws}$ respectively, then they satisfy
			%  	$0<|\mu^{ss}|<\lambda<|\mu^{ws}|<1$.
		\end{proof}
		
		The gap between the eigenvalues established in the preceding claim implies that the $A$-invariant splitting $T\TT^d=L^{ss}\oplus L^{ws}\oplus L^u$ is a partially hyperbolic splitting. This proves the first item of Theorem~\ref{thm:conjugate}. Note that we have showed the conjugacy $h$ preserves all the invariant foliations, which proves the third item.  
	\end{proof}
	
	We have finished the proof of Theorem~\ref{thm:conjugate}. We include two more technical lemmas in this section which will be used later.
	
	\begin{lemma}\label{lem:C1-su}
		Let $f\in{\rm Diff}^2(\TT^d)$ be an irreducible Anosov diffeomorphism with absolutely partially hyperbolic splitting $T\TT^d=E^{ss}\oplus E^{ws}\oplus E^u$. If $E^{ss}\oplus E^u$ is integrable and $f$ satisfies the center bunching condition 
		$$
		\|Df|_{E^{ws}(x)}\|~<~
		m\big(Df|_{E^{ws}(x)}\big)\cdot m\big(Df|_{E^u(x)}\big),
		\qquad \forall x\in\TT^d,
		$$
		then the $su$-foliation $\cF^{su}$ with $T\cF^{su}=E^{ss}\oplus E^u$ is a  $C^{1+{\rm H\"older}}$ foliation on $\TT^d$. 
		
		In particular, for every $x\in\TT^d$ and $y\in\cF^{su}(x)$, let  $\,\,\H^{su}_{x,y}:\cF^{ws}(x)\rightarrow\cF^{ws}(y)$ be the holonomy map given by sliding along $\cF^{su}$, satisfying $\H^{su}_{x,y}(x)=y$, then $\H^{su}_{x,y}$ is $C^{1+{\rm H\"older}}$.
	\end{lemma}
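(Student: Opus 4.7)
The plan is to reduce regularity of the $\H^{su}$-holonomies to Pugh--Shub--Wilkinson (PSW) type regularity of strong-stable and strong-unstable sub-holonomies, with the center bunching hypothesis entering exactly where one expects it.

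First I would use Theorem~\ref{thm:conjugate} to obtain dynamical coherence of $f$, which gives $f$-invariant foliations $\cF^{ss}, \cF^{ws}, \cF^u, \cF^s, \cF^{cu}$. Using the conjugacy $h$ to identify these with commuting linear foliations on $\TT^d$, I would then check that the $\cF^{ss}$-holonomy inside an $\cF^s$-leaf carries $\cF^{ws}$-leaves to $\cF^{ws}$-leaves, and that the $\cF^u$-holonomy inside an $\cF^{cu}$-leaf preserves the $\cF^{ws}$-foliation. This makes the relevant sub-holonomies well-defined maps between $\cF^{ws}$-plaques.

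Next, for $x\in\TT^d$ and $y\in\cF^{su}(x)$ close to $x$, the local product structure of $\cF^{ss}$ and $\cF^u$ inside $\cF^{su}(x)$ yields $z=\cF^{ss}(x)\cap\cF^u(y)$ and a factorization
\[
\H^{su}_{x,y}=\H^u_{z,y}\circ\H^{ss}_{x,z}\colon \cF^{ws}(x)\to\cF^{ws}(z)\to\cF^{ws}(y).
\]
The main task is to show each factor is $C^{1+\text{H\"older}}$ with constants uniform in $(x,y)$. The strong-stable factor $\H^{ss}_{x,z}$ is the holonomy of the strong-stable sub-foliation inside the stable manifold $\cF^s$; its $C^{1+\text{H\"older}}$ regularity follows from the classical Hirsch--Pugh--Shub type bunching, which here reduces to $\|Df|_{E^{ss}}\|<m(Df|_{E^{ws}})$ and is automatic from~($\ast$), since $\|Df|_{E^{ss}}\|<\mu<m(Df|_{E^{ws}})$ with a definite spectral gap. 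The strong-unstable factor $\H^u_{z,y}$ is the holonomy of the unstable sub-foliation inside the center-unstable leaf $\cF^{cu}$, viewed as an Anosov manifold for $f|_{\cF^{cu}}$; here the PSW bunching for $C^{1+\text{H\"older}}$ regularity reads $\|Df|_{E^{ws}}\|<m(Df|_{E^{ws}})\cdot m(Df|_{E^u})$, which is exactly the center bunching hypothesis of the lemma. Composing two uniformly $C^{1+\text{H\"older}}$ holonomies then gives the claim for $\H^{su}_{x,y}$, and uniform bounds in $(x,y)$ promote this to $C^{1+\text{H\"older}}$ regularity of the foliation $\cF^{su}$ itself.

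The delicate point I expect to require the most care is correctly setting up the PSW machinery for sub-holonomies inside $\cF^s$- and $\cF^{cu}$-leaves (treating $f|_{\cF^s}$ and $f|_{\cF^{cu}}$ as partially hyperbolic systems in their own right, with ``center'' direction $E^{ws}$ sitting inside the overall stable direction of $f$), rather than invoking the standard PSW statement in the ambient manifold. This reformulation is essentially routine, since the graph transform argument only uses cocycle bounds along the three subbundles involved in the holonomy at hand, but it must be carried out carefully in order to match the precise form of the bunching we have available.
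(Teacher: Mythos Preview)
Your proposal is correct and follows essentially the same route as the paper: factor $\H^{su}$ as an $\cF^{ss}$-holonomy inside $\cF^s$ composed with an $\cF^u$-holonomy inside $\cF^{cu}$, then invoke PSW-type regularity (the paper cites \cite{PSW} Theorem~B and \cite{Bro} Theorem~2.2) for each factor, using absolute partial hyperbolicity for the first and the center bunching hypothesis for the second. The only refinement is that the paper records the bunching inequalities in the sharper $(1-\varepsilon)$-exponent form needed for $C^{1+\text{H\"older}}$ (rather than merely $C^1$) regularity, which you should make explicit when writing out the details.
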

	
	\begin{proof}
		By Theorem \ref{thm:conjugate}, $f$ is dynamically coherent with stable foliation $\cF^s$ and center unstable foliation $\cF^{cu}$, where
		$$
		T\cF^s=E^{ss}\oplus E^{ws}, \qquad {\rm and} \qquad
		T\cF^{cu}=E^{ws}\oplus E^u.
		$$ 
		Let $\cF^{ws}=\cF^s\cap\cF^{cu}$ be the weak stable foliation satisfying $T\cF^{ws}=E^{ws}$.
		
		Since $f$ is Anosov and absolutely partially hyperbolic, there exists $\e>0$ such that
		$$
		\|Df|_{E^{ss}(x)}\|^{(1-\e)}\cdot\|Df|_{E^{ws}(x)}\|~<~m\big(Df|_{E^{ws}(x)}\big)
		\qquad \forall x\in\TT^d.
		$$
		Then applying Theorem B of \cite{PSW} and Theorem 2.2 of \cite{Bro} we obtain that the strong stable foliation $\cF^{ss}$ is $C^{1+{\rm H\"older}}$-smooth in every stable leaf $\cF^s(x)$, $x\in\TT^d$. Moreover, the center bunching assumption implies that there exists $\e'>0$ such that
		$$
		\|Df|_{E^{ws}(x)}\|~<~
		m\big(Df|_{E^{ws}(x)}\big)\cdot m\big(Df|_{E^u(x)}\big)^{(1-\e')},
		\qquad \forall x\in\TT^d.
		$$
		Again, by Theorem B of \cite{PSW} and Theorem 2.2 of \cite{Bro} we have that the unstable foliation $\cF^u$ is $C^{1+{\rm H\"older}}$-smooth in every stable leaf $\cF^{cu}(x)$, $x\in\TT^d$. Since $E^{ss}\oplus E^u$ is integrable, this implies the holonomy map induced by the $su$-foliation $\cF^{su}$ between every pair of weak stable leaves in $\cF^{ws}$ is uniformly $C^{1+{\rm H\"older}}$-smooth, and, hence, $\cF^{su}$ is a $C^{1+{\rm H\"older}}$-foliation on $\TT^d$.
	\end{proof}

	\begin{lemma}\label{lem:Diophantine}
		Let $f\in{\rm Diff}^1(\TT^d)$ be an Anosov diffeomorphism satisfying the assumptions of Theorem \ref{thm:conjugate}.
		Let $A=f_*\in{\rm GL}(d,\ZZ)$ be the linear part of $f$ and let $h:\TT^d\rightarrow\TT^d$ be the conjugacy: $h\circ f=A\circ h$.
		There exist $\,\,\theta,\theta'\in(0,1)$, such that for every pair of points $x,y\in\TT^d$, if we let $x'=h(x),y'=h(y)$, then
		\begin{itemize}
			\item there exists two sequences of points
			$$
			z_n'\in\cL^{ss}(x'), \qquad {\it and} \qquad
			w_n'\in\cL^{ws}(y')\cap\cL^u(z_n'), \qquad n\in\NN,
			$$
			such that 
			$$
			d_{\cL^{ws}}(w_n',y')~\leq~ L_n^{-\theta'},
			\qquad {\it and} \qquad
			d_{\cL^u}(w_n',z_n')~\leq~ L_n^{-\theta'},
			$$ 
			where $L_n=d_{\cL^{ss}}(x',z_n')\rightarrow+\infty$ as $n\rightarrow\infty$.

			\item let
			$z_n=h^{-1}(z_n')\in\cF^{ss}(x)$ and $w_n=h^{-1}(w_n')\in\cF^{ws}(y)\cap\cF^u(z_n)$ for every $n\in\NN$. Then
			$$
			d_{\cF^{ws}}(w_n,y)~\leq~ K_n^{-\theta},
			\qquad {\it and} \qquad
			d_{\cF^u}(w_n,z_n)~\leq~ K_n^{-\theta},
			$$ 
			where $K_n=d_{\cF^{ss}}(x,z_n)\rightarrow+\infty$ as $n\rightarrow\infty$.
		\end{itemize}
		
	\end{lemma}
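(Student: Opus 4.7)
The plan is to first establish the linear (first) bullet as a simultaneous Diophantine approximation on $\RR^d$, then transfer the estimate to the $f$-side (second bullet) via H\"older regularity of the conjugacy $h$ along leaves.

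\emph{Linear part.} Lift $\tilde x', \tilde y' \in \RR^d$ and decompose $q := \tilde y' - \tilde x' = q_{ss} + q_{ws} + q_u$ along the splitting $\RR^d = L^{ss} \oplus L^{ws} \oplus L^u$ from Theorem \ref{thm:conjugate}. Writing $\tilde z_n' = \tilde x' + v_n^{ss}$ and $\tilde w_n' = \tilde z_n' + v_n^u = \tilde y' + v_n^{ws} + k_n$ with $v_n^\sigma \in L^\sigma$ and $k_n \in \ZZ^d$, a direct calculation gives $v_n^{ss} = q_{ss} + (k_n)_{ss}$, $v_n^u = q_u + (k_n)_u$, and $v_n^{ws} = -q_{ws} - (k_n)_{ws}$. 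So the lemma reduces to producing $k_n \in \ZZ^d$ with $\|q_{ws} + (k_n)_{ws}\|$ and $\|q_u + (k_n)_u\|$ at most $L_n^{-\theta'}$, while $L_n := \|q_{ss} + (k_n)_{ss}\| \to \infty$. This is a simultaneous Diophantine approximation of a fixed target in $L^{ws} \oplus L^u$ by $\pi(\ZZ^d)$, where $\pi: \RR^d \to L^{ws} \oplus L^u$ is the projection along $L^{ss}$. Since $\cL^{ss}$ is a minimal linear foliation on $\TT^d$ (so $L^{ss} \cap \ZZ^d = \{0\}$ and $\pi$ is injective on $\ZZ^d$), a Dirichlet-style pigeonhole on $[-N,N]^d \cap \ZZ^d$---$\sim N^d$ distinct images in a box of volume $\sim N^n$ in $L^{ws} \oplus L^u$, where $n := \dim(L^{ws}\oplus L^u) < d$---yields $k_N$ with projection error at most $CN^{-(d-n)/n}$. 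As $N \to \infty$ (passing to a subsequence to rule out the degenerate case in which the target happens to sit in $\pi(\ZZ^d)$), unboundedness of $k_N$ together with boundedness of $\pi(k_N)$ forces $\|(k_N)_{ss}\| \to \infty$, hence $L_n \to \infty$; using $L_n \leq CN$ gives the bound $L_n^{-\theta'}$ with $\theta' := \min\{(d-n)/n,\,1-\delta\} \in (0,1)$.

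\emph{Nonlinear part.} Both $h$ and $h^{-1}$ are H\"older continuous on $\TT^d$ by the classical Franks--Manning argument. Since $h$ sends each foliation $\cF^\sigma$ to the corresponding $\cL^\sigma$ (Theorem \ref{thm:conjugate}(3)), and the leaves of $f$ and $A$ are expanded/contracted at comparable exponential rates (items (1)--(2)), the restrictions of $h$ and $h^{-1}$ to each leaf are bi-H\"older in the respective leafwise metrics---a standard consequence of uniform hyperbolicity. Hence there are exponents $\gamma_\sigma^- > 0$ with $K_n \geq c L_n^{1/\gamma_{ss}^-}$ (so $K_n \to \infty$), $d_{\cF^{ws}}(w_n,y) \leq C\,d_{\cL^{ws}}(w_n',y')^{\gamma_{ws}^-} \leq C L_n^{-\theta'\gamma_{ws}^-}$, and similarly $d_{\cF^u}(w_n,z_n) \leq C L_n^{-\theta'\gamma_u^-}$. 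Combining the two chains yields $d_{\cF^{ws}}(w_n,y),\, d_{\cF^u}(w_n,z_n) \leq C K_n^{-\theta}$ with $\theta := \theta'\min\{\gamma_{ws}^-,\gamma_u^-\}/\gamma_{ss}^- \in (0,1)$ (capping at $1-\delta$ if necessary).

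\emph{Main obstacle.} The Diophantine step is the crux. One must verify $L^{ss} \cap \ZZ^d = \{0\}$ (equivalently, minimality of $\cL^{ss}$) in the setting of Theorem \ref{thm:conjugate}, handle the degenerate case where the target lies exactly in $\pi(\ZZ^d)$, and convert the pigeonhole rate in $N$ into the polynomial rate in $L_n$. A secondary concern is the leafwise H\"older regularity of $h$; this is standard for Anosov conjugacies but requires invoking the proper stability lemma along each sub-foliation separately.
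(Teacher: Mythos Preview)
Your transfer from the linear to the nonlinear side via leafwise H\"older regularity of $h$ is correct and is exactly what the paper does (they dispatch the second bullet in one line). The problem lies entirely in your linear step.

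The Dirichlet-style pigeonhole you describe does \emph{not} yield what you claim. Placing $\sim N^d$ points $\pi(k)$ in a box of volume $\sim N^n$ and invoking pigeonhole gives you two lattice images within $N^{-(d-n)/n}$ of \emph{each other}---that is, some nonzero $k$ with $\|\pi(k)\|$ small. This is a homogeneous statement (approximation of $0$, equivalently a close return of the leaf $\cL^{ss}(x')$ to itself). What the lemma requires is \emph{inhomogeneous}: approximation of an arbitrary fixed target $-\pi(q)$, i.e., that the leaf comes polynomially close to an arbitrary point $y'$. These are different problems, and the second does not follow from pigeonhole plus minimality. Indeed, for a linear foliation with Liouville slope (minimal but not Diophantine) a leaf segment of length $R$ need not be $R^{-\theta}$-dense for any fixed $\theta>0$: between successive convergent denominators the orbit clusters near finitely many rationals and the gaps decay slower than any power of $R$.

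What makes the lemma true is that $L^{ss}$ is an eigenspace of $A\in{\rm GL}(d,\ZZ)$, so the direction vector has algebraic coordinates and is therefore \emph{badly approximable} (Diophantine). The paper uses exactly this: it passes to a $1$- or $2$-dimensional $A$-invariant subfoliation $\cL\subset\cL^{ss}$, reads off the induced translation on a codimension-one (or two) torus, observes that the translation vector is Diophantine because its entries are eigenvector coordinates, and then invokes a quantitative discrepancy bound (Theorem~1.80 of~\cite{DT}) for Diophantine toral translations to get $CR^{-\theta'}$-density of $\cL^{ss}(x',R)$. Your ``main obstacle'' paragraph flags minimality ($L^{ss}\cap\ZZ^d=\{0\}$) but not the Diophantine property, which is the genuine crux. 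To fix your argument you must either reproduce an effective discrepancy/equidistribution estimate using the algebraicity of the slope, or cite one.
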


\begin{proof}
We begin by noticing that, since by Theorem~\ref{thm:conjugate} conjugacy $h$ matches all invariant foliations, and because $h$ is H\"older continuous, the second statement is an immediate corollary of the first one. Hence we only need to verify the first one which is a qualitative density property of the strong stable foliation. 

The fact that the foliation $\cL^{ss}$ is algebraic, hence Diophantine, will play an essential role in the proof. Denote by $\cL^{ss}(x', R)$ a ball of radius $R$ inside $\cL^{ss}(x')$. If we can show that $\cL^{ss}(x', R)$ is $CR^{-\theta'}$-dense in $\mathbb T^d$ then the claim follows. Indeed, the point $y'$ can be joined to a point in $\cL^{ss}(x', R)$ via a path inside $\cL^{ws}\oplus \cL^u(y')$ of length $\le CR^{-\theta'}$ and the desired estimates on $d_{\cL^{ws}}(w_n',y')$ and $d_{\cL^u}(w_n',z_n')$ easily follow from transversality of $\cL^{ws}$ and $\cL^u$ inside $\cL^{ws}\oplus \cL^u(y')$. (By taking $R\to\infty$ we will have $L_n\to\infty$ and, choosing a slightly smaller $\theta'$ if needed be can eliminate the constant $C$.)

Let's look at the spectrum of $A$ along the strong stable subspace corresponding to $\cL^{ss}$. We will consider two cases. The first easier case is that $A$ has a real eigenvalue along strong stable subspace. In this case $\cL^{ss}$ admits a 1-dimensional invariant foliation $\cL\subset \cL^{ss}$ corresponding to this eigenvalue. In the second case, there are no real eigenvalues along $\cL^{ss}$, and we have a 2-dimensional invariant sub-foliation $\cL\subset \cL^{ss}$ which corresponds to a pair of complex conjugate eigenvalues. Note that in both cases we know that $\cL$ is a completely irrational minimal foliation since it is an invariant foliation of an irreducible automorphism $A$. In both cases we will, in fact, prove quantitative denseness of  $\cL$. Clearly, because $\cL(x', R)\subset \cL^{ss}(x',R)$ quantitative denseness of $\cL^{ss}$ follows immediately.
\\

\noindent{\bf Case 1.} There exists a 1-dimensional $A$-invariant foliation $\cL\subset\cL^{ss}$.

Recall that we would like to show that a segment of a leaf of $\cL$ of length $2R$ is $CR^{-\theta'}$-dense in $\TT^d$. We can discretize this problem by considering the codimension one transversal $\TT^{d-1}\subset\TT^d$ given by $\TT^{d-1}=\{(x_1, x_2, \ldots x_{d-1}, 0)\in\TT^d\}$. Then the holonomy of $\cL$ induces an irrational translation $T_{\bar \alpha}\colon\TT^{d-1}\to\TT^{d-1}$ given by $ y\mapsto y+\bar \alpha$, where $\bar\alpha\in\TT^{d-1}$ is a Diophantine vector, because it's coordinates are given by the eigenvector coordinates. Now we can quote the following result about discrepancy of the Diophantine translation on tori~\cite[Theorem 1.80]{DT}.
\begin{claim} 
\label{claim_dis}
Let $T_{\bar \alpha}\colon\TT^{d-1}\to\TT^{d-1}$ be translation on the torus with $\bar\alpha$ Diophantine (badly approximable). Then the following upper bound on discrepancy of finite length orbits holds
$$
D(\{y, T_{\bar \alpha}y, T_{\bar \alpha}^2y,\ldots T_{\bar \alpha}^{N-1}y\})\le \frac{C\log^2N}{N^\frac{1}{d-1}}
$$
\end{claim}
Recall that discrepancy measures the maximal deviation of the finite orbit from the uniform distribution on $\TT^{d-1}$ over all balls. In particular, the discrepancy gives an upper bound on the $(d-1)$-volume of the largest ball which is disjoint with the finite orbit. Accordingly, we obtain an upper bound $\Delta(d-1, N)$ on the radius of such ball:
$$
\Delta(d-1, N)=\left(\frac{C\log^2N}{N^\frac{1}{d-1}}\right)^{1/(d-1)}\simeq \frac{C \log^{2/(d-1)}N}{N^\frac{1}{(d-1)^2}}
$$
Hence, any finite orbit sequence $\{y, T_{\bar \alpha}y, T_{\bar \alpha}^2y,\ldots T_{\bar \alpha}^{N-1}y\}$ is $\Delta(d-1, N)$-dense. By taking $\theta'<\frac{1}{(d-1)^2}$ we obtain that any finite orbit of length $N$ is $CN^{-\theta'}$-dense, which completes the proof in the Case~1.
\\

\noindent{\bf Case 2.} There exist a 2-dimensional $A$-invariant foliation $\cL\subset\cL^{ss}$.
The proof in this case follows the same scheme and is based on the same bound on discrepancy, however it is somewhat more technical since we need to work with two dimensional foliation $\cL$. Accordingly we fix a codimension two transversal $\TT^{d-2}\subset\TT^d$ given by $\TT^{d-2}=\{(x_1, x_2, \ldots x_{d-2}, 0, 0)\in\TT^d\}$. Then the foliation $\cL$ induces a $\ZZ^2$-action on $\TT^{d-2}$ whose generators are translations $T_{\bar\alpha}$ and $T_{\bar\beta}$. 

Because $\cL$ is minimal the full $\ZZ^2$-action is also minimal. However, while the coordinates of $\bar\alpha$ and $\bar\beta$ are still algebraic, it could happen that the coordinates of $\bar \alpha$ (or $\bar\beta$ or both) are rationally dependent and, accordingly, the orbits of $T_{\bar \alpha}$ (or $T_{\bar\beta}$ or both) are not dense. If either $T_{\bar \alpha}$ or $T_{\bar \beta}$ is minimal then we can finish the proof in exactly the same way as in Case~1. Otherwise, we can denote by $\TT_{\bar \alpha}$ and $\TT_{\bar \beta}$ the orbit closures $\overline{\{T^n_{\bar\alpha}(y); n\in\ZZ\}}$ and $\overline{\{T^n_{\bar\beta}(y); n\in\ZZ\}}$, correspondingly. These are tori of some positive dimension $<d-2$. Because the joint action is minimal we also have that $\dim \TT_{\bar \alpha}+\dim \TT_{\bar \beta}\ge d-2$ and  $\TT_{\bar \alpha}$ is transverse to $\TT_{\bar \beta}$ inside $\TT^{d-2}$.

Restricting the dynamics of $T_{\bar \alpha}$ to $\TT_{\bar \alpha}$ eliminates all rational dependencies and, hence, this restriction is a Diophantine completely irrational translation on $\TT_{\bar \alpha}$. Therefore, using the Claim~\ref{claim_dis} and arguing in the same way as in Case~1 we have that the finite orbit $\{y, T_{\bar \alpha}y, T_{\bar \alpha}^2y,\ldots T_{\bar \alpha}^{N-1}y\}$ is 
$\Delta(\dim \TT_{\bar \alpha}, N)$-dense inside $\TT_{\bar \alpha}$. By the same token we have that
the finite orbit $\{y, T_{\bar \beta}y, T_{\bar \beta}^2y,\ldots T_{\bar \alpha}^{N-1}y\}$ is 
$\Delta(\dim \TT_{\bar \beta}, N)$-dense inside $\TT_{\bar \beta}$. Now, since we have that $\TT_{\bar \alpha}$ and $\TT_{\bar \beta}$
``span'' $\TT^{d-2}$ we have that the $\ZZ^2$-finite orbit $\{T_{\bar\alpha}^i T_{\bar\beta}^jy: 0\le i,j\le N-1\}$ is
$\max\{\Delta(\dim \TT_{\bar \alpha}, N),\Delta(\dim \TT_{\bar \beta}, N)\}$-dense in $\TT^{d-2}$, which implies that it is $CN^{-\theta'}$-dense for appropriately small $\theta'>0$. Finally, the quantitative denseness of disks in $\cL$ follows from $CN^{-\theta'}$-denseness of the finite $\ZZ^2$-orbits in the standard way.
\end{proof}

\section{Local Lyapunov exponents rigidity}

In this section, we prove Theorem \ref{thm:local}. Let $A\in{\rm GL}(d,\ZZ)$ be a generic automorphism. Recall that this means the following.
\begin{enumerate}
	\item $A$ is hyperbolic, i.e. the spectrum of $A$ is disjoint from the unit circle in $\CC$;
	\item $A$ is irreducible, i.e. its characteristic polynomial is irreducible over $\QQ$;
	\item no three eigenvalues of $A$ have the same absolute value, and if two eigenvalues of $A$ have the same absolute value then they are a pair of
	complex conjugate eigenvalues.
\end{enumerate}

Assume $A$ has the following finest dominated splitting on $\TT^d$:
$$
T\TT^d=L^s_1\oplus\cdots\oplus L^s_k\oplus L^u_1\oplus\cdots\oplus L^u_l.
$$
By the above conditions ${\rm dim}L^{s/u}_i\leq 2$ for every $i$. Moreover, if ${\rm dim}L^{s/u}_i=2$, then $A$ has complex eigenvalues in the associated eigenspace. We further assume $k\geq 2$.

Every $f\in{\rm Diff}^{1+\alpha}(\TT^d)$, which is $C^1$-close to $A$,  is also Anosov and admits the  dominated splitting 
$$
T\TT^d=E^s_1\oplus\cdots\oplus E^s_k\oplus E^u_1\oplus\cdots\oplus E^u_l
$$
with dimensions which match the dimensions of the splitting for $A$. Then the following properties of $f$ are well-known:
\begin{enumerate}
	\item For every $i=1,\cdots,k-1$ and $j=1,\cdots,l-1$, 
	$$
	\max_{x\in\TT^d}\|Df|_{E^s_i(x)}\|~<~\min_{x\in\TT^d}m\big(Df|_{E^s_{i+1}(x)}\big),
	\qquad {\rm and} \qquad
	\max_{x\in\TT^d}\|Df|_{E^u_j(x)}\|~<~\min_{x\in\TT^d}m\big(Df|_{E^u_{j+1}(x)}\big).
	$$
	\item $f$ has a fixed point $p_f$ close to $0$ and if ${\rm dim}E^{s/u}_i=2$, then the derivative $\big(Df|_{E^{s/u}_i(p_f)}\big)$ has complex conjugate eigenvalues.
	\item 
	From the invariant manifold theory of Hirsch-Pugh-Shub \cite{HPS}, for every $i=1,\cdots,k-1$, the $Df$-invariant bundles
	$$
	E^s_{(i,k)}=E^s_i\oplus E^s_{i+1}\oplus\cdots\oplus E^s_k
	$$
	is uniquely integrable. We denote by $\cF^s_{(i,k)}$ the invariant foliations tangent to $E^s_{(i,k)}$.
	Since the bundle $E^s_{(1,j)}=E^s_i\oplus E^s_1\oplus\cdots\oplus E^s_j$ is also uniquely integrable for every $j=1,\cdots,k$, the bundle $E^s_{(i,j)}=E^s_i\oplus\cdots\oplus E^s_j$ is also uniquely integrable to a foliation $\cF^s_{(i,j)}$ for every $1\leq i\leq\cdots\leq j\leq k$. 
	We denote $\cF^s_i=\cF^s_{(i,i)}$. The same holds for the unstable bundle of $f$.
\end{enumerate}

\begin{lemma}\label{lem:i-linear}
	For every $f$ which is $C^1$ to $A$ and $1\leq i<k$, the bundle $E^s_i\oplus E^u$ is integrable if and only if $E^s_{(1,i)}\oplus E^u$ is integrable. In both cases, for every $i<j\leq k$, we have
	$$
	h\big(\cF^s_i\big)=\cL^s_i,\qquad 
	h\big(\cF^s_{(1,i)}\big)=\cL^s_{(1,i)}, \qquad {\it and} \qquad
	h\big(\cF^s_j\big)=\cL^s_j.
	$$
	Here $\cL^s_m$ and $\cL^s_{(1,m)}$ are linear foliations tangent to $L^s_m$ and $L^s_{(1,m)}=L^s_1\oplus\cdots\oplus L^s_m$, respectively.
\end{lemma}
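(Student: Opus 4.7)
The plan is to reduce the lemma to iterated applications of Theorem \ref{thm:conjugate} and of Lemma \ref{lem:linear-foli}, exploiting the closeness of $f$ to the generic automorphism $A$ and the irreducibility of $A$.

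Assume $E^s_{(1,i)}\oplus E^u$ is integrable. First I apply Theorem \ref{thm:conjugate} with grouping $(E^{ss},E^{ws},E^u)=(E^s_{(1,i)},E^s_{(i+1,k)},E^u)$; absolute partial hyperbolicity comes from the uniform domination in property~1. The theorem yields a dominated $A$-invariant splitting of matching dimensions, and because $A$ admits the finest dominated splitting, these summands must coincide with $L^s_{(1,i)}$ and $L^s_{(i+1,k)}$. So $h(\cF^s_{(1,i)})=\cL^s_{(1,i)}$ and $h(\cF^s_{(i+1,k)})=\cL^s_{(i+1,k)}$. I then extend the partial-sum matching to all $j\geq i$ by induction on $j$: since $\cF^s_{(1,j)}$ is HPS-sub-foliated by $\cF^s_{(1,j-1)}$, whose $h$-image is the minimal linear foliation $\cL^s_{(1,j-1)}$ (minimality coming from irreducibility of $A$), Lemma \ref{lem:linear-foli} forces $h(\cF^s_{(1,j)})$ to be a minimal linear foliation. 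Being $A$-invariant, contained in $\cL^s$, and $C^0$-close to $\cL^s_{(1,j)}$ (by $C^1$-closeness of $f$ to $A$), it must equal $\cL^s_{(1,j)}$.

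With $h(\cF^s_{(1,j)})=\cL^s_{(1,j)}$ for all $j\geq i$ in hand, I propagate joint integrability of $E^s_{(1,j)}\oplus E^u$ upward from $j=i$ (see the final paragraph) and apply Theorem \ref{thm:conjugate} again with grouping $(E^s_{(1,j)},E^s_{(j+1,k)},E^u)$ at each $j$ to obtain $h(\cF^s_{(j+1,k)})=\cL^s_{(j+1,k)}$. Intersecting matched foliations then gives $h(\cF^s_j)=h(\cF^s_{(1,j)}\cap\cF^s_{(j,k)})=\cL^s_{(1,j)}\cap\cL^s_{(j,k)}=\cL^s_j$ for every $i<j\leq k$, and the equality $h(\cF^s_i)=\cL^s_i$ follows from the initial application at level $i$ by the same intersection. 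For the reverse direction ($E^s_i\oplus E^u$ integrable $\Rightarrow$ $E^s_{(1,i)}\oplus E^u$ integrable), the same Lemma \ref{lem:linear-foli}-plus-closeness argument first yields $h(\cF^{i,u})=\cL^{i,u}$ (the linear foliation tangent to $L^s_i\oplus L^u$); the HPS-sub-foliation $\cF^s_i=\cF^{i,u}\cap\cF^s$ is then matched to $\cL^s_i$, which as a minimal linear sub-foliation of $\cF^s_{(1,i)}$ forces $h(\cF^s_{(1,i)})=\cL^s_{(1,i)}$, after which the saturation argument below produces the foliation tangent to $E^s_{(1,i)}\oplus E^u$.

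The principal obstacle is establishing joint integrability of $E^s_{(1,j)}\oplus E^u$ at each new level. The candidate foliation is $h^{-1}(\cL^s_{(1,j)}\oplus\cL^u)$, a continuous foliation containing both the already-$C^1$ foliation tangent to $E^s_{(1,j-1)}\oplus E^u$ (or, in the reverse direction, $\cF^{i,u}$) and the HPS-foliation $\cF^s_j$ (or $\cF^s_{(1,i-1)}$). Showing that this candidate is a $C^1$-foliation tangent to the anticipated bundle requires verifying the local product structure of its two transverse pieces inside each candidate leaf. I expect this to follow by combining HPS-theory applied within the center-unstable leaves furnished by Brin's Lemma (Lemma \ref{lem:Brin}, using that $\cF^s_{(1,j)}$ and $\cF^u$ are quasi-isometric by Lemma \ref{lem:quasi-isometric}) with the linear-model structure transferred via $h$.
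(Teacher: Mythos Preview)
Your approach is essentially correct but takes a more circuitous route than the paper. The paper avoids your whole ``propagate joint integrability upward and re-apply Theorem~\ref{thm:conjugate}'' scheme by invoking a known fact for local perturbations (Lemma~2.1 of~\cite{GKS}): the conjugacy automatically matches the \emph{tail} foliations, $h(\cF^s_{(j,k)})=\cL^s_{(j,k)}$ for every $j$, with no integrability hypothesis. With that in hand, the initial-segment matching $h(\cF^s_{(1,j)})=\cL^s_{(1,j)}$ follows from the sub-foliation Lemma~\ref{lem:linear-foli} plus the algebraic identification of $A$-invariant linear subfoliations of $\cL^s$ (uniqueness given transversality to $\cL^s_{(j+1,k)}$), and $h(\cF^s_j)=\cL^s_j$ drops out by intersecting the two matched families. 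No repeated use of Theorem~\ref{thm:conjugate} and no $C^0$-closeness argument is needed; the identification is purely algebraic.

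Your ``principal obstacle'' is also set up in a way that creates a genuine gap. To argue that the candidate $h^{-1}(\cL^s_{(1,j)}\oplus\cL^u)$ has $C^1$ leaves tangent to $E^s_{(1,j)}\oplus E^u$, you propose using the transverse pair $(\cF^s_{(1,j-1)}\oplus\cF^u,\,\cF^s_j)$ inside each candidate leaf; but you have not yet shown that $\cF^s_j$ sub-foliates the candidate (this would require $h(\cF^s_j)=\cL^s_j$, which you are still trying to prove). The correct and much simpler pair is $(\cF^s_{(1,j)},\,\cF^u)$: both are already known to sub-foliate the candidate since you have established $h(\cF^s_{(1,j)})=\cL^s_{(1,j)}$ and $h(\cF^u)=\cL^u$, they are transverse $C^1$ foliations of complementary dimension in each candidate leaf, and the standard local product structure then gives the $C^1$ tangency. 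This is exactly how the paper handles the integrability conclusion in one line.
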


\begin{proof}
	Assume that $E^s_i\oplus E^u$ is integrable and let $\cF^s_i\oplus\cF^u$ be the foliation tangent to $E^s_i\oplus E^u$. Since $h\big(\cF^s_i\oplus\cF^u\big)$ is sub-foliated by $h(\cF^u)$ which is a totally irrational linear foliation, Lemma \ref{lem:linear-foli} implies that $h\big(\cF^s_i\oplus\cF^u\big)$ is a linear foliation. According to Lemma 2.1 of \cite{GKS}, the conjugacy $h$ satisfies $h\big(\cF^s_{(j,k)}\big)=\cL^s_{(j,k)}$ for every $j=1,\cdots,k$. So we have
	$$
	h\big(\cF^s_i\big)~=~h\big(\cF^s_{(i,k)}\big)\cap h\big(\cF^s_i\oplus\cF^u\big)
	$$
	which is the intersection of two $A$-invariant linear foliations. Thus $h(\cF^s_i)$ is an $A$-invariant linear foliation. Moreover, since
	$$
	h\big(\cF^s_i\big)\subset h\big(\cF^s_{(i,k)}\big)=\cL^s_{(i,k)}
	\qquad {\rm and} \qquad
	h\big(\cF^s_{(i+1,k)}\big)=\cL^s_{(i+1,k)},
	$$
	we  have that 
	$h\big(\cF^s_i\big)=\cL^s_i$ which is the only $A$-invariant linear foliation in $\cL^s_{(i,k)}$ which is not contained in $\cL^s_{(i+1,k)}$. 
	
	Since $A$ is irreducible, the linear foliation $h\big(\cF^s_i\big)=\cL^s_i$ is minimal. The foliation $h\big(\cF^s_{(1,i)}\big)$ is sub-foliated by $h\big(\cF^s_i\big)=\cL^s_i$. So $h\big(\cF^s_{(1,i)}\big)$ is an $A$-invariant linear foliation. Moreover,
	$$
	h\big(\cF^s_{(1,i)}\big)\subset h\big(\cF^s\big)=\cL^s
	\qquad {\rm and} \qquad
	h\big(\cF^s_{(i+1,k)}\big)=\cL^s_{(i+1,k)}
	$$
	implies that $h\big(\cF^s_{(1,i)}\big)=\cL^s_{(1,i)}$, because $\cL^s_{(1,i)}$ is the only $A$-invariant linear sub-foliation of $\cL^s$ transverse to $\cL^s_{(i+1,k)}$.
	Finally, the fact that $h\big(\cF^s_{(1,i)}\big)$ and $h\big(\cF^u\big)$ are jointly integrable implies that $\cF^s_{(1,i)}$ and $\cF^u$ are jointly integrable, and their integral foliation $\cF^s_{(1,i)}\oplus\cF^u$ is tangent to $E^s_{(1,i)}\oplus E^u$.
	
	Conversely, if $E^s_{(1,i)}\oplus E^u$ is integrable then the proof is similar. The $h$-image of the integral foliation $\cF^s_{(1,i)}\oplus\cF^u$ is linear, thus 
	$$
	h\big(\cF^s_{(1,i)}\big)=h\big(\cF^s\big)\cap h\big(\cF^s_{(1,i)}\oplus\cF^u\big)
	$$
	is also linear and $A$-invariant. Therefore 
	$h\big(\cF^s_{(1,i)}\big)=\cL^s_{(1,i)}$ and
	$$
	h\big(\cF^s_i\big)~=~h\big(\cF^s_{(1,i)}\big)\cap h\big(\cF^s_{(i,k)}\big)
	~=~\cL^s_{(1,i)}\cap\cL^s_{(i,k)}~=~\cL^s_i.
	$$
	We can conclude that $\cF^s_i$ and $\cF^u$ are jointly integrable to the foliation $\cF^s_i\oplus\cF^u$ which is tangent to $E^s_i\oplus E^u$.
	
	Finally, since the $A$-invariant foliation $h\big(\cF^s_{(1,i+1)}\big)$ is sub-foliated by $h\big(\cF^s_{(1,i)}\big)=\cL^s_{(1,i)}$ and transverse to $h\big(\cF^s_{(i+2,k)}\big)=\cL^s_{(i+2,k)}$ inside $h\big(\cF^s\big)=\cL^s$, we must have $h\big(\cF^s_{(1,i+1)}\big)=\cL^s_{(1,i+1)}$. Thus
	$$
	h\big(\cF^s_{i+1}\big)~=~h\big(\cF^s_{(1,i+1)}\cap\cF^s_{(i+1,k)}\big)
	~=~\cL^s_{(1,i+1)}\cap\cL^s_{(i+1,k)}~=~\cL^s_{i+1}.
	$$
	Proceeding inductively we have $h\big(\cF^s_j\big)=\cL^s_j$ for all $j\in[i+1, k]$.
\end{proof}

The following proposition is a strengthened version of Proposition 4.1 in \cite{GS}.
We also need this proposition in next section. 

\begin{proposition}\label{prop:smallest-Lya-expo}
	Let $g\in{\rm Diff}^{1+\alpha}(\TT^d)$ be an Anosov diffeomorphism which admits an absolutely partially hyperbolic splitting $T\TT^d=E^{ss}\oplus E^{ws}\oplus E^u$:
	$$
	\|Dg|_{E^{ss}(x)}\|<\mu<m(Dg|_{E^{ws}(x)})\leq
	\|Dg|_{E^{ws}(x)}\|<1<\|Dg|_{E^u(x)}\|.
	$$
	Assume $E^{ss}\oplus E^u$ is integrable.
	
	For every periodic point $p\in{\rm Per}(g)$, let $\lambda_0(p)\in(\log\mu,0)$ be the smallest Lyapunov exponent of $p$ inside $E^{ws}(p)$, then there exists $\lambda_0\in\big(\log\mu,0\big)$, such that
	$$
	\lambda_0(p)\equiv\lambda_0, \qquad
	\forall p\in{\rm Per}(g).
	$$
\end{proposition}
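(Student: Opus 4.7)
The plan is to extend the cocycle/conjugation argument of Gan--Shi~\cite{GS}, which handles the one-dimensional weak stable case in dimension three, by isolating the smallest-modulus eigenvalue of $Dg|_{E^{ws}}$ at each periodic point via a Pesin-type invariant submanifold. Fix $p,q\in\mathrm{Per}(g)$ with common period $N$, and replace $g$ by $g^{N}$ so that $p$ and $q$ become fixed points; the Lyapunov exponents are merely rescaled and the claim becomes $\lambda_{0}(p)=\lambda_{0}(q)$. Applying the Hadamard--Perron theorem to the smooth contraction $g|_{\mathcal F^{ws}(p)}$ at the fixed point $p$, the generalized eigenspace of $Dg(p)|_{E^{ws}(p)}$ associated to the eigenvalue of smallest modulus integrates to a $g$-invariant $C^{1+\alpha}$-submanifold $W^{0}(p)\subset\mathcal F^{ws}(p)$ on which orbits converge to $p$ at the precise rate $e^{m\lambda_{0}(p)}$. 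A symmetric construction yields $W^{0}(q)\subset\mathcal F^{ws}(q)$ with rate $e^{m\lambda_{0}(q)}$.

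Next, I would apply Lemma~\ref{lem:Diophantine} with $x=p$ and $y=q$ to produce sequences $z_{n}\in\mathcal F^{ss}(p)$ and $w_{n}\in\mathcal F^{u}(z_{n})\cap\mathcal F^{ws}(q)$ with
\[
d_{\mathcal F^{ws}}(w_{n},q)\le K_{n}^{-\theta}, \qquad d_{\mathcal F^{u}}(w_{n},z_{n})\le K_{n}^{-\theta}, \qquad K_{n}:=d_{\mathcal F^{ss}}(p,z_{n})\to\infty.
\]
By construction $w_{n}\in\mathcal F^{su}(p)\cap\mathcal F^{ws}(q)$, so the $su$-holonomy $H_{n}:=\mathcal H^{su}_{p,w_{n}}\colon\mathcal F^{ws}(p)\to\mathcal F^{ws}(q)$ is well-defined with $H_{n}(p)=w_{n}$, and the $g$-invariance of $\mathcal F^{su}$ yields the equivariance
\[
g^{m}\circ H_{n}=\mathcal H^{su}_{p,g^{m}(w_{n})}\circ g^{m}|_{\mathcal F^{ws}(p)}, \qquad m\ge 0.
\]
Since $W^{0}(p)$ is $g$-invariant, applying this to a small ball $B_{r}\subset W^{0}(p)$ around $p$ shows that $g^{m}(H_{n}(B_{r}))\subset\mathcal F^{ws}(q)$ is the $su$-image of $g^{m}(B_{r})\subset W^{0}(p)$, whose intrinsic diameter is $\asymp r e^{m\lambda_{0}(p)}$. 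Comparing this with the a~priori lower bound $e^{m\lambda_{0}(q)}$ on the fastest possible contraction rate of any subset of $\mathcal F^{ws}(q)$ near $q$ under $g^{m}$, and exploiting the H\"older continuity of $\mathcal H^{su}$ together with $g^{m}(w_{n})\to q$, I expect to conclude $\lambda_{0}(p)\ge\lambda_{0}(q)$. Exchanging the roles of $p$ and $q$ gives the reverse inequality, hence the desired equality.

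The main technical obstacle is that, without the center bunching hypothesis of Lemma~\ref{lem:C1-su}, the $su$-holonomy $H_{n}$ is only H\"older continuous, with H\"older constant that a~priori blows up exponentially in the intrinsic $\mathcal F^{su}$-length of the path from $p$ to $w_{n}$, an amount of order $K_{n}$. Controlling this will be the heart of the argument: by iterating $g^{k}$ forward with $k$ of order $\log K_{n}$, the long $\mathcal F^{ss}$-portion of the path from $p$ to $w_n$ is compressed to bounded length while the short $\mathcal F^{u}$-portion and the Diophantine error $d(w_n,q)$ remain small. This reduces the comparison to a $su$-holonomy between transversals of bounded geometry, carrying uniform H\"older constants, after which the diameter comparison closes the argument. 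This is precisely the step where the quantitative Diophantine bound $K_{n}^{-\theta}$ of Lemma~\ref{lem:Diophantine}, rather than mere density of $\mathcal F^{ss}$, becomes essential.
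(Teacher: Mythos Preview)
Your outline has the right skeleton --- Pesin submanifold for the smallest exponent, the Diophantine approximation of Lemma~\ref{lem:Diophantine}, and a forward-iteration scheme to gain control --- but there is a genuine gap in the last step, and the paper's proof closes it with two ingredients you are missing.

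First, the iteration you sketch does not work as stated. You want to choose $k\asymp\log K_n$ so that the $\cF^{ss}$-segment from $p$ to $z_n$ is contracted to bounded length \emph{and} the $\cF^u$-segment from $z_n$ to $w_n$ (of length $\le K_n^{-\theta}$) stays bounded. The first requires $k\gtrsim \log K_n/\bigl|\log\max_x\|Dg|_{E^{ss}}\|\bigr|$, the second $k\lesssim \theta\log K_n/\log\max_x\|Dg|_{E^u}\|$. Since the exponent $\theta$ produced by Lemma~\ref{lem:Diophantine} is small (it comes from discrepancy bounds and satisfies $\theta<1/(d-1)^2$), these two ranges are typically disjoint. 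The paper confronts exactly this obstruction by splitting into two cases according to whether $k_n\ge m_n$ or $k_n<m_n$; your proposal only covers the first.

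Second, and this is what makes Case~2 tractable, the paper does \emph{not} compare an arbitrary pair $p,q$. It first replaces $p$ by a periodic point whose $\lambda_0(p)$ is within $\delta/2$ of the infimum over all ergodic measures (via~\cite{WS}), and then passes to an adapted metric so that $\log\min_x m(Dg|_{E^{ws}(x)})\ge \lambda_0(p)-\delta$. This global lower bound is what controls the $\cF^{ws}$-distance during the time interval $[k_n,m_n]$ when the orbit is no longer close to $q$; without it the comparison collapses. Your symmetric plan (``exchange $p$ and $q$'') cannot exploit this.

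Finally, a structural point: you work with the full $su$-holonomy $\mathcal H^{su}_{p,w_n}$, which is only H\"older in the absence of center bunching, and this forces you into the regularity discussion. The paper instead uses only the strong stable holonomy $\mathcal H^{ss}_{p,z_n}\colon\cF^{ws}(p)\to\cF^{ws}(z_n)$ \emph{inside the stable leaf} $\cF^s(p)$. Because $\|Dg|_{E^{ss}}\|\cdot\|Dg|_{E^{ws}}\|<\mu<m(Dg|_{E^{ws}})$ follows from absolute partial hyperbolicity alone, this holonomy is uniformly $C^1$ by~\cite{PSW,Bro}. The contradiction is then obtained by showing that the ratio $d_{\cF^{ws}}(g^{m_n\kappa}x_n,g^{m_n\kappa}z_n)/d_{\cF^{ws}}(g^{m_n\kappa}x,p)\to\infty$, which is incompatible with a $C^1$ holonomy over a bounded $\cF^{ss}$-distance. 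If you reroute your argument through $\mathcal H^{ss}$ rather than $\mathcal H^{su}$, the H\"older-constant issue disappears entirely --- but you will still need the minimizing choice of $p$ and the two-case analysis to finish.
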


\begin{proof}
	Theorem~\ref{thm:conjugate} shows that $g$ is dynamically coherent with invariant foliations $\cF^{\sigma}$ tangent to $E^{\sigma}$ for $\sigma={\it ss,ws,u}$. The linear part $A$ of $g$ also admits partially hyperbolic splitting $T\TT^d=L^{ss}\oplus L^{ws}\oplus L^u$ with same dimensions to the spiltting of $g$. Moreover, if we let $h_g:\TT^d\rightarrow\TT^d$ be the topological conjugacy $h_g\circ g=A\circ h_g$, then $h_g$ is H\"older continuous and preserves all invariant foliations:
	$$
	h_g(\cF^{\sigma})=\cL^{\sigma}, \qquad \sigma={\it ss,ws,u}.
	$$
	
	We only need to show that $\lambda_0(p)=\lambda_0(q)$ for every $p, q\in{\rm Per}(g)$. Assume there exist $p_1,q\in{\rm Per}(g)$ such that $\lambda_0(p_1)<\lambda_0(q)$. Let $\theta$ be the constant given by Lemma \ref{lem:Diophantine}. Let
	\begin{align}\label{equ:chi-delta}
	\chi=-\frac{4\cdot\log\big(\max_{x\in\TT^d}\|Dg|_{E^u(x)}\|\big) }{\theta\cdot\log\big(\max_{x\in\TT^d}\|Dg|_{E^{ss}(x)}\|\big)}>0,
	\qquad {\rm and} \qquad
	\delta=\frac{1}{2\chi+2}\big[ \lambda_0(q)-\lambda_0(p_1) \big]>0.
	\end{align}
	
	For every ergodic measure $m$ of $g$, Theorem 1.1 of \cite{WS} gives that  the smallest Lyapunov exponent $\lambda_0(m)$ of $m$ in the $Dg$-invariant bundle $E^{ws}$ can be approximated by the smallest Lyapunov exponent of periodic orbits in $E^{ws}$. So for $\delta>0$, there exists $p\in{\rm Per}(g)$, such that 
	$$
	\lambda_0(p)~\leq~
	\inf\big\{\lambda_0(m):~m~{\rm is~an~ergodic~measure~of~}g~\big\}+\delta/2.
	$$
	Here we can take $p=p_1$ if $\lambda_0(p_1)<\lambda_0(p)$. By taking the adapted metric, we can assume 
	\begin{align}\label{equ:0-p}
	\lambda_0(p)~\leq~
	\min\left\{\lambda_0(p_1),
	~\log\big[ \min_{x\in\TT^d}m\big(Dg|_{E^{ws}(x)}\big)\big]+\delta
	\right\}
	~\leq~\lambda_0(q)-(2\chi+2)\delta.
	\end{align}
	
	\vskip3mm
	
	Let $\kappa\in\NN$ be the minimal common period of $p,q$, i.e. the smallest integer satisfies $g^{\kappa}(p)=p$ and $g^{\kappa}(q)=q$. Since $\lambda_0(q)$ is the smallest Lyapunov exponent of $q$ in $E^{ws}$, there exists $\eta>0$, such that for every $x$ if it satisfies $d(x,q)\leq\eta$, then
	\begin{align}\label{equ:q}
	m\big(Dg^{\kappa}|_{E^{ws}(x)}\big) ~\geq~ 
	\exp\big[\kappa\left(\lambda_0(q)-\delta\right)\big].
	\end{align}
	
	Let $\cF_0(p)\subseteq\cF^{ws}(p)$ be the Pesin stable manifold of $p$ associated to the Lyapunov exponent $\lambda_0(p)$ in $\cF^{ws}(p)$, that is, $x\in\cF_0(p)$ if and only if for every $\epsilon>0$,
	$$
	\lim_{n\rightarrow+\infty}\frac{1}{n}\log d\big(g^n(x),g^n(y)\big)
	<\lambda_0+\epsilon.
	$$
	Since $\|Dg|_{E^{ws}}\|$ is uniformly contracting, we have the following claim.
	
	\begin{claim}\label{clm:eta1}
		There exists a constant $\eta_1\in(0,\eta/10)$ satisfying the following properties:
		\begin{itemize}
			\item  If $x\in\cF_{0,{\it loc}}(p)\subseteq\cF^{ws}_{\it loc}(p)$ and $d_{\cF^{ws}}(p,x)\leq\eta_1$, then
			$$
			d_{\cF^{ws}}\big(g^{n\kappa}(p),g^{n\kappa}(x)\big)
			~\leq~
			\exp\big[n\kappa\cdot\big(\lambda_0(p)+\delta\big)\big]
			\cdot d_{\cF^{ws}}(p,x),
			\qquad \forall n\geq0.
			$$
			\item If $y\in\cF^{ws}(x)$ and $d_{\cF^{ws}}(x,y)\leq\eta_1$, then
			$$
			d\big(g^j(x),g^j(y)\big)\leq\frac{\eta}{4}, \qquad\forall j\geq0.
			$$
		\end{itemize}
	\end{claim}

	Applying Lemma \ref{lem:Diophantine} to $p$ and $q$ we have that there exist $z_n\in\cF^{ss}(p)$ and $w_n\in\cF^u(z_n)\cap\cF^{ws}(q)$, such that 
	$$
	d_{\cF^u}(w_n,z_n)~\leq~C\cdot L_n^{-\theta},
	\qquad {\it and} \qquad
	d_{\cF^{ws}}(w_n,q)~\leq~C\cdot L_n^{-\theta},
	$$ 
	where $L_n=d_{\cF^{ss}}(p,z_n)\rightarrow+\infty$ as $n\rightarrow\infty$. For $n$ large enough, we can assume that $d_{\cF^{ws}}(w_n,q)<\eta_1$.
	
	\begin{claim}\label{clm:N}
		Let $\eta_1$ be the constant from Claim~\ref{clm:eta1}. There exist a point $x\in\cF_{0,{\it loc}}(p)\subseteq\cF^{ws}_{\it loc}(p)$ and a constant $K>1$, such that if   
		$x_n$ is the unique intersecting point of $\cF^{ss}(x)$ and $\cF^{ws}(z_n)$:
		$$
		x_n~\in~\cF^{ss}(x)\cap\cF^{ws}(z_n)
		~\subset~\cF^s(p),
		$$
		then
		$$
		d_{\cF^{ws}}(x,p)\leq\eta_1, \qquad {\it and} \qquad
		\frac{1}{K}d_{\cF^{ws}}(x,p) \leq d_{\cF^{ws}}(x_n,z_n)\leq\eta_1.
		$$
	\end{claim}
	
	\begin{proof}[Proof of the Claim]
		Since $\cF^{ss}$ and $\cF^{ws}$ have global product structure inside the stable leaf $\cF^s(p)$, we have that $\cF^{ss}(x)$ intersects $\cF^{ws}(z_n)$ at a unique point $x_n$. Moreover, the conjugacy $h_g$ preserves the strong stable foliation $h_g(\cF^{ss})=\cL^{ss}$ which is a linear foliation. This implies that for any $x\in\cF_0(p)\subseteq\cF^{ws}(p)$ and corresponding $x_n\in\cF^{ss}(x)\cap\cF^{ws}(z_n)$ with $z_n\in\cF^{ss}(p)$, must satisfy
		$$
		d_{\cL^{ws}}\big(h_g(x),h_g(p)\big)~=~
		d_{\cL^{ws}}\big(h_g(x_n),h_g(z_n)\big),
		\qquad\forall n\in\NN.
		$$
		
		Since both $h_g$ and $h_g^{-1}$ are uniformly continuous on $\TT^d$, we can take $x\in\cF_{0,{\it loc}}(p)$ with $d_{\cF^{ws}}(p,x)<\eta_1$ small enough, such that for every pair of points $r_1,r_2\in\TT^d$ with 
		$$
		r_2\in\cL^{ws}(r_1),
		\qquad {\rm and} \qquad
		d_{\cL^{ws}}(r_1,r_2)=d_{\cL^{ws}}\big(h_g(x),h_g(p)\big),
		$$
		we have
		$$
		\epsilon_0\leq d_{\cF^{ws}}\big(h_g^{-1}(r_1),h_g^{-1}(r_2)\big)\leq\eta_1.
		$$
		Here the constant $\epsilon_0$ is independent of the choice of $r_1,r_2$. Finally, we let $K=d_{\cF^{ws}}(x,p)/\epsilon_0$.
	\end{proof}
	Let $m_n\in\NN$ be the smallest integer satisfying 
	$
	d_{\cF^{ss}}\big(g^{m_n\kappa}(z_n),p\big)\leq1$. Then
	$$
	m_n\leq\frac{\log L_n}{-\kappa\log\big(\max_{x\in\TT^d}\|Dg|_{E^{ss}(x)}\|\big)}+1.
	$$
	Let $k_n$ be the largest integer satisfying 
	$d\big(g^{\kappa j}(w_n),g^{\kappa j}(z_n)\big)\leq\eta/4$ for every $0\leq j\leq k_n-1$.
	Then there exists $N>0$, such that for every $n\geq\NN$,
	$$
	k_n~\geq~
	\frac{\theta\log L_n+\log(\eta/4)}{ \kappa\log\big(\max_{x\in\TT^d}\|Dg|_{E^u(x)}\|\big)}
	~\geq~\frac{\theta\log L_n}{ 2\kappa\log\big(\max_{x\in\TT^d}\|Dg|_{E^u(x)}\|\big)}.
	$$
	Using the definition of $\chi$ in (\ref{equ:chi-delta}), we have
	\begin{align}\label{equ:ratio}
	\frac{k_n}{m_n}~\geq~
	\frac{-\theta\cdot \log\big(\max_{x\in\TT^d}\|Dg|_{E^{ss}(x)}\|\big)}{ 4\cdot\log\big(\max_{x\in\TT^d}\|Dg|_{E^u(x)}\|\big)}
	~=~\frac{1}{\chi}.
	\end{align}
	
	\vskip2mm
	
	\noindent{\bf Case 1.} There exist infinitely many $n$, such that $k_n\geq m_n$.
	
	In this case, we have 
	$$
	d\big(g^{\kappa j}(z_n),q\big)~\leq~ 
	d_{\cF^u}\big(g^{\kappa j}(z_n),g^{\kappa j}(w_n)\big)
	~+~d_{\cF^{ws}}\big(g^{\kappa j}(w_n),q\big)~\leq~
	\frac{\eta}{4}+\frac{\eta}{4}~=~\frac{\eta}{2}
	\qquad \forall 0\leq j\leq m_n.
	$$
	On the other hand, Claim \ref{clm:N} shows 
	$d_{\cF^{ws}}\big(g^j(x_n),g^j(z_n)\big)<\eta/4$ for every $j\geq0$. Thus we have
	\begin{align*}
	\frac{
		d_{\cF^{ws}}\big(g^{m_n\kappa}(x_n),g^{m_n\kappa}(z_n)\big) }{d_{\cF^{ws}}\big(g^{m_n\kappa}(x),p\big)} 
	~&\geq~ \frac{\exp\big[\kappa m_n(\lambda_0(q)-\delta)\big] 
	}{\exp\big[\kappa m_n(\lambda_0(p)+\delta)\big] }\cdot
	\frac{d_{\cF^{ws}}(x_n,z_n)}{d_{\cF^{ws}}(x,p)} \\
	&\geq~\frac{\exp\big[\kappa m_n(\lambda_0(p)+\delta+2\chi\delta)\big]}{
		\exp\big[\kappa m_n(\lambda_0(p)+\delta)\big]}\cdot\frac{1}{K} \\
	&\longrightarrow\infty\qquad(n\rightarrow\infty).
	\end{align*}
	If we denote by $\H^{ss}_{p,g^{m_n\kappa}(z_n)}:\cF^{ws}(p)\rightarrow\cF^{ws}(g^{m_n\kappa}(z_n))$ the holonomy map induced by $\cF^{ss}$ inside the leaf $\cF^s(p)$, then we have
	$$
	\H^{ss}_{p,g^{m_n\kappa}(z_n)}(p)=g^{m_n\kappa}(z_n)
	\qquad {\rm and} \qquad
	\H^{ss}_{p,g^{m_n\kappa}(z_n)}(x)=g^{m_n\kappa}(x_n).
	$$
	But this is absurd. Indeed because $d_{\cF^{ss}}\big(g^{m_n\kappa}(z_n),p\big)\leq1$, this contradicts to the fact that $\cF^{ss}$ is $C^1$-smooth in $\cF^s(p)$ (see e.g., Corollary of \cite[Theorem B]{PSW} and \cite[Theorem 2.2]{Bro}).
	
	\vskip2mm
	
	\noindent{\bf Case 2.} We can now assume $k_n<m_n$ for all sufficiently large $n$.

	In this case, we have $\chi^{-1}\leq k_n/m_n<1$. So for all sufficiently large $n$, the estimation (\ref{equ:q}) and Claim \ref{clm:eta1} imply
	\begin{align*}
	\frac{
		d_{\cF^{ws}}\big(g^{m_n\kappa}(x_n),g^{m_n\kappa}(z_n)\big)} {d_{\cF^{ws}}\big(g^{m_n\kappa}(x),p\big)}
	~&\geq~ 
	\frac{\big[ \min_{x\in\TT^d}m\big(Dg|_{E^{ws}(x)}\big) \big]^{(m_n-k_n)\kappa}\cdot 
		\exp\big[k_n\kappa\big(\lambda_0(q)-\delta\big)\big]
		\cdot d_{\cF^{ws}}(z_n,x_n)}{\exp\big[m_n\kappa\cdot
		\big(\lambda_0(p)+\delta\big)\big]\cdot d_{\cF^{ws}}(p,x)
	} \\
	&\geq~\frac{ \exp\big[(m_n-k_n)\kappa
		\big(\lambda_0(p)-\delta\big)\big] \cdot \exp\big[k_n\kappa\big(\lambda_0(q)-\delta\big)\big]}{ \exp\big[m_n\kappa\cdot
		\big(\lambda_0(p)+\delta\big)\big] }
	\cdot\frac{d_{\cF^{ws}}(z_n,x_n)}{d_{\cF^{ws}}(p,x)} \\
	&\geq~\frac{ 
		\exp\left\{m_n\kappa\cdot\big[[1-\chi^{-1}](\lambda_0(p)-\delta)+ \chi^{-1}(\lambda_0(p)+\delta+2\chi\delta)\big]\right\}}{\exp\big[m_n\kappa\cdot
		\big(\lambda_0(p)+\delta\big)\big] }\cdot\frac{1}{K} \\
	&\geq~\frac{ \exp\big[m_n\kappa\cdot
		\big(\lambda_0(p)+\delta+2\chi^{-1}\delta\big)\big] }{ \exp\big[m_n\kappa\cdot
		\big(\lambda_0(p)+\delta\big)\big]\cdot K  }\\
	&\longrightarrow \infty \qquad (n\rightarrow+\infty).
	\end{align*}
	This is absurd. We finishes the proof of this proposition.
\end{proof}

\begin{corollary}	
	If $E^s_i\oplus E^u$ is integrable, then there exists a constant $\lambda_0\in(\mu,\lambda_0)$, such that the smallest Lyapunov exponent of $f$ inside $E^s_{i+1}(p)$ is equal to $\lambda_0$ for every $p\in{\rm Per}(f)$. 
	
	In particular, if ${\rm dim}E^s_{i+1}=1$, then the Lyapunov exponents of $f$ in  $E^s_{i+1}$ for all ergodic measures are equal to $\lambda_0$.
\end{corollary}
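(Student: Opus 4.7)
The plan is to reduce this corollary directly to Proposition \ref{prop:smallest-Lya-expo} by regrouping the dominated splitting. First I would invoke Lemma \ref{lem:i-linear} to conclude that integrability of $E^s_i\oplus E^u$ is equivalent to integrability of $E^s_{(1,i)}\oplus E^u$. Then, since the dominated splitting provides a uniform gap between $E^s_i$ and $E^s_{i+1}$, I would regard $f$ as absolutely partially hyperbolic with respect to the grouping
$$
T\TT^d=E^{ss}\oplus E^{ws}\oplus E^u,\qquad E^{ss}:=E^s_{(1,i)},\qquad E^{ws}:=E^s_{(i+1,k)},
$$
choosing $\mu\in(0,1)$ in the gap $\max_x\|Df|_{E^s_i(x)}\|<\mu<\min_x m(Df|_{E^s_{i+1}(x)})$; the other inequalities of $(\ast)$ are automatic since $E^{ws}$ is stable and $E^u$ is unstable. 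With this choice, the integrability hypothesis of Proposition \ref{prop:smallest-Lya-expo} is fulfilled.

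Applying Proposition \ref{prop:smallest-Lya-expo} to $g=f$ with this splitting then yields a constant $\lambda_0\in(\log\mu,0)$ such that the smallest Lyapunov exponent of every $p\in{\rm Per}(f)$ inside $E^{ws}(p)=E^s_{(i+1,k)}(p)$ equals $\lambda_0$. Because the decomposition $E^s_{i+1}\oplus\cdots\oplus E^s_k$ is itself dominated, any Lyapunov exponent living in $E^s_{j}(p)$ for $j\geq i+2$ strictly exceeds every exponent living in $E^s_{i+1}(p)$. Hence the smallest exponent of $p$ in $E^s_{(i+1,k)}(p)$ is realized inside $E^s_{i+1}(p)$, which gives the first assertion.

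For the ``In particular'' statement, assume $\dim E^s_{i+1}=1$. Then $\varphi(x):=\log\|Df|_{E^s_{i+1}(x)}\|$ is a continuous function on $\TT^d$, and for every ergodic $f$-invariant measure $m$ the unique Lyapunov exponent in $E^s_{i+1}$ is $\lambda(E^s_{i+1},m)=\int\varphi\,dm$, which by the same domination argument also coincides with the smallest Lyapunov exponent of $m$ in $E^{ws}$. By Sigmund's theorem periodic measures are weak-$*$ dense in the ergodic measures of the Anosov diffeomorphism $f$, so choosing $\mu_{p_n}\to m$ weak-$*$ and using continuity of $\varphi$ gives
$$
\lambda(E^s_{i+1},m)=\lim_{n\to\infty}\int\varphi\,d\mu_{p_n}=\lim_{n\to\infty}\lambda(E^s_{i+1},p_n)=\lambda_0.
$$
(Alternatively, this step follows at once from the periodic approximation result of \cite{WS} already quoted in the proof of Proposition~\ref{prop:smallest-Lya-expo}.)

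There is no real obstacle here; the corollary is essentially a bookkeeping consequence of Proposition \ref{prop:smallest-Lya-expo} combined with Lemma \ref{lem:i-linear}. The only point that merits care is verifying that the regrouped splitting $E^s_{(1,i)}\oplus E^s_{(i+1,k)}\oplus E^u$ genuinely fits the hypotheses of Proposition \ref{prop:smallest-Lya-expo}, i.e.\ the absolute partial hyperbolicity condition $(\ast)$ and the joint integrability $E^{ss}\oplus E^u$, both of which are immediate from the dominated splitting and Lemma \ref{lem:i-linear}.
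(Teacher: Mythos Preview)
Your proposal is correct and follows essentially the same approach as the paper: regroup the splitting as $E^{ss}=E^s_{(1,i)}$, $E^{ws}=E^s_{(i+1,k)}$, use Lemma~\ref{lem:i-linear} to obtain integrability of $E^s_{(1,i)}\oplus E^u$, apply Proposition~\ref{prop:smallest-Lya-expo}, and then observe that by domination the smallest exponent in $E^s_{(i+1,k)}(p)$ lies in $E^s_{i+1}(p)$. Your treatment is simply more explicit than the paper's, in particular your verification of~$(\ast)$ and your justification of the ``In particular'' clause via density of periodic measures, which the paper leaves implicit.
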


\begin{proof}
	We apply Proposition \ref{prop:smallest-Lya-expo} to $f$ and the absolutely partially hyperbolic splitting $T\TT^d=E^s_{(1,i)}\oplus E^s_{(i+1,k)}\oplus E^u$. Lemma \ref{lem:i-linear} says that integrability of $E^s_i\oplus E^u$ implies integrability of $E^s_{(1,i)}\oplus E^u$. Since the smallest Lyapunov exponent of $f$ along $E^s_{(i+1,k)}$ is also the smallest Lyapunov exponent along $E^s_{i+1}(p)$, we obtain the corollary.
\end{proof}

Now we need to deal with the case when ${\rm dim}E^s_{i+1}=2$. Recall that we have assumed that $f$ has a fixed point $p_f$ close to $0$ such that the derivative $\big(Df|_{E^s_{i+1}(p_f)}\big)$ has complex eigenvalues. This implies that
$$
\log\left|{\rm det}\big(Df|_{E^s_{i+1}(p_f)}\big)\right|~=~2\lambda_0.
$$

We need to introduce some notation. For any two points $x',y'\in\TT^d$, we denote by $T_{x',y'}:\TT^d\rightarrow\TT^d$ the linear translation on $\TT^d$:
$$
T_{x',y'}(z')=z'+(y'-x').
$$	
Here $+$ and $-$ refer to the standard $\TT^d$ action on itself.  Then $T_{x',y'}(x')=y'$ and it preserves all linear foliations on $\TT^d$.

\begin{proposition}\label{prop:det}
	If $E^s_i\oplus E^u$ is integrable, then for every $p\in{\rm Per}(f)$ with period $\pi(p)$ we have
	$$
	\frac{1}{\pi(p)}\sum_{k=0}^{\pi(p)-1}
	\log\left|{\rm det}\big(Df|_{E^s_{i+1}(f^k(p))}\big)\right|
	~=~2\lambda_0.
	$$
	This implies that for every $p\in{\rm Per}(f)$, both Lyapunov exponents of $p$ in $E^s_{i+1}$ are equal to $\lambda_0$.
\end{proposition}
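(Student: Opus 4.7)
The plan is to argue by contradiction, closely paralleling the proof of Proposition \ref{prop:smallest-Lya-expo}, but now tracking both Lyapunov exponents of the 2-dimensional bundle $E^s_{i+1}$ via the log-Jacobian function $\varphi(x)=\log|\det(Df|_{E^s_{i+1}(x)})|$. First I would make the reduction: by the preceding corollary applied to the absolutely partially hyperbolic splitting $T\TT^d=E^s_{(1,i)}\oplus E^s_{(i+1,k)}\oplus E^u$ (with $E^s_{(1,i)}\oplus E^u$ integrable by Lemma \ref{lem:i-linear}), the smallest Lyapunov exponent of every $p\in{\rm Per}(f)$ in $E^s_{(i+1,k)}$, and hence in $E^s_{i+1}$, equals $\lambda_0$. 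Writing the two exponents of $p\in{\rm Per}(f)$ in $E^s_{i+1}$ as $\lambda_0\leq\lambda(p)$, the desired identity is equivalent to $\lambda(p)=\lambda_0$. Assume for contradiction that some $p_1$ has $\lambda(p_1)>\lambda_0$. Via the periodic approximation of Lyapunov exponents~\cite{WS} and an adapted metric, fix $p\in{\rm Per}(f)$ with $\lambda(p)\geq\lambda_0+(2\chi+2)\delta$ for constants $\chi,\delta>0$ chosen in analogy with~\eqref{equ:chi-delta}.

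The key reference point is the fixed point $p_f$: the genericity of $A$ together with $C^1$-closeness forces $Df|_{E^s_{i+1}(p_f)}$ to have a pair of complex conjugate eigenvalues, and combined with the previous step each has modulus $e^{\lambda_0}$, so $Df^n|_{E^s_{i+1}(p_f)}$ is conformal up to a bounded factor with norm and conorm both comparable to $e^{n\lambda_0}$. I would then realize the Diophantine ping-pong of Lemma \ref{lem:Diophantine} applied to the pair $(p,p_f)$, obtaining $z_n\in\cF^{ss}(p)$ and $w_n\in\cF^u(z_n)\cap\cF^{ws}(p_f)$ with $L_n=d_{\cF^{ss}}(p,z_n)\to\infty$ and polynomially small $d_{\cF^{ws}}(w_n,p_f)$ and $d_{\cF^u}(w_n,z_n)$. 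Choose integers $m_n,k_n$ exactly as in the proof of Proposition \ref{prop:smallest-Lya-expo}, so that $k_n/m_n\geq 1/\chi$.

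In place of the one-dimensional distance comparison of Proposition \ref{prop:smallest-Lya-expo}, I would transport a unit vector $v\in E^s_{i+1}(p_f)$ aligned with the pullback through the $su$-holonomy of the eventual most-expanded direction of $Df^{m_n\kappa}|_{E^s_{i+1}(p)}$, using the $C^{1+{\rm H\"older}}$ $su$-holonomy on $\cF^{ws}$-leaves (an adaptation of Lemma \ref{lem:C1-su}, valid here by the dominated-splitting bunching for perturbations of generic $A$) to land in $E^s_{i+1}(z_n)$. The vector $Df^{m_n\kappa}v$ at $p_f$ grows conformally like $e^{m_n\kappa\lambda_0}$, while the corresponding transported vector based near $p$ grows like $e^{m_n\kappa\lambda(p)}$. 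Running the same two-case dichotomy as in the proof of Proposition \ref{prop:smallest-Lya-expo} (the case $k_n\geq m_n$ versus $k_n<m_n$) with these two rates, the mismatch $\lambda(p)-\lambda_0>0$ forces the strong stable holonomy inside $\cF^s(p)$ to have unbounded two-dimensional distortion on arbitrarily small scales, contradicting its $C^1$-smoothness (via~\cite{PSW} and \cite{Bro}) exactly as at the end of the proof of Proposition \ref{prop:smallest-Lya-expo}.

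The main obstacle I anticipate is the alignment choice for $v$. Every direction at $p_f$ grows at the uniform rate $\lambda_0$, but at $p$ only one distinguished direction attains $\lambda(p)$; so $v$ must be chosen via the $su$-holonomy pullback of the long axis at $p$, and the $C^{1+{\rm H\"older}}$ regularity of this pullback must be shown to preserve the alignment up to an error that is absorbed by the polynomial decay of $L_n$ from Lemma \ref{lem:Diophantine}. A secondary technical point is to verify the adapted version of Lemma \ref{lem:C1-su} in the present local, non-center-bunched setting, which should follow from the strict dominance of $E^s_{i+1}$ over $E^s_{(1,i)}$ and from $E^s_{i+1}\oplus E^u$ being integrable with $f$ sufficiently $C^1$-close to $A$. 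Once $\lambda(p)=\lambda_0$ for every $p\in{\rm Per}(f)$, the Birkhoff average of $\varphi$ equals $2\lambda_0$ as claimed.
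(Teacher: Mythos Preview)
Your overall plan---contradiction via Diophantine recurrence and the $C^1$-smoothness of the strong-stable holonomy---matches the paper. The substantive difference is the object you track: you propose to follow the length of a single vector in $E^s_{i+1}$, while the paper follows the \emph{area} of a two-dimensional disk $\mathcal D(p)\subset\cF^s_{i+1}(p)$ and its $\cF^s_{(1,i)}$-holonomy image $\mathcal D(z_n)$. Area is multiplied exactly by $|\det Df|_{E^s_{i+1}}|$, so the ratio ${\rm Area}\big(f^{\kappa m_n}\mathcal D(z_n)\big)/{\rm Area}\big(f^{\kappa m_n}\mathcal D(p)\big)$ is controlled purely by Birkhoff sums of $\varphi$ and is completely direction-free. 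This lets the paper prove $\overline\varphi$ is constant over ${\rm Per}(f)$ for an \emph{arbitrary} pair with $\overline\varphi(p)<\overline\varphi(q)$, invoking the conformal fixed point $p_f$ only in the last line to identify the constant as $2\lambda_0$.

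Your vector approach can be salvaged, but the geometry as written is off. The $C^1$ holonomy whose distortion must blow up is the $\cF^s_{(1,i)}$-holonomy from $\cF^s_{i+1}(p)$ to $\cF^s_{i+1}\big(f^{\kappa m_n}(z_n)\big)$; so the comparison must be between a vector $v_p\in E^s_{i+1}(p)$ (in the $\lambda(p)$-direction) and its transport $v_{z_n}$ at $z_n$, not a vector literally based at $p_f$. The role of $p_f$ is only that the orbit of $z_n$ spends its first $k_n$ steps near $p_f$, where conformality forces $\|Df^{k_n}|_{E^s_{i+1}}\|\approx e^{k_n\lambda_0}$ \emph{in every direction}; hence the upper bound $|Df^{k_n}v_{z_n}|\lesssim e^{k_n\lambda_0}$ holds regardless of alignment, and your anticipated alignment obstacle dissolves. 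What you \emph{do} need, and have not stated, is to choose $p$ with $\lambda(p)$ within $\delta$ of the supremum of the top exponent over all periodic points (not merely $\lambda(p)>\lambda_0$), so that an adapted metric gives $\|Df|_{E^s_{i+1}}\|\le e^{\lambda(p)+\delta}$ globally for the remaining $m_n-k_n$ steps of $z_n$'s orbit. With these corrections the two-case estimate can be pushed through; the paper's area argument simply sidesteps all of this bookkeeping.
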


\begin{proof}
	Define $\varphi:\TT^d\rightarrow\RR$ as follows
	$$
	\varphi(x)=\log\left|{\rm det}\big(Df|_{E^s_{i+1}(x)}\big)\right|,
	$$
	This function is H\"older continuous. Moreover, for every ergodic measure $m$ of $f$, the integral $\int\varphi{\rm d}m$ is equal to the sum of two Lyapunov exponents of $m$ along $E^s_{i+1}$. For every periodic point $p$ with period $\pi(p)$, we denote $m_p$ the invariant measure supported on the orbit of $p$ and let 
	$$
	\overline{\varphi}(p)~=~\int\varphi{\rm d}m_p
	~=~\frac{1}{\pi(p)}\sum_{j=0}^{\pi(p)-1}\varphi\circ f^j(p).
	$$
	
	Assume there exist $p_1,q\in{\rm Per}(f)$ such that $\overline{\varphi}(p_1)<\overline{\varphi}(q)$. Let $\theta$ be the constant given by Lemma~\ref{lem:Diophantine}. Let
	\begin{align}\label{equ:det-chi-delta}
	\chi=-\frac{4\cdot\log\big(\max_{x\in\TT^d}\|Df|_{E^u(x)}\|\big) }{\theta\cdot\log\big(\max_{x\in\TT^d}\|Dg|_{E^s_{(1,i)}(x)}\|\big)}>0,
	\qquad {\rm and} \qquad
	\delta=\frac{1}{2\chi+2}\big[ \overline{\varphi}(q)-\overline{\varphi}(p_1) \big]>0.
	\end{align}
	
	Since the periodic measures are dense in the space of ergodic measures of $f$,  there exists $p\in{\rm Per}(f)$, such that 
	$$
	\overline{\varphi}(p)~\leq~
	\inf\big\{\int\varphi{\rm d}m:~m~{\rm is~an~ergodic~measure~of~}f~\big\}+\delta/2.
	$$
	for any $\delta>0$.
	Here we can take $p=p_1$ when  $\overline{\varphi}(p_1)<\overline{\varphi}(p)$.
	There exists $N>0$, such that for every $n\geq N$ we have
	\begin{align}\label{equ:det-p}
	\overline{\varphi}(p)~\leq~
	\min\left\{\overline{\varphi}(p_1),
	~\min_{x\in\TT^d}\big[\frac{1}{n}\sum_{j=0}^{n-1}\varphi\circ f^j(x)\big]+\delta
	\right\}
	~\leq~\overline{\varphi}(q)-(2\chi+2)\delta.
	\end{align}
	
	Let $\kappa\in\NN$ be the minimal common period of $p$ and $q$. Since $\varphi$ is continuous, there exists $\eta>0$, such that for every $x,y\in\TT^d$ if $d(x,y)\leq\eta$, then
	$$
	|\varphi\circ f^j(x)-\varphi\circ f^j(y)|<\delta, 
	\qquad\forall 0\leq j\leq\kappa-1.
	$$

	\begin{claim}\label{clm:det-D}
		Let $p'=h(p)$.
		There exist a 2-dimensional disk $\,\,\,\cD(p)\subset\cF^s_{i+1}(p)$ which contains $p$ and a constant $A_0>0$, such that for every $x\in\TT^d$ and $x'=h(x)$, the disk
		$$
		\cD(x)~=~h^{-1}\circ T_{p',x'}\circ h\big(\cD(p)\big)
		~\subset~\cF^s_{i+1}(x)
		$$
		satisfies
		$$
		{\rm Area}(\cD(x))>A_0, \qquad {\rm and} \qquad
		{\rm diam}\big(f^n(\cD(x))\big)~\leq~\frac{\eta}{2},
		\qquad \forall n\geq 0.
		$$
		
		In particular, if we take $x=p$, then
		$$
		{\rm Area}\big(f^{\kappa n}(\cD(p))\big)~\leq~
		\exp\big[ \kappa n\cdot(\overline{\varphi}(p)+\delta)\big]
		\cdot{\rm Area}(\cD(p)),
		\qquad \forall n\geq0.
		$$
	\end{claim}
	
	\begin{proof}[Proof of the Claim]
		Lemma \ref{lem:i-linear} shows that $h\big(\cF^s_{i+1}\big)=\cL^s_{i+1}$ which is linear. So $\cD(p)\subset\cF^s_{i+1}(p)$ implies $\cD(x)\subset\cF^s_{i+1}(x)$. Since $f$ is uniformly contracting along $\cF^s_{i+1}$, we only need to take $\cD(p)$ small enough such that 
		${\rm diam}\big(f^n(\cD(x))\big)\leq\eta/2$ for every $x\in\TT^d$ and every $n\geq0$. The uniform continuity of $h$ and $h^{-1}$ ensures the existence of a constant $A_0$ satisfying the claim.
		
		Finally, for every $z\in\cD(p)$, we have $d\big(f^n(z),f^n(p)\big)<\eta/2$ for every $n\geq0$. Thus 
		$$
		|\phi\circ f^n(z)-\phi\circ f^n(p)|<\delta, \qquad \forall n\geq0.
		$$
		From the definition of $\varphi(z)$ and $\overline{\varphi}(p)$, we have
		$$
		{\rm Area}\big(f^{\kappa n}(\cD(p))\big)~\leq~
		\exp\big[ \kappa n\cdot(\overline{\varphi}(p)+\delta)\big]
		\cdot{\rm Area}(\cD(p)),
		\qquad \forall n\geq0.
		$$
	\end{proof}
	
	Applying Lemma \ref{lem:Diophantine} to $p$ and $q$ with respect to the splitting $T\TT^d=E^s_{(1,i)}\oplus E^s_{(i+1,k)}\oplus E^u$, we obtain points $z_n\in\cF^s_{(1,i)}(p)$ and $w_n\in\cF^s_{(i+1,k)}(z_n)\cap\cF^u(q)$, such that 
	$$
	d_{\cF^s_{(i+1,k)}}(w_n,z_n)~\leq~C\cdot L_n^{-\theta},
	\qquad {\it and} \qquad
	d_{\cF^u}(w_n,q)~\leq~C\cdot L_n^{-\theta},
	$$ 
	where $L_n=d_{\cF^s_{(1,i)}}(p,z_n)\rightarrow+\infty$ as $n\rightarrow\infty$.
	
	Since $f$ uniformly contracts $\cF^s_{(i+1,k)}$, we can assume that
	$$
	d_{\cF^s_{(i+1,k)}}(f^j(w_n),f^j(z_n))<\eta/4, \qquad \forall j\geq0.
	$$
	for sufficiently large $n$.
	Let $m_n\in\NN$ be the smallest integer such that 
	$
	d_{\cF^s_{(1,i)}}\big(f^{m_n\kappa}(z_n),p\big)\leq1,$
	then
	$$
	m_n\leq\frac{\log L_n}{-\kappa\log\big(\max_{x\in\TT^d}\|Df|_{E^s_{(1,i)}(x)}\|\big)}+1.
	$$
	Let $k_n$ be the largest integer satisfying 
	$d_{\cF^u}\big(f^{\kappa j}(w_n),q\big)\leq\eta/4$ for all $j\in [0, k_n-1]$.
	There exists $N>0$, such that for all $n\geq N$,
	$$
	k_n~\geq~
	\frac{\theta\log L_n+\log(\eta/4)}{ \kappa\log\big(\max_{x\in\TT^d}\|Df|_{E^u(x)}\|\big)}
	~\geq~\frac{\theta\log L_n}{ 2\kappa\log\big(\max_{x\in\TT^d}\|Df|_{E^u(x)}\|\big)}.
	$$
	According to the definition of $\chi$ in (\ref{equ:det-chi-delta}), we have
	\begin{align}\label{equ:det-ratio}
	\frac{k_n}{m_n}~\geq~
	-\frac{\theta\cdot \log\big(\max_{x\in\TT^d}\|Dg|_{E^s_{(1,i)}(x)}\|\big)}{ 4\cdot\log\big(\max_{x\in\TT^d}\|Df|_{E^u(x)}\|\big)}
	~=~\frac{1}{\chi}.
	\end{align}
	
	\vskip2mm
	
	\noindent{\bf Case 1.} There exist infinitely many $n$, such that $k_n\geq m_n$.
	
	In this case, we have $d_{\cF^u}\big(f^{\kappa j}(w_n),q\big)\leq\eta/4$ for all $j\in [0, m_n-1]$. Thus
	$$
	d\big(f^{\kappa j}(z_n),q\big)~\leq~ 
	d_{\cF^s_{(i+1,k)}}\big(f^{\kappa j}(z_n),f^{\kappa j}(w_n)\big)~+~
	d_{\cF^u}\big(f^{\kappa j}(w_n),q\big)~\leq \eta/2,
	\qquad\forall 0\leq j\leq m_n-1.
	$$
	Let $\cD(z_n)$ be defined as Claim \ref{clm:det-D}, then $\cD(z_n)=\H^{ss}_{p,z_n}(\cD(p))$, where $\H^{ss}_{p,z_n}:\cF^s_{i+1}(p)\rightarrow\cF^s_{i+1}(z_n)$ is the holonomy map  induced by $\cF^s_{(1,i)}$ inside the leaf $\cF^s_{(1,i+1)}(p)$.
	This implies that for every $z\in\cD(z_n)$, we have $d\big(f^j(z),f^j(q)\big)\leq\eta$ for $j=0,\cdots,\kappa m_n-1$.
	
	From the estimation (\ref{equ:det-chi-delta}), we have
	\begin{align}\label{equ:det-infty}
	\frac{{\rm Area}\big(f^{\kappa m_n}(\cD(z_n))\big)}{ {\rm Area}\big(f^{\kappa m_n}(\cD(p))\big)}~&\geq~
	\frac{\exp\big[ \kappa m_n\cdot(\overline{\varphi}(q)-\delta)\big] }{\exp\big[ \kappa m_n\cdot(\overline{\varphi}(p)+\delta)\big]}
	\cdot\frac{A_0}{{\rm Area}(\cD(p))} \notag\\
	&\geq~\frac{\exp\big[ \kappa m_n\cdot(\overline{\varphi}(p)+\delta+2\chi\delta)\big] }{\exp\big[ \kappa m_n\cdot(\overline{\varphi}(p)+\delta)\big]}
	\cdot\frac{A_0}{{\rm Area}(\cD(p))}
	~\longrightarrow~\infty\qquad (n\rightarrow\infty).
	\end{align}
	Let
	$\H^{ss}_{p,f^{\kappa m_n}(z_n)}:\cF^s_{i+1}(q)\rightarrow\cF^s_{i+1}\big(f^{\kappa m_n}(z_n)\big)$ be the holonomy map induced by $\cF^s_{(1,i)}$ restricted to $\cF^s_{(1,i+1)}(p)$, then 
	$$
	\H^{ss}_{p,f^{\kappa m_n}(y_n)}\big(f^{\kappa m_n}(\cD(p))\big)
	~=~f^{\kappa m_n}\big(f^{\kappa m_n}(\cD(z_n))\big).
	$$
	So equation (\ref{equ:det-infty}) is absurd since $d_{\cF^s_{(1,i)}}\big(p,f^{\kappa m_n}(z_n)\big)\leq1$ and $\cF^s_{(1,i)}$ is uniformly $C^1$-smooth inside the leaf $\cF^s_{(1,i+1)}(p)$, see \cite[Theorem 2.2]{Bro}.
	
	\vskip2mm
	
	\noindent {\bf Case 2.} We can assume that $k_n<m_n$ for all sufficiently large $n$. 
	
	In fact, we can assume that for all sufficiently large $n$, $m_n-k_n\geq N$, where $N$ is such that the equation (\ref{equ:det-p}) holds. (Indeed, if that's not the case, since $N$ is fixed we have infinitely many $n$ with $m_n-k_n<N$ and the same estimates as in Case~1 gives a contradiction.)
	Recall that $\overline{\varphi}(q)-\overline{\varphi}(p)=(2\chi+2)\cdot\delta$.
	Since $k_n/m_n>1/\chi>0$, from the definition of $m_n,k_n$ and equation (\ref{equ:det-p}), we have
	\begin{align*}
	\frac{{\rm Area}\big(f^{\kappa m_n}(\cD(z_n))\big)}{ {\rm Area}\big(f^{\kappa m_n}(\cD(p))\big)}~&\geq~
	\frac{
		\exp\big[\kappa (m_n-k_n)\cdot(\overline{\varphi}(p)-\delta) \big]\cdot
		\exp\big[\kappa k_n\cdot(\overline{\varphi}(q)-\delta)\big] 
	}{\exp\big[ \kappa m_n\cdot(\overline{\varphi}(p)+\delta)\big]}
	\cdot\frac{A_0}{{\rm Area}(\cD(p))} \\
	&\geq~\frac{\exp\big[ \kappa m_n\cdot\big((1-\chi^{-1})\cdot(\overline{\varphi}(p)-\delta)+ \chi^{-1}(\overline{\varphi}(p)-\delta)\big)\big] }{\exp\big[ \kappa m_n\cdot(\overline{\varphi}(p)+\delta)\big]}
	\cdot\frac{A_0}{{\rm Area}(\cD(p))} \\
	&\geq~\frac{\exp\big[ \kappa m_n\cdot(\overline{\varphi}(p)+\delta+2\chi^{-1}\delta)\big] }{\exp\big[ \kappa m_n\cdot(\overline{\varphi}(p)+\delta)\big]}
	\cdot\frac{A_0}{{\rm Area}(\cD(p))} \\
	&\longrightarrow~\infty\qquad (n\rightarrow\infty).
	\end{align*}
	This is absurd. Hence the proof is complete.
\end{proof}

This proposition shows that all Lyapunov exponents of $f$ along $E^s_{i+1}$ are equal to $\lambda_0$. We need to show that $\lambda_0$ is equal to $\lambda^s_{i+1}$, which is the Lyapunov exponents of $A$ in $L^s_{i+1}$. We prove a more general lemma that will be needed in next section.

\begin{lemma}\label{lem:k0=ws}
	Let $g\in{\rm Diff}^1(\TT^d)$ be an Anosov diffeomorphism on $\TT^d$ which is conjugate to a linear automorphism $A\in{\rm GL}(d,\ZZ)$. Assume $g$ admits a dominated splitting $T\TT^d=E^{ss}\oplus E_0\oplus F_0\oplus E^u$ where the stable bundle $E^s=E^{ss}\oplus E_0\oplus F_0$.
	Also assume the following
	\begin{itemize}
		\item the bundle $E_0$ is integrable to a foliation $\cF_0$ and the conjugacy $h_g$ maps $\cF_0$ to a linear $A$-invariant foliation $\cL_0$;
		\item for every $p\in{\rm Per}(g)$, the Lyapunov exponents of $g$ along $E_0$ are all equal to $\lambda_0$,
	\end{itemize}
	then all the Lyapunov exponents of $A$ along $L_0=T\cL_0$ are equal to $\lambda_0$.
	
	%In particular, when the bundle $F_0$ is trivial and $E_0=E^{ws}$, then all the Lyapunov exponents of $A$ in $L^{ws}$ are equal to $\lambda_0$.
\end{lemma}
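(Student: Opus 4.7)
The plan is to lift to the universal cover $\RR^d$ and compare two expressions for the asymptotic leafwise growth rate of a backward orbit at a fixed point: one coming from the linear action of $A$, the other from the infinitesimal conformality of $g^{-1}$ at the fixed point. Let $F,H\colon\RR^d\to\RR^d$ be lifts of $g$ and $h_g$ with $H\circ F=A\circ H$ and $\|H-\mathrm{id}\|\le C_0$. After replacing $g$ with a power we may assume $g$ has a fixed point $p$ whose lift $\tilde p$ is fixed by $F$; then $\tilde p_A:=H(\tilde p)$ is fixed by $A$, the leaf $\tilde\cL_0(\tilde p_A)=\tilde p_A+L_0$ is $A$-invariant, and $v(x):=H(x)-\tilde p_A\in L_0$ defines a homeomorphism $v\colon\tilde\cF_0(\tilde p)\to L_0$ intertwining $F^{-n}$ with $A^{-n}$ (using $A^{-n}\tilde p_A=\tilde p_A$).

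Decompose $L_0=L_0^{(1)}\oplus\cdots\oplus L_0^{(m)}$ into the $A$-invariant generalized eigenspaces grouped by eigenvalue moduli $|\mu^{(1)}|<\cdots<|\mu^{(m)}|$. For each $i$ I would pick $x_i\in\tilde\cF_0(\tilde p)$ with $v(x_i)\in L_0^{(i)}\setminus\{0\}$ and set $\epsilon_i:=\|v(x_i)\|$; this is possible since $v$ is a surjective homeomorphism onto $L_0$. Then $\|A^{-n}v(x_i)\|\asymp |\mu^{(i)}|^{-n}\epsilon_i$, where the subexponential factors from Jordan blocks are harmless for identifying the exponential rate. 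Using $\|H-\mathrm{id}\|\le C_0$ and the quasi-isometry of $\tilde\cF_0$ from Lemma~\ref{lem:quasi-isometric} (applicable because $h_g(\cF_0)=\cL_0$ is a linear foliation), the ambient and leafwise distances satisfy
\[
d_{\tilde\cF_0}(F^{-n}(x_i),\tilde p)\asymp \|F^{-n}(x_i)-\tilde p\|=\|A^{-n}v(x_i)\|+O(1)\asymp |\mu^{(i)}|^{-n}\epsilon_i
\]
for large $|\mu^{(i)}|^{-n}\epsilon_i$, so the exponential growth rate of $d_{\tilde\cF_0}(F^{-n}(x_i),\tilde p)$ is $-\log|\mu^{(i)}|$.

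On the $g$-side, the hypothesis means $Dg(p)|_{E_0(p)}$ is a conformal contraction of rate $e^{\lambda_0}$, so $(Dg(p)|_{E_0(p)})^{-1}$ is a conformal expansion of rate $e^{|\lambda_0|}$. Fix a small exit scale $\delta_0$ well inside the local linearization region of $\tilde p$ and let $\epsilon_i\to 0$; choose $n=n(\epsilon_i)\to\infty$ to be the first time the backward orbit reaches scale $\delta_0$. Only the $C^1$ dynamics of $g^{-1}$ near $\tilde p$ is used, and the conformality yields $d_{\tilde\cF_0}(F^{-n}(x_i),\tilde p)\asymp e^{n|\lambda_0|}\epsilon_i\asymp\delta_0$. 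Combining with the previous estimate $|\mu^{(i)}|^{-n}\epsilon_i\asymp\delta_0$ gives $|\mu^{(i)}|^{-n}\asymp e^{n|\lambda_0|}$; taking logarithms and dividing by $n$ forces $\log|\mu^{(i)}|=\lambda_0$ for every $i$. Hence every eigenvalue of $A|_{L_0}$ has modulus $e^{\lambda_0}$, so every Lyapunov exponent of $A$ along $L_0$ equals $\lambda_0$. The main technical point is avoiding global control of the expansion rate of $g^{-1}$ along the whole leaf: this is bypassed by coupling $\epsilon_i\to 0$ with $n(\epsilon_i)\to\infty$ so that the orbits compared stay in the local linearization region of $\tilde p$ throughout.
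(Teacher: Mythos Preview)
Your argument has a genuine gap at the point where you combine the two growth estimates. You set $\epsilon_i:=\|v(x_i)\|=\|H(x_i)-H(\tilde p)\|$ and then write the $g$-side estimate as $d_{\tilde\cF_0}(F^{-n}(x_i),\tilde p)\asymp e^{n|\lambda_0|}\epsilon_i$. But the dynamics of $F^{-1}$ near $\tilde p$ expands the \emph{leafwise} initial distance $d_i^g:=d_{\tilde\cF_0}(x_i,\tilde p)$, not $\epsilon_i$; the correct local estimate (granting everything else) is $d_{\tilde\cF_0}(F^{-n}(x_i),\tilde p)\asymp e^{n|\lambda_0|}d_i^g$ up to subexponential factors. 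Since $H$ is only H\"older, the ratio $\epsilon_i/d_i^g$ is not controlled multiplicatively: you only have $C^{-1}(d_i^g)^{1/\alpha}\le \epsilon_i\le C(d_i^g)^{\alpha}$ for some $\alpha\in(0,1)$. With $n$ chosen as the exit time, $\tfrac{1}{n}\log d_i^g$ and $\tfrac{1}{n}\log\epsilon_i$ are both bounded away from zero, so $\tfrac{1}{n}\log(\epsilon_i/d_i^g)$ need not tend to $0$. Concretely, matching the two estimates gives
\[
|\lambda_0|+\tfrac{1}{n}\log d_i^g \;=\; -\log|\mu^{(i)}|+\tfrac{1}{n}\log\epsilon_i+o(1),
\]
and the H\"older defect $\tfrac{1}{n}\log(d_i^g/\epsilon_i)$ can absorb any nonzero value of $\lambda_0-\log|\mu^{(i)}|$. (A secondary issue: ``$Dg(p)|_{E_0(p)}$ is a conformal contraction'' is not what the hypothesis gives --- equal Lyapunov exponents allow Jordan blocks --- though this by itself would only contribute subexponential corrections.)

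The paper avoids this obstacle by not working locally at a single periodic point. From the hypothesis that all periodic exponents along $E_0$ equal $\lambda_0$, it first extracts a \emph{global} bound: for every $\epsilon>0$ there is an adapted metric with $\lambda_0-\epsilon<\log m(Dg|_{E_0(x)})\le\log\|Dg|_{E_0(x)}\|<\lambda_0+\epsilon$ for all $x\in\TT^d$. This makes the leafwise growth estimate $d_{\tilde\cF_0}(G^{-n}x,G^{-n}y)\in[e^{-n(\lambda_0+\epsilon)},e^{-n(\lambda_0-\epsilon)}]\cdot d_{\tilde\cF_0}(x,y)$ valid for all $n$, so one can take a \emph{fixed} pair $x',y'$ on the $A$-side and let $n\to\infty$; the only error coming from $H$ is the additive $\|H-\mathrm{id}\|\le C$, which is subexponential. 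The paper then proves $\log|\mu^{(i)}|\ge\lambda_0$ by a direct limit and $\log|\mu^{(i)}|\le\lambda_0$ by a contradiction argument using quasi-isometry of $\tilde\cF_0$. Your strategy of ``staying in a linearization region'' was designed precisely to bypass this global step, but without it the H\"older nature of $H$ makes the initial-scale comparison fail.
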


\begin{proof}
	Since $g$ is Anosov and the bundle $E_0$ is H\"older continuous, we have that for every $0<\e\ll|\lambda_0|$ there exists an adapted metric on $\TT^d$, such that
	\begin{align}\label{equ:norm}
	\lambda_0-\e~<~
	\min_{x\in\TT^d}\left\{\log m\big(Dg|_{E_0(x)}\big)\right\}
	~\leq~\max_{x\in\TT^d}\left\{\log\|Dg|_{E_0(x)}\|\right\}
	~<~\lambda_0+\e.
	\end{align}
	
	Let $G:\RR^d\rightarrow\RR^d$ be a lift of $g$, and let $\tilde{\cF}_0$ be the lift of $\cF_0$ to $\RR^d$. Denote by $H:\RR^d\rightarrow\RR^d$ the conjugation of $G$: $\,\,H\circ G=A\circ H$. Then there exists a constant $C>0$, such that $\|H-{\rm Id}\|_{C^0}<C$. By Theorem \ref{thm:conjugate} we have that $H\big(\tilde{\cF}_0\big)=\tilde{\cL}_0$. In particular, the foliation  $\tilde{\cF}_0$ is quasi-isometric, i. e. there exist $a_0,b_0>0$, such that for every $x\in\RR^d$ and $y\in\tilde{\cF}_0(x)$, we have
	$$
	d_{\tilde{\cF}_0}(x,y)\leq a_0\cdot d(x,y)+b_0.
	$$ 
	
	For every  $x'\in\RR^d$ and $y'\in\tilde{\cL}_0(x)$, let $x=H^{-1}(x')$ and $y=H^{-1}(y')$. Then $y\in\tilde{\cF}_0(x)$. From estimation (\ref{equ:norm}), we have:
	\begin{align*}
	\lim_{n\rightarrow+\infty}
	\frac{d\big(A^{-n}(x'),A^{-n}(y')\big)}{\exp\big[-n\cdot(\lambda_0-\e)\big]}
	~&=\lim_{n\rightarrow+\infty}
	\frac{d\big(H\circ G^{-n}(x), H\circ G^{-n}(y)\big)}{\exp\big[-n\cdot(\lambda_0-\e)\big]} \\
	&\leq \lim_{n\rightarrow+\infty}
	\frac{d\big(G^{-n}(x), G^{-n}(y)\big)+2C}{\exp\big[-n\cdot(\lambda_0-\e)\big]} \\
	&\leq \lim_{n\rightarrow+\infty}
	\frac{d_{\tilde{\cF}_0}\big(G^{-n}(x), G^{-n}(y)\big)+2C }{\exp\big[-n\cdot(\lambda_0-\e)\big]} \\
	&\leq \lim_{n\rightarrow+\infty}
	\frac{\big[\min_{x\in\TT^d}m\big(Dg|_{E_0(x)}\big)\big]^{-n}}{ \exp\big[-n\cdot(\lambda_0-\e)\big]}
	\cdot d_{\tilde{\cF}_0}(x,y) +
	\frac{2C}{\exp\big[-n\cdot(\lambda_0-\e)\big]} \\
	&=0.
	\end{align*}
	Taking $\e\rightarrow0$ we obtain that every Lyapunov exponent of $A$ in $L^{ws}$ is no less than $\lambda_0$. 
	
	\vskip2mm
	
	On the other hand, let $\lambda_0'$ be the largest Lyapunov exponent of $A$ along $L^{ws}$. So $A$ has an eigenvalue in $L^{ws}$ whose absolute value is $\exp(\lambda_0')$. 
	
	Assume that $\lambda_0<\lambda_0'(<0)$, then we will obtain  a contradiction. Let $\e<(\lambda_0'-\lambda_0)/2$ in inequality (\ref{equ:norm}).
	Let  $L'(0)\subseteq L_0(0)\subset\RR^d$ be the eigenspace of $A$ generated by eigenvectors whose corresponding eigenvalues have absolute value $\exp(\lambda_0')$. We take a sequence of points $x_k'\in L'(0)\subset\RR^d$ satisfying
	\begin{itemize}
		\item $\|x_k'\|=\exp(-k\lambda_0')\rightarrow+\infty$ as $k\rightarrow+\infty$;
		\item $\|A^n(x_k')\|=\exp\big((n-k)\cdot\lambda_0'\big)$ for every $n\in\NN$.
	\end{itemize}
	The point $p=H^{-1}(0)$ is a fixed point of $G$, and $x_k=H^{-1}(x_k')$, then $x_k\in\tilde{\cF}_0(p)$. Since $\|H-{\rm Id}\|_{C^0}<C$ and the foliation $\tilde{\cF}_0$ is quasi-isometric, we have
	$$
	d_{\tilde{\cF}_0}(p,x_k)~\leq~ a_0\cdot d(p,x_k)+b_0 
	~\leq~a_0\exp(-k\lambda_0')+b_0+2C.
	$$
	
	Let $n_k=\left[k\cdot\frac{\lambda_0'}{\lambda_0+\e}\right]+1\in\NN$, where $[\cdot]$ is the integer floor function.  We have
	\begin{align}
	d_{\tilde{\cF}_0}\big(G^{n_k}(p),G^{n_k}(x_k)\big)
	~&\leq~ 
	\big[\max_{x\in\TT^d}\|Dg|_{E_0(x)}\|\big]^{n_k}\cdot d_{\tilde{\cF}_0}(p,x_k)  \notag \\ 
	&\leq~\exp\big[n_k\cdot(\lambda_0+\e)\big]\cdot
	\big[ a_0\exp(-k\lambda_0')+b_0+2C \big] \notag \\
	&\leq~ a_0+(b_0+2C)\exp\big[n_k\cdot(\lambda_0+\e)\big] \notag
	\end{align}
	Since $\lambda_0+\e<0$ and $p=G^{n_k}(p)$, this implies that $\|G^{n_k}(x_k)\|$ are uniformly bounded.
	However, since $0<\lambda_0'/(\lambda_0+\e)<1$, $(n_k-k)\rightarrow-\infty$ as $k\rightarrow+\infty$.  Thus the sequence $\|A^{n_k}(x_k')\|\rightarrow+\infty$ as $k\rightarrow\infty$, This contradicts to the fact that
	$$
	H\circ G^{n_k}(x_k)~=~H\circ G^{n_k}\circ H^{-1}(x_k')~=~A^{n_k}(x_k').
	$$
	This proves that every Lyapunov exponent of $A$ in $L_0$ is no larger than $\lambda_0$. Hence the proof is complete.
\end{proof}

Now we can prove Theorem \ref{thm:local} by induction.

\begin{proof}[Proof of Theorem \ref{thm:local}]
	Firstly, Lemma \ref{lem:i-linear} shows that the first and second items of Theorem \ref{thm:local} are equivalent. 
	
	From Proposition \ref{prop:smallest-Lya-expo}, Proposition \ref{prop:det} and Lemma \ref{lem:k0=ws}, the first item implies all the Lyapunov exponents of $f$ in $E^s_{i+1}$ are equal to the Lyapunov exponent of $A$ in $L^s_{i+1}$, which is $\lambda(L^s_{i+1},A)$. Moreover, Lemma \ref{lem:i-linear} shows that $h\big(\cF^s_{i+1}\big)=\cL^s_{i+1}$ which implies $E^s_{i+1}\oplus E^u$ is integrable. So we can apply the induction which shows for every $i+1\leq j\leq k$, all the Lyapunov exponents of $f$ in $E^s_j$ are equal to $\lambda(E^s_j,A)$. This proves the third item.
	
	If item 3 holds, then \cite[Proposition 2.4]{GKS1} implies the conjugacy $h$ is $C^{1+{\rm H\"older}}$-smooth along $\cF^s_j$ for every $i=i+1\cdots,k$. 
	
	Finally, if item 4 holds, then Journ\'e's theorem \cite{J}\footnote{We remark that the proof of Journ\'e theorem in $C^{1+{\rm H\"older}}$ regularity is a simple calculus exercise, since mixed derivatives don't have to be recovered.} implies $f$ is $C^{1+{\rm H\"older}}$-smooth along $\cF^s_{(i+1,k)}$. Proposition 2.2 of \cite{GKS} shows that $h\big(\cF^s_i\big)=\cL^s_i$. Thus $E^s_i\oplus E^u$ is integrable, i.e., the second item holds. The proof is complete.
\end{proof}

\section{Global Lyapunov exponents rigidity}

In this section, we show that if the conjugacy $h$ preserves the strong stable foliation $h(\cF^{ss})=\cL^{ss}$, then $f$  admits a dominated splitting along $E^{ws}$ with sub-bundle dimensions matching the dimensions for the dominated splitting of $A$ along $L^{ws}$. Moreover, $f$ has spectral rigidity along every  sub-bundle of $E^{ws}$.

In this section, we always assume that $f$ is Anosov and that it admits an absolutely partially hyperbolic splitting $T\TT^d=E^{ss}\oplus E^{ws}\oplus E^u$ satisfying
$$
\|Df|_{E^{ss}(x)}\|<\mu<m(Df|_{E^{ws}(x)})\leq
\|Df|_{E^{ws}(x)}\|<1<\|Df|_{E^u(x)}\|.
$$
Recall that by Theroem \ref{thm:conjugate}, $f$ is dynamically coherent with invariant foliations $\cF^{\sigma}$ tangent to $E^{\sigma}$ for $\sigma={\it ss,ws,u}$. The linear part $A$ of $f$ also admits a partially hyperbolic splitting $T\TT^d=L^{ss}\oplus L^{ws}\oplus L^u$ with same dimensions as the splitting for $f$. Moreover, let $h:\TT^d\rightarrow\TT^d$ be the topological conjugacy $h\circ f=A\circ h$, then $h$ is H\"older continuous and preserves all invariant foliations:
$$
h(\cF^{\sigma})=\cL^{\sigma}, \qquad \sigma={\it ss,ws,u}.
$$
The key property which we will utilize for proving the spectral rigidity is that the conjugacy $h$ preserves the strong stable foliation $h(\cF^{ss})=\cL^{ss}$.

Since $h(\cF^{ss})=\cL^{ss}$ implies $E^{ss}\oplus E^u$ is integrable, we can apply Proposition \ref{prop:smallest-Lya-expo} which shows that every periodic point of $f$ admits the same smallest Lyapunov exponent $\lambda_0$ in $E^{ws}$.
For every $p\in{\rm Per}(f)$, let $E_0(p)\subset T_p\TT^d$ be the Lyapunov subspace of $p$ associated to the Lyapunov exponent $\lambda_0$, and let $k_0(p)\in\NN$ the dimension of $E_0(p)$: 
$$
1~\leq~k_0(p)~=~{\rm dim}E_0(p)~\leq~{\rm dim}E^{ws}.
$$ 
Let $F_0(p)$ be the Lyapunov subspace of $p$ associated to the Lyapunov exponents contained in the interval $(\lambda_0,0)$, then
$$
T_p\TT^d=E^{ss}(p)\oplus E_0(p)\oplus F_0(p)\oplus E^u(p)
\qquad {\rm where} \qquad E^{ws}(p)=E_0(p)\oplus F_0(p).
$$
Denote by $\lambda_1(p)$ the smallest Lyapunov exponent of $p$ along the bundle $F_0(p)$. Obviously, we have $\lambda_1(p)>\lambda_0$.

Let $\cF_0$ be the $k_0(p)$-dimensional strong stable foliation in $\cF^{ws}(p)$ defined by the Pesin invariant manifolds associated to the Lyapunov exponent $\lambda_0$. That is, for  any $x,y\in\cF^{ws}(p)$, $y\in\cF_0(x)$ if and only if for every $\epsilon>0$,
\begin{align}\label{equ:strong-converge}
\lim_{n\rightarrow+\infty}\frac{1}{n}\log d\big(f^n(x),f^n(y)\big)
<\lambda_0+\epsilon.
\end{align}
Since every weak stable leaf $\cF^{ws}(p)$ is a $C^{1+{\rm H\"older}}$-smooth submanifold, and $f$ is uniformly contracting in $\cF^{ws}_{\it loc}(p)$, Theorem 2.2 of \cite{Bro} implies $\cF_0$ is a $C^{1+{\rm H\"older}}$-foliation in $\cF^{ws}_{\it loc}(p)$. In particular, the tangent bundle $T\cF_0$ is H\"older continuous in $\cF^{ws}_{\it loc}(p)$. Then we define
$$
\cF_0|_{\cF^{ws}(p)}~=~
\bigcup_{n\geq0}f^{-n\kappa}\left( \cF_0|_{\cF^{ws}_{\it loc}(p)} \right),
$$
where $\kappa$ is the period of $p$. This implies $\cF_0$ is a $C^{1+{\rm H\"older}}$-foliation in $\cF^{ws}(p)$, where $T\cF_0$ is H\"older continuous in $\cF^{ws}(p)$. Notice here the H\"older constants are only uniform in $\cF^{ws}_{\it loc}(p)$.

There are two possibilities:
\begin{itemize}
	\item either there exists $p\in{\rm Per}(f)$, such that ${\rm dim}E_0(p)<{\rm dim}E^{ws}$;
	\item or ${\rm dim}E_0(p)={\rm dim}E^{ws}$ for every $p\in{\rm Per}(f)$.
\end{itemize}
In the first case, we will show that the dimensions $k_0(p)={\rm dim}E_0(p)$ are equal for all $p\in{\rm Per}(f)$, and subspaces $E_0(p)$ belong to a sub-bundle $E_0$ of a finer dominated splitting $E^{ws}=E_0\oplus F_0$. This then allows us to continue by induction and to complete the proof of Theorem \ref{thm:main}. We can apply Lemma \ref{lem:k0=ws} to the second case, which shows that all Lyapunov exponents of $A$ in $L^{ws}$ are equal to $\lambda_0$. This finishes the proof Theorem \ref{thm:main}.

\vskip2mm

First we assume that there exists a periodic point $p$ of $f$, such that  ${\rm dim}E_0(p)<{\rm dim}E^{ws}$. 

\begin{lemma}\label{lem:local-cs}
	Let $p$ be a periodic point of $f$ with minimal period $\kappa$ such that $k_0(p)<{\rm dim}E^{ws}$. Then for every $0<\delta\ll(\lambda_1(p)-\lambda_0)$ and every pair of points $x,y\in\cF^{ws}(p)$ satisfying $y\notin\cF_0(x)$, 
	there exists $N>0$, such that
	for every $n\geq N$ and $i\geq0$
	$$
	d_{\cF^{ws}}\left(f^{\kappa (i+n)}(x),f^{\kappa (i+n)}(y)\right)
	~\geq~
	\exp\left[\kappa i\cdot\left(\lambda_1(p)-\delta\right)\right]
	\cdot d_{\cF^{ws}}\left(f^n(x),f^n(y)\right).
	$$
\end{lemma}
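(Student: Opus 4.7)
My plan is to reduce the estimate to a transverse bound to the Pesin strong stable foliation $\cF_0$ and then to exploit that every Lyapunov exponent along the complement $F_0(p)$ is at least $\lambda_1(p)>\lambda_0$. Since $p$ is attracting inside $\cF^{ws}(p)$, for $n$ large enough both $f^{\kappa n}(x)$ and $f^{\kappa n}(y)$ enter any fixed small neighborhood $U$ of $p$ inside $\cF^{ws}(p)$. In an adapted metric on $U$ tuned to the orbit of $p$, one has pointwise
\[
\|Df^\kappa|_{E_0}\|<e^{\kappa(\lambda_0+\delta/8)},\qquad m\bigl(Df^\kappa|_{F_0}\bigr)>e^{\kappa(\lambda_1(p)-\delta/8)},
\]
which is possible because $\lambda_0<\lambda_1(p)$ provides a dominated splitting $E_0\prec F_0$ along the periodic orbit.

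Inside $U$ I would use two complementary $f^\kappa$-invariant objects: the $C^{1+{\rm H\"older}}$ Pesin strong stable foliation $\cF_0$ (established just before the statement of the lemma), and a slow invariant manifold $W^{sl}\subset\cF^{ws}_{\it loc}(p)$ tangent to $F_0(p)$ at $p$, provided by Hirsch--Pugh--Shub \cite{HPS} applied to the dominated splitting $E_0\prec F_0$ of $Df^\kappa$ at the fixed point $p$. Let $\pi\colon U\to W^{sl}$ be the holonomy along $\cF_0$-leaves; invariance of both objects gives the equivariance $\pi\circ f^\kappa=f^\kappa\circ\pi$. Since every Lyapunov exponent of $f^\kappa|_{W^{sl}}$ at $p$ is $\geq\lambda_1(p)$, the adapted-metric conorm estimate iterates to
\[
d_{W^{sl}}\bigl(f^{\kappa i}(a),f^{\kappa i}(b)\bigr)\;\geq\;e^{\kappa i(\lambda_1(p)-\delta/4)}\,d_{W^{sl}}(a,b),\qquad a,b\in W^{sl}\cap U,\ i\geq 0.
\]

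To finish, pick a chart $(s,t)\in E_0(p)\oplus F_0(p)$ on $U$ in which $\cF_0$ is the family $\{t=\text{const}\}$ and $W^{sl}=\{s=0\}$. Transversality of $\cF_0$ and $W^{sl}$ yields the uniform comparisons $d_{\cF^{ws}}(\cdot,\cdot)\asymp|\Delta s|+|\Delta t|$ and $d_{W^{sl}}(\pi\cdot,\pi\cdot)\asymp|\Delta t|$. The assumption $y\notin\cF_0(x)$ forces $\Delta t_n:=t_n(x)-t_n(y)\neq 0$ for every $n$; meanwhile the $s$-component contracts at rate $\leq\lambda_0+\delta/8$ and the $t$-component at rate $\geq\lambda_1(p)-\delta/8>\lambda_0+\delta/8$, so the ratio $|\Delta s_n|/|\Delta t_n|\to 0$ exponentially. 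Enlarging $N$ if necessary, one obtains $d_{\cF^{ws}}(f^{\kappa n}(x),f^{\kappa n}(y))\leq C\, d_{W^{sl}}(\pi(f^{\kappa n}(x)),\pi(f^{\kappa n}(y)))$ uniformly for $n\geq N$. Combining this with the iterated transverse estimate and the equivariance of $\pi$ gives the claim with rate $\lambda_1(p)-\delta/4$, and the $\delta/4$ slack absorbs the constant $C$ for all $i\geq 1$ (while $i=0$ is equality).

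The main obstacle will be coordinating the regularity of the various invariant objects: $\cF_0$ is only $C^{1+{\rm H\"older}}$ and the slow manifold $W^{sl}$ from Hirsch--Pugh--Shub may be only $C^1$, so one has to verify that the holonomy projection $\pi$ satisfies the needed Lipschitz and transversality estimates uniformly on $U$, and that the adapted metric near the orbit of $p$ gives pointwise conorm bounds that persist under $f^\kappa$ for all $n\geq N$.
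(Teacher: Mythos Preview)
Your approach is correct and follows the same underlying idea as the paper---decompose the displacement between $f^{\kappa n}(x)$ and $f^{\kappa n}(y)$ into an $E_0$-component and an $F_0$-component, show that the $F_0$-component eventually dominates because $y\notin\cF_0(x)$, and then read off the rate $\lambda_1(p)-\delta$ from the conorm along $F_0$. The difference is in implementation. The paper works entirely in the \emph{linear} chart $\exp_p^{-1}$ at $p$: it writes $v_*=v_*^{ss}+v_*^E+v_*^F+v_*^u$ with respect to the fixed splitting $E^{ss}(p)\oplus E_0(p)\oplus F_0(p)\oplus E^u(p)$ of $T_p\TT^d$, uses that $f^\kappa=Df^\kappa+o(\cdot)$ in these coordinates, and argues directly that $\|v^E_{f^{\kappa i}(x)}-v^E_{f^{\kappa i}(y)}\|/\|v^F_{f^{\kappa i}(x)}-v^F_{f^{\kappa i}(y)}\|\to 0$ and $d_{\cF^{ws}}/\|v^F_x-v^F_y\|\to 1$. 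No slow manifold, no holonomy projection, no adapted metric---just a Taylor remainder at a hyperbolic fixed point. Your route through the Hirsch--Pugh--Shub slow manifold $W^{sl}$ and the $\cF_0$-holonomy $\pi$ is more geometric and equally valid, but it imports machinery (existence of $W^{sl}$, regularity of $\pi$) that the linear-chart argument sidesteps entirely.

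One small point to tighten: your claim that ``the $\delta/4$ slack absorbs the constant $C$ for all $i\ge 1$'' is not quite uniform---for small $i$ the factor $e^{\kappa i\cdot 3\delta/4}$ need not dominate the bi-Lipschitz constant of the chart. The fix is easy: since both $c_m:=d_{\cF^{ws}}(f^{\kappa m}(x),f^{\kappa m}(y))/|\Delta t_m|$ and its reciprocal tend to $1$ as $m\to\infty$, enlarge $N$ so that $c_{n+i}/c_n\ge e^{-\kappa\cdot 3\delta/4}$ for all $n\ge N$ and $i\ge 0$ (which is possible because $c_m\to 1$). This gives the inequality for every $i\ge 0$ simultaneously. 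Also note that the displayed estimate in the statement has $f^n$ on the right-hand side where $f^{\kappa n}$ is clearly intended; your argument addresses the intended version.
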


\begin{proof}
	Since $x,y\in\cF^{ws}(p)\subset\cF^s(p)$ and $y\notin\cF_0(x)$, we can assume $x,y\in\cF^{ws}_{\it loc}(p)$ after some iteration. 
	Let $\exp_p:T_p\TT^d\rightarrow\TT^d$ be the exponential map. We use the inverse of the exponential map at $p$ to introduce the following notation
	$$
	\exp_p^{-1}(*)~=~v_*~=~v_*^{ss}+v_*^E+v_*^F+v_*^u~\in~
	E^{ss}(p)\oplus E_0(p)\oplus F_0(p)\oplus E^u(p)=T_p\TT^d,
	$$
	for $*=f^{\kappa i}(x),f^{\kappa i}(y)$ and every $i\in\NN$.
	
	Notice that $Df^{\kappa}:T_p\TT^d\rightarrow T_p\TT^d$ preserves the splitting $T_p\TT^d=E^{ss}(p)\oplus E_0(p)\oplus F_0(p)\oplus E^u(p)$. We can express $f^{\kappa}$ in the local coordinates of tangent space $T_p\TT^d$ as
	\begin{align*}
	&\exp_p^{-1}\circ f^{\kappa}\circ\exp_p:~
	T_p\TT^d(\eta)\longrightarrow T_p\TT^d \\
	&\exp_p^{-1}\circ f^{\kappa}\circ\exp_p(v)=Df^{\kappa}(v)+o(v),
	\end{align*}
	where $\lim_{v\rightarrow 0}\|o(v)\|/\|v\|=0$. Since $f^{\kappa i}(y)\notin\cF_0(f^{\kappa i}(x))$ for every $i\geq0$, we have
	\begin{align}\label{equ:d-ws}
	\lim_{i\rightarrow+\infty}
	\frac{\|v_{f^{\kappa i}(x)}^E-v_{f^{\kappa i}(y)}^E\|}{
		\|v_{f^{\kappa i}(x)}^F-v_{f^{\kappa i}(y)}^F\|}=0
	\qquad {\rm and} \qquad
	\lim_{i\rightarrow+\infty}
	\frac{d_{\cF^{ws}}\left(f^{\kappa i}(x),f^{\kappa i}(y)\right)}{
		\|v_{f^{\kappa i}(x)}^F-v_{f^{\kappa i}(y)}^F\|}=1
	\end{align}
	
	For every $\delta>0$, since 
	$$
	\lim_{i\rightarrow+\infty}
	\frac{m\big(Df^{\kappa i}|_{F_0(p)}\big)}{ 
	\exp\left[\kappa i\cdot\lambda_1(p)\right]}=1,
	$$
	we can take $N$ large enough, such that for every $n\geq N$ and $i\geq0$, we have 
	$$
	\|v_{f^{\kappa (n+i)}(x)}^F-v_{f^{\kappa (n+i)}(y)}^F\|
	~\geq~
	\exp\left[\kappa i\cdot\left(\lambda_1(p)-\delta/2\right)\right]\cdot
	\|v_{f^{\kappa n}(x)}^F-v_{f^{\kappa n}(y)}^F\|.
	$$
	The estimation (\ref{equ:d-ws}) finishes the proof of this lemma.
\end{proof}

Let $h:\TT^d\rightarrow\TT^d$ be the conjugacy $h\circ f=A\circ h$, and let $p\in{\rm Per}(f)$. Let $p'=h(p)$ and let
$$
\cL_0|_{\cL^{ws}(p')}=h\big(\cF_0|_{\cF^{ws}(p)}\big)
$$ 
be the push forward of $\cF_0$ to $\cL^{ws}(p')$. Since $\cF_0$ is $f$-invariant
 $f\big(\cF_0|_{\cF^{ws}(p)}\big)=\cF_0|_{\cF^{ws}(f(p))}$ for every $p\in{\rm Per}(f)$, the foliation $\cL_0$ is $A$-invariant:
 $$
 A\big(\cL_0|_{\cL^{ws}(p')}\big)=\cL_0|_{\cL^{ws}(A(p'))},
 \qquad \forall p'\in{\rm Per}(A).
 $$

Since $\cL^{ws}$ is a linear foliation, for any $p',q'\in\TT^d$ and $z'\in \cL^{ws}(q)$, we can define the linear translation 
\begin{align*}
T_{p',z'}:\quad
\cL^{ws}(p') \quad &\longrightarrow  \quad\cL^{ws}(q'),\\
x'\quad \quad &\longmapsto \quad x'+(z'-p'),
\end{align*}
where $+,-$ stand for the standard operations on the group $\TT^d$. It is clear that $T_{p',z'}(p')=z'\in\cL^{ws}(q')$.

Recall that $f$ and $A$ have matching foliations: $h(\cF^{su})=\cL^{su}$ and that $h(\cF^{ws})=\cL^{ws}$, so we have the following lemma.

\begin{lemma}\label{lem:su-holonomy}
	For every $x\in\TT^d$ and $y\in\cF^{su}(x)$, let $\H^{su}_{x,y}:\cF^{ws}(x)\rightarrow\cF^{ws}(y)$ be the holonomy map induced by $\cF^{su}$ from $\cF^{ws}(x)$ to $\cF^{ws}(y)$ satisfying $\H^{su}_{x,y}(x)=y$. 
	Then for every $z\in\cF^{ws}(x)$, we have
	$$
	\H^{su}_{x,y}(z)~=~h^{-1}\circ T_{x',y'}(z')~=~h^{-1}\left(z'+(y'-z')\right),
	$$
	where $x'=h(x)$, $y'=h(y)$, and $z'=h(z)$.
\end{lemma}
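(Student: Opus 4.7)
The plan is to lift everything to the universal cover $\RR^d$, transport the $su$-holonomy through the conjugacy to the linear model where it becomes a pure translation, and then read the formula off. Let $H\colon\RR^d\to\RR^d$ be a lift of $h$, and denote the lifted foliations by $\tilde\cF^{su},\tilde\cF^{ws},\tilde\cL^{su},\tilde\cL^{ws}$. By Theorem~\ref{thm:conjugate} we have $H(\tilde\cF^{ws})=\tilde\cL^{ws}$ and $H(\tilde\cF^{su})=\tilde\cL^{su}$. Fix lifts $\tilde x\in\RR^d$, $\tilde y\in\tilde\cF^{su}(\tilde x)$, $\tilde z\in\tilde\cF^{ws}(\tilde x)$, and set $\tilde x'=H(\tilde x)$, $\tilde y'=H(\tilde y)$, $\tilde z'=H(\tilde z)$.

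Next I would exploit that the lifted linear foliations are affine: every leaf of $\tilde\cL^{ws}$ is a translate of $L^{ws}$ and every leaf of $\tilde\cL^{su}$ is a translate of $L^{su}:=L^{ss}\oplus L^u$. Consequently $\tilde z'-\tilde x'\in L^{ws}$ and $\tilde y'-\tilde x'\in L^{su}$. Define
$$
\tilde w':=\tilde z'+(\tilde y'-\tilde x').
$$
Then $\tilde w'-\tilde y'=\tilde z'-\tilde x'\in L^{ws}$ and $\tilde w'-\tilde z'=\tilde y'-\tilde x'\in L^{su}$, so $\tilde w'\in\tilde\cL^{ws}(\tilde y')\cap\tilde\cL^{su}(\tilde z')$. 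Because $L^{ws}\oplus L^{su}=\RR^d$, this intersection of two transverse affine subspaces is the single point $\tilde w'$.

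Finally, I would set $\tilde w:=H^{-1}(\tilde w')$. By the matching of foliations, $\tilde w\in\tilde\cF^{ws}(\tilde y)\cap\tilde\cF^{su}(\tilde z)$, and the intersection is a single point since uniqueness on $\RR^d$ is inherited through $H$ from the linear model. By the very definition of the $su$-holonomy, $\tilde w=\tilde\H^{su}_{\tilde x,\tilde y}(\tilde z)$. Projecting to $\TT^d$ yields $\H^{su}_{x,y}(z)=h^{-1}\bigl(T_{x',y'}(z')\bigr)=h^{-1}\bigl(z'+(y'-x')\bigr)$, which is the claimed formula (the expression $z'+(y'-z')$ in the statement is evidently a typographical slip for $z'+(y'-x')$).

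The only mild subtlety is that on $\TT^d$ the leaves $\cL^{ws}(y')$ and $\cL^{su}(z')$ typically meet in many points since $\cL^{ws}$ has dense leaves, so the $su$-holonomy is not canonically defined just from set-theoretic intersection on the torus; the lift to $\RR^d$ is what makes the choice unambiguous. This is handled transparently above by working with specific lifts and then projecting. No deep work is involved; the lemma is essentially a bookkeeping consequence of Theorem~\ref{thm:conjugate} combined with the affine structure of linear foliations.
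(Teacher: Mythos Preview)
Your proof is correct and is exactly the argument the paper has in mind: the paper does not give a proof at all but simply prefaces the lemma with the sentence ``Recall that $f$ and $A$ have matching foliations: $h(\cF^{su})=\cL^{su}$ and $h(\cF^{ws})=\cL^{ws}$, so we have the following lemma,'' leaving the verification to the reader. You have written out precisely that verification --- lift, use the affine structure of the linear foliations to identify the unique intersection point, pull back via $H^{-1}$ --- and you also correctly flagged the typo $z'+(y'-z')$ for $z'+(y'-x')$.
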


The following proposition implies that $\cL_0$ is invariant under all linear translations.
The philosophy of  proposition is similar to that of Proposition \ref{prop:smallest-Lya-expo}.

\begin{proposition}\label{prop:holonomy-invariant}
	For any $p',q'\in{\rm Per}(A)$ and $z'\in \cL^{ws}(q')$, the linear translation $T_{p',z'}:\cL^{ws}(p')\rightarrow\cL^{ws}(q')$ satisfies
	$$
	T_{p',z'}\big( \cL_0(p') \big) ~\subseteq~ \cL_0(z').
	$$	
\end{proposition}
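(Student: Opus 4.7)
The plan is to argue by contradiction, adapting the strategy of Proposition~\ref{prop:smallest-Lya-expo}. Fix $p',q'\in{\rm Per}(A)$, $z'\in\cL^{ws}(q')$ and $x'\in\cL_0(p')$; let $p,q,z,x$ be their counterparts under $h^{-1}$, let $\kappa$ be a common period of $p$ and $q$, and set $g=f^\kappa$. Define $w:=h^{-1}(x'+z'-p')\in\cF^{ws}(z)=\cF^{ws}(q)$. Since $h(\cF_0)=\cL_0$, the proposition is equivalent to $w\in\cF_0(z)$. If $k_0(q)=\dim E^{ws}$ then $\cF_0(z)=\cF^{ws}(z)$ and there is nothing to prove; otherwise assume for contradiction that $w\notin\cF_0(z)$.

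First I would invoke Lemma~\ref{lem:local-cs} at $q$ applied to the pair $(z,w)$: for any small $\delta>0$ we get $N_0$ and $c>0$ with
\[
d_{\cF^{ws}}\bigl(g^n(z),g^n(w)\bigr)\;\geq\;c\,e^{n\kappa(\lambda_1(q)-\delta)}\qquad\forall\,n\geq N_0.
\]
Then I would establish the matching upper bound of order $e^{n\kappa(\lambda_0+\delta)}$ by approximating $w$ through $su$-holonomies of $x$. Applying Lemma~\ref{lem:Diophantine} to $(p,z)$ produces $z_m\in\cF^{ss}(p)$ and $w_m\in\cF^u(z_m)\cap\cF^{ws}(z)\subset\cF^{su}(p)$ with $w_m\to z$. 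Setting $y_m:=\H^{su}_{p,w_m}(x)$, Lemma~\ref{lem:su-holonomy} gives $y_m=h^{-1}(x'+w_m'-p')\to w$. The $g$-invariance of $\cF^{su}$ yields
\[
d_{\cF^{ws}}\bigl(g^n(y_m),g^n(w_m)\bigr)=d_{\cF^{ws}}\bigl(\H^{su}_{p,g^n(w_m)}(g^n(x)),\,\H^{su}_{p,g^n(w_m)}(p)\bigr).
\]
Lemma~\ref{lem:C1-su} guarantees that $\cF^{su}$ is $C^{1+{\rm H\"older}}$. Taking first $m\to\infty$ and then $n\to\infty$, the targets $g^n(w_m)$ stay in a compact neighborhood of $q$ (because $g^n(w_m)\to g^n(z)\to q$ inside $\cF^{ws}(q)$), hence the $C^1$-norms of $\H^{su}_{p,g^n(w_m)}$ on a shrinking neighborhood of $p$ in $\cF^{ws}(p)$ are uniformly bounded by some $M$. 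Combining with $x\in\cF_0(p)$, which gives $d_{\cF^{ws}}(g^n(x),p)\leq e^{n\kappa(\lambda_0+\delta)}d_{\cF^{ws}}(x,p)$, and passing $m\to\infty$, one obtains
\[
d_{\cF^{ws}}\bigl(g^n(w),g^n(z)\bigr)\;\leq\;M\,e^{n\kappa(\lambda_0+\delta)}\,d_{\cF^{ws}}(x,p).
\]
Choosing $\delta<(\lambda_1(q)-\lambda_0)/2$ contradicts the lower bound for $n$ large.

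The main obstacle is the uniform $C^1$-bound $M$ on the family $\{\H^{su}_{p,g^n(w_m)}\}$ restricted to a small neighborhood of $p$ in $\cF^{ws}(p)$: although $g^n(w_m)$ may sit at large intrinsic $\cF^{su}$-distance from $p$, the targets remain in a compact subset of $\TT^d$ near $q$, so Lemma~\ref{lem:C1-su} together with compactness of $\TT^d$ controls the local holonomy derivatives. Making this uniform estimate precise, and carefully arranging the order of limits so that $g^n(w_m)$ does stay close to $q$, is the main technical content of the proof.
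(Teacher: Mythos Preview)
Your argument has a genuine gap at exactly the point you flag as the ``main obstacle'': the uniform bound $M$ on $\|D\H^{su}_{p,\,g^n(w_m)}\|$ is not available. Lemma~\ref{lem:C1-su} says $\cF^{su}$ is a $C^{1+{\rm H\"older}}$ foliation on $\TT^d$, which controls holonomies along \emph{short} $\cF^{su}$-paths; it does not give uniform $C^1$-control for holonomies along arbitrarily long leaf-paths. Here the $\cF^{su}$-path from $p$ to $g^n(w_m)$ passes through $g^n(z_m)\in\cF^{ss}(p)$, and for fixed $n$ the $\cF^{ss}$-segment has length comparable to $K_m\to\infty$; if instead you let $n\to\infty$ first, the $\cF^u$-segment blows up. Your justification (``the targets remain in a compact subset of $\TT^d$ near $q$'') conflates ambient distance on $\TT^d$ with intrinsic $\cF^{su}$-leaf distance --- the latter is what governs the holonomy derivative. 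One might hope to rescue this via Lemma~\ref{lem:su-holonomy}, since $\H^{su}_{p,b}=h^{-1}\circ T_{p',b'}\circ h$ with $T$ an isometry; but $h$ is only H\"older, so you get a uniform \emph{H\"older} bound with exponent $\alpha\beta<1$, and because $\lambda_0<\lambda_1(q)<0$ this reverses the needed inequality (the upper bound $e^{n\kappa(\lambda_0+\delta)\alpha\beta}$ decays \emph{slower} than the lower bound $e^{n\kappa(\lambda_1(q)-\delta)}$), so no contradiction results.

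The paper's proof avoids this by never using global $\cF^{su}$-holonomy. It decomposes the passage from $p$ to $q$ into an $\cF^{ss}$-step and an $\cF^u$-step, and iterates until each step separately has leaf-length $\le 1$: the time $m_n$ brings $f^{\kappa m_n}(y_n)$ within $\cF^{ss}$-distance $1$ of $p$, and the time $k_n$ is the last moment the $\cF^u$-distance stays $\le 1$ (Claim~\ref{clm:holo-norm}). The contradiction then comes from the \emph{bounded-distance} $C^1$-holonomy of $\cF^{ss}$ inside $\cF^s(p)$. Because these two clocks need not agree, the quantitative Diophantine density of Lemma~\ref{lem:Diophantine} (not mere density) is essential to guarantee $k_n/m_n\ge 1/\chi$, which drives the two-case estimate. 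Your approach, even if the order of limits were arranged carefully, cannot sidestep this: some version of the $m_n$-vs-$k_n$ balancing is unavoidable.
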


\begin{proof}
	We prove it by contradiction. Assume that there exist $p',q'\in{\rm Per}(A)$, $z'\in\cL^{ws}(q')$, and $x'\in\cL_0(p')$, such that
	$w'=T_{p',z'}(x')\notin\cL_0(z')$. We denote $p=h^{-1}(p'),q=h^{-1}(q')\in{\rm Per}(f)$, $z=h^{-1}(z')\in\cF^{ws}(q)$ and let $x=h^{-1}(x')\in\cF_0(p)$, then
	$w=h^{-1}(w)\notin\cF_0(z)$.
	This implies that $k_0(q)={\rm dim}E_0(q)<{\rm dim}E^{ws}(q)$.

	Let $\lambda_1(q)>\lambda_0$ be the smallest Lyapunov exponent of $q$ along $F_0(q)$, and let $\theta$ be the constant given by Lemma \ref{lem:Diophantine}.  Let
	\begin{align}\label{equ:dim-chi-delta}
	\chi=-\frac{2\cdot\log\big(\max_{x\in\TT^d}\|Df|_{E^u(x)}\|\big) }{\theta\cdot\log\big(\max_{x\in\TT^d}\|Df|_{E^{ss}(x)}\|\big)}>0,
	\qquad {\rm and} \qquad
	\delta=\frac{1}{2\chi+2}\big[ \lambda_1(q)-\lambda_0 \big]>0.
	\end{align}
	By taking an adapted metric, we can assume that
	\begin{align}\label{equ:ws-minnorm}
	\log\big( \min_{x\in\TT^d}m\big(Df|_{E^{ws}(x)}\big) \big)~>~\lambda_0-\delta.
	\end{align}
	
	 Since the linear translation is commuting with the action of $A$:
	$$
	A^i\circ T_{p',z'}(x')~=~T_{A^i(p'),A^i(z')}(A^i(x')), \qquad \forall i\in\ZZ,
	$$ 
	and $\cL_0$ is $A$-invariant, we can assume that $x'\in\cL_{0,{\it loc}}(p')$ and 
	$z',T_{p',z'}(x')\in\cL^{ws}_{\it loc}(p')$ after some $A$-iterations. Let $\kappa$ be the minimal common period of $p'$ and $q'$, which is also the minimal common period of $p,q\in{\rm Per}(f)$. According to Lemma \ref{lem:local-cs}, by taking the positive $A^{\kappa}$-iterations of $p',q',x',z',w'$, we can assume that the corresponding points $p,q,x,z,w$ satisfy the following properties:
	\begin{itemize}
	     \item  
	     $d_{\cF^{ws}}\big(p,f^{\kappa n}(x)\big)
	     \leq \exp\left[\kappa n\cdot(\lambda_0+\delta)\right]\cdot
	     d_{\cF^{ws}}(p,x)$, for every $n\geq0$;
	     \item the points $z,w\in\cF^{ws}(q)$ have the decomposition for $*=z,w$
	     $$ 
	     \exp_q^{-1}(*)~=~v_*~=~v_*^{ss}+v_*^E+v_*^F+v_*^u~\in~
	     E^{ss}(q)\oplus E_0(q)\oplus F_0(q)\oplus E^u(q)=T_q\TT^d,
	     $$
	     which satisfy the following: $d_{\cF^{ws}}(z,w)/\|v^F_z-v^F_w\|$ is arbitrarily close to $1$, and  $\|v^E_z-v^E_w\|/\|v^F_z-v^F_w\|$ is arbitrarily close to $0$. In particular, it follows from the proof of Lemma \ref{lem:local-cs} that there exists $\e_0\ll d_{\cF^{ws}}(z,w)$, such that if $d_{\cF^{ws}}(z_1,z)<\e_0$ and $d_{\cF^{ws}}(w_1,w)<\e_0$, then the corresponding decompositions for $z_1,w_1$ have the same property as $z,w$ avove and 
	     \begin{align}\label{equ:zw-growth}
	         d_{\cF^{ws}}\big(f^{\kappa n}(z_1),f^{\kappa n}(w_1)\big)
	         ~\geq~ \frac{1}{2}\exp\left[\kappa n\cdot(\lambda_1(q)-\delta)\right]\cdot
	         d_{\cF^{ws}}(z,w), \qquad \forall n\geq 0.
	     \end{align}
	\end{itemize}

    Lemma \ref{lem:Diophantine} shows that there exist two sequences of points $\{y_n\}\subset\cF^{ss}(p)$ and $\{z_n\}\subset\cF^{ws}(z)=\cF^{ws}(q)$, such that $z_n=\cF^u(y_n)\cap\cF^{ws}(z)$. Moreover, they satisfy
    $$
    K_n=d_{\cF^{ss}}(p,y_n)\rightarrow+\infty, n\rightarrow\infty, 
    \qquad d_{\cF^u}(y_n,z_n)\leq K_n^{-\theta}, 
    \qquad d_{\cF^{ws}}(z_n,z)\leq K_n^{-\theta}.
    $$
    Let $y_n'=h^{-1}(y_n)$ and $z_n'=h^{-1}(z_n)$ and consider the linear translation $T_{p',x'}=T_{z',w'}$. We have the following 
    $$
    x_n'=T_{p',x'}(y_n')\in\cL^{ss}(x'),
    \qquad {\rm and} \qquad w_n'=T_{p',x'}(z_n')\in\cL^{ws}(z)\cap\cL^u(x_n). 
    $$
    In particular, this implies that
    $$
    d_{L^{ss}}(x',x_n')=d_{L^{ss}}(p,y_n), \qquad
    d_{L^u}(x_n',w_n')=d_{L^u}(y_n',z_n'), \qquad {\rm and} \qquad
    d_{L^{ws}}(w_n',w)=d_{L^{ws}}(z_n',z).
    $$ 
    
    Also consider points $x_n=h(x_n')$ and $w_n=h(w_n')$. Since the conjugacy $h$ preserves all invariant foliations, we have
    $$
    x_n=\cF^{ss}(x)\cap\cF^{ws}(y_n), \qquad {\rm and} \qquad
    w_n=\cF^u(y_n)\cap\cF^{ws}(w)=\cF^u(y_n)\cap\cF^{ws}(z).
    $$
    Moreover, from Lemma \ref{lem:Diophantine}, we have
    $$
    \tilde{K}_n=d_{\cF^{ss}}(x,x_n)\rightarrow+\infty, n\rightarrow\infty, 
    \qquad d_{\cF^u}(x_n,w_n)\leq \tilde{K}_n^{-\theta}, 
    \qquad d_{\cF^{ws}}(w_n,w)\leq \tilde{K}_n^{-\theta}.
    $$
    
    \begin{claim}\label{clm:holo-norm}
    	There exists a constant $C_1>1$, such that for every pair of points $\tilde{y},\tilde{z}$, if $\tilde{z}\in\cF^u(\tilde{y})$ and $d_{\cF^u}(\tilde{y},\tilde{z})\leq1$, then the $\cF^u$-holonomy map $\H^u_{\tilde{y},\tilde{z}}:\cF^{ws}(\tilde{y})\rightarrow\cF^{ws}(\tilde{z})$ is globally defined, and the derivative of $\,\,\H^u_{\tilde{y},\tilde{z}}$ satisfies the following bounds
    	$$
    	\frac{1}{C_1}~<~m\big(D\H^u_{\tilde{y},\tilde{z}}(\tilde{y}_1)\big)
    	~\leq~\|D\H^u_{\tilde{y},\tilde{z}}(\tilde{y}_1)\|~<~C_1,
    	\qquad \forall \tilde{y}_1\in\cF^{ws}(\tilde{y}).
    	$$
    \end{claim}

    \begin{proof}[Proof of the Claim]
    	It follows from the global product structure of $\cF^u$ and $\cF^{ws}$ in each leaf of $\cF^{cu}$ and from the fact that the conjugacy $h$ preserves all invariant foliations, that for every pair of points $\tilde{y},\tilde{z}$ if $\tilde{z}\in\cF^u(\tilde{y})$ and $d_{\cF^u}(\tilde{y},\tilde{z})\leq1$,  the holonomy map $\H^u_{\tilde{y},\tilde{z}}:\cF^{ws}(\tilde{y})\rightarrow\cF^{ws}(\tilde{z})$ is globally defined and $d_{\cF^u}\big(\tilde{y}_1,\H^u_{\tilde{y},\tilde{z}}(\tilde{y}_1)\big)$ is uniformly bounded for every $\tilde{y}_1\in\cF^{ws}(\tilde{y})$. 
    	
    	From the center bunching condition on $f$, the unstable foliation $\cF^u$ is uniformly $C^1$-smooth inside each center unstable leaf (see Lemma \ref{lem:C1-su}). Thus there exists a constant $C_1>1$, such that 
    	$$
    	\frac{1}{C_1}~<~m\big(D\H^u_{\tilde{y},\tilde{z}}(\tilde{y}_1)\big)
    	~\leq~\|D\H^u_{\tilde{y},\tilde{z}}(\tilde{y}_1)\|~<~C_1,
    	\qquad \forall \tilde{y}_1\in\cF^{ws}(\tilde{y}).
    	$$
    \end{proof}
   
    Since $z_n\rightarrow z$ and $w_n\rightarrow w$ as $n\rightarrow\infty$, equation (\ref{equ:zw-growth}) implies that for every sufficiently large $n$ we have
    \begin{align}\label{equ:zw_n-growth}
    d_{\cF^{ws}}\big(f^{\kappa m}(z_n),f^{\kappa n}(w_n)\big)
    ~\geq~ \frac{1}{2}\exp\left[\kappa m\cdot(\lambda_1(q)-\delta)\right]\cdot
    d_{\cF^{ws}}(z,w), \qquad \forall m\geq 0.
    \end{align}
    Also for every sufficiently large $n$ we have the bound
    $d_{\cF^u}(y_n,z_n)\leq K_n^{-\theta}$ where $K_n=d_{\cF^{ss}}(p,y_n)$. Let $k_n$ be defined by
    $$
    k_n=\max\left\{m\in\NN:~d_{\cF^u}\big(f^{\kappa m}(z_n),f^{\kappa m}(y_n)\big)\leq1 \right\},
    \qquad {\rm then} \qquad
    k_n\geq\frac{\theta\cdot\log K_n}{\kappa\cdot\log\big(\max_{x\in\TT^d}\|Df|_{E^u(x)}\|\big)}-1.
    $$
    On the other hand if we define $m_n$ as
    $$
    m_n=\min\left\{m\in\NN:~d_{\cF^{ss}}\big(p,f^{\kappa m}(y_n)\big)\leq1 \right\},
    \qquad {\rm then} \qquad
    m_n\leq
    -\frac{\log K_n}{\kappa\cdot\log\big(\max_{x\in\TT^d}\|Df|_{E^{ss}(x)}\|\big)}+1.
    $$
    So for every sufficiently large $n$, using the definition of $\chi$ in (\ref{equ:dim-chi-delta}) , we have
    \begin{align}\label{equ:k_n/m_n}
        \frac{k_n}{m_n}~\geq~
        \frac{\theta\cdot\log\big(\max_{x\in\TT^d}\|Df|_{E^{ss}(x)}\|\big)}{2\cdot\log\big(\max_{x\in\TT^d}\|Df|_{E^u(x)}\|\big) }
        ~=~\frac{1}{\chi}.
    \end{align}
    
    \vskip2mm
    
    \noindent {\bf Case 1.} There are infinitely many $n$ such that $k_n\geq m_n$.
    
    By taking corresponding subsequence, we can assume that
     $d_{\cF^u}\big(f^{\kappa m_n}(y_n),f^{\kappa m_n}(z_n)\big)\leq 1$ for all $n$ from this subsequence. From Claim \ref{clm:holo-norm} and equation (\ref{equ:zw_n-growth}), we have
     \begin{align*}
          \frac{d_{\cF^u}\big(f^{\kappa m_n}(y_n),f^{\kappa m_n}(x_n)\big) }{d_{\cF^{ws}}\big(p,f^{\kappa m_n}(x)\big)}
          ~&\geq~\frac{1}{C_1}\cdot\frac{d_{\cF^u}\big(f^{\kappa m_n}(z_n), f^{\kappa m_n}(w_n)\big)}{d_{\cF^{ws}}\big(p,f^{\kappa m_n}(x)\big)} \\
          &\geq~\frac{1}{2C_1}\cdot\frac{\exp\left[\kappa m_n\cdot(\lambda_1(q)-\delta)\right]
          	}{\exp\left[\kappa m_n\cdot(\lambda_0+\delta)\right]}
          \cdot\frac{d_{\cF^{ws}}(z,w)}{d_{\cF^{ws}}(p,x)}
     \end{align*}
     Recall from the definition (\ref{equ:dim-chi-delta}) that $\lambda_1(q)-\lambda_0=(2\chi+2)\cdot\delta>2\delta$. Hence we have
     \begin{align}\label{equ:infty}
     \lim_{n\rightarrow\infty}\frac{d_{\cF^u}\big(f^{\kappa m_n}(y_n),f^{\kappa m_n}(x_n)\big) }{d_{\cF^{ws}}\big(p,f^{\kappa m_n}(x)\big)}=+\infty.
     \end{align}
     Notice that if we denote
     $\H^{ss}_{p,f^{\kappa m_n}(y_n)}:\cF^{ws}(p)\rightarrow\cF^{ws}\big(f^{\kappa m_n}(y_n)\big)$ be the holonomy map induced by $\cF^{ss}$ restricted in $\cF^s(p)$, then 
     $$
     \H^{ss}_{p,f^{\kappa m_n}(y_n)}\big(f^{\kappa m_n}(x)\big)~=~f^{\kappa m_n}(x_n).
     $$
     Therefore the equation (\ref{equ:infty}) gives a contradiction since $d_{\cF^{ss}}\big(p,f^{\kappa m_n}(y_n)\big)\leq1$ and $\cF^{ss}$ is uniformly $C^1$-smooth inside the stable leaf $\cF^s(p)$.
     
     \vskip2mm
    
    \noindent {\bf Case 2.} We can assume have $k_n<m_n$ for all sufficiently large $n$.
    
    Recall that $\lambda_1(q)-\lambda_0=(2\chi+2)\cdot\delta$.
    Since $k_n/m_n>1/\chi>0$, from the definition of $m_n$, equation (\ref{equ:ws-minnorm}) and (\ref{equ:zw_n-growth}) imply that
    \begin{align*}
    \frac{d_{\cF^u}\big(f^{\kappa m_n}(y_n),f^{\kappa m_n}(x_n)\big) }{d_{\cF^{ws}}\big(p,f^{\kappa m_n}(x)\big)}
    ~&\geq~\frac{\big[\min_{x\in\TT^d}m\big(Df|_{E^{ws}(x)}\big)\big]^{\kappa(m_n-k_n)}
    	\cdot d_{\cF^{ws}}\big(f^{\kappa k_n}(y_n),f^{\kappa k_n}(x_n)\big)}{d_{\cF^{ws}}\big(p,f^{\kappa m_n}(x)\big)} \\
    &\geq~\frac{1}{2C_1}\cdot\frac{\exp\big[\kappa(m_n-k_n)(\lambda_0-\delta)\big]\cdot
    	\exp\left[\kappa k_n\cdot(\lambda_1(q)-\delta)\right]
    }{\exp\left[\kappa m_n\cdot(\lambda_0+\delta)\right]}
    \cdot\frac{d_{\cF^{ws}}(z,w)}{d_{\cF^{ws}}(p,x)} \\
    &\geq~\frac{1}{2C_1}\cdot
    \frac{\exp\left[\kappa m_n\cdot\left((1-\chi^{-1})(\lambda_0-\delta)+\chi^{-1}(\lambda_1(q)-\delta)\right)\right] }{\exp\left[\kappa m_n\cdot(\lambda_0+\delta)\right]}
    \cdot\frac{d_{\cF^{ws}}(z,w)}{d_{\cF^{ws}}(p,x)} \\
    &\geq~\frac{1}{2C_1}\cdot
    \frac{\exp\left[\kappa m_n\cdot(\lambda_0+\delta+2\chi^{-1}\delta)\right] }{\exp\left[\kappa m_n\cdot(\lambda_0+\delta)\right]}
    \cdot\frac{d_{\cF^{ws}}(z,w)}{d_{\cF^{ws}}(p,x)} \\
    &\longrightarrow~\infty\quad(n\rightarrow\infty).
    \end{align*}
    This is a contradiction, thus finishing the proof of the proposition.
\end{proof}

\begin{corollary}\label{cor:holonomy-invariant}
	For any periodic point of $f$, the dimension of the invariant space associated to the Lyapunov exponent $\lambda_0$ is a constant $k_0$ independent of the point. 
	Moreover, there exists a $k_0$-dimensional linear $A$-invariant foliation $\cL_0$ on $\TT^d$ such that its restriction to $\cL^{ws}(p')$ coincides with $\cL_0|_{\cL^{ws}(p')}$ for every $p'\in{\rm Per}(A)$.
\end{corollary}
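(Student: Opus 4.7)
The plan is to deduce the corollary from Proposition \ref{prop:holonomy-invariant} in three linked stages: constancy of the dimension, identification of the leaves $\cL_0(p')$ through different periodic points as translates of a common set $V \subset L^{ws}$, and linearity plus $A$-invariance of $V$. Once $V$ is recognized as a $k_0$-dimensional $A$-invariant linear subspace of $L^{ws}$, the desired global foliation $\cL_0$ on $\TT^d$ is defined as the family of translates of $V$.

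For the first two stages, pick $p, q \in {\rm Per}(f)$ with $p' = h(p)$ and $q' = h(q)$. Applying Proposition \ref{prop:holonomy-invariant} to the pair $(p', q')$ with $z' = q'$ gives $T_{p', q'}(\cL_0(p')) \subseteq \cL_0(q')$; swapping the roles and taking $z' = p'$ yields the reverse inclusion, and since $T_{q', p'} = T_{p', q'}^{-1}$ the two combine to the equality $T_{p', q'}(\cL_0(p')) = \cL_0(q')$. Because $T_{p', q'}$ is a homeomorphism and topological dimension is preserved under homeomorphisms, this forces $k_0(p) = k_0(q) =: k_0$. Setting $V_{p'} := \cL_0(p') - p' \subset L^{ws}$, the same equality also yields $V_{p'} = V_{q'} =: V$, so there is a common subset $V \subset L^{ws}$ with $\cL_0(p') = p' + V$ for every $p' \in {\rm Per}(A)$. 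The $A$-invariance $A(V) = V$ then follows from the identity $A(\cL_0(p')) = \cL_0(Ap')$ (a consequence of $f$-invariance of $\cF_0$ combined with $h \circ f = A \circ h$) by comparing translates at the periodic point $Ap'$.

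The heart of the proof is showing that $V$ is a linear subspace. Specializing Proposition \ref{prop:holonomy-invariant} to $q' = p'$ and $z' \in \cL_0(p')$, and noting that then $\cL_0(z') = \cL_0(p')$, the inclusion reads $V + v \subseteq V$ for every $v \in V$, so that $V$ is a closed connected subsemigroup of $L^{ws}$ containing $0$; moreover $V$ is a $k_0$-dimensional topological submanifold of $L^{ws}$, being the translate of a leaf of the topological foliation $\cL_0|_{\cL^{ws}(p')}$. To conclude that $V$ is linear, I would combine this topological-manifold structure at $0$, the semigroup property, and the contraction $A|_{L^{ws}}$ (which brings any element of $V$ arbitrarily close to $0$ by forward iteration while remaining in $V$) to recover the subspace structure, and in particular the negation property $-v \in V$ for every $v \in V$. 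With $V$ identified as a $k_0$-dimensional $A$-invariant linear subspace, the linear foliation $\cL_0 := \{x + V : x \in \TT^d\}$ is $A$-invariant, and by construction restricts to the originally defined $\cL_0|_{\cL^{ws}(p')}$ on every weak stable leaf through a periodic point.

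The main obstacle I expect is precisely this linearity step. Because the conjugacy $h$ is only H\"older continuous, the leaves of $\cL_0|_{\cL^{ws}(p')}$ are a priori only topological (not $C^1$) submanifolds of $\cL^{ws}(p')$, so the passage from the semigroup identity $V + V \subseteq V$ together with $A$-invariance to the full additive subgroup (hence linear subspace) structure must be carried out without any appeal to a smooth tangent space at $0$, relying instead on the delicate interplay between closure under addition and the contraction action of $A|_{L^{ws}}$.
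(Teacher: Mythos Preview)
Your overall plan coincides with the paper's: deduce constancy of $k_0$ and identify a common set $V=\cL_0(p')-p'$ from Proposition~\ref{prop:holonomy-invariant}, then show $V$ is a linear $A$-invariant subspace and declare $\cL_0$ to be the foliation by its translates. The paper argues in the same order and, at the linearity step, simply asserts the ``fact'' that a $k_0$-dimensional set closed under self-addition is a linear subspace.

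Where you diverge is in the linearity step, and here you are making your life harder than necessary. You only specialize Proposition~\ref{prop:holonomy-invariant} to $z'\in\cL_0(p')$, obtaining the semigroup property $v+V\subseteq V$; you then propose to recover $-v\in V$ via the contraction $A|_{L^{ws}}$, and you correctly flag this as delicate because $h$ is only H\"older. But Proposition~\ref{prop:holonomy-invariant} applies to \emph{every} $z'\in\cL^{ws}(p')$, not just to $z'\in\cL_0(p')$. Taking $z'=p'-v$ for $v\in V$ gives $(p'-v)+V\subseteq\cL_0(p'-v)$; since $p'=(p'-v)+v$ lies in the left-hand side, $p'\in\cL_0(p'-v)$, hence $\cL_0(p'-v)=\cL_0(p')$ and $-v\in V$. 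So $V$ is a subgroup in one line, with no appeal to $A$. (The paper does record the inclusion for arbitrary $x'\in\cL^{ws}(p')$ before specializing, though it does not make this negation step explicit either.) Once $V$ is a path-connected subgroup of $L^{ws}$ one concludes it is a linear subspace, e.g.\ by Yamabe's theorem, or more concretely by observing that every leaf of $\cL_0|_{\cL^{ws}(p')}$ is then a single $V$-coset and the resulting translation-invariant $C^0$ foliation forces $V$ to be closed, hence a subspace. A minor point: your phrase ``closed connected subsemigroup'' is ambiguous --- if ``closed'' is meant topologically, it is not justified, since leaves of foliations need not be closed.
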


\begin{proof}
	From Proposition \ref{prop:holonomy-invariant}, we know that there exists $k_0\in\NN$, such that  $k_0(p)=k_0$ for every $p\in{\rm Per}(f)$. 
	For any $p'\in{\rm Per}(f)$ and $x'\in\cL^{ws}(p)$, by applying Proposition \ref{prop:holonomy-invariant} we have
	$$
	T_{p',x'}(\cL_0(p'))\subset\cL_0(x').
	$$
	
    In particular, if we take $x'\in\cL_0(p')$, then
	$$
	T_{p',x'}(\cL_0(p'))\subset\cL_0(p'), \qquad \forall x'\in\cL_0(p'),
	$$
	which implies $\cL_0(p')$ is a linear subspace in $\cL^{ws}(p')$. Here we use the fact that, if a $k_0$-dimensional subset contained in a linear vector space is invariant by adding any element in this set, then it is a $k_0$-dimensional linear subspace. By taking the linear translation in $\cL^{ws}(p')$, we have $\cL_0|_{\cL^{ws}(p')}$ is a linear foliation in $\cL^{ws}(p')$. This holds for every $p'\in{\rm Per}(A)$.

	Finally, since for every $p',q'\in{\rm Per}(A)$ and $z'\in\TT^d$, the fact that $T_{p',q'}(\cL_0|_{\cL^{ws}(p')})=\cL_0|_{\cL^{ws}(q')}$ implies that
	$$
	T_{p',z'}(\cL_0|_{\cL^{ws}(p')})=T_{q',z'}(\cL_0|_{\cL^{ws}(q')}).
	$$
	So we can  define a $k_0$-dimensional linear foliation $\cL_0$ on $\TT^d$ such that its restriction to $\cL^{ws}(p')$ coincides with $\cL_0|_{\cL^{ws}(p')}$ for every $p'\in{\rm Per}(A)$. Finally, the foliation $\cL_0$ is $A$-invariant since it is $A$-invariant in  $\cL^{ws}(p')$ for every $p'\in{\rm Per}(A)$.
\end{proof}

\begin{proposition}\label{prop:foliation}
	The foliation $\cF_0=h^{-1}(\cL_0)$ is a $C^0$-foliation on $\TT^d$ with uniformly $C^1$-smooth leaves. The tangent bundle $E_0=T\cF_0$ is a H\"older continuous $k_0$-dimensional $Df$-invariant bundle which is jointly integrable with $E^{ss}\oplus E^u$:
	$$
	T(\cF^{ss}\oplus\cF_0\oplus\cF^u)~=~E^{ss}\oplus E_0\oplus E^u.
	$$
\end{proposition}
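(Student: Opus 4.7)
The starting point is Corollary \ref{cor:holonomy-invariant}, which already gives a $k_0$-dimensional linear $A$-invariant foliation $\cL_0$ sub-foliating $\cL^{ws}$ on all of $\TT^d$. Set $\cF_0 := h^{-1}(\cL_0)$. Since $h$ is a homeomorphism and matches $\cF^{ws}$ with $\cL^{ws}$, this is automatically a $C^0$-foliation of $\TT^d$ sub-foliating $\cF^{ws}$, of the correct dimension $k_0$, and $Df$-invariant: $f(\cF_0) = h^{-1}\circ A(\cL_0) = h^{-1}(\cL_0) = \cF_0$. Moreover, by the way $\cL_0$ was defined on periodic weak stable leaves, $\cF_0|_{\cF^{ws}(p)}$ for $p\in{\rm Per}(f)$ coincides with the Pesin strong stable foliation of $p$ associated to $\lambda_0$. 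By Theorem B of \cite{PSW} and Theorem 2.2 of \cite{Bro}, this Pesin foliation is uniformly $C^{1+{\rm H\"older}}$ on $\cF^{ws}_{\it loc}(p)$, and standard $f^{-\kappa}$-iteration spreads this regularity across all of $\cF^{ws}(p)$.

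The heart of the proof is to transfer leaf-wise $C^1$ smoothness from periodic $\cF^{ws}$-leaves to \emph{every} $\cF^{ws}$-leaf. For this I would use the $su$-holonomy. By Lemma \ref{lem:su-holonomy}, $\H^{su}_{x,y} = h^{-1}\circ T_{x',y'}\circ h$, and by Corollary \ref{cor:holonomy-invariant} the linear translations $T_{x',y'}$ preserve $\cL_0$ between $\cL^{ws}$-leaves; hence $\H^{su}_{x,y}$ carries $\cF_0|_{\cF^{ws}(x)}$ to $\cF_0|_{\cF^{ws}(y)}$ for every $x$, $y$ in the same $\cF^{su}$-leaf. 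By Lemma \ref{lem:C1-su}, $\H^{su}_{x,y}$ is uniformly $C^{1+{\rm H\"older}}$. Given any $y\in\TT^d$, density of periodic points for $f$ and density of the $\cF^{su}$-leaves allow us to choose $p\in{\rm Per}(f)$ and a point $x\in\cF^{ws}(p)\cap\cF^{su}(y)$; then pushing the Pesin foliation by $\H^{su}_{x,y}$ produces $\cF_0$ on $\cF^{ws}(y)$ as a $C^{1+{\rm H\"older}}$-foliation with uniform constants. Because the same procedure works for any $y$ and the $su$-holonomies satisfy uniform bounds, we obtain that $\cF_0$ has uniformly $C^1$ leaves and that $E_0 = T\cF_0$ is H\"older continuous on $\TT^d$.

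For joint integrability, the linear distribution $L^{ss}\oplus L_0\oplus L^u$ integrates tautologically to the linear foliation $\cL^{ss}\oplus\cL_0\oplus\cL^u$ on $\TT^d$, since all three summands are $A$-invariant linear foliations. Its pullback $h^{-1}(\cL^{ss}\oplus\cL_0\oplus\cL^u)$ is a $C^0$-foliation of $\TT^d$ that sub-foliates, and is sub-foliated by, each of $\cF^{ss}$, $\cF_0$, and $\cF^u$; therefore its tangent distribution contains $E^{ss}$, $E_0$, and $E^u$, and by dimension count equals $E^{ss}\oplus E_0\oplus E^u$. This gives exactly the joint integrability claim.

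The main technical obstacle is in the middle step: verifying that the $su$-pushforward of the Pesin foliation from a periodic leaf truly \emph{coincides} with $\cF_0$ on an arbitrary target leaf (rather than merely yielding some other $k_0$-dimensional $C^1$-foliation of the same leaf), and that one gets \emph{uniform} $C^1$-bounds. The coincidence follows from the fact that both foliations are $h$-preimages of the same linear foliation $\cL_0$ restricted to the target $\cL^{ws}$-leaf. Uniformity follows from the uniform $C^{1+{\rm H\"older}}$ bounds on $\H^{su}$ given by the center bunching assumption via Lemma \ref{lem:C1-su}, combined with the uniform Pesin estimates on periodic leaves; one may also need to observe that a $C^0$-limit of a sequence of $k_0$-planes that are $C^1$-images of a fixed plane under a convergent sequence of $C^1$-diffeomorphisms is still $C^1$ with the limit derivative.
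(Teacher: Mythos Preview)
Your proposal is correct and follows essentially the same approach as the paper: define $\cF_0=h^{-1}(\cL_0)$, use that it agrees with the Pesin foliation on periodic $\cF^{ws}$-leaves (where it is $C^{1+{\rm H\"older}}$), transport this regularity to every leaf via the $C^{1+{\rm H\"older}}$ holonomy $\H^{su}$ (which preserves $\cF_0$ because $h$ intertwines $\H^{su}$ with the linear translation preserving $\cL_0$), and read off H\"older continuity of $E_0$ from $E_0(x)=D\H^{su}_{y,x}(T\cF_0(y))$. For joint integrability the paper phrases the conclusion directly as $D\H^{su}_{x,y}(E_0(x))=E_0(y)$ rather than by pulling back the linear foliation $\cL^{ss}\oplus\cL_0\oplus\cL^u$; your version is equivalent, but note that speaking of the ``tangent distribution'' of $h^{-1}(\cL^{ss}\oplus\cL_0\oplus\cL^u)$ presupposes $C^1$ leaves, so it is cleaner to deduce leafwise smoothness from the $\H^{su}$-invariance you already established in the middle step.
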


\begin{proof}
	We have shown that $\cL_0$ is a linear foliation on $\TT^d$. This implies $\cL_0$ is jointly integrable with $\cL^{su}=\cL^{ss}\oplus\cL^u$. Since the conjugacy $h$ matches the strong unstable foliation $h(\cF^{su})=\cL^{su}$,
	the foliation $\cF_0=h^{-1}(\cL_0)$ is preserved by the holonomy map induced by the foliation $\cF^{su}$. That is, for every $x\in\TT^d$ and every $y\in\cF^{su}(x)$, if we denote by 
	$\,\H^{su}_{x,y}:\cF^{ws}(x)\rightarrow\cF^{ws}(y)$  the holonomy map of $\cF^{su}$ which maps $x$ to $y$, then
	$$
	\H^{su}_{x,y}\big(\cL_0|_{\cF^{ws}(x)}\big)~=~\cL_0|_{\cF^{ws}(y)}.
	$$
	
	Recall that $\cF_0$ is a $C^{1+{\rm H\"older}}$-foliation inside $\cF^{ws}(p)$ for every $p\in{\rm Per}(f)$. Lemma \ref{lem:C1-su} shows that the holonomy map $\H^{su}_{x,y}:\cF^{ws}(x)\rightarrow\cF^{ws}(y)$ is 
	$C^{1+{\rm H\"older}}$-smooth for every $x\in\TT^d$ and every $y\in\cF^{su}(x)$. For every $x\in\TT^d$, there exists $y\in\cF^{su}(x)\cap\cF^{ws}(p)$ for some periodic point $p$ such that
	$$
	\cF_0(x)~=~\H^{su}_{y,x}\big(\cF_0(y)\big)
	$$
	which is, hence,  $C^{1+{\rm H\"older}}$-smooth. Moreover, the tangent bundle 
	$$
	E_0(x)=T\cF_0(x)=D\H^{su}_{y,x}\big(T\cF_0(y)\big)
	$$
	varies H\"older continuously with respect to $x$. From the compactness of $\TT^d$, the foliation $\cF_0$ is a $C^0$-foliation with uniformly $C^1$-smooth leaves. Thus
	$$
	D\H^{su}_{x,y}\big(E_0(x)\big)=E_0(y), 
	\qquad \forall x\in\TT^d, \quad\forall y\in\cF^{su}(x).
	$$
    This implies $E_0$ is H\"older continuous and jointly integrable with $E^{ss}\oplus E^u$, and $T(\cF^{ss}\oplus\cF_0\oplus\cF^u)=E^{ss}\oplus E_0\oplus E^u$.
\end{proof}

\begin{proposition}\label{prop:second-Lya-expo}
	There exists $\delta_0>0$, such that for every periodic point $p$ of $f$ if $\lambda_1(p)$ be the smallest Lyapunov exponent of $p$ different to $\lambda_0(p)$ in $E^{ws}(p)$, then  $\lambda_1(p)\geq\lambda_0+\delta_0$.
\end{proposition}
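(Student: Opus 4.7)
The plan is to take $\delta_0 = \lambda_A' - \lambda_0$, where $\lambda_A' > \lambda_0$ is a spectral gap of $A|_{L^{ws}}$ that I will identify, and then prove $\lambda_1(p) \geq \lambda_A'$ for every $p \in \mathrm{Per}(f)$ via a direct comparison between the backward dynamics of $f$ along Pesin manifolds and the backward dynamics of $A$, transported through the conjugacy. First I would apply Lemma~\ref{lem:k0=ws} to the triple $(E_0,\cF_0,\cL_0)$ supplied by Proposition~\ref{prop:foliation}: since $E_0$ is a $Df$-invariant continuous subbundle with all periodic Lyapunov exponents equal to $\lambda_0$, and $h(\cF_0)=\cL_0$ is a linear $A$-invariant foliation, every Lyapunov exponent of $A$ along $L_0=T\cL_0$ equals $\lambda_0$. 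Let $L_0^*\supseteq L_0$ denote the sum of generalized eigenspaces of $A|_{L^{ws}}$ whose eigenvalues have modulus $e^{\lambda_0}$, and let $L^{ws}=L_0^*\oplus L_0'^*$ be the resulting $A$-invariant decomposition. In the case at hand $E_0\neq E^{ws}$ forces $L_0'^*\neq 0$, so I would set $\lambda_A'$ to be the smallest $\log|\mu|$ among eigenvalues $\mu$ of $A|_{L_0'^*}$, giving $\delta_0=\lambda_A'-\lambda_0>0$.

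For a fixed periodic point $p$ of period $\kappa$, I would pick $y$ on the Pesin invariant submanifold of $p$ tangent to $E_0(p)\oplus F_1(p)$, with $y\notin\cF_0(p)$; the resulting nonzero $F_1(p)$-component of the displacement gives $|\tilde f^{-n\kappa}(\tilde y)-\tilde p|\sim e^{-n\kappa\lambda_1(p)}$ on suitable lifts to $\RR^d$. Choosing the lift so that $\tilde H(\tilde p)=0$, which is possible since the origin is the only fixed point of the linear map $A^\kappa$ acting on $\RR^d$, I would set $v'=\tilde H(\tilde y)\in L^{ws}$; the matching $h(\cF_0)=\cL_0$ forces $v'\notin L_0$. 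The finite sup-norm bound $\|\tilde H-\mathrm{id}\|_\infty\leq C$ then yields the two-sided estimate
$$
\bigl|\,|A^{-n\kappa}(v')|-|\tilde f^{-n\kappa}(\tilde y)-\tilde p|\,\bigr|\leq 2C.
$$

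Decomposing $v'=v_0^*+v_1^*$ along $L_0^*\oplus L_0'^*$, I would argue as follows. If $v_0^*\neq 0$, then $|A^{-n\kappa}(v')|\gtrsim e^{-n\kappa\lambda_0}$, which grows strictly faster than $e^{-n\kappa\lambda_1(p)}$ since $\lambda_0<\lambda_1(p)$; dividing the two-sided estimate by $e^{-n\kappa\lambda_1(p)}$ and sending $n\to\infty$ produces a contradiction. Hence $v_0^*=0$, so $v'\in L_0'^*$, and $|A^{-n\kappa}(v')|\asymp e^{-n\kappa\lambda_{v'}}$ (up to sub-exponential polynomial factors from possible Jordan blocks) for some Lyapunov exponent $\lambda_{v'}$ of $A|_{L_0'^*}$. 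Applying both inequalities of the two-sided estimate forces the exponential rates to agree, so $\lambda_{v'}=\lambda_1(p)$; since $\lambda_{v'}\geq\lambda_A'$ by definition, $\lambda_1(p)\geq\lambda_A'=\lambda_0+\delta_0$, as desired.

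The main subtle point will be the asymptotic rate-matching step: the conjugacy only provides additive control of size $2C$, but because both quantities grow exponentially under backward iteration, the additive error is dwarfed and the exponential rates must coincide. Potential polynomial Jordan-block corrections from $A|_{L^{ws}}$ are sub-exponential and so do not affect the matching. A secondary technical ingredient is the existence of a Pesin submanifold tangent to $E_0(p)\oplus F_1(p)$ with the stated exponential behavior along $F_1(p)$, which is standard invariant manifold theory for hyperbolic periodic orbits.
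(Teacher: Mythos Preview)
Your approach is conceptually attractive but contains a genuine gap in the rate-matching step. The assertion
\[
|\tilde f^{-n\kappa}(\tilde y)-\tilde p|\sim e^{-n\kappa\lambda_1(p)}
\]
is not justified. The exponent $\lambda_1(p)$ is a purely local invariant of the periodic orbit; it controls the behaviour of $Df^{\kappa}$ only in a bounded neighbourhood of $p$. Once the backward orbit $\tilde f^{-n\kappa}(\tilde y)$ leaves that neighbourhood, the subsequent growth of $d_{\cF^{ws}}(f^{-n\kappa}(y),p)$ is governed by $\|Df^{-1}|_{E^{ws}}\|$ along the orbit, and in the adapted metric this gives only the coarse bound $\lesssim e^{-n\kappa(\lambda_0-\varepsilon)}$, not $e^{-n\kappa\lambda_1(p)}$. (Even the infinitesimal picture already fails: on a manifold tangent to $E_0(p)\oplus F_1(p)$ the $E_0$-component of the displacement expands under $f^{-\kappa}$ at the faster rate $e^{-\kappa\lambda_0}$ and dominates.) Consequently your argument for ruling out $v_0^*\neq 0$ breaks down, and so does the final identification $\lambda_1(p)=\lambda_{v'}$. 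There is also a secondary issue: from $y\notin\cF_0(p)$ and $h(\cF_0)=\cL_0$ you only obtain $v'\notin L_0$, not $v'\notin L_0^*$; since a priori $L_0\subsetneq L_0^*$ is possible, this does not place $v'$ where you need it.

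For comparison, the paper does not attempt a direct comparison with the spectrum of $A$ at all. Instead it fixes the gap $\lambda_1(p)-\lambda_0$ at a single fixed point $p$, assumes there are periodic points $q$ with $\lambda_1(q)$ arbitrarily close to $\lambda_0$, and then runs a holonomy argument of the same flavour as Propositions~\ref{prop:smallest-Lya-expo} and~\ref{prop:holonomy-invariant}: using the translation structure of $\cL_0$, the Diophantine Lemma~\ref{lem:Diophantine}, and the $C^1$ bound on $\cF^{ss}$-holonomies inside $\cF^s$ to produce an unbounded distortion of a bounded holonomy, a contradiction. The point is that this argument never leaves the nonlinear side and hence never needs to translate $\lambda_1(p)$ into a global escape rate.
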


\begin{proof}
	Assume that for every $\delta>0$, there exists $q_{\delta}\in{\rm Per}(f)$ such that $\lambda_1(q_{\delta})<\lambda_0+\delta$. We proceed with a proof by contradiction.
	
	Let $p$ be the fixed point of $f$ and let $\lambda_1(p)>\lambda_0$ be the smallest Lyapunov exponent of $p$ which is different to $\lambda_0$ inside $E^{ws}(p)$. Let $\theta$ be the constant given by Lemma \ref{lem:Diophantine}. Let
	\begin{align}\label{equ:second-chi-delta}
	\chi=-\frac{2\cdot\log\big(\max_{x\in\TT^d}\|Df|_{E^u(x)}\|\big) }{\theta\cdot\log\big(\max_{x\in\TT^d}\|Df|_{E^{ss}(x)}\|\big)}>0,
	\qquad {\rm and} \qquad
	\delta=\frac{1}{2\chi+2}\big[ \lambda_1(p)-\lambda_0 \big]>0.
	\end{align}
	By our assumption, there exists $q\in{\rm Per}(f)$ such that $\lambda_1(q)<\lambda_0+\delta$. Denote by $\kappa\in\NN$ the minimal period of $q$.
	As before, by taking the adapted metric, we can assume that
	$\,\,\log\big( \min_{x\in\TT^d}m(Df|_{E^{ws}(x)}) \big)>(\lambda_0-\delta)$.
	 
	\begin{claim}\label{clm:cs-growth}
		There exist $x\in\cF^{ws}_{\it loc}(q)\setminus\cF_0(q)$, $z\in\cF^{ws}_{\it loc}(p)\setminus\cF_0(p)$ and a constant $\e$ satisfying $0<\e\ll d_{\cF^{ws}}(p,z)$, such that the following properties hold:
		\begin{enumerate}
			\item For every $m\geq0$, the point $x$ satisfies
			  $d_{\cF^{ws}}\big(q,f^{\kappa m}(x)\big)\leq
			     \exp[\kappa m\cdot(\lambda_0+\delta)]\cdot d_{\cF^{ws}}(q,x)$.
			\item For every $m\geq0$ and any two points $z_1,w_1\in\cF^{ws}(p)$ such that $d_{\cF^{ws}}(p,w_1)<\e$  and $d_{\cF^{ws}}(z,z_1)<\e$, the following holds
			$$
			d_{\cF^{ws}}\big(f^{\kappa m}(w_1),f^{\kappa m}(z_1)\big)~\geq~
			\frac{1}{2}\exp\big[\kappa m\cdot(\lambda_1(p)-\delta)\big]\cdot 
			d_{\cF^{ws}}(p,z).
			$$
			\item Let $q'=h^{-1}(q)$, $x'=h^{-1}(x)$, $p'=h^{-1}(p)$, and $z'=h^{-1}(z)$, then $T_{q',p'}(x')=z'$. 
		\end{enumerate}
	\end{claim} 
	
	\begin{proof}[Proof of the Claim]
		The proof follows the analysis in Lemma \ref{lem:local-cs} and Proposition \ref{prop:holonomy-invariant}. Since $\lambda_1(q)<\lambda_0+\delta$, we first take $x\in\cF^{ws}_{\it loc}(q)\setminus\cF_0(q)$ which is contained in the local Pesin stable manifold of $q$ corresponding to the Lyapunov exponent $\lambda_1(q)$. If we take $x$ sufficiently close to $q$, then we, clearly, we have the first item. Then we define 
		$z=h\big(T_{q',p'}(x')\big)$, which proves the third item. It is obvious that $z\notin\cF_0(p)$. If we consider appropriate positive $f^{\kappa}$-iteration of $x,z$, then, according to Lemma \ref{lem:local-cs}, we have
		$$
		d_{\cF^{ws}}\big(f^{\kappa m}(p),f^{\kappa m}(z)\big)
		~\geq~
		\frac{1}{2}\exp\big[\kappa m\cdot(\lambda_1(p)-\delta)\big]\cdot 
		d_{\cF^{ws}}(p,z),
		\qquad \forall m\geq0.
		$$
		Finally, we can take a sufficiently small positive $e\ll d_{\cF^{ws}}(p,z)$ such that the second item of claim also holds.
	\end{proof} 

    Recall that Lemma \ref{lem:Diophantine} states that there exist two sequences of points $\{y_n\}\subset\cF^{ss}(q)$ and $\{w_n\}\subset\cF^{ws}(p)$, such that $w_n=\cF^u(y_n)\cap\cF^{ws}(p)$. Moreover, they satisfy
    $$
    K_n=d_{\cF^{ss}}(q,y_n)\rightarrow+\infty~(n\rightarrow\infty), 
    \qquad d_{\cF^u}(y_n,w_n)\leq K_n^{-\theta}, 
    \qquad d_{\cF^{ws}}(w_n,p)\leq K_n^{-\theta}.
    $$
   
    Let $\H^{ss}_{q,y_n}:\cF^{ws}(q)\rightarrow\cF^{ws}(y_n)$ be the holonomy map induced by $\cF^{ss}$ restricted to $\cF^s(q)$, and let $x_n=\H^{ss}_{q,y_n}(x)$. Let $\H^u_{y_n,w_n}:\cF^{ws}(y_n)\rightarrow\cF^{ws}(w_n)=\cF^{ws}(p)$ be the holonomy map induced by $\cF^u$ restricted to $\cF^{cu}(p)$. Also let $z_n=\H^u_{y_n,w_n}(x_n)$. Then it is obvious that $w_n\rightarrow p$ and $z_n\rightarrow z$ as $n\rightarrow\infty$.
    So for every $n$ large enough, according to the second item of Claim \ref{clm:cs-growth} we have 
	\begin{align}\label{equ:second-zw_n-growth}
	d_{\cF^{ws}}\big(f^{\kappa n}(w_n),f^{\kappa m}(z_n)\big)
	~\geq~ \frac{1}{2}\exp\left[\kappa m\cdot(\lambda_1(p)-\delta)\right]\cdot
	d_{\cF^{ws}}(p,z), \qquad \forall m\geq 0.
	\end{align}
	
	For every sufficiently large $n$, we have
	$\,d_{\cF^u}(y_n,w_n)\leq K_n^{-\theta}$, where $K_n=d_{\cF^{ss}}(q,y_n)$. Let $k_n$ be defined as
	$$
	k_n=\max\left\{m\in\NN:~d_{\cF^u}\big(f^{\kappa m}(w_n),f^{\kappa m}(y_n)\big)\leq1 \right\},
	\qquad {\rm then} \qquad
	k_n\geq\frac{\theta\cdot\log K_n}{\kappa\cdot\log\big(\max_{x\in\TT^d}\|Df|_{E^u(x)}\|\big)}-1.
	$$
	On the other hand, let $m_n$ be defined as
	$$
	m_n=\min\left\{m\in\NN:~d_{\cF^{ss}}\big(q,f^{\kappa m}(y_n)\big)\leq1 \right\},
	\qquad {\rm then} \qquad
	m_n\leq
	-\frac{\log K_n}{\kappa\cdot\log\big(\max_{x\in\TT^d}\|Df|_{E^{ss}(x)}\|\big)}+1.
	$$
	So for every sufficiently large $n$, using the definition of $\chi$~(\ref{equ:second-chi-delta}) , we obtain
	\begin{align}\label{equ:second-k_n/m_n}
	\frac{k_n}{m_n}~\geq~
	\frac{\theta\cdot\log\big(\max_{x\in\TT^d}\|Df|_{E^{ss}(x)}\|\big)}{2\cdot\log\big(\max_{x\in\TT^d}\|Df|_{E^u(x)}\|\big) }
	~=~\frac{1}{\chi}.
	\end{align}
	
	If there exist infinitely many $n$ such that $k_n\geq m_n$, then
	$d_{\cF^u}\big(f^{\kappa m_n}(y_n),f^{\kappa m_n}(z_n)\big)\leq 1$ for all these $n$. From Claim \ref{clm:holo-norm} and Equation (\ref{equ:second-zw_n-growth}), we have
	\begin{align*}
	\frac{d_{\cF^u}\big(f^{\kappa m_n}(y_n),f^{\kappa m_n}(x_n)\big) }{d_{\cF^{ws}}\big(p,f^{\kappa m_n}(x)\big)}
	~&\geq~\frac{1}{C_1}\cdot\frac{d_{\cF^u}\big(f^{\kappa m_n}(z_n), f^{\kappa m_n}(w_n)\big)}{d_{\cF^{ws}}\big(p,f^{\kappa m_n}(x)\big)} \\
	&\geq~\frac{1}{2C_1}\cdot\frac{\exp\left[\kappa m_n\cdot(\lambda_1(q)-\delta)\right]
	}{\exp\left[\kappa m_n\cdot(\lambda_0+\delta)\right]}
	\cdot\frac{d_{\cF^{ws}}(z,w)}{d_{\cF^{ws}}(p,x)}
	\end{align*}
    Recall that from the definition (\ref{equ:second-chi-delta}) we have that $\lambda_1(q)-\lambda_0=(2\chi+2)\cdot\delta>2\delta$. Hence
	$$
	\lim_{n\rightarrow\infty}\frac{d_{\cF^u}\big(f^{\kappa m_n}(y_n),f^{\kappa m_n}(x_n)\big) }{d_{\cF^{ws}}\big(q,f^{\kappa m_n}(x)\big)}=+\infty.
	$$
	Let 
	$\H^{ss}_{q,f^{\kappa m_n}(y_n)}:\cF^{ws}(q)\rightarrow\cF^{ws}\big(f^{\kappa m_n}(y_n)\big)$ be the holonomy map induced by $\cF^{ss}$ restricted to $\cF^s(q)$. Then 
	$$
	\H^{ss}_{q,f^{\kappa m_n}(y_n)}\big(f^{\kappa m_n}(x)\big)~=~f^{\kappa m_n}(x_n).
	$$
	This contradicts to the fact that $d_{\cF^{ss}}\big(q,f^{\kappa m_n}(y_n)\big)\leq1$ and $\cF^{ss}$ is uniformly $C^1$-smooth in the stable leaf $\cF^s(p)$.
	
	\vskip2mm
	
	Therefore, now we can assume have $k_n<m_n$ for every $n$. Recall that
    $\lambda_1(p)-\lambda_0=(2\chi+2)\cdot\delta$, and that
    $$
    \log\big( \min_{x\in\TT^d}m(Df|_{E^{ws}(x)}) \big)>(\lambda_0-\delta).
    $$
     Since $k_n/m_n>1/\chi>0$, from the definition of $m_n$, equation (\ref{equ:second-zw_n-growth}) implies that
	\begin{align*}
	\frac{d_{\cF^u}\big(f^{\kappa m_n}(y_n),f^{\kappa m_n}(x_n)\big) }{d_{\cF^{ws}}\big(q,f^{\kappa m_n}(x)\big)}
	~&\geq~\frac{\big[\min_{x\in\TT^d}m\big(Df|_{E^{ws}(x)}\big)\big]^{\kappa(m_n-k_n)}
		\cdot d_{\cF^{ws}}\big(f^{\kappa k_n}(y_n),f^{\kappa k_n}(x_n)\big)}{d_{\cF^{ws}}\big(q,f^{\kappa m_n}(x)\big)} \\
	&\geq~\frac{1}{2C_1}\cdot\frac{\exp\big[\kappa(m_n-k_n)(\lambda_0-\delta)\big]\cdot
		\exp\left[\kappa k_n\cdot(\lambda_1(p)-\delta)\right]
	}{\exp\left[\kappa m_n\cdot(\lambda_0+\delta)\right]}
	\cdot\frac{d_{\cF^{ws}}(z,w)}{d_{\cF^{ws}}(q,x)} \\
	&\geq~\frac{1}{2C_1}\cdot
	\frac{\exp\left[\kappa m_n\cdot\left((1-\chi^{-1})(\lambda_0-\delta)+\chi^{-1}(\lambda_1(p)-\delta)\right)\right] }{\exp\left[\kappa m_n\cdot(\lambda_0+\delta)\right]}
	\cdot\frac{d_{\cF^{ws}}(z,w)}{d_{\cF^{ws}}(q,x)} \\
	&\geq~\frac{1}{2C_1}\cdot
	\frac{\exp\left[\kappa m_n\cdot(\lambda_0+\delta+2\chi^{-1}\delta)\right] }{\exp\left[\kappa m_n\cdot(\lambda_0+\delta)\right]}
	\cdot\frac{d_{\cF^{ws}}(z,w)}{d_{\cF^{ws}}(q,x)} \\
	&\longrightarrow~\infty\quad(n\rightarrow\infty).
	\end{align*}
	This is absurd. We finished the proof of this proposition.
\end{proof}

\begin{proposition}\label{prop:domiated-splitting}
	There exists a continuous $Df$-invariant bundle $F_0\subset E^{ws}$ and an integer $m\in\NN$, such that 
	$$
	E^{ws}=E_0\oplus F_0, \qquad {\it and} \qquad
	\|Df^m|_{E_0(x)}\|<\mu_0^m<m(Df^m|_{F_0(x)}), \quad\forall x\in\TT^d.
	$$ 
	Here the constant $\mu_0=\exp\big(\lambda_0+(\delta_0/2)\big)\in(\mu,1)$, and $\delta_0$ is the constant given by Proposition \ref{prop:second-Lya-expo}. 
	
	This implies that $E^{ws}=E_0\oplus F_0$ is a finer dominated splitting for $f$, and $f$ admits another absolutely partially hyperbolic splitting:
	$$
	T\TT^d=\big(E^{ss}\oplus E_0\big)\oplus F_0\oplus E^u,
	$$
	where the bundle $(E^{ss}\oplus E_0)\oplus E^u$ is integrable.
\end{proposition}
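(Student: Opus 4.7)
The plan is to deduce uniform domination between the $Df$-invariant subbundle $E_0$ and its quotient inside $E^{ws}$ from the periodic-orbit gap of Proposition \ref{prop:second-Lya-expo}, and then build the complement $F_0$ as a fixed point of a graph transform.

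First I upgrade the gap at periodic orbits to a gap over all invariant measures. Consider the H\"older continuous quotient cocycle $\overline{Df}$ on $Q:=E^{ws}/E_0$. At every periodic point $p$ the Lyapunov exponents of $Df$ on $E_0(p)$ all equal $\lambda_0$ (by construction of $E_0$ from Proposition \ref{prop:foliation}), and those on $Q(p)$ coincide with the exponents of $Df$ on $F_0(p)$, all $\geq\lambda_0+\delta_0$ by Proposition \ref{prop:second-Lya-expo}. Since $f\in {\rm Diff}^2(\TT^d)$ is Anosov, every $f$-invariant ergodic measure is hyperbolic; Katok's approximation theorem then produces, for every ergodic $\nu$, a sequence of periodic measures $\nu_{p_n}\to\nu$ with convergent Lyapunov spectra on any H\"older continuous $Df$-invariant subbundle. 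Passing to the limit yields
\[
\lambda^+(E_0,\nu)\le\lambda_0\qquad\text{and}\qquad\lambda^-(Q,\nu)\ge\lambda_0+\delta_0
\]
for every ergodic invariant $\nu$.

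Next I convert this spectral gap into a uniform finite-time gap via Cao's subadditive theorem. The continuous subadditive cocycle $\phi_n(x):=\log\|Df^n|_{E_0(x)}\|-\log m(Df^n|_{Q(x)})$ has $\lim\tfrac1n\int\phi_n\,d\nu\le-\delta_0$ for every ergodic $\nu$. Cao's theorem then gives $\tfrac1n\sup_x\phi_n(x)\to\sup_\nu\lim\tfrac1n\int\phi_n\,d\nu\le-\delta_0$, so for some $m\in\NN$ one has $\sup_x\phi_m(x)<-m\delta_0/2$. Applying the same theorem separately to the cocycles $\log\|Df^n|_{E_0}\|-n\log\mu_0$ and $n\log\mu_0-\log m(Df^n|_Q)$ with $\mu_0:=\exp(\lambda_0+\delta_0/2)$, I enlarge $m$ if needed so that
\[
\|Df^m|_{E_0(x)}\|<\mu_0^m<m(Df^m|_{Q(x)}),\qquad\forall x\in\TT^d.
\]
Since $\lambda_0>\log\mu$ (by the definition of absolute partial hyperbolicity, which forces $m(Df|_{E^{ws}})>\mu$, together with Proposition \ref{prop:smallest-Lya-expo}), we have $\mu_0\in(\mu,1)$.

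Finally I construct $F_0$ by graph transform. Fix any continuous (e.g.\ Riemannian-orthogonal) complement $F_0^{(0)}$ of $E_0$ in $E^{ws}$; continuous complements of $E_0$ in $E^{ws}$ are in bijection with continuous sections of the bundle $\mathrm{Hom}(F_0^{(0)},E_0)$, and $Df^m$ acts on such sections by an affine cocycle whose linear part has operator norm bounded by $\|Df^m|_{E_0}\|\cdot m(Df^m|_{F_0^{(0)}})^{-1}$. The preceding uniform gap forces this norm to be uniformly $<1$, so by Banach's contraction principle there is a unique $Df^m$-invariant continuous section $\sigma^\ast$; by uniqueness it is also $Df$-invariant. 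Setting $F_0$ to be the graph of $\sigma^\ast$ gives a continuous $Df$-invariant complement of $E_0$ with $E^{ws}=E_0\oplus F_0$, and the bounds above translate into $\|Df^m|_{E_0(x)}\|<\mu_0^m<m(Df^m|_{F_0(x)})$. Combined with the original bounds on $E^{ss}$ and $E^u$ this yields the absolutely partially hyperbolic splitting $T\TT^d=(E^{ss}\oplus E_0)\oplus F_0\oplus E^u$; integrability of $E^{ss}\oplus E_0\oplus E^u$ was already established in Proposition \ref{prop:foliation}. The main obstacle I anticipate is the first step, since the Lyapunov exponent functionals on the space of invariant measures have the ``wrong'' semicontinuity properties for a direct density argument to extend the periodic gap; Katok's approximation (available because every invariant measure of the Anosov diffeomorphism $f$ is automatically hyperbolic) is the key tool that bypasses this issue, after which Cao's theorem and the contraction mapping principle are routine.
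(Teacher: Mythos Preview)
Your proof is correct and follows the same overall architecture as the paper's---extract a uniform exponential gap from the periodic-data gap of Proposition~\ref{prop:second-Lya-expo}, then build the invariant complement---but the specific tools you invoke differ from those in the paper. Where you first pass through all ergodic measures via Katok/Wang--Sun approximation and then apply Cao's subadditive theorem to obtain the finite-time bounds on $\|Df^n|_{E_0}\|$ and $m(Df^n|_{Q})$, the paper goes directly from periodic data to uniform bounds by quoting Kalinin's Liv\v{s}ic theorem for matrix cocycles \cite[Theorem~1.3]{K}, applied separately to the H\"older cocycles $A(x)=Df|_{E_0(x)}$ and $C(x)=\mathrm{proj}_{E_0^\perp}\circ Df|_{E_0^\perp(x)}$ in an upper-triangular block decomposition. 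This collapses your two-step measure-theoretic argument into a single citation. For the construction of $F_0$, you run a graph transform on sections of $\mathrm{Hom}(F_0^{(0)},E_0)$ and invoke the contraction mapping principle; the paper instead checks the cone-field criterion \cite[Theorem~2.6]{CP} for a cone around $E_0^\perp$ inside $E^{ws}$. These are well-known equivalent mechanisms for producing a dominated splitting, so the difference is cosmetic. One notational quibble: in your contraction estimate the relevant conorm is that of the quotient cocycle $C$ (equivalently $Df^m|_Q$), not literally $m(Df^m|_{F_0^{(0)}})$, since $F_0^{(0)}$ is not invariant; this is exactly what you established in the previous step, so the argument goes through.
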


\begin{proof}
	By Proposition~\ref{prop:foliation} we have that $E_0$ is a $k_0$-dimensional H\"older continuous $Df$-invariant bundle on $\TT^d$.
	Let $E_0^{\perp}(x)=\big\{v\in E^{ws}(x):v\perp E^0(x)\big\}$, which obviously also varies H\"older continuously with respect to $x$. Let $E_0^{\perp}=\bigcup_{x\in\TT^d}E_0^{\perp}(x)$, then we have the decomposition
	$$
	E^{ws}=E_0\oplus E_0^{\perp}.
	$$ 
	Let ${\it proj}_0:E^{ws}\rightarrow E_0^{\perp}$ be the projection with respect to the direct sum $E^{ws}=E_0\oplus E_0^{\perp}$.
	
	We take a H\"older continuous base
	$\big\{e_1(x),\cdots,e_{k_0}(x)\big\}_{x\in\TT^d}$ on $E_0$, and a H\"older continuous base 
    $\big\{e_{k_0+1}(x),\cdots,e_{{\rm dim}E^{ws}}(x)\big\}_{x\in\TT^d}$ on $E_0^{\perp}$.
	There exist three families of matrices $\{A(x)\}_{x\in\TT^d}$, $\{B(x)\}_{x\in\TT^d}$, and $\{C(x)\}_{x\in\TT^d}$, such that in the base
	$\big\{e_1(x),\cdots,e_{k_0}(x),e_{k_0+1}(x),\cdots,e_{{\rm dim}E^{ws}}(x)\big\}_{x\in\TT^d}$, we have
	\begin{equation*}
	    Df|_{E^{ws}(x)}~=~\left(  
	       \begin{array}{cc}
	           A(x) & B(x) \\
	           0    & C(x) \\
	       \end{array}
	       \right)
	       \qquad \forall x\in\TT^d.
	\end{equation*}
	Thus
	$$
	Df(v)=A(x)v, \quad\forall v\in E_0(x),
	\qquad {\rm and} \qquad
	{\it proj}_0\circ Df(u)=C(x)u,
	\quad \forall u\in E_0^{\perp}(x).
	$$
	In particular, since $Df|_{E^{ws}(x)}$ is also H\"older continuous, $A(x)$, $B(x)$ and $C(x)$ vary H\"older continuously with respect to $x\in\TT^d$.
	
    For every $x\in\TT^d$ and $n\geq1$, we introduce the following notation for the cocycles
    $$
    A^n(x)=\prod_{i=0}^{n-1}A(f^i(x))
    \qquad {\rm and} \qquad 
    C^n(x)=\prod_{i=0}^{n-1}C(f^i(x)).
    $$ 
    
    \begin{claim}\label{clm:AC-norm}
    	For every $\e>0$, there exists an integer $N(\e)>0$, such that for every $x\in\TT^d$ and every $n\geq N(\e)$, we have
    	$$
    	\|A^n(x)\|~\leq~\exp\big(n\cdot(\lambda_0+\e)\big),
    	\qquad {\it and} \qquad
    	m\big(C^n(x)\big)~\geq~
    	\exp\big(n\cdot(\lambda_0+\delta_0-\e)\big).
    	$$
    \end{claim}

    \begin{proof}[Proof of the Claim]
    	For every $p\in{\rm Per}(f)$, all the Lyapunov exponents of $f$ in $E_0(p)$ are equal to $\lambda_0$. This implies the Lyapunov exponents $A^{\pi(p)}(p)$ are equal to $\lambda_0$, where $\pi(p)$ is the period of $p\in{\rm Per}(f)$. Since $f$ is Anosov and $\{A(x)\}_{x\in\TT^d}$ is H\"older continuous, Theorem 1.3 of \cite{K} says that for every $\e>0$, there exists $N_1(\e)>0$, such that
    	$$
    	\|A^n(x)\|~\leq~\exp\big(n\cdot(\lambda_0+\e)\big), 
    	\qquad \forall x\in\TT^d,~\forall n\geq N_1(\e).
    	$$
    	
    	For every periodic point $p$ of $f$, let $F_0(p)\subset E^{ws}(p)$ be the Lyapunov subspace of $A^{\pi(p)}(p)$ corresponding to the Lyapunov exponents larger than $\lambda_0$ in $E^{ws}$, then $E^{ws}(p)=E_0(p)\oplus F_0(p)$. Then there exists a constant $C_p>1$, such that the norm of the projection
    	${\it proj}_0|_{F_0(p)}:F_0(f^n(p))\rightarrow E_0^{\perp}(f^n(p))$ satisfies
    	$$
    	C_p^{-1}~\leq~m\big({\it proj}_0|_{F_0(f^n(p))}\big)~\leq~
    	\|{\it proj}_0|_{F_0(f^n(p))}\|~\leq~C_p, \qquad \forall n\in\ZZ.
    	$$ 
    	Since $E_0$ is $Df$-invariant, for every unit vector $u\in E_0^{\perp}(p)$, we have
    	\begin{align*}
    	     \lim_{n\rightarrow+\infty}\frac{1}{n}\log\|C^n(p)u\|~&=~
    	     \lim_{n\rightarrow+\infty}
    	     \frac{1}{n}\log\|{\it proj}_0\circ Df^n(u)\| \\
    	     &\geq~\lim_{n\rightarrow+\infty}
    	     \frac{1}{n}\log\big[C_p^{-1}\cdot\|Df^n(u)\|\big] \\
    	     &\geq~\lambda_0+\delta_0.
    	\end{align*}
    	The last inequality holds because all the Lyapunov exponents of $f$ in $F_0(p)$ is no less than $\lambda_0+\delta_0$. This implies that the Lyapunov exponents of $C^{\pi(p)}(p)$ are no less than $\lambda_0+\delta_0$ for every $p\in{\rm Per}(f)$. Using Theorem 1.3 of~\cite{K} again we have that for every $\e>0$, there exists $N_2(\e)>0$, such that
    	$$
    	m\big(C^n(x)\big)~\geq~
    	\exp\big(n\cdot(\lambda_0+\delta_0-\e)\big),
    	\qquad \forall x\in\TT^d,~\forall n\geq N_2(\e).
    	$$
    	This finishes the proof of the claim.
    \end{proof}
    
    Finally, since $B(x)$ varies continuously with respect to $x\in\TT^d$, the norm $\|B(x)\|$ is uniformly bounded. Thus there exists a continuous cone-field $\{\cC(x)\}_{x\in\TT^d}$ containing $E_0^{\perp}$ in $E^{ws}$, such that 
    $$
    Df\big(\overline{\cC(x)}\big)~\subset~\cC(f(x)), \qquad \forall x\in\TT^d.
    $$
    Therefore, by the cone-field criterion \cite[Theorem 2.6]{CP}, there exists a dominated splitting $E^{ws}=E_0\oplus F_0$. Moreover, if we take $\e\ll(\delta_0/2)$ in Claim \ref{clm:AC-norm} and let $\mu_0=\exp\big(\lambda_0+(\delta_0/2)\big)$, then there exists $m\in\NN$, such that
    $$
    \|Df^m|_{E_0(x)}\|<\mu_0^m<m(Df^m|_{F_0(x)}), \quad\forall x\in\TT^d.
    $$
    Thus $f$ is absolutely partially hyperbolic with respect to the splitting 
    $T\TT^d=\big(E^{ss}\oplus E_0\big)\oplus F_0\oplus E^u$, and the bundle $(E^{ss}\oplus E_0)\oplus E^u$ is integrable.
\end{proof}

Now we can prove Theorem \ref{thm:main}.

\begin{proof}[Proof of Theorem \ref{thm:main}]
	According to Proposition \ref{prop:domiated-splitting}, $f$ admits a finer dominated splitting $T\TT^d=E^{ss}\oplus E_0\oplus F_0\oplus E^u$, where
	$$
	T\TT^d=\big(E^{ss}\oplus E_0\big)\oplus F_0\oplus E^u
	$$
	is an absolutely partially hyperbolic splitting. Moreover, Proposition \ref{prop:smallest-Lya-expo} and Corollary \ref{cor:holonomy-invariant} show that there exists an $f$-invariant foliation $\cF_0$ satisfying $T\cF_0=E_0$, and such that all Lyapunov exponents of $f$ along $E_0$ are equal to $\lambda_0$. If ${\rm dim}E_0={\rm dim}E^{ws}$, then, by Lemma~\ref{lem:k0=ws} we have that all Lyapunov exponents of $A$ along $L^{ws}$ are equal to $\lambda_0$ which completes the proof Theorem \ref{thm:main}.
	
	Otherwise we have that $F_0$ is non-trivial subbundle. Then Theorem \ref{thm:conjugate} implies that $A$ admits a finer dominated splitting $T\TT^d=L^{ss}\oplus L_0\oplus G_0\oplus L^u$ where ${\rm dim}L_0={\rm dim}E_0$. By Lemma \ref{lem:k0=ws}  all Lyapunov exponents of $A$ along $L_0$ are equal to $\lambda_0$.
	Since $T\TT^d=\big(E^{ss}\oplus E_0\big)\oplus F_0\oplus E^u$ is also absolutely paritally hyperbolic and $f$ satisfies the center bunching condition for this splitting, we have
	$$
	\|Df|_{F_0(x)}\|=\|Df|_{E^{ws}(x)}\|<
	m\big(Df|_{E^{ws}(x)}\big)\cdot m\big(Df|_{E^u(x)}\big)
	<m\big(Df|_{F_0(x)}\big)\cdot m\big(Df|_{E^u(x)}\big),
	\qquad\forall x\in\TT^d.
	$$
	Then we can  apply Proposition~\ref{prop:smallest-Lya-expo}, Corollary~\ref{cor:holonomy-invariant} and Proposition~\ref{prop:domiated-splitting} again to this splitting. Proceeding by induction finishes the proof of Theorem \ref{thm:main}.
\end{proof}

\section{Integrability of extremal sub-bundles for symplectic diffeomorphisms}\label{sec:T4}

In this section, we prove Theorem \ref{thm:T4}. 
It is a corollary of the following theorem, which is Theorem \ref{thm:T4general}.

\begin{theorem}\label{thm:C2}
	Let $A\in{\rm GL}(d,\ZZ)$ be hyperbolic irreducible automorphism which admits a dominated splitting 
	$$
	T\TT^d=L^{ss}\oplus L^{ws}\oplus L^{wu}\oplus L^{uu}
	\qquad \text{with} \qquad
	{\rm dim}L^{ws}={\rm dim}L^{wu}=1.
	$$ 
	For every $f\in{\rm Diff}^2(\TT^d)$ which is sufficiently $C^1$-close to $A$, let $T\TT^d=E^{ss}\oplus E^{ws}\oplus E^{wu}\oplus E^{uu}$ be the corresponding dominated splitting of $f$. Then $E^{ss}\oplus E^{uu}$ is integrable if and only if $f$ is spectrally rigid along $E^{ws}$ and $E^{wu}$.
\end{theorem}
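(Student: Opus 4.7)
The plan is to prove the two implications separately, as they require quite different techniques.

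For the reverse direction ($\Leftarrow$), I plan to reduce to Theorem~\ref{thm:local} applied in both forward and backward time. Near $A$, the finest dominated splitting on the stable side contains $E^{ss}\oplus E^{ws}$ with $\dim E^{ws}=1$, so Theorem~\ref{thm:local} applies with $k\geq 2$ and $i=1$: spectral rigidity along $E^{ws}$ is equivalent to integrability of $E^{ss}\oplus E^u$. Applying the same theorem to $f^{-1}$ (whose strongest stable piece is now $E^{uu}$ and whose second stable piece is $E^{wu}$) yields that spectral rigidity along $E^{wu}$ is equivalent to integrability of $E^{uu}\oplus E^s$. Assuming both, the two resulting foliations are transverse with $C^1$ leaves and their tangent bundles have intersection $(E^{ss}\oplus E^u)\cap(E^s\oplus E^{uu})=E^{ss}\oplus E^{uu}$; slicing either foliation by the other yields a foliation tangent to $E^{ss}\oplus E^{uu}$, as required.

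The forward direction ($\Rightarrow$) is the technical heart of the theorem. The obstacle, relative to the setup of Theorem~\ref{thm:main}, is that integrability of the strong-strong bundle $E^{ss}\oplus E^{uu}$ is strictly weaker than integrability of $E^{ss}\oplus E^u$, so the Diophantine-density scheme underlying Propositions~\ref{prop:smallest-Lya-expo} and~\ref{prop:second-Lya-expo} cannot be invoked as is. The plan is to work inside the $2$-dimensional weak-center leaves $\cF^{wc}:=\cF^{ws}\oplus\cF^{wu}$, which exist by dynamical coherence near $A$. On such a leaf through a periodic point $p$, the return map $f^{\pi(p)}$ acts as a $2$-dimensional hyperbolic saddle with (un)stable manifolds $\cF^{ws}(p)$ and $\cF^{wu}(p)$. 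Since $f$ is close to $A$, the center bunching invoked in Lemma~\ref{lem:C1-su} holds, making the integral $su$-foliation $\cF^{su}$ tangent to $E^{ss}\oplus E^{uu}$ a $C^{1+\alpha}$-foliation with uniformly $C^{1+\alpha}$ holonomies between center leaves. The core idea is a ``ping-pong'' in which $f^n$-contractions along $E^{ws}$, $su$-holonomy jumps to nearby center leaves, $f^{-m}$-expansions along $E^{wu}$, and return $su$-holonomies are interleaved; the return positions are arranged using Lemma~\ref{lem:Diophantine} applied to both $\cL^{ss}$ and $\cL^{uu}$, which are algebraic. Running the contradiction machinery of Propositions~\ref{prop:smallest-Lya-expo} and~\ref{prop:second-Lya-expo} through this modified construction will force the weak Lyapunov exponents to be constant over periodic points, and then Lemma~\ref{lem:k0=ws}, applied in turn to $E^{ws}$ and (via $f^{-1}$) to $E^{wu}$, identifies these common values with $\lambda(L^{ws},A)$ and $\lambda(L^{wu},A)$.

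The main obstacle I foresee is the ``very $2$-dimensional'' nature of the ping-pong, a difficulty the authors themselves flag. The $su$-holonomies act on the full $2$-plane $\cF^{wc}$ and a priori need not preserve the sub-foliations $\cF^{ws}$ and $\cF^{wu}$ individually, so one must show that the composite returns --- $f$-iterates interleaved with $su$-holonomies --- genuinely separate the $E^{ws}$- and $E^{wu}$-directions, rather than collapsing onto a common $1$-dimensional sub-foliation of $\cF^{wc}$. This transversality, together with the Diophantine density of $\cL^{ss}$ and $\cL^{uu}$ in $\TT^d$, is what should produce the arbitrarily small but non-trivial returns needed to contradict a mismatch of periodic Lyapunov exponents. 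Once this geometric set-up is in place, the quantitative comparison of norms mirrors the contradictions in Propositions~\ref{prop:smallest-Lya-expo} and~\ref{prop:second-Lya-expo}, and the identification with the linear exponents is routine via Lemma~\ref{lem:k0=ws}.
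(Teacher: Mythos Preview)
Your reverse direction is essentially the paper's argument: spectral rigidity along $E^{ws}$ and $E^{wu}$, via Theorem~\ref{thm:local} applied to $f$ and to $f^{-1}$, forces $h(\cF^{ss})=\cL^{ss}$ and $h(\cF^{uu})=\cL^{uu}$, and then $E^{ss}\oplus E^{uu}$ integrates (the paper pulls back $\cL^{ss}\oplus\cL^{uu}$ by $h$; your intersection of $\cF^{ss}\oplus\cF^u$ with $\cF^s\oplus\cF^{uu}$ is an equivalent packaging).

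The forward direction, however, has a genuine gap, and the paper's route is quite different from what you propose. Your plan is to run the quantitative Diophantine machinery of Propositions~\ref{prop:smallest-Lya-expo} and~\ref{prop:second-Lya-expo} directly on the $2$-dimensional center leaf, using $su$-holonomies and Lemma~\ref{lem:Diophantine}. Two things break here. First, Lemma~\ref{lem:Diophantine} (its second bullet, which is what you need on the nonlinear side) relies on $h(\cF^{ss})=\cL^{ss}$, and that is precisely what is \emph{not} known when only $E^{ss}\oplus E^{uu}$ is assumed integrable; the H\"older transfer from $\cL^{ss}$ to $\cF^{ss}$ is unavailable. Second, and more seriously, the one-dimensional length comparisons of Propositions~\ref{prop:smallest-Lya-expo} and~\ref{prop:second-Lya-expo} need the holonomy to carry $\cF^{ws}$-segments to $\cF^{ws}$-segments. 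You correctly flag that the $su$-holonomy need not preserve $\cF^{ws}$ or $\cF^{wu}$ inside $\cF^c$, but you do not supply any mechanism to recover this; without it, there is no way to set up the $d_{\cF^{ws}}$-ratios that drive the contradiction.

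The paper resolves this by an entirely different, \emph{algebraic} argument. It never attempts to compare Lyapunov exponents directly in this regime. Instead it introduces the solvable action of $\Gamma=\ZZ\ltimes\ZZ^d$ on the single center leaf $\cF^c(0)$, induced by $F$ and by the $\cF^{su}$-holonomy (Section~\ref{subsec:solvable}). If the $\ZZ^d$-part $\bar\alpha$ preserves $\cF^{wu}$ (resp.\ $\cF^{ws}$) inside $\cF^c(0)$, then Lemma~\ref{lemma_32} yields integrability of $E^{ss}\oplus E^{wu}\oplus E^{uu}$ (resp.\ its dual), hence $h$ matches the strong foliations, and Theorem~\ref{thm:local} gives spectral rigidity. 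If neither is preserved, Proposition~\ref{prop:C2-general} produces $m\in\ZZ^d$ with $\bar\alpha(m)(\cF^{ws}(0))$ and $\bar\alpha(m)(\cF^{wu}(0))$ both topologically transverse to $\cF^{ws}(0)$ and $\cF^{wu}(0)$; a topological Inclination Lemma then shows that, for large $k$, any nontrivial word in $F^{\pm k}$ and $G^{\pm k}$ (with $G=\bar\alpha(m)F\bar\alpha(-m)$) genuinely moves a test segment. But the solvable relations in $\Gamma$ (Lemma~\ref{lem:solve}, Lemma~\ref{lem:FG-solve}) force such a word to equal the identity---contradiction. Thus the ``both non-integrable'' case cannot occur, and the proof reduces to the case already handled. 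The ping-pong here is purely topological (free vs.\ solvable), not the quantitative exponent comparison you had in mind.
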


Using this theorem we can prove Theorem \ref{thm:T4}.

\begin{proof}[Proof of Theorem \ref{thm:T4}]
	Since $A\in{\rm Sp}(4,\ZZ)$ is irreducible and non-conformal, it admits a dominated splitting $T\TT^4=L^{ss}\oplus L^{ws}\oplus L^{wu}\oplus L^{uu}$. 
	%For every symplectic diffeomorphism $f\in{\rm Diff}^2_{\omega}(\TT^2)$ which is $C^1$-close to $A$, it admits a dominated splitting $T\TT^4=E^{ss}\oplus E^{ws}\oplus E^{wu}\oplus E^{uu}$. 
	
	If $f$ is smoothly conjugate to $A$ via a diffeomorphism $h:\TT^4\to\TT^4$, $h\circ f=A\circ h$, then $\cF^{su}=h^{-1}(\cL^{su})$ is a foliation tangent to the extremal symplectic bundle $E^{su}=E^{ss}\oplus E^{uu}$. Here $\cL^{su}$ is the linear foliation tangent to $L^{su}=L^{ss}\oplus L^{uu}$. Thus $E^{su}$ is integrable.
	
	Conversely, if the extremal symplectic bundle $E^{su}=E^{ss}\oplus E^{uu}$ is integrable, then Theorem \ref{thm:C2} implies that $f$ is spectrally rigid along $E^{ws}$ and $E^{wu}$. Hence the topological conjugacy $h$ is, in fact, $C^{1+{\rm H\"older}}$-smooth along $\cF^{ws}$ and along $\cF^{wu}$. Hence, by applying Journ\'e's theorem \cite{J}, we have that $h$ is smooth along $\cF^c=\cF^{ws}\oplus\cF^{wu}$. Then, applying Theorem \ref{thm:local} we have that $h(\cF^{ss})=\cL^{ss}$ and $h(\cF^{uu})=\cL^{uu}$. 	
    Note that now holonomy of the foliation $\cF^{su}$ can be viewed as the holonomy $\cL^{su}$ (which is smooth) conjugated by restrictions of $h$ to the leaves of $\cF^c$. Hence the holonomy and the foliation $\cF^{su}$ is also $C^{1+{\rm H\"older}}$.
	From the smoothness of every leaf of $\cF^{su}$, the conjugacy $h$ is smooth along $\cF^c$ implies the foliation $\cF^{su}$ is $C^{1+{\rm H\"older}}$-smooth. Further, since the bundle $E^c$ is symplectic orthogonal to $E^{su}$, the foliation $\cF^c$ is also $C^{1+{\rm H\"older}}$-smooth. Moreover, from symplectic orthogonality an elementary lemma~\cite[Lemma 8.14]{AV} shows that the symplectic structure $\omega|_{\cF^{su}}$ is invariant under the holonomy map induced by center foliation $\cF^c$ on $\TT^4$. 

    Recall that the restriction $\omega|_{\cF^{su}}$ is non-degenerate. Indeed, if $e^{ss}$ is a unit vector field in $E^{ss}$ then from mismatch of contraction and expansion rates we easily see that both $E^{ws}$ and $E^{wu}$ are in the kernel of $\omega(e^{ss},\cdot)$. Because $\omega$ is a non-degenerate symplectic form this kernel is 2-dimensional, and, hence does not contain $E^{uu}$.
	
	We can view the restriction $\omega|_{\cF^{su}}$ as a tranverse invariant measure for the foliation $\cF^c$. We can push it forward using $h$ and obtain the measure  $h_*\big(\omega|_{\cF^{su}}\big)$  which is holonomy invariant with respect to $h(\cF^c)=\cL^c$. Since $\cL^c$ is linear irrational foliation, it must be uniquely  ergodic on $\TT^4$. Hence, we can conclude that $h_*\big(\omega|_{\cF^{su}}\big)$ is the Lebesgue measure on $\cL^{su}$ and the restriction of $h$ to $\cF^{su}$ is absolutely continuous. In fact, we have
	$$h_*\big(\omega|_{\cF^{su}}\big)=\omega|_{\cL^{su}}$$
	Now recall that, since we have $h(\cF^{ss})=\cL^{ss}$ and $h(\cF^{uu})=\cL^{uu}$, so $h$ sends conditional measures of $\omega|_{\cF^{su}}$ along $\cF^{ss}$-leaves and $\cF^{uu}$-leaves to the conditional measures of $\omega|_{\cL^{su}}$ along $\cL^{ss}$-leaves and $\cL^{uu}$-leaves, respectively. Thus $h$ is absolutely continuous along the leaves of $\cF^{ss}$ and along the leaves of $\cF^{uu}$. Lemma 2.4 of \cite{Go17} shows that the conjugacy $h$ is actually $C^{1+{\rm H\"older}}$-smooth along every leaf of $\cF^{ss}$ and every leaf of $\cF^{uu}$. Finally, Journ\'e's theorem \cite{J} implies that the conjugacy $h$ is $C^{1+{\rm H\"older}}$-smooth on $\TT^4$.
\end{proof}

In the rest of this section, we prove Theorems \ref{thm:C2}.
We will always assume that $A\in{\rm GL}(d,\ZZ)$ is hyperbolic and irreducible with the dominated splitting
$$
T\TT^d=L^{ss}\oplus L^{ws}\oplus L^{wu}\oplus L^{uu},
\qquad \text{where} \qquad
{\rm dim}L^{ws}={\rm dim}L^{wu}=1.
$$
Let $f\in{\rm Diff}^2(\TT^d)$ be a diffeomorphism which is sufficiently $C^1$-close to $A$ so that it admits a dominated splitting $T\TT^d=E^{ss}\oplus E^{ws}\oplus E^{wu}\oplus E^{uu}$ with the same dimensions as $A$. Under some further assumptions we will show that $E^{ss}\oplus E^{uu}$ being integrable implies that $f$ has spectral rigidity along $E^{ws}$ and $E^{wu}$.

Denote by $\cF^{su}$ the foliation to which $\cF^{ss}$ and $\cF^{uu}$ jointly integrate, then $T\cF^{su}=E^{su}=E^{ss}\oplus E^{uu}$. It is easy to see that this foliation is $f$-invariant. Let $\cF^c=\cF^{ws}\oplus\cF^{wu}$ and $\cL^c=\cL^{ws}\oplus\cL^{wu}$ be the center foliations tangent to $E^{ws}\oplus E^{wu}$ and $L^{ws}\oplus L^{wu}$, respectively. Then the lifting foliations of $\cF^{su}$ and $\cF^c$ have global product structures on $\RR^d$.

We begin with the following observation. 
\begin{lemma}
	\label{lemma_31}
	If $f\in{\rm Diff}^2(\TT^d)$ is sufficiently $C^1$-close to $A$ and at least one of the distributions $E^{ss}\oplus E^{wu}$ and $E^{ws}\oplus E^{uu}$ integrates to a foliation then the conjugacy $h$ matches the extremal foliations:
	$$h(\cF^{uu})=\cL^{uu}\,\,\,\,\,\,\,\mbox{and}\,\,\,\,\,\,\,h(\cF^{ss})=\cL^{ss}$$
\end{lemma}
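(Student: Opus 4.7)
The plan is to handle the two extremal matchings by different routes: $h(\cF^{ss})=\cL^{ss}$ will follow from the integrability hypothesis via a saturation argument and Theorem~\ref{thm:conjugate}, while $h(\cF^{uu})=\cL^{uu}$ will come from a direct periodic-orbit argument that exploits the uniform closeness of $f$ to $A$.

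By the symmetry $f\leftrightarrow f^{-1}$, I may assume $E^{ss}\oplus E^{wu}$ integrates to a foliation $\cF^{ss,wu}$. Both $\cF^{ss,wu}$ and the unstable foliation $\cF^u$ are subfoliated by $\cF^{wu}$ (which exists from the dominated splitting inside $\cF^u$), so their join is a foliation tangent to $E^{ss}\oplus E^{wu}\oplus E^{uu}=E^{ss}\oplus E^u$. Because $f$ is $C^1$-close to $A$, the splitting $T\TT^d=E^{ss}\oplus E^{ws}\oplus E^u$ is absolutely partially hyperbolic, and Theorem~\ref{thm:conjugate} applies and delivers $h(\cF^{ss})=\cL^{ss}$ (as well as $h(\cF^{ws})=\cL^{ws}$ and $h(\cF^u)=\cL^u$).

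For $h(\cF^{uu})=\cL^{uu}$ I would argue directly. Lift $h$ to $\tilde h\colon\RR^d\to\RR^d$; the key property is that $|\tilde h-\mathrm{id}|\le C_0$ is uniformly bounded. Fix any $f$-periodic $p$ and $y\in\cF^{uu}(p)$. The backward rate of convergence $|f^{-n}y-f^{-n}p|$ is controlled by a Lyapunov exponent of $f$ at $p$ in the $E^{uu}$-direction, and since $f$ is $C^1$-close to $A$ one has, after passing to an adapted metric, that this exponent exceeds $\log|\mu^{wu}|$ uniformly in $p$. Hence $|f^{-n}y-f^{-n}p|=o(|\mu^{wu}|^{-n})$. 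The bounded-distance estimate then yields
\[
\bigl|A^{-n}(\tilde h(y)-\tilde h(p))\bigr|
\;=\;\bigl|\tilde h(f^{-n}y)-\tilde h(f^{-n}p)\bigr|
\;=\;|f^{-n}y-f^{-n}p|+O(1).
\]
Decomposing $w:=\tilde h(y)-\tilde h(p)=w_s+w_{wu}+w_{uu}\in L^s\oplus L^{wu}\oplus L^{uu}$, the growth $|A^{-n}w_s|\to\infty$ forces $w_s=0$, and the fact that $|A^{-n}w_{wu}|=|\mu^{wu}|^{-n}|w_{wu}|$ decays strictly slower than $|f^{-n}y-f^{-n}p|$ unless $w_{wu}=0$ forces $w_{wu}=0$. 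Thus $w\in L^{uu}$ and $h(y)\in\cL^{uu}(h(p))$. Density of $f$-periodic points in $\TT^d$ and continuity of $h$ together with continuity of the foliations $\cF^{uu}$ and $\cL^{uu}$ extend this to $h(\cF^{uu}(x))\subset\cL^{uu}(h(x))$ for every $x\in\TT^d$.

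The final step upgrades this containment to an equality of foliations. By invariance of domain, each $h(\cF^{uu}(x))$ is open inside the $\cL^{uu}$-leaf containing it, since $\dim E^{uu}=\dim L^{uu}$. Within any fixed $\cL^{uu}$-leaf $\ell$, the $h$-images of distinct $\cF^{uu}$-leaves that land in $\ell$ form a partition of $\ell$ into disjoint non-empty open subsets, and by connectedness of $\ell$ this partition is trivial; therefore $h$ sends each $\cF^{uu}$-leaf onto a whole $\cL^{uu}$-leaf, giving $h(\cF^{uu})=\cL^{uu}$. The main technical obstacle is the uniform Lyapunov-exponent estimate over all periodic orbits in the $E^{uu}$ direction, which is precisely what the $C^1$-closeness of $f$ to $A$ guarantees through an adapted-metric argument.
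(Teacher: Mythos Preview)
Your argument for $h(\cF^{uu})=\cL^{uu}$ has a genuine gap at the step ``$|A^{-n}w_{wu}|$ decays strictly slower than $|f^{-n}y-f^{-n}p|$ forces $w_{wu}=0$.'' From $|\tilde h-\mathrm{id}|\le C_0$ you only obtain $\bigl||A^{-n}w|-|f^{-n}y-f^{-n}p|\bigr|\le 2C_0$, and since both $|A^{-n}w|$ and $|f^{-n}y-f^{-n}p|$ tend to $0$ as $n\to\infty$, the additive $O(1)$ error completely swamps any comparison of decay rates; you cannot distinguish a $|\mu^{wu}|^{-n}$ rate from a faster one this way. A strong red flag is that this part of your argument never uses the integrability hypothesis: if it worked, it would give $h(\cF^{uu})=\cL^{uu}$ for \emph{every} $f$ that is $C^1$-close to $A$, which is false in general. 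There is also a (smaller) gap in the first part: two foliations sharing a common subfoliation $\cF^{wu}$ need not ``join'' into a larger foliation. For instance on $\RR^4$, with $E_1=\partial_1$, $E_2=\partial_2$, $E_3=\partial_3+x_1\partial_4$, both $E_1\oplus E_2$ and $E_2\oplus E_3$ integrate but $E_1\oplus E_2\oplus E_3$ does not; you give no reason why the situation is better here.

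The paper's proof avoids both problems by working entirely with $h$-images and Lemma~\ref{lem:linear-foli}. One uses the standard fact (for $f$ $C^1$-close to $A$) that $h(\cF^{ws})=\cL^{ws}$: if $E^{ws}\oplus E^{uu}$ integrates to $\mathcal W$, then $h(\mathcal W)$ is subfoliated by the minimal linear foliation $\cL^{ws}$ and hence is itself linear; intersecting with $\cL^u=h(\cF^u)$ shows $h(\cF^{uu})$ is linear, hence equals $\cL^{uu}$. Then the standing assumption of Section~\ref{sec:T4} that $\cF^{su}$ exists allows one to repeat the same saturation trick: $h(\cF^{su})$ is subfoliated by $\cL^{uu}$, hence linear, and intersecting with $\cL^s$ yields $h(\cF^{ss})=\cL^{ss}$.
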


\begin{proof}
	If $E^{ws}$ integrates jointly with $E^{uu}$ to a foliation $\mathcal W$. Then the topological foliation $h(\mathcal W)$ is $A$-invariant and subfoliated by the linear minimal foliation $\cL^{ws}=h(\cF^{ws})$. By Lemma~\ref{lem:linear-foli}, $h(\mathcal W)$ is linear. We have $\cF^{uu}=\mathcal W\cap \cF^u$. Hence $h(\cF^{uu})=h(\mathcal W)\cap h(\cF^u)$ and we can conclude that $h(\cF^{uu})$ is a linear foliation as an intersection of two linear foliations. Then it follows that $h(\cF^{uu})=\cL^{uu}$. Once we have this, we know that the foliation $h(\cF^{su})$ is also subfoliated by a linear minimal foliation $h(\cF^{uu})=\cL^{uu}$. Hence, by applying Lemma~\ref{lem:linear-foli} again, we have that $h(\cF^{su})$ is also linear and we can conclude that $h(\cF^{ss})=\cL^{ss}$. The proof is the same in the case when $E^{ss}\oplus E^{wu}$ is integrable.
\end{proof}

\begin{remark}
	This lemma also holds if we assume that one of $E^{ss}\oplus E^u$ and $E^s\oplus E^{uu}$ is integrable. The proof is the same.
\end{remark}

Once we know that both $\cF^{uu}$ and $\cF^{ss}$ become corresponding linear foliations after the conjugacy, Theorem \ref{thm:local} gives rigidity of Lyapunov exponents along the weak unstable and weak stable subbundles.
Similarly if $E^u=E^{wu}\oplus E^{uu}$ integrates jointly with $E^{ss}$ we can finish the proof in the same way. 

Hence, from now on {\it we will assume that $E^{ws}$ does not integrate jointly with $E^{uu}$ and $E^{wu}$ does not integrate jointly with $E^{ss}$. } Our objective now is to rule this out, which would then complete the proof.

\subsection{Solvable action on the center leaf}\label{subsec:solvable}

Consider lifted dynamical systems $A\colon\RR^d\to\RR^d$ and $F\colon\RR^d\to\RR^d$.  By composing with a small translation, we can assume that $f$ has a lift $F:\RR^d\rightarrow\RR^d$ such that $F(0)=0$. Hence, from now on, we always assume that the lift $F$ fixes $0\in\RR^d$.

We also lift all foliations to the universal cover.  Because we have arranged that $F(0)=0$, the conjugacy $h$ has a lift $H:\RR^d\rightarrow\RR^d$ such that  $H\circ F=A\circ H$ on $\RR^d$ and $H(0)=0$.  

We will use the same notation for the lifted foliations and it is not going to cause any confusion because for the remainder of the proof we will work only on the universal cover.  Then we have  $H(\cF^c)=\cL^c$ and $H(\cF^c(0))=\cL^c(0)$.

Denote by $\Gamma$ the semidirect product $\ZZ\ltimes \ZZ^d$, where the generator of $\ZZ$ acts on $\ZZ^d$ via the hyperbolic matrix $A\in{\rm GL}(d,\ZZ)$.
The natural action of  $\Gamma$ on $\RR^d$ is given by affine transformations
$$
(k,n)(x)=A^k(x)+n, \qquad\forall (k,n)\in\Gamma, ~\forall x\in\RR^d.
$$
By a direct calculation one can easily check that $[\Gamma,\Gamma]=\ZZ^d$.

The action of $\Gamma$ on $\RR^d$ preserves the linear foliation $\cL^{su}=\cL^{ss}\oplus \cL^{uu}$ and, hence, descends to an action on the leaf space of the foliation $\cL^{su}$. Each leaf of $\cL^{su}$ intersects the fixed leaf $\cL^c(0)\subset\RR^4$ precisely once. Hence we can identify the space of leaves of $\cL^{su}$ with $\cL^c(0)$ and we have the affine action $ \Gamma\times\cL^c(0)\to\cL^c(0)$.

Because we have the $F$-invariant and $\ZZ^d$-invariant foliation $\cF^{su}$ which has global product structure $\cF^c=\cF^{ws}\oplus\cF^{wu}$, we can obtain a similar action on the non-linear side. Namely, we identify $\cF^c(0)$ with the leaf space of $\cF^{su}$. The action of $\Gamma$ on $\RR^d$ given by $(k,n)(x)=F^k(x)+n$ induces the action $\alpha\colon \Gamma\times\cF^c(0)\to\cF^c(0)$, which is explicitly given by
$$
\alpha(k,n)(x)=\cF^{su}\big(F^k(x)+n\big)\cap \cF^c(0), 
\qquad\forall (k,n)\in\Gamma, ~\forall x\in\cF^c(0).
$$
Also we will use the following notation for the $\ZZ^d$ subaction
$$
\bar\alpha(n)=\alpha(0,n), \qquad \forall n\in\ZZ^d. 
$$
Note that,  while the conjugacy $H$ conjugates $A|_{\cL^c(0)}$ and $F|_{\cF^c(0)}$, the affine action on $\cL^c(0)$ and $\alpha$ are, a priori, not conjugate. This is because we do not have $H(\cF^{su})=\cL^{su}$. However, the action $\alpha$ is still solvable.

\begin{lemma}\label{lem:solve}
	For every $k\in\ZZ$ and $n\in\ZZ^d$, the action  $\alpha\colon \Gamma\times\cF^c(0)\to\cF^c(0)$ satisfies
	$$
	\alpha(k,0)\circ\alpha(0,n)\circ\alpha(-k,0)=\alpha(0,A^kn).
	$$
	Moreover, for every $k\in\ZZ\setminus\{0\}$ and $n\in\ZZ^d\setminus\{0\}$, there exist $p_1,p_2,\cdots,p_{d+1}\in\ZZ$ which are not all zeros, such that
	$$
	\prod_{i=1}^{d+1}
	\big[\alpha(ik,0)\circ\alpha(0,n)\circ
	\alpha(-ik,0)\circ\alpha(0,-n)\big]^{p_i}=\alpha(0,0)
	={\rm Id}_{\cF^c(0)}.
	$$
\end{lemma}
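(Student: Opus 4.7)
The plan is to observe that $\alpha$ is a group homomorphism from $\Gamma=\ZZ\ltimes\ZZ^d$ into the self-maps of $\cF^c(0)$. Once this is in place, the first equation becomes a purely algebraic consequence of the group law in $\Gamma$, and the second equation reduces to a linear-algebra dependence among $d+1$ vectors in $\RR^d$.

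First I would verify directly that $\alpha$ is a group action. This requires three ingredients: (i) $F$ preserves the foliation $\cF^{su}$, which holds because $\cF^{su}$ is $f$-invariant; (ii) integer translations preserve $\cF^{su}$, which holds because $\cF^{su}$ is lifted from $\TT^d$; and (iii) the standard lift identity $F(x+m)=F(x)+Am$ for $m\in\ZZ^d$, coming from $A$ being the linear part of $f$. A short induction then gives the inclusion
\[
F^{k_1}(F^{k_2}(x)+n_2)+n_1 \,\in\, \cF^{su}\bigl(F^{k_1+k_2}(x)+A^{k_1}n_2+n_1\bigr)
\]
inside a single $\cF^{su}$-leaf, which matches the product $(k_1,n_1)(k_2,n_2)=(k_1+k_2,\,n_1+A^{k_1}n_2)$ in $\Gamma$. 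The global product structure of $\cF^{su}$ and $\cF^c$ on $\RR^d$, noted just before the lemma, guarantees that the defining intersection for $\alpha$ is always a single point, so $\alpha$ is well-defined and is a bona fide homomorphism. The first equation then follows from the identity $(k,0)(0,n)(-k,0)=(0,A^kn)$ in $\Gamma$.

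For the second equation, I would apply the first identity to each bracket, rewriting
\[
\alpha(ik,0)\circ\alpha(0,n)\circ\alpha(-ik,0)\circ\alpha(0,-n)
\,=\,\alpha(0,A^{ik}n)\circ\alpha(0,-n)\,=\,\bar\alpha\bigl((A^{ik}-I)n\bigr).
\]
Since $\bar\alpha$ restricts to a homomorphism from the abelian group $\ZZ^d$, the whole product collapses to $\bar\alpha\bigl(\sum_{i=1}^{d+1}p_i(A^{ik}-I)n\bigr)$, and it is enough to produce integers $p_1,\ldots,p_{d+1}$, not all zero, with $\sum_i p_i(A^{ik}-I)n=0$. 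But these are $d+1$ vectors in $\ZZ^d\subset\RR^d$, hence linearly dependent over $\RR$; since their entries are rational they are in fact $\QQ$-linearly dependent, and clearing denominators yields the required integer relation.

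I do not expect a serious obstacle. The only conceptual point is checking that $\alpha$ is a genuine group action despite being built from the nonlinear map $F$, and what makes this work is precisely that the nonlinearity of $F$ is absorbed inside the $\cF^{su}$-leaves, so on the transversal $\cF^c(0)$ the dynamics behaves formally like the linear $\Gamma$-action on $\cL^c(0)$. Everything else is book-keeping plus a one-line pigeonhole in $\RR^d$.
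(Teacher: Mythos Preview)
Your proposal is correct and follows essentially the same route as the paper: both reduce the first identity to the group relation $(k,0)(0,n)(-k,0)=(0,A^kn)$ in $\Gamma=\ZZ\ltimes\ZZ^d$, and both handle the second by rewriting each bracket as $\bar\alpha((A^{ik}-I)n)$ and then invoking linear dependence of $d+1$ vectors in $\ZZ^d$. The only difference is that you spell out the verification that $\alpha$ is a genuine $\Gamma$-action (via the lift identity $F(x+m)=F(x)+Am$ and $\cF^{su}$-invariance), whereas the paper treats this as already established in the setup preceding the lemma.
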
 

\begin{proof} The first equation is just a relation in $\ZZ\ltimes\ZZ^d$ and can be verified by a direct calculation.
%	For every $x\in\cF^c(0)$, we have
%	\begin{align*}
%	   \alpha(k,0)\circ\alpha(0,n)\circ\alpha(-k,0)(x)
%	   ~&=~\alpha(k,0)\circ\alpha(0,n)\big(F^{-k}(x)\big) \\
%	    &=~\alpha(k,0)\big(\cF^{su}\big(F^{-k}(x)+n\big)\cap\cF^c(0)\big) \\
%	    &=~F^k\big(\cF^{su}\big(F^{-k}(x)+n\big)\big)\cap\cF^c(0) \\
%	    \big(F^k(n)=A^kn\big)~&=~
%	    \cF^{su}\big(x+A^kn\big)\cap\cF^c(0) \\
%	    &=~\alpha(0,A^kn)(x).
%	\end{align*}
%	Thus $\alpha(k,0)\circ\alpha(0,n)\circ\alpha(-k,0)=\alpha(0,A^kn)$
%	for every $k\in\ZZ$ and $n\in\ZZ^d$. 
	
	Now for every $k\in\ZZ\setminus\{0\}$, $n\in\ZZ^d\setminus\{0\}$, and $i=1,\cdots,d+1$, we have
	$$
	\alpha(ik,0)\circ\alpha(0,n)\circ
	\alpha(-ik,0)\circ\alpha(0,-n)=
	\alpha(0,A^{ik}n-n)=
	\bar\alpha(A^{ik}n-n).
	$$
	Since $A$ is Anosov, $A^{ik}n-n$ is nonzero in $\ZZ^d$. This implies $\left\{A^{ik}n-n\right\}_{i=1}^{d+1}$ are $d+1$ nonzero elements in $\ZZ^d$. So there exist $p_1,p_2,\cdots,p_{d+1}\in\ZZ$ which are not all zeros, such that
	$$
	\sum_{i=1}^{d+1}p_i\cdot(A^{ik}n-n)=0\in\ZZ^d.
	$$
	Finally, $\bar\alpha(\cdot)=\alpha(0,\cdot)$ is a $\ZZ^d$-action on $\cF^c(0)$, so we have
	$$
	\prod_{i=1}^{d+1}
	\big[\alpha(ik,0)\circ\alpha(0,n)\circ
	\alpha(-ik,0)\circ\alpha(0,-n)\big]^{p_i}=\alpha(0,0)
	={\rm Id}_{\cF^c(0)}.
	$$
\end{proof}

\begin{remark}
	In general, since the foliation $\cF^{su}$ is only H\"older continuous, the action $\alpha$ is also merely H\"older.
\end{remark}

Now recall that $\cF^c(0)$ is subfoliated by $\cF^{ws}$ and $\cF^{wu}$, however we don't have $\alpha$-invariance property for these foliations.
\begin{lemma}
	\label{lemma_32}
	If $\bar \alpha(n)\big(\cF^{wu}(0)\big)\subseteq\cF^{wu}\big(\bar\alpha(n)(0)\big)$ for all $n\in\ZZ^d$, then the distribution $E^{ss}\oplus E^{wu}\oplus E^{uu}$ is integrable, which implies $h(\cF^{ss})=\cL^{ss}$ and $h(\cF^{uu})=\cL^{uu}$.
\end{lemma}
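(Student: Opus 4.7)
The plan is to establish the following pointwise strengthening of the hypothesis:
\[
\bar\alpha(n)(\cF^{wu}(y))\subseteq \cF^{wu}(\bar\alpha(n)(y)),\qquad \forall\,y\in \cF^c(0),\ \forall\,n\in\ZZ^d,
\]
together with its variants at other center leaves. Geometrically this says that the $\cF^{su}$-holonomy between any two center leaves in $\RR^d$ carries the $\cF^{wu}$-subfoliation of the source into the $\cF^{wu}$-subfoliation of the target. Once this is granted, set $\mathcal{W}(x):=\bigcup_{y\in \cF^{wu}(x)}\cF^{su}(y)$ for every $x\in\RR^d$; an easy check using the pointwise claim shows $\mathcal{W}(z)=\mathcal{W}(x)$ whenever $z\in\mathcal{W}(x)$, so the $\mathcal{W}(x)$ partition $\RR^d$ into immersed $3$-dimensional submanifolds whose tangent space at every point is $E^{ss}\oplus E^{wu}\oplus E^{uu}=E^{ss}\oplus E^u$. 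The conclusion $h(\cF^{ss})=\cL^{ss}$ and $h(\cF^{uu})=\cL^{uu}$ then follows from the remark after Lemma~\ref{lemma_31}.

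The first ingredient is to upgrade the given inclusion to equality: for each $n\in\ZZ^d$, $\bar\alpha(n)(\cF^{wu}(0))=\cF^{wu}(\bar\alpha(n)(0))$. Indeed, $\bar\alpha(n)$ is a homeomorphism of $\cF^c(0)$ with continuous inverse $\bar\alpha(-n)$, so its restriction to the curve $\cF^{wu}(0)$ is a proper continuous injection into the target curve $\cF^{wu}(\bar\alpha(n)(0))$; both curves are homeomorphic to $\RR$ and a proper continuous injection $\RR\to\RR$ is automatically surjective. The $\ZZ^d$-group law $\bar\alpha(n)\bar\alpha(m)=\bar\alpha(n+m)$ now propagates the pointwise claim to every point of the orbit $\bar\alpha(\ZZ^d)(0)$: for $y=\bar\alpha(m)(0)$ we have $\cF^{wu}(y)=\bar\alpha(m)(\cF^{wu}(0))$, whence
\[
\bar\alpha(n)(\cF^{wu}(y))=\bar\alpha(n+m)(\cF^{wu}(0))=\cF^{wu}(\bar\alpha(n+m)(0))=\cF^{wu}(\bar\alpha(n)(y)).
\]

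To reach arbitrary $y\in\cF^c(0)$ I would combine density of $\bar\alpha(\ZZ^d)(0)$ in $\cF^c(0)$ with continuity. Since $\bar\alpha(\ZZ^d)(0)=\pi^{-1}(\pi(\cF^{su}(0)))\cap \cF^c(0)$, density reduces to density of the leaf $\pi(\cF^{su}(0))$ in $\TT^d$; as $\cF^{ss}\subset\cF^{su}$, this in turn follows from the minimality of $\cF^{ss}$, a known robustness property for $C^1$-perturbations of irreducible hyperbolic toral automorphisms. Given density, the approximation is routine: pick $y_k\in\bar\alpha(\ZZ^d)(0)$ with $y_k\to y$ and $y'_k\in\cF^{wu}(y_k)$ with $y'_k\to y'\in\cF^{wu}(y)$, apply the orbit case to each $y_k$, and pass to the limit using continuity of $\bar\alpha(n)$ and of $\cF^{wu}$. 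The variant at a center leaf $\cF^c(m)$ with $m\in\ZZ^d$ is obtained by $\ZZ^d$-translation, and for general $\cF^c(x_0)$ by a second density-and-continuity argument based on density of $\pi(\cF^c(0))$ in $\TT^d$, i.e.\ density of the $\ZZ^d$-translates of $\cF^c(0)$ in the space of center leaves.

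Once the $\cF^{su}$-holonomy is shown to preserve the $\cF^{wu}$-subfoliation between any two center leaves, the verification that $\{\mathcal{W}(x)\}$ is a foliation---and that each $\mathcal{W}(x)$ is tangent at every point to $E^{ss}\oplus E^{wu}\oplus E^{uu}$---proceeds as sketched in the first paragraph, establishing integrability of the extremal bundle; the remark after Lemma~\ref{lemma_31} then delivers the matching of the extremal foliations. The main technical obstacle is the density of $\bar\alpha(\ZZ^d)(0)$ in $\cF^c(0)$ (equivalently, density of the $\cF^{su}$-leaf through $0$ in $\TT^d$), for which I would invoke the minimality results for strong invariant foliations of $C^1$-perturbations of irreducible toral Anosov automorphisms. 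The remaining ingredients---properness of $\bar\alpha(n)$, the $\ZZ^d$-group law, and continuity of foliations---fit together in a clean and routine manner.
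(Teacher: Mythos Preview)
Your direct-construction approach is different from the paper's contrapositive argument, and most of the ingredients are sound, but there is a real gap at the density step. You need $\bar\alpha(\ZZ^d)(0)$ dense in $\cF^c(0)$, which you correctly reduce to density of the single $\cF^{su}$-leaf through $0$ in $\TT^d$, and then propose to deduce this from minimality of $\cF^{ss}$. The problem is that at this stage of the argument we do \emph{not} know that $h(\cF^{ss})=\cL^{ss}$ --- that is exactly the conclusion of the lemma --- so you cannot transfer minimality from the linear side. Robust minimality of strong stable foliations for $C^1$-perturbations of irreducible toral automorphisms is not established anywhere in the paper, and it is not a one-line fact: the bundle $E^{ss}$ being $C^0$-close to $L^{ss}$ does not by itself force global density of a leaf. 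You would need either an outside reference with the precise statement or a self-contained argument, neither of which you provide.

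The paper avoids this difficulty entirely by arguing the contrapositive. It assumes $E^{ss}\oplus E^{wu}\oplus E^{uu}$ is not integrable, finds a robust configuration witnessing that the $\cF^{su}$-holonomy fails to preserve $\cF^{wu}$ between two center leaves, and then uses density of $\cup_{n\in\ZZ^d}\cF^{wu}(n)$ in $\RR^d$ to slide this configuration so that it produces an $n$ with $\bar\alpha(n)(\cF^{wu}(0))\not\subset\cF^{wu}(\bar\alpha(n)(0))$. The key point is that density of $\cup_n\cF^{wu}(n)$ follows immediately from the already-known matching $h(\cF^{wu})=\cL^{wu}$ (the weak foliations are always preserved by the conjugacy for $f$ $C^1$-close to $A$), so no minimality of the strong foliations is needed. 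This makes the paper's route both shorter and self-contained.
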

\begin{proof} 
	Let's assume that $E^{ss}\oplus E^{wu}\oplus E^{uu}$ does not integrate to a foliation. Then the foliations $\cF^{su}$ and $\cF^{wu}$ do not integrate together and, consequently, we can find points $x\in\RR^d$, $a\in \cF^{su}(x)$ and $b\in\cF^{wu}(a)$ such that the point $y=\cF^c(x)\cap \cF^{su}(b)$ does not lie on the leaf $\cF^{wu}(x)\subset \cF^c(x)$. Such configuration of points is robust, that is if we move $x$ a little bit and all the other points accordingly then we have the same configuration with the property $y\notin \cF^{wu}(x)$ persisting. This is clear from continuity of foliations.
	
	Now recall that $\cF^{wu}$ is conjugate to a linear irrational foliation and, hence $\cup_{n\in\ZZ^d}\cF^{wu}(n)$ is dense in $\RR^d$. Hence, by slightly moving $x$ we may assume that $x\in\cF^{wu}(m)$ for some $m\in\ZZ^d$ and then by applying the tranlation by $-m$ to all the points, we may in fact assume that $x\in\cF^{wu}(0)$.
	
	Using denseness of $\cup_{n\in\ZZ^d}\cF^{wu}(n)$ we can find a sequence $\{n_i: i\ge 1\}\subset\ZZ^d$, such that the leaves $\cF^{wu}(n_i)$ converge to $\cF^{wu}(a)$ as $i\to\infty$. Because  $y\notin \cF^{wu}(x)$ we have $\H^{su}_{a,x}(\cF^{wu}(a))\not\subset \cF^{wu}(x)$, where $\H^{su}_{a,x}\colon \cF^c(a)\to\cF^c(x)$ is the holonomy map given by sliding along the leaves of $\cF^{su}$. Therefore, for a sufficiently large $i$ we also have $\H^{su}_{a,x}\big(\cF^{wu}(n_i)\big)\not\subset \cF^{wu}(x)$, but that precisely means that we have
	$$
	\bar\alpha(n_i)\big(\cF^{wu}(0)\big)\not\subset \cF^{wu}\big(\bar\alpha(n_i)(0)\big)
	$$ 
	yielding a contradiction.
	
	Finally, if $E^{ss}\oplus E^{wu}\oplus E^{uu}$ is integrable to a foliation $\cF$, then $h(\cF)$ is sub-foliated by the minimal linear foliation $\cL^{ws}=h(\cF^{ws})$. Lemma \ref{lem:linear-foli} shows that $h(\cF)$ is linear and $h(\cF^{ss})=h(\cF)\cap h(\cF^s)$ which is also linear. So we must have $h(\cF^{ss})=\cL^{ss}$. Again, this implies $h(\cF^{su})$ is linear, and we have $h(\cF^{uu})=\cL^{uu}$.
\end{proof}

\subsection{Double transverse position in 2-dimensional center}

In this subsection, we show that under the assumptions of Theorem \ref{thm:C2}, if $E^{ss}\oplus E^{uu}$ is integrable and the corresponding action $\alpha\colon \Gamma\times\cF^c(0)\to\cF^c(0)$ does not preserve $\cF^{ws}$ and $\cF^{wu}$ in $\cF^c(0)$, then there exists some $m\in\ZZ^d$ such that $\bar\alpha(m)$ puts them topologically transverse to themselves. 

Here $\cF^{ws}(0)$ and $\cF^{wu}(0)$ are imbedded lines in $\cF^c(0)$ which both separate $\cF^c(0)$ into two connected components. For some $n\in\ZZ^d$, we say that $\bar\alpha(n)\big(\cF^{ws}(0)\big)$ is topologically transverse to $\cF^{ws}(0)$ if $\bar\alpha(n)\big(\cF^{ws}(0)\big)$ intersect both connected components of $\cF^c(0)\setminus\cF^{ws}(0)$ (and similarly for $\cF^{wu}(0)$).

\begin{lemma}\label{lem:intersection}
	For every $m\in\ZZ^d$, we have
	\begin{itemize}
		\item $\bar\alpha(m)\big(\cF^{ws}(0)\big)$ intersects $\cF^{wu}(0)$ topologically transversely at a unique point in $\cF^c(0)$;
		\item $\bar\alpha(m)\big(\cF^{wu}(0)\big)$ intersects $\cF^{ws}(0)$ topologically transversely at a unique point in $\cF^c(0)$.
	\end{itemize}
\end{lemma}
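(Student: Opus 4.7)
The plan is to prove the first item; the second follows by a symmetric argument swapping $\cF^{ws}$ and $\cF^{wu}$. The strategy is to deform $\bar\alpha(m)$ to the identity through an explicit continuous family, transfer the picture to $\cL^c(0)\cong\RR^2$ via the conjugacy, and combine a $C^0$ bounded-distance estimate with a topological transversality argument.

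For $t\in[0,1]$ I will work with the maps $\bar\alpha_t(m)\colon\cF^c(0)\to\cF^c(0)$ defined by
\[
\bar\alpha_t(m)(x):=\cF^{su}(x+tm)\cap\cF^c(0).
\]
The global product structure of $\cF^{su}$ and $\cF^c$ in $\RR^d$, which is available because $E^{ss}\oplus E^{uu}$ is integrable and $f$ is $C^1$-close to $A$ (Lemma~\ref{lem:Brin} and the remarks preceding this lemma), makes each $\bar\alpha_t(m)$ a well-defined self-homeomorphism of $\cF^c(0)$, and the family continuously interpolates $\bar\alpha_0(m)=\mathrm{id}$ to $\bar\alpha_1(m)=\bar\alpha(m)$.

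Next I identify $\cF^c(0)$ with $\cL^c(0)\cong\RR^2$ via $H|_{\cF^c(0)}$, choosing linear coordinates $(u,v)$ on $\cL^c(0)$ so that $\cL^{ws}(0)=\{v=0\}$ and $\cL^{wu}(0)=\{u=0\}$, and set $\tilde\alpha_t(m):=H\,\bar\alpha_t(m)\,H^{-1}$. Combining $\|H-\mathrm{id}\|_{C^0}\le C$ on $\RR^d$ with the $C^0$-closeness of the $\cF^{su}$-holonomy to the $\cL^{su}$-holonomy on bounded scales (a consequence of $C^1$-closeness of $f$ to $A$), I aim to prove the uniform bound
\[
\bigl\|\tilde\alpha_t(m)(p)-(p+t\pi_c(m))\bigr\|\le C',\qquad \forall p\in\cL^c(0),\ \forall t\in[0,1],
\]
where $\pi_c\colon\RR^d\to\cL^c(0)$ is the linear projection along $L^{su}$. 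Thus each $\tilde\alpha_t(m)$ is a uniformly bounded $C^0$-perturbation of a translation, so the topological line $\gamma_t:=\tilde\alpha_t(m)(\{v=0\})$ lies in a horizontal strip and its $u$-coordinate tends to $\pm\infty$ at its two ends. In particular $\gamma_t\cap\{u=0\}$ is a nonempty compact set for each $t$.

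The main obstacle is to upgrade this to a unique transverse intersection. My plan is to show that, for every $t\in[0,1]$, the foliation $\bar\alpha_t(m)(\cF^{ws})$ on $\cF^c(0)$ retains a global product structure with $\cF^{wu}$ inside $\cF^c(0)$. At $t=0$ this is automatic, and for $t>0$ I will propagate it using the uniform bound above together with the fact that $\bar\alpha_t(m)$ is a homeomorphism conjugating the transverse pair $(\cF^{ws},\cF^{wu})$ to another transverse pair with global product structure, by means of a continuity and cone-field argument in the $(u,v)$-coordinates. Once the global product structure is established for all $t$, each leaf of $\bar\alpha_t(m)(\cF^{ws})$ meets each leaf of $\cF^{wu}$ in exactly one point; equivalently $\gamma_t$ is a graph over the $u$-axis, so $\gamma_t\cap\{u=0\}$ consists of a single point at which the crossing is topologically transverse. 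Setting $t=1$ yields the first item, and the second item follows from the analogous argument with the roles of $\cF^{ws}$ and $\cF^{wu}$ interchanged.
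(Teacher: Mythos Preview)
Your argument has a real gap at exactly the place you flag as ``the main obstacle.'' The bounded-strip conclusion only tells you that $\gamma_t$ is a proper topological line whose $u$-coordinate goes to $\pm\infty$; that yields existence of an intersection with $\{u=0\}$, but not uniqueness. To promote this to a unique transverse crossing you propose a ``continuity and cone-field argument,'' but $\bar\alpha_t(m)$ is only a H\"older homeomorphism (the $\cF^{su}$-holonomy has no reason to be $C^1$ here), so there is no tangent cone to propagate. The image $\bar\alpha_t(m)(\cF^{ws}(0))$ is a topological curve with no differentiable structure, and nothing in your estimates prevents it from oscillating across $\cF^{wu}(0)$ many times while staying in a bounded horizontal strip. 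The claim that $\bar\alpha_t(m)(\cF^{ws})$ retains a global product structure with $\cF^{wu}$ is exactly what you need to prove, and the homotopy from $t=0$ does not obviously carry it: global product structure is not an open condition for $C^0$ foliations without some quantitative transversality, which you do not have.

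The paper's proof bypasses all of this with a structural observation you are missing: factor the $\cF^{su}$-holonomy as $\H^{uu}\circ\H^{ss}$ through the point $\xi=\cF^{ss}(m)\cap\cF^{uu}(\bar\alpha(m)(0))$. Since $E^{ss}\oplus E^{ws}=E^s$ is integrable, the $\cF^{ss}$-holonomy sends $\cF^{ws}(m)$ to the genuine leaf $\cF^{ws}(\xi)$; since $E^{wu}\oplus E^{uu}=E^u$ is integrable, the inverse $\cF^{uu}$-holonomy sends $\cF^{wu}(0)$ to a genuine leaf $\cF^{wu}(\zeta)$. Now the intersection question reduces to $\cF^{ws}(\xi)\cap\cF^{wu}(\zeta)$ inside $\cF^c(\xi)$, which is a single transverse point by the global product structure of the actual weak foliations. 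Pushing forward by the homeomorphism $\H^{uu}$ preserves both uniqueness and topological transversality. This two-line argument uses precisely the integrability of $E^s$ and $E^u$ that your approach ignores.
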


\begin{proof}
	For every $m\in\ZZ^d$, $\bar\alpha(m)\big(\cF^{ws}(0)\big)$ is the image of $\cF^{ws}(m)$ under the holonomy map induced by the foliation $\cF^{su}$: 
	$$
	\H^{su}_{m,\bar\alpha(m)(0)}:
	\cF^c(m)\longrightarrow \cF^c(0),
	\qquad \text{and} \qquad
	\bar\alpha(m)\big(\cF^{ws}(0)\big)= 
	\H^{su}_{m,\bar\alpha(m)(0)}\big(\cF^{ws}(m)\big).
	$$ 
	Let $\xi$ be the unique intersection point of $\cF^{ss}(m)$ and $\cF^{uu}(\bar\alpha(m)(0))$ in $\cF^{su}(m)$. Then
	$$
	\H^{su}_{m,\bar\alpha(m)(0)}=
	\H^{uu}_{\xi,\bar\alpha(m)(0)}\circ
	\H^{ss}_{m,\xi}:~
	\cF^c(m)\longrightarrow \cF^c(0).
	$$
	
	Since $E^{ss}\oplus E^{ws}$ is integrable, the holonomy map  $\H^{ss}_{m,\xi}:\cF^c(m)\to\cF^c(\xi)$ preserves the weak foliation $\cF^{ws}$. Hence 
	$\H^{ss}_{m,\xi}\big(\cF^{ws}(m)\big)=\cF^{ws}(\xi)$. On the other hand, $E^{wu}\oplus E^{uu}$ being integrable implies that $\H^{uu}_{\xi,\bar\alpha(m)(0)}$ preserves $\cF^{wu}$. Thus 
	$$
	\left(\H^{uu}_{\xi,\bar\alpha(m)(0)}\right)^{-1}\big(\cF^{wu}(0)\big)
	=\cF^{wu}(\zeta)
	\qquad \text{for~some} \quad
	\zeta\in\cF^c(\xi).
	$$
	
	Since $\cF^{ws}(\xi)$ transversely intersects $\cF^{wu}(\zeta)$ at a unique point in $\cF^c(\xi)$, and the holonomy map $\H^{uu}_{\xi,\bar\alpha(m)(0)}:\cF^c(\xi)\to\cF^c(0)$ is a homeomorphism, we have
	$$
	\bar\alpha(m)\big(\cF^{ws}(0)\big)=
	H^{uu}_{\xi,\bar\alpha(m)(0)}\big(\cF^{ws}(\xi)\big)
	\qquad \text{intersects} \qquad
	\cF^{wu}(0)=H^{uu}_{\xi,\bar\alpha(m)(0)}\big(\cF^{wu}(\zeta)\big)
	$$
	topologically transversely at a unique point in $\cF^c(0)$. The proof for the second item is the same.
\end{proof}

The following proposition is the key observation for the case when both $E^{ws}\oplus E^{uu}$ and $E^{ss}\oplus E^{wu}$ are not integrable.

\begin{proposition}\label{prop:C2-general}
	Let $A$ and $f$ satisfy the assumption of Theorem \ref{thm:C2}. Assume $E^{ss}\oplus E^{uu}$ is integrable and let $\alpha:\Gamma\times\cF^c(0)\rightarrow\cF^c(0)$ be the corresponding action induced by $\cF^{su}$. If both $E^{ws}\oplus E^{uu}$ and $E^{ss}\oplus E^{wu}$ are not integrable, then there exists $m\in\ZZ^d$, such that
	\begin{itemize}
		\item $\bar\alpha(m)\big(\cF^{ws}(0)\big)$ intersects $\cF^{ws}(0)$ topologically transversely in $\cF^c(0)$;
		\item $\bar\alpha(m)\big(\cF^{wu}(0)\big)$ intersects $\cF^{wu}(0)$ topologically transversely in $\cF^c(0)$.
	\end{itemize}  
\end{proposition}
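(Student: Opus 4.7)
The plan is to carry out a 2-dimensional ``ping-pong'' argument on the center leaf $\cF^c(0)$, using the two non-integrability hypotheses to break leaf-preservation of $\cF^{wu}$ and $\cF^{ws}$ by the $\bar\alpha$-action, and then exploiting density of $\bar\alpha$-orbits together with the openness of transverse crossings to produce a common $m$. The key preliminary observation is that the non-integrability of $E^{ss}\oplus E^{wu}$ is equivalent to the statement that $\bar\alpha$ does not preserve the sub-foliation $\cF^{wu}$ of $\cF^c(0)$: the $\cF^{su}$-holonomy factors as $\cF^{ss}$-holonomy composed with $\cF^{uu}$-holonomy, and the latter automatically preserves $\cF^{wu}$ inside the 2-dimensional unstable foliation $\cF^u$ by local product structure, so $\bar\alpha$ preserves $\cF^{wu}$ iff $\cF^{ss}$-holonomy does, iff $E^{ss}\oplus E^{wu}$ integrates. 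Our hypothesis thus yields some $n_1\in\ZZ^d$ and $x_1\in\cF^{wu}(0)$ such that $\bar\alpha(n_1)(x_1)\notin\cF^{wu}(\bar\alpha(n_1)(0))$; symmetrically, non-integrability of $E^{ws}\oplus E^{uu}$ gives $n_2\in\ZZ^d$ and $x_2\in\cF^{ws}(0)$ with the analogous property.

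Next I would produce, separately, $m_1, m_2$ realizing each of the two transverse crossings. Working in curvilinear coordinates on $\cF^c(0)\cong\RR^2$ coming from the transverse sub-foliations $\cF^{ws}$ and $\cF^{wu}$, the connected curve $\bar\alpha(n_1)(\cF^{wu}(0))$ has non-constant $\cF^{wu}$-coordinate and thus achieves a nontrivial interval of ``$\cF^{wu}$-heights''. Using density of the $\ZZ^d$-orbit $\{\bar\alpha(n)(0): n\in\ZZ^d\}$ in $\cF^c(0)$ (inherited from density on the linear side via $H$) together with commutativity of the $\ZZ^d$-subaction, I would select $n'$ so that $\bar\alpha(n'+n_1)(\cF^{wu}(0))=\bar\alpha(n')\bigl(\bar\alpha(n_1)(\cF^{wu}(0))\bigr)$ straddles $\cF^{wu}(0)$ in the $\cF^{wu}$-coordinate. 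Combined with the fact from Lemma~\ref{lem:intersection} that this curve still meets $\cF^{ws}(0)$ at exactly one point, this forces a topologically transverse crossing of $\cF^{wu}(0)$. Setting $m_1=n'+n_1$ works, and an analogous argument produces $m_2$ witnessing the crossing for $\cF^{ws}$.

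Finally, to combine $m_1$ and $m_2$ into a single $m$ realizing both crossings simultaneously, I would use the openness of the transverse crossing condition in the $C^0$-topology on curves (a small $C^0$-perturbation of a curve near a transverse intersection point still crosses there) together with the ``two-fold freedom'' of the $\ZZ^d$-action. The aim is a simultaneous approximation of the pair $\bigl(\bar\alpha(m_1)(\cF^{wu}(0)),\bar\alpha(m_2)(\cF^{ws}(0))\bigr)$ by $\bigl(\bar\alpha(m)(\cF^{wu}(0)),\bar\alpha(m)(\cF^{ws}(0))\bigr)$ for a single $m\in\ZZ^d$. The main obstacle will be executing this synchronization rigorously: one has only countably many $m$ at one's disposal and must independently control two curves passing through two different base points. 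This is precisely the point at which the argument becomes intrinsically two-dimensional and, as noted in the introduction, fails to extend to higher-dimensional centers.
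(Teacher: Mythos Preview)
Your first two steps are broadly aligned with the paper: the equivalence between non-integrability of $E^{ss}\oplus E^{wu}$ and non-preservation of $\cF^{wu}$ by $\bar\alpha$ is exactly the content of Lemma~\ref{lemma_32}, and the paper also first produces separate witnesses $m_1,m_2$ for the two transverse crossings (its argument for this step is slightly different from yours, using that $\bar\alpha(n)(\cF^{wu}(0))$ stays close to an actual $\cF^{wu}$-leaf when $n$ is close to $\cF^c(0)$, but your density idea could likely be made to work too).

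The genuine gap is in your third step. You propose a simultaneous approximation: find a single $m$ with $\bar\alpha(m)(\cF^{wu}(0))$ close to $\bar\alpha(m_1)(\cF^{wu}(0))$ \emph{and} $\bar\alpha(m)(\cF^{ws}(0))$ close to $\bar\alpha(m_2)(\cF^{ws}(0))$. But these are two independent nonlinear proximity conditions on the same $m$, and the ``two-fold freedom of $\ZZ^d$'' does not obviously buy you anything here, since $\bar\alpha$ is not a translation action in the $\cF^{ws}\times\cF^{wu}$ coordinates. You yourself flag this as ``the main obstacle,'' and indeed you give no mechanism to overcome it.

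The paper's resolution is a different idea that you are missing entirely: it uses the $\ZZ$-factor of $\Gamma=\ZZ\ltimes\ZZ^d$, not further freedom in $\ZZ^d$. The relation $\bar\alpha(A^k n)=F^k\circ\bar\alpha(n)\circ F^{-k}$, together with the fact that $F$ preserves both $\cF^{ws}$ and $\cF^{wu}$ on $\cF^c(0)$ and is hyperbolic there, gives
\[
\bar\alpha(A^{-k}m_1)\big(\cF^{ws}(0)\big)\xrightarrow[k\to\infty]{}\cF^{ws}(0),
\qquad
\bar\alpha(A^{k}m_2)\big(\cF^{wu}(0)\big)\xrightarrow[k\to\infty]{}\cF^{wu}(0)
\]
in the Hausdorff topology. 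Now openness of transverse crossing shows that $\bar\alpha(m_1)(\cF^{wu}(0))$, which crosses $\cF^{wu}(0)$, also crosses the nearby $\bar\alpha(A^{2k}m_2)(\cF^{wu}(0))$ for $k$ large; applying $F^{-k}$ yields that $\bar\alpha(A^{-k}m_1)(\cF^{wu}(0))$ crosses $\bar\alpha(A^{k}m_2)(\cF^{wu}(0))$, i.e.\ $\bar\alpha(m)(\cF^{wu}(0))$ crosses $\cF^{wu}(0)$ for $m=A^k m_2-A^{-k}m_1$. The symmetric argument gives the $\cF^{ws}$ crossing for the \emph{same} $m$. So the synchronization is achieved not by approximation in $\ZZ^d$ but by pushing $m_1$ and $m_2$ to different ends of the hyperbolic dynamics and taking their difference.
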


It will be important to keep in mind that foliations $\cF^c$, $\cF^{wu}$ and $\cF^{ws}$ are conjugate to corresponding irrational linear foliations $\cL^c$, $\cL^{wu}$ and $\cL^{ws}$, respectively. In particular, if points $x$ and $y$ are close then corresponding leaves  $\cF^{wu}(x)$ and $\cF^{wu}(y)$ also stay uniformly close. We begin with a lemma.

\begin{lemma} There exists a vector $m_1\in\ZZ^d$ such that $\bar\alpha(m_1)(\cF^{wu}(0))$ has topologically transverse  intersection with $\cF^{wu}(0)$.
\end{lemma}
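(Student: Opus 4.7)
The plan is to argue by contradiction, combining the failure of $\bar\alpha$-invariance of $\cF^{wu}$ with the expansion of $F^{-1}$ in the weak stable direction and the density of the lattice projections onto $L^c(0)$. First I identify $\cF^c(0)$ with $\cL^c(0)\cong\RR^2$ via $\phi:=H|_{\cF^c(0)}$ and choose coordinates so that $\cF^{wu}(0)=\{y=0\}$ and $\cF^{ws}(0)=\{x=0\}$. Because $\phi$ conjugates $F|_{\cF^c(0)}$ to $A|_{\cL^c(0)}$, in these coordinates $F$ acts as the diagonal linear map $\mathrm{diag}(\lambda^{wu},\lambda^{ws})$ with $|\lambda^{ws}|<1<|\lambda^{wu}|$. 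Assuming the conclusion fails, every curve $\gamma_m:=\bar\alpha(m)(\cF^{wu}(0))$ lies in one of the closed half-planes $\{y\ge 0\}$ or $\{y\le 0\}$, so the $y$-coordinate along each $\gamma_m$ has constant sign; I shall exhibit some $\gamma_{m_1}$ with $y$-values of both signs.

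The key analytic input is that $\bar\alpha(m)$ is a uniform $C^0$-bounded perturbation of the translation $T_{m_c}(y)=y+m_c$, where $m_c$ denotes the projection of $m\in\ZZ^d$ onto $L^c(0)$ along $L^{su}(0)$: there exists $C>0$ with $|\bar\alpha(m)(y)-(y+m_c)|\le C$ for all $y\in\cL^c(0)$ and $m\in\ZZ^d$. Using $\ZZ^d$-equivariance $\cF^{su}(z+m)=\cF^{su}(z)+m$ and $H(z+m)=H(z)+m$ one computes
$$
\bar\alpha(m)(y)=\bigl(H(\cF^{su}(\phi^{-1}(y)))+m\bigr)\cap\cL^c(0),
$$
and the linear counterpart $(\cL^{su}(\phi^{-1}(y))+m)\cap\cL^c(0)=\phi^{-1}(y)+m_c$ together with $\|H-\mathrm{id}\|_{C^0}<\infty$, the uniform Hausdorff-closeness of $\cF^{su}$-leaves to $\cL^{su}$-leaves (via structural stability of the strong foliations $\cF^{ss}$ and $\cF^{uu}$ for $f$ close to $A$), and uniform transversality of $\cL^{su}$ to $\cL^c(0)$ give the bound. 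A second standard input is that irreducibility of $A$ makes $\{m_c:m\in\ZZ^d\}$ dense in $\cL^c(0)$, so in particular the $y$-components $\{b_m\}$ are dense in $\RR$.

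By Lemma~\ref{lemma_32}, if $\bar\alpha$ preserved $\cF^{wu}$ on $\cF^c(0)$ then $E^{ss}\oplus E^{wu}\oplus E^{uu}$ would be integrable and we would get $h(\cF^{ss})=\cL^{ss}$, $h(\cF^{uu})=\cL^{uu}$, finishing Theorem~\ref{thm:C2} directly; so we may pick $n_0\in\ZZ^d$ with $\gamma_{n_0}$ not contained in any $\cF^{wu}$-leaf, i.e.\ with non-constant $y$-coordinate. The commutation relation of Lemma~\ref{lem:solve} together with $F^{-k}(\cF^{wu}(0))=\cF^{wu}(0)$ gives $\gamma_{A^{-k}n_0}=F^{-k}(\gamma_{n_0})$; since $F^{-k}$ scales the $y$-coordinate by $|\lambda^{ws}|^{-k}$, for $k$ large the $y$-coordinate along $\gamma_{A^{-k}n_0}$ traces a subinterval $[y_-,y_+]$ of length $>4C$. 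By density, pick $m\in\ZZ^d$ with $b_m\in(-y_++C,\,-y_--C)$, a nonempty open interval. Then $\gamma_{A^{-k}n_0}+m_c$ has $y$-values $\le y_-+b_m<-C$ and $\ge y_++b_m>C$, and because $\gamma_{m+A^{-k}n_0}=\bar\alpha(m)(\gamma_{A^{-k}n_0})$ lies within sup-distance $C$ of this translate, it has $y$-values of both signs. By connectedness of $\gamma_{m+A^{-k}n_0}$ it meets both components of $\cF^c(0)\setminus\cF^{wu}(0)$, so $m_1:=m+A^{-k}n_0$ is the desired vector.

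The main obstacle is the uniform $C^0$-approximation of $\bar\alpha(m)$ by $T_{m_c}$: since $m$ ranges over all of $\ZZ^d$, the bound must not deteriorate with $|m|$, which requires global (not just local) Hausdorff closeness of $\cF^{su}$-leaves to $\cL^{su}$-leaves. Establishing this globally uses $C^0$-structural stability of the strong foliations $\cF^{ss}$ and $\cF^{uu}$ applied to an $f$ that is $C^1$-close to $A$; once it is in hand the remaining stretching-and-density argument is essentially quantitative bookkeeping.
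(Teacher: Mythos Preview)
Your overall strategy is coherent, but the argument has a genuine gap at precisely the point you yourself flag as ``the main obstacle.'' The uniform bound $|\bar\alpha(m)(y)-(y+m_c)|\le C$ for \emph{all} $m\in\ZZ^d$ is equivalent, after unwinding, to the statement that every lifted leaf $\cF^{su}(z)$ lies in a bounded $L^c$-tube around the linear leaf $\cL^{su}(z)$. Your justification --- ``$C^0$-structural stability of the strong foliations $\cF^{ss}$ and $\cF^{uu}$'' --- does not deliver this. Structural stability of the strong foliations gives that $E^{ss}$ and $E^{uu}$ are $C^0$-close to $L^{ss}$ and $L^{uu}$; integrating, each strong leaf is a graph of small slope over the corresponding linear subspace, which yields only \emph{linear} drift in the transverse directions, not a uniform bound. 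What would give the uniform bound is precisely $H(\cF^{su})=\cL^{su}$, but this is exactly what is unknown in the regime you are analyzing (and is in fact what the eventual contradiction is meant to establish indirectly). So invoking it here is circular, and without it step~5 of your argument fails: applying $\bar\alpha(m)$ to the stretched curve $\gamma_{A^{-k}n_0}$ could in principle introduce a $y$-shift comparable to the stretched range, wiping out the sign change.

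The paper's proof avoids this difficulty entirely by never needing a uniform bound over all of $\ZZ^d$. Instead it uses the purely local fact that if $n$ is $\delta$-close to $\cF^c(0)$ then the $\cF^{su}$-holonomy from $\cF^c(n)$ to $\cF^c(0)$ is along paths of length $O(\delta)$ and hence distorts $\cF^{wu}$-leaves by only $\eps(\delta)\to 0$. Having found (via Lemma~\ref{lemma_32}) a point $x\in\cF^c(0)$ where $\bar\alpha(n_1)(\cF^{wu}(0))$ crosses $\cF^{wu}(x)$, the paper then picks $n_1'\in\ZZ^d$ close to $\cF^{wu}(x)\subset\cF^c(0)$ (density), so that $\bar\alpha(n_1')(\cF^{wu}(0))$ is close to $\cF^{wu}(x)$ by the local estimate; the crossing persists, and $m_1=n_1-n_1'$ works. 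If you want to salvage your approach, you would need to restrict to $m$ close to $\cF^c(0)$ so that the local estimate applies --- but then the set of available $b_m$ is no longer obviously dense, and the simple translation-plus-stretch bookkeeping breaks down.
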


\begin{proof}
	By Lemma~\ref{lemma_32} we have $\bar\alpha(n_1)(\cF^{wu}(0))\not\subset \cF^{wu}(\bar\alpha(n_1)(0))$ for some $n_1\in\ZZ^d$. 
	Hence we have that $\bar\alpha(n_1)(\cF^{wu}(0))$ intersects some $\cF^{wu}(x)$ topologically transversely. 
	
	Assume that $n\in\ZZ^d$ is $\delta$-close to $\cF^c(0)$ then $\bar\alpha(n)(\cF^{wu}(0))$ is contained $\eps(\delta)$-neighborhood of the leaf $\cF^{wu}(\bar\alpha(n)(0))$, where $\eps(\delta)\to 0$ as $\delta\to 0$. 
	Indeed, $\bar\alpha(n)(\cF^{wu}(0))$ is the image of $\cF^{wu}(n)$ under $C\delta$-short holonomy $\H^{su} \colon \cF^c(n)\to \cF^c(0)$ induced by $\cF^{su}$, and hence is contained is small neighborhoods of $\cF^{wu}(n)$ and also of $\cF^{wu}(\bar\alpha(n)(0))$ as well. More precisely, for any $\delta>0$ there exists an $\eps>0$ such that if
	$$
	d(n,\cF^c(0))<\eps, \qquad \text{then} \qquad
	\alpha(n)(\cF^{wu}(0))\subset U_\delta(\cF^{wu}(\bar\alpha(n))),
	$$
	where $U_\delta(\cF^{wu}(\bar\alpha(n)))$ is the $\delta$-neighborhood of $\cF^{wu}(\bar\alpha(n))$.
	
	Now pick an $x \in\cF^c(0)$ such that $\bar\alpha(n_1)(\cF^{wu}(0)))$ intersects $\cF^{wu}(x)$ transversely. Pick points $a,b\in \bar\alpha(n_1)(\cF^{wu}(0)))$ which lie in different connected components of $\cF^c(0)\backslash \cF^{wu}(x)$. We can pick $n_1'\in\ZZ^d$ so that $\cF^{wu}(n_1')$ is arbitrarily close to $\cF^{wu}(x)$. Then, according to the above discussion $\bar\alpha(n_1')(\cF^{wu}(0))$ is also arbitrarily close to $\cF^{wu}(x)$. Hence, for an appropriate choice of $n_1'$, we have that $a$ and $b$ also lie in different connected components of $\cF^c(0)\backslash \bar\alpha(n_1')(\cF^{wu}(0))$. This means that $\bar\alpha(n_1)(\cF^{wu}(0))$ and $\bar\alpha(n_1')(\cF^{wu}(0))$ have transverse topological intersection, which is equivalent to  $\bar\alpha(n_1-n_1')(\cF^{wu}(0))$ transversally intersecting $\cF^{wu}(0)=\bar\alpha(0)(\cF^{wu}(0))$. Hence we obtain the lemma by setting $m_1=n_1-n_1'$.
\end{proof}

By the same argument we can adjust the value of $m_2$ such that $\bar\alpha(m_2)(\cF^{ws}(0))$ intersects $\cF^{ws}(0)$ topologically transversely. Now we can prove Proposition \ref{prop:C2-general}.

\begin{proof}[Proof of Proposition \ref{prop:C2-general}]
	We want to show that for every sufficiently large $k\in\NN$, let 
	$$
	m=A^km_2-A^{-k}m_1\in\ZZ^d,
	$$ 
	then we have that $\bar\alpha(m)\big(\cF^{ws}(0)\big)$ intersects $\cF^{ws}(0)$ topologically transversely, and $\bar\alpha(m)\big(\cF^{wu}(0)\big)$ intersects $\cF^{wu}(0)$ topologically transversely.
	
	Recall that $\bar\alpha(m_1)\big(\cF^{ws}(0)\big)$ is given by the image of $\cF^{ws}(m_1)$ under the $\cF^{su}$-holonomy 
	$$
	\H^{su}_{m_1,\bar\alpha(m_1)(0)}:
	\cF^c(m_1)\longrightarrow
	\cF^c(\bar\alpha(m_1)(0))=\cF^c(0).
	$$ 
	So there exists a constant $C>0$, such that $\bar\alpha(m_1)\big(\cF^{ws}(0)\big)$ is contained in the $C$-neighborhood of $\cF^{ws}(0)$. By the same taken $-m_1$, we can assume $\cF^{ws}(0)$ is also contained in the $C$-neighborhood of $\bar\alpha(m_1)\big(\cF^{ws}(0)\big)$.

	Since $F^{-1}$ uniformly contracts $\cF^{wu}$ and uniformly expands $\cF^{ws}$ as $k\to+\infty$, we have 
	$$
	\bar\alpha(A^{-k}m_1)\big(\cF^{ws}(0)\big)=
	F^{-k}\circ\bar\alpha(m_1)\circ F^k\big(\cF^{ws}(0)\big)=
	F^{-k}\circ\bar\alpha(m_1)\big(\cF^{ws}(0)\big)
	~\longrightarrow~\cF^{ws}(0)
	$$ 
	in the Hausdorff topology on $\cF^c(0)$. The same argument shows that 
	$$
	\bar\alpha(A^km_2)\big(\cF^{wu}(0)\big)~\longrightarrow~
	\cF^{wu}(0)
	\qquad \text{as} \qquad
	k\to+\infty
	$$
	in Hausdorff topology of $\cF^c(0)$. 
	
	On the other hand, $\bar{\alpha}(m_1)\big(\cF^{wu}(0)\big)$ intersects $\cF^{wu}(0)$ topologically transversely in $\cF^c(0)$. This implies $\bar{\alpha}(m_1)\big(\cF^{wu}(0)\big)$ intersects both connected components of $\cF^c(0)\setminus\cF^{wu}(0)$. Since 
	$$
	\bar\alpha(A^{2k}m_2)\big(\cF^{wu}(0)\big)
	~\longrightarrow~ \cF^{wu}(0)
	\qquad \text{as } \qquad k\to+\infty,
	$$
	$\bar{\alpha}(m_1)\big(\cF^{wu}(0)\big)$ intersects both connected components of $\cF^c(0)\setminus\bar\alpha(A^{2k}m_2)\big(\cF^{wu}(0)\big)$ for every sufficiently large $k$. That is, $\bar{\alpha}(m_1)\big(\cF^{wu}(0)\big)$ intersects $\bar\alpha(A^{2k}m_2)\big(\cF^{wu}(0)\big)$ transversely.
    By applying the action of $F^{-k}$ on $\cF^c(0)$, we have $\bar{\alpha}(A^{-k}m_1)\big(\cF^{wu}(0)\big)$ intersects $\bar\alpha(A^km_2)\big(\cF^{wu}(0)\big)$ topologically transversely for every sufficiently large $k\in\NN$.
	
	The same argument shows that $\bar\alpha(A^km_2)\big(\cF^{ws}(0)\big)$ intersects $\bar{\alpha}(A^{-k}m_1)\big(\cF^{ws}(0)\big)$ transversely in $\cF^c(0)$ for every sufficiently large $k\in\NN$.
	Hence we can let
	$$
	m=A^km_2-A^{-k}m_1\in\ZZ^d,
	$$ 
	Then it satisfies $\bar\alpha(m)\big(\cF^{ws}(0)\big)$ intersects $\cF^{ws}(0)$ topologically  transversely and $\bar\alpha(m)\big(\cF^{wu}(0)\big)$ intersects $\cF^{wu}(0)$ topologically transversely in $\cF^c(0)$ for sufficiently large $k$.
\end{proof}

\subsection{The ping-pong argument}\label{subsec:ping-pong}

In this section, we prove Theorem \ref{thm:C2} from Lemma \ref{lem:intersection} and Proposition \ref{prop:C2-general}. Recall that in the the case when both $E^{ss}\oplus E^{wu}$ and $E^{ws}\oplus E^{uu}$ are not integrable we have the following. There exists $m\in\ZZ^d$, such that 
\begin{itemize}
	\item $\bar\alpha(m)(\cF^{ws}(0))$ topologically transversely intersects both $\cF^{ws}(0)$ and $\cF^{wu}(0)$ in $\cF^c(0)$;
	\item $\bar\alpha(m)(\cF^{wu}(0))$ topologically transversely intersects both $\cF^{ws}(0)$ and $\cF^{wu}(0)$ in $\cF^c(0)$.
\end{itemize}

Now denote by $F=\alpha(1,0):\cF^c(0)\to\cF^c(0)$ the restriction $F:\RR^d\to\RR^d$ to $\cF^c(0)$, and let 
$$
G~=~\bar\alpha(m)\circ\alpha(1,0)\circ\bar\alpha(-m)
~=~\bar\alpha(m)\circ F\circ\bar\alpha(-m):
~\cF^c(0)\to\cF^c(0).
$$
Here $F$ is uniformly contracting on $\cF^{ws}$ and expanding on $\cF^{wu}$, and $G$ is uniformly contracting along $\bar\alpha(\cF^{ws})$ and expanding along $\bar\alpha(\cF^{wu})$. Notice that since the action $\bar\alpha$ is just H\"older continuous, $G$ is globally a ``topological hyperbolic saddle'' on $\cF^c(0)$.

\begin{lemma}\label{lem:FG-solve}
	For every $k\in\NN$, there exist integers $a_1,\cdots,a_l,b_1\cdots,b_l\in\ZZ$ with only $a_l$ and $b_1$ allowed to be zeros, such that
	$$
	F^{a_l\cdot k}\circ G^{b_l\cdot k}\circ 
	F^{a_{l-1}\cdot k}\circ G^{b_{l-1}\cdot k}\circ\cdots\cdots\circ
	F^{a_1\cdot k}\circ G^{b_1\cdot k}
	={\rm Id}_{\cF^c(0)}.
	$$
\end{lemma}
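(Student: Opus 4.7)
The plan is to view $F$ and $G$ as elements of the semidirect product $\Gamma=\ZZ\ltimes\ZZ^d$ (using that $\alpha$ is a genuine $\Gamma$-action, which is encoded in the commutation relation of Lemma~\ref{lem:solve}) and to exploit the resulting polycyclic structure. A direct multiplication in $\Gamma$ shows that $G=\bar\alpha(m)\circ F\circ\bar\alpha(-m)$ corresponds to the element $(1,m-Am)\in\Gamma$, and induction gives $G^k=(k,m-A^km)$ while $F^k=(k,0)$.

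The key observation is the commutator identity
$$
[F^k,G^k]~=~F^kG^kF^{-k}G^{-k}~=~\bar\alpha\bigl(-(A^k-I)^2m\bigr)~=:~\bar\alpha(n_0),
$$
verified by multiplying out in $\Gamma$. Since $A$ is hyperbolic, $A^k-I$ is invertible over $\RR$; combined with $m\neq 0$ coming from Proposition~\ref{prop:C2-general}, this yields $n_0\neq 0$. Conjugation by $F^{jk}$ acts linearly on these translations,
$$
F^{jk}\bar\alpha(n_0)F^{-jk}~=~\bar\alpha(A^{jk}n_0),
$$
so the $d+1$ vectors $n_0,A^kn_0,\ldots,A^{dk}n_0$ in $\RR^d$ admit a nontrivial integer linear dependence $\sum_{j=0}^d p_jA^{jk}n_0=0$ (by clearing denominators in any rational relation). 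This produces the group identity
$$
\prod_{j=0}^{d}\bigl(F^{(j+1)k}\,G^k\,F^{-k}\,G^{-k}\,F^{-jk}\bigr)^{p_j}~=~\bar\alpha\!\Bigl(\sum_{j=0}^d p_jA^{jk}n_0\Bigr)~=~{\rm Id}_{\cF^c(0)}.
$$

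The remaining step is combinatorial: expand the product into an alternating word in $F^{(\cdot)k}$ and $G^{(\cdot)k}$ matching the stated template. I discard the trivial $p_j=0$ factors, then check two types of merging. Within a single block $(F^{(j+1)k}G^kF^{-k}G^{-k}F^{-jk})^{p_j}$ with $|p_j|\geq 2$, consecutive copies meet at $F^{-jk}F^{(j+1)k}=F^k$, a nonzero multiple of $k$. Between two consecutive surviving blocks with indices $j<j'$, the junction is $F^{-jk}F^{(j'+1)k}=F^{(j'-j+1)k}$, also nonzero since $j'-j+1\geq 2$. Every interior exponent is therefore a nonzero multiple of $k$. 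The word begins with a power of $F$ (giving nonzero $a_l$) and ends either with $G^{-k}$ or with a power of $F$; in the latter case I append $G^0$ on the right, taking $b_1=0$, which the template allows. The main obstacle is really just ensuring no merged interior exponent vanishes, but this is guaranteed by the strict positivity of $j+1$ and $j'-j+1$ appearing in the junctions above.
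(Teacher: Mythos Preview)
Your overall strategy matches the paper's: produce $d+1$ elements of the translation subgroup written as words in $F^{\pm k},G^{\pm k}$, use an integer linear dependence among the translation vectors to get the identity, then expand. The paper, however, takes the shorter building blocks $T_i:=F^{ik}\circ G^{-ik}=\bar\alpha(A^{ik}m-m)$ for $i=1,\ldots,d+1$ (this is the first identity of Lemma~\ref{lem:solve} read directly), rather than your conjugated commutators $F^{jk}[F^k,G^k]F^{-jk}$. The shorter blocks make the final combinatorial step essentially automatic.

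Your combinatorial step has a genuine gap. The junction analysis is written only for $p_j>0$; when $p_j<0$ the relevant block is the inverse $F^{jk}G^kF^kG^{-k}F^{-(j+1)k}$. If two adjacent surviving indices $j<j'=j+1$ both carry negative $p$, the junction is $F^{-(j+1)k}\cdot F^{(j+1)k}=F^0$, producing a forbidden interior zero exponent; nothing in the dependence $\sum_jp_jA^{jk}n_0=0$ prevents consecutive negative coefficients. The repair is easy but must be stated: since the translations $\bar\alpha(A^{jk}n_0)$ commute you may reorder the product freely, and (for instance) placing the negative-$p_j$ blocks in \emph{decreasing} $j$ order makes every junction exponent a nonzero multiple of $k$. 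With the paper's blocks $T_i$ this issue never arises: the four possible junctions between $T_i^{\pm1}$ and $T_{i'}^{\pm1}$ for $i<i'$ are $G^{-ik}F^{i'k}$, $G^{(i'-i)k}$, $F^{(i'-i)k}$, and $F^{-ik}G^{i'k}$, all with nonzero exponents.
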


\begin{proof}
	From Lemma \ref{lem:solve}, for every $k\in\NN$ and $i=1\cdots,d+1$, we have
	$$
	F^{ik}\circ G^{-ik}=\alpha(ik,0)\circ \bar\alpha(m)\circ\alpha(-ik,0)\circ\bar\alpha(-m)
	=\bar\alpha(A^{ik}m-m).
	$$
	Lemma \ref{lem:solve} shows that there exist $p_1,p_2,\cdots,p_{d+1}\in\ZZ$ which are not all zeros, such that
	$$
	\prod_{i=1}^{d+1}\left(F^{ik}\circ G^{-ik}\right)^{p_i}=
	\prod_{i=1}^{d+1}
	\big[\alpha(ik,0)\circ\bar\alpha(m)\circ
	\alpha(-ik,0)\circ\bar\alpha(-m)\big]^{p_i}=\alpha(0,0)
	={\rm Id}_{\cF^c(0)}.
	$$
	Since for every $i=1,\cdots,d$ and $j>i$,
	$$
	\big(F^{jk}\circ G^{-jk}\big)^{-1}\circ 
	\big(F^{ik}\circ G^{-ik}\big)=
	G^{-jk}\circ F^{(i-j)k}\circ G^{-ik},
	$$
	we have integers $a_1,\cdots,a_l,b_1\cdots,b_l\in\ZZ$ (only $a_l$ and $b_1$ can be zeros), such that
	$$
	F^{a_l\cdot k}\circ G^{b_l\cdot k}\circ 
	F^{a_{l-1}\cdot k}\circ G^{b_{l-1}\cdot k}\circ\cdots\cdots\circ
	F^{a_1\cdot k}\circ G^{b_1\cdot k}
	={\rm Id}_{\cF^c(0)}.
	$$
\end{proof}

Finally, we can prove Theorem \ref{thm:C2}.

\begin{proof}[Proof of Theorem \ref{thm:C2}]
	If $f$ is spectrally rigid along $E^{ws}$ and $E^{wu}$, then Theorem \ref{thm:local} shows that $h(\cF^{ss})=\cL^{ss}$ and $h(\cF^{uu})=\cL^{uu}$. So $\cF^{ss}$ and $\cF^{uu}$ are jointly integrable to a foliation $\cF^{su}$, which is tangent to $E^{ss}\oplus E^{uu}$.
	
	Conversely, we want to show $E^{ss}\oplus E^{uu}$ is integrable implies spectral rigidity along $E^{ws}$ and $E^{wu}$. If one of $E^{ss}\oplus E^{wu}$ and $E^{ws}\oplus E^{uu}$ is integrable, then Lemma \ref{lemma_31} shows that $h(\cF^{ss})=\cL^{ss}$ and $h(\cF^{uu})=\cL^{uu}$. Again Theorem \ref{thm:local} proves the spectral rigidity along $E^{ws}$ and $E^{wu}$, which finishes the proof. So we only need to consider the case that both $E^{ss}\oplus E^{wu}$ and $E^{ws}\oplus E^{uu}$ are not integrable. In the rest of the proof, we use ping-pong argument to show this case couldn't happen and finishes the proof.
	
	\vskip2mm
	
	Following from Lemma \ref{lem:intersection} and Proposition \ref{prop:C2-general}, if both $E^{ss}\oplus E^{wu}$ and $E^{ws}\oplus E^{uu}$ are not integrable, then there exists $m\in\ZZ^d$ such that both  $\bar{\alpha}(m)(\cF^{ws}(0))$ and $\bar{\alpha}(m)(\cF^{wu}(0))$ are topologically transverse to both $\cF^{ws}(0)$ and $\cF^{ws}(0)$ in $\cF^c(0)$.

	Here we have two families of foliations $\cF^{ws}\times\cF^{wu}$ and $\bar\alpha(m)(\cF^{ws})\times\bar\alpha(m)(\cF^{wu})$ with global product structures on $\cF^c(0)$. Moreover, $F$ uniformly contracts $\cF^{ws}$ and expands $\cF^{wu}$; $G$ uniformly contracts $\bar\alpha(m)(\cF^{ws})$ and expands $\bar\alpha(m)(\cF^{wu})$. 
	
	We can take 
	\begin{itemize}
		\item two large compact segments $D^s_F\subset\cF^{ws}(0)$ and $D^u_F\subset\cF^{wu}(0)$, which are disks centered at $0$ with radius large enough in $\cF^{ws}(0)$ and $\cF^{wu}(0)$ respectively, such that both $D^s_F$ and $D^u_F$ intersect both components of $\cF^c(0)\setminus\alpha(m)(\cF^{ws}(0))$ and both components of $\cF^c(0)\setminus\alpha(m)(\cF^{wu}(0))$;
		\item two large compact segments $D^s_G\subset\alpha(m)(\cF^{ws}(0))$ and $D^u_G\subset\alpha(m)(\cF^{wu}(0))$, which are disks centered at $\alpha(m)(0)$ with radius large enough in $\alpha(m)(\cF^{ws}(0))$ and $\alpha(m)(\cF^{wu}(0))$ respectively, such that both $D^s_G$ and $D^u_G$ intersect both components of $\cF^c(0)\setminus\cF^{ws}(0)$ and both components of $\cF^c(0)\setminus\cF^{wu}(0)$;
	\end{itemize} 
    such that there exist four points $x,y,z,w$ satisfying
    $$
    x\in{\rm Int}(D^s_F)\cap {\rm Int}(D^s_G),\quad
    y\in{\rm Int}(D^s_F)\cap {\rm Int}(D^u_G),\quad
    z\in{\rm Int}(D^u_F)\cap {\rm Int}(D^s_G),\quad
    w\in{\rm Int}(D^u_F)\cap {\rm Int}(D^u_G).
    $$
    
    Moreover, since $ x\in{\rm Int}(D^s_F)\cap {\rm Int}(D^s_G)$, we can take two compact small segments $D_F(x)\subset{\rm Int}(D^s_F)$ and $D_G(x)\subset{\rm Int}(D^s_G)$ containing $x$ in their interiors and $\delta_x>0$, such that 
    for every compact segment $D\subset\cF^c(0)$,
    \begin{itemize}
    	\item if $d_H(D,D_F(x))<\delta_x$, then $D$ intersects both components of $\cF^c(0)\setminus\alpha(m)(\cF^{ws}(0))$ and $D\cap D_G(x)\neq\emptyset$;
    	\item if $d_H(D,D_G(x))<\delta_x$, then $D$ intersects both components of $\cF^c(0)\setminus\cF^{ws}(0)$ and $D\cap D_F(x)\neq\emptyset$.
    \end{itemize}
    Here $d_H(\cdot,\cdot)$ is the Hausdorff distance on $\cF^c(0)$.
    
    \begin{figure}\label{fig:TIL}
    	\centering
    	\includegraphics[width=10cm]{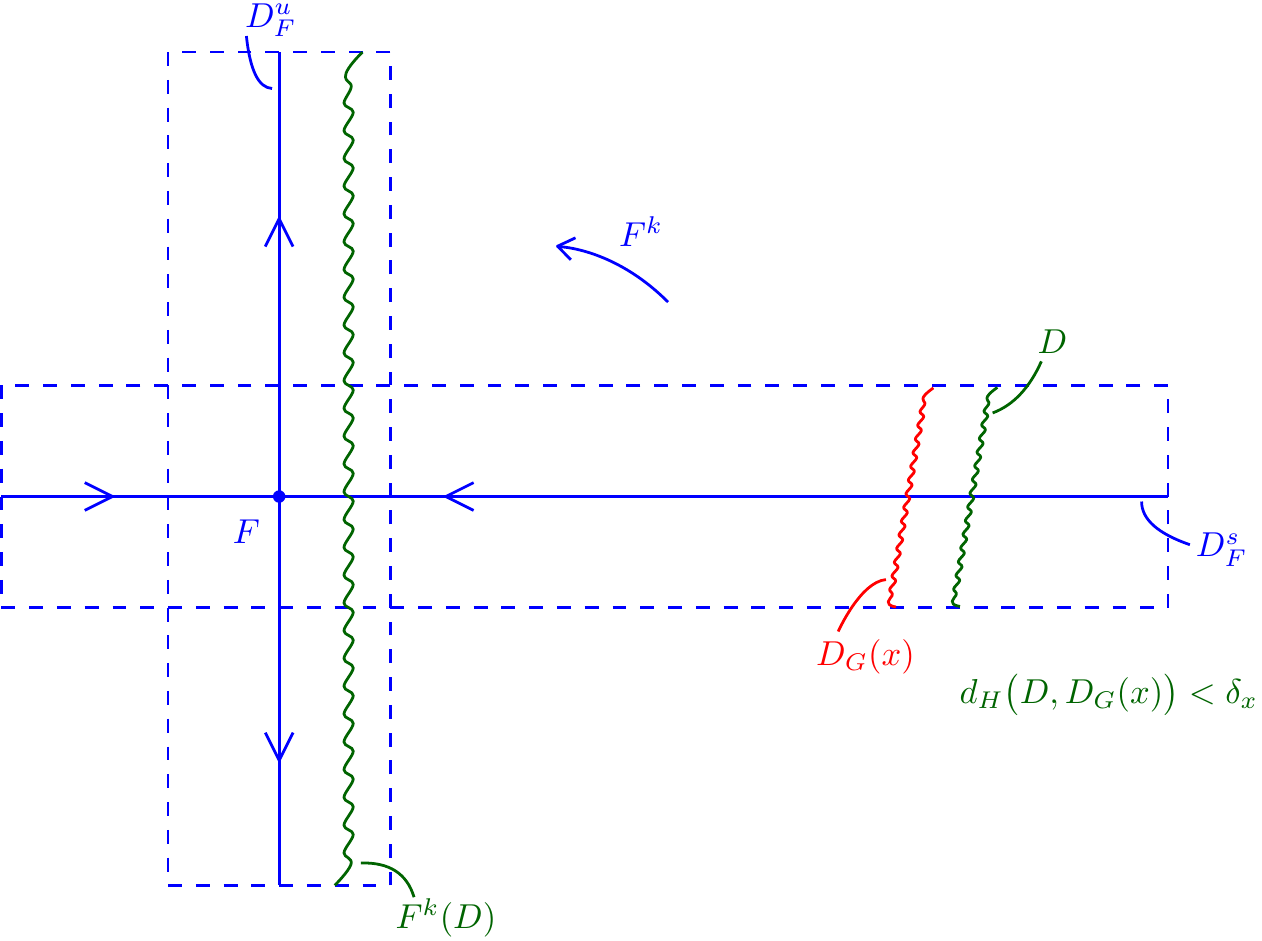}
    	\caption{Topological Inclination Lemma}
    \end{figure}
    
    Since $F$ uniformly contracts $\cF^{ws}$ and expands $\cF^{wu}$; $G$ uniformly contracts $\bar\alpha(m)(\cF^{ws})$ and expands $\bar\alpha(m)(\cF^{wu})$, 
    the fact that $D_G(x)\subset\alpha(m)(\cF^{ws}(0))$ and $D_F(x)\subset\cF^{ws}(0)$ implies we have the following claim, which is a topological Inclination Lemma, see Figure 1.
    
    \begin{claim}[Topological Inclination Lemma]\label{clm:TIL}
    	For every $\e>0$, there exists $K=K(x,\e)\in\NN$, such that for every $k>K$,
    	\begin{itemize}
    		\item if $d_H(D,D_F(x))<\delta_x$, then $G^k(D)$ contains a segment $D'$ such that $d_H(D',D^u_G)<\e$;
    		\item  if $d_H(D,D_G(x))<\delta_x$, then $F^k(D)$ contains a segment $D'$ such that $d_H(D',D^u_F)<\e$.
    	\end{itemize}
    \end{claim}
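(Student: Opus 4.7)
The plan is to prove a topological $\lambda$-lemma using the fact that the saddle $G$ on $\cF^c(0)$ admits two transverse $G$-invariant foliations with global product structure. I treat the first statement; the second is symmetric, with $F$ and $G$ interchanged and with coordinates adapted to $\cF^{ws}, \cF^{wu}$ instead.

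Key setup. The leaf $\cF^c(0)$ is homeomorphic to $\RR^2$ via the conjugacy $h$, which sends $\cF^{ws}, \cF^{wu}$ to the linear foliations $\cL^{ws}, \cL^{wu}$; in particular $\cF^{ws}$ and $\cF^{wu}$ form a pair of transverse one-dimensional foliations with global product structure on $\cF^c(0)$, and the same is true for the images $\bar\alpha(m)(\cF^{ws}), \bar\alpha(m)(\cF^{wu})$ under the homeomorphism $\bar\alpha(m)$. This yields global (H\"older) product coordinates $(u,s)\in\RR^2$ on $\cF^c(0)$ in which $\bar\alpha(m)(\cF^{ws})$-leaves are horizontal lines $\{s=\text{const}\}$, $\bar\alpha(m)(\cF^{wu})$-leaves are vertical lines $\{u=\text{const}\}$, and the fixed point $p=\bar\alpha(m)(0)$ is the origin. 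Since $G=\bar\alpha(m)\circ F\circ\bar\alpha(-m)$ preserves each of these two foliations, it sends horizontals to horizontals and verticals to verticals; together with the global product structure this forces $G$ to be a product map $G(u,s)=(\psi(u),\phi(s))$ with $\psi,\phi\colon\RR\to\RR$ homeomorphisms fixing $0$. Uniform topological contraction of $G$ along $\bar\alpha(m)(\cF^{ws})$ makes $\psi$ a topological contraction, i.e.\ $\psi^k(K)\to\{0\}$ in Hausdorff distance for every compact $K\subset\RR$; uniform expansion along $\bar\alpha(m)(\cF^{wu})$ makes $|\phi^k(s)|\to\infty$ for every $s\neq 0$, uniformly on sets bounded away from $0$.

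Execution. By the defining property of $\delta_x$, when $d_H(D,D_F(x))<\delta_x$ the arc $D$ contains a subarc $D_0$ which crosses $D_G(x)\subset\bar\alpha(m)(\cF^{ws}(0))$ topologically transversely at a point $q$ close to $x$, and has endpoints $a,b$ in different connected components of $\cF^c(0)\setminus\bar\alpha(m)(\cF^{ws}(0))$. In the product coordinates, $q=(u_q,0)$ while $a=(u_a,s_a)$ and $b=(u_b,s_b)$ have bounded $u$-coordinates and $s$-coordinates of opposite signs. The iterate $G^k(D_0)$ is the continuous arc parameterized by $r\mapsto(\psi^k(u_r),\phi^k(s_r))$. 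Writing $D^u_G=\{0\}\times[s^-,s^+]$ for some $s^-<0<s^+$, we choose $K$ large enough so that for every $k>K$: (i) $\max_{r\in D_0}|\psi^k(u_r)|<\e$ by the uniform contraction of $\psi$, and (ii) $[\phi^k(s_a),\phi^k(s_b)]\supset[s^-,s^+]$ by the expansion of $\phi$ off of $0$. Applying the intermediate value theorem to the $s$-coordinate along $G^k(D_0)$ yields a subarc $D'\subset G^k(D_0)$ whose $s$-coordinate ranges exactly over $[s^-,s^+]$; combined with (i), this gives $d_H(D',D^u_G)<\e$, as required.

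The one delicate point is the product decomposition $G=\psi\times\phi$, which uses both the global product structure of $\bar\alpha(m)(\cF^{ws}),\bar\alpha(m)(\cF^{wu})$ on $\cF^c(0)$ and the simultaneous $G$-invariance of each foliation. Once this structural fact is in hand, the argument reduces to elementary one-dimensional contraction/expansion on each coordinate axis, and the merely H\"older nature of the product coordinates causes no issue since only Hausdorff convergence on compact sets is needed.
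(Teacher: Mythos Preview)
Your argument is correct and aligns with the paper's reasoning. The paper does not give a detailed proof of this claim; it simply asserts that since $G$ uniformly contracts $\bar\alpha(m)(\cF^{ws})$ and uniformly expands $\bar\alpha(m)(\cF^{wu})$, while $D_F(x)\subset\cF^{ws}(0)$ is topologically transverse to $\bar\alpha(m)(\cF^{ws}(0))$, the inclination lemma follows (referring the reader to a figure). Your use of global product coordinates adapted to the $G$-invariant pair $\bar\alpha(m)(\cF^{ws}),\bar\alpha(m)(\cF^{wu})$ and the resulting product decomposition $G=(\psi,\phi)$ is exactly the rigorous mechanism behind that one-line justification.

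One small point worth making explicit for the uniformity $K=K(x,\e)$: you need the endpoints $a,b$ of $D_0$ to have $s$-coordinates bounded away from $0$ \emph{uniformly over all $D$} with $d_H(D,D_F(x))<\delta_x$, not merely of opposite sign. This follows because the two endpoints of the fixed compact arc $D_F(x)$ have definite nonzero $s$-coordinates (as $D_F(x)$ itself crosses $\{s=0\}$ transversally), and Hausdorff $\delta_x$-closeness forces $D$ to contain points near each of these endpoints; continuity of the $s$-coordinate then gives the uniform lower bound, provided $\delta_x$ is small (which the construction permits). With this clarification your choice of $K$ is genuinely independent of $D$. Likewise, since the product coordinates are only H\"older, one should choose the coordinate-threshold $\e'$ so that $\e'$-closeness in product coordinates yields $\e$-closeness in the leaf metric on the relevant compact set; this is routine.
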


    Similarly, we can take compact small segments 
    \begin{align*}
    &D_F(y)\subset{\rm Int}(D^s_F), \quad
    D_G(y)\subset{\rm Int}(D^u_G); \qquad
    D_F(z)\subset{\rm Int}(D^u_F), \quad
    D_G(z)\subset{\rm Int}(D^s_G); \\
    &D_F(w)\subset{\rm Int}(D^u_F), \quad
    D_G(w)\subset{\rm Int}(D^u_G);
    \end{align*}
    and find constants
    $\delta_y,\delta_z,\delta_w$ for $y,z,w$, such that they satisfy the corresponding transversality conditions. Here all these  $D_{F/G}(*)$ for every
    $*=x,y,z,w$ are small segments comparing with every $D_{F/G}^{s/u}$.
    
    Let 
    $$
    \delta_0=\min\{\delta_x,\delta_y,\delta_z,\delta_w\}
    $$
    then there exists $\e_0>0$, such that 
    \begin{itemize}
    	\item if $d_H(D,D^s_F)<\e_0$, then $D$ contains two small segments $D',D''$ such that $d_H(D',D_F(x))<\delta_0$ and $d_H(D'',D_F(y))<\delta_0$;
    	\item if $d_H(D,D^u_F)<\e_0$, then $D$ contains two small segments $D',D''$ such that $d_H(D',D_F(z))<\delta_0$ and $d_H(D'',D_F(w))<\delta_0$;
    	\item if $d_H(D,D^s_G)<\e_0$, then $D$ contains two small segments $D',D''$ such that $d_H(D',D_G(x))<\delta_0$ and $d_H(D'',D_G(z))<\delta_0$;
    	\item if $d_H(D,D^u_G)<\e_0$, then $D$ contains two small segments $D',D''$ such that $d_H(D',D_G(y))<\delta_0$ and $d_H(D'',D_G(w))<\delta_0$.
    \end{itemize}
    Recall that $D_{F/G}(*)$ for every
    $*=x,y,z,w$ are small segments comparing with every $D_{F/G}^{s/u}$. So we can take a smaller $\e_0$ if necessary, such that $D_{F/G}(*)$, 
    $*=x,y,z,w$ is not equal to any subset of $\cF^c(0)$ that contains a segment which is $\e_0$-close to one of $D_{F/G}^{s/u}$ in the Hausdorff topology of $\cF^c(0)$.

    From the definition of $\e_0$, we can apply Claim \ref{clm:TIL} of the topological Inclination Lemma repeatedly to $\e_0>0$ and  $k_0\in\NN$, such that for every $k\geq k_0$, we have
    \begin{itemize}
    	\item if $d_H(D,D_F(x))<\delta_0$ or $d_H(D,D_F(z))<\delta_0$, then $G^k(D)$ contains two segments $D',D''$ such that 
    	$$
    	d_H\big(D',D_G(y)\big)<\delta_0,
    	\qquad \text{and} \qquad
    	d_H\big(D'',D_G(w)\big)<\delta_0;
    	$$
    	\item if $d_H(D,D_F(y))<\delta_0$ or $d_H(D,D_F(w))<\delta_0$, then $G^{-k}(D)$ contains two segments $D',D''$ such that 
    	$$
    	d_H\big(D',D_G(x)\big)<\delta_0,
    	\qquad \text{and} \qquad
    	d_H\big(D'',D_G(z)\big)<\delta_0;
    	$$
    	\item if $d_H(D,D_G(x))<\delta_0$ or $d_H(D,D_G(y))<\delta_0$, then $F^k(D)$ contains two segments $D',D''$ such that 
    	$$
    	d_H\big(D',D_F(z)\big)<\delta_0,
    	\qquad \text{and} \qquad
    	d_H\big(D'',D_F(w)\big)<\delta_0;
    	$$
    	\item if $d_H(D,D_G(z))<\delta_0$ or $d_H(D,D_G(w))<\delta_0$, then $F^{-k}(D)$ contains two segments $D',D''$ such that 
    	$$
    	d_H\big(D',D_F(x)\big)<\delta_0,
    	\qquad \text{and} \qquad
    	d_H\big(D'',D_F(y)\big)<\delta_0.
    	$$
    \end{itemize} 
    
    We apply Lemma \ref{lem:FG-solve} to this $k_0\in\NN$: there exist integers $a_1,\cdots,a_l,b_1\cdots,b_l\in\ZZ$ with only $a_l$ and $b_1$ allowed to be zeros, such that
    $$
    F^{a_l\cdot k_0}\circ G^{b_l\cdot k_0}\circ 
    F^{a_{l-1}\cdot k_0}\circ G^{b_{l-1}\cdot k_0}\circ\cdots\cdots\circ
    F^{a_1\cdot k_0}\circ G^{b_1\cdot k_0}
    ={\rm Id}_{\cF^c(0)}.
    $$
    We will assume $a_l,b_1\neq0$. The proof of other cases is the same.  
    
    If $b_1>0$, we take $D_0=D_F(x)$, then $G^{b_1\cdot k_0}(D_0)$ contains two segments $D_0',D_0''$ such that
    $$
    d_H\big(D_0',D_G(y)\big)<\delta_0,
    \qquad \text{and} \qquad
    d_H\big(D_0'',D_G(w)\big)<\delta_0.
    $$
    If $b_1<0$, we take $D_0=D_F(y)$, then $G^{b_1\cdot k_0}(D_0)$ contains two segments $D_0',D_0''$ such that
    $$
    d_H\big(D_0',D_G(x)\big)<\delta_0,
    \qquad \text{and} \qquad
    d_H\big(D_0'',D_G(z)\big)<\delta_0.
    $$
    In both cases, we can see that $D_0'$ is transverse to $\cF^{ws}(0)$ and $\delta_0$-close to $D_G(x)$ or $D_G(y)$ in Hausdorff topology; and $D_0''$ is transverse to $\cF^{wu}(0)$ and $\delta_0$-close to $D_G(z)$ or $D_G(w)$ in  Hausdorff topology, see Figure 2 (we have presented some of the segments as thin rectangles for clarity).
    
    \begin{figure}
    	\centering
    	\includegraphics[width=10cm]{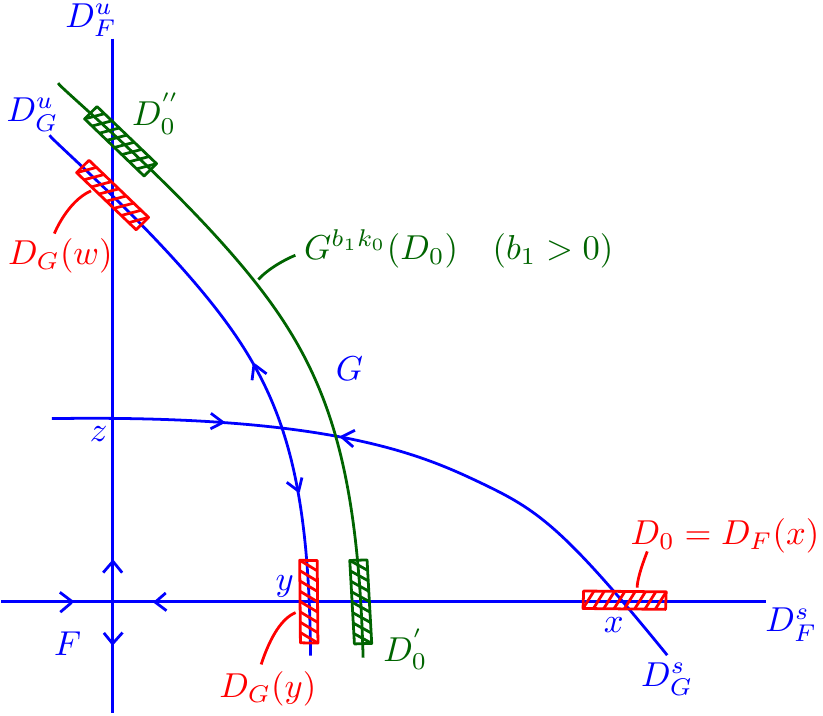}
    	\caption{Ping-pong Argument}
    \end{figure}
    
    Now we consider the action of $F^{a_1\cdot k_0}$. If $a_1>0$, we take $D_1=D_0'$, then $F^{a_1\cdot k_0}(D_1)$ contains two segments $D_1',D_1''$ such that
    $$
    d_H\big(D_1',D_F(z)\big)<\delta_0,
    \qquad \text{and} \qquad
    d_H\big(D_1'',D_F(w)\big)<\delta_0.
    $$ 
    If $a_1<0$, we take $D_1=D_0''$, then $F^{a_1\cdot k_0}(D_1)$ contains two segments $D_1',D_1''$ such that
    $$
    d_H\big(D_1',D_F(x)\big)<\delta_0,
    \qquad \text{and} \qquad
    d_H\big(D_1'',D_F(y)\big)<\delta_0.
    $$ 
    In both cases, we can see that $D_1'$ is transverse to $\bar\alpha(m)(\cF^{ws}(0))$ and $\delta_0$-close to $D_F(x)$ or $D_F(z)$; and $D_1''$ is transverse to $\bar\alpha(m)(\cF^{wu}(0))$ and $\delta_0$-close to $D_F(y)$ or $D_F(w)$.
    
    We repeat this procedure to obtain that the segment $D_{2l-2}'$ is transverse to $\cF^{ws}(0)$ and $\delta_0$-close to $D_G(x)$ or $D_G(y)$; and the segment $D_{2l-2}''$ is transverse to $\cF^{wu}(0)$ and $\delta_0$-close to $D_G(z)$ or $D_G(w)$. 
    
    Finally, we consider the action of $F^{a_l\cdot k_0}$. 
    \begin{itemize}
    	\item If $a_l>0$, we take $D_{2l-1}=D_{2l-2}'$, then $F^{a_l\cdot k_0}(D_{2l-1})$ contains a segment $D$ which is $\e_0$-close to $D_F^u$ in Hausdorff topology.
    	\item If $a_l<0$, we take $D_{2l-1}=D_{2l-2}''$, then $F^{a_l\cdot k_0}(D_{2l-1})$ contains a segment $D$ which is $\e_0$-close to $D_F^s$ in Hausdorff topology.
    \end{itemize}

    So the set
    $$
    F^{a_l\cdot k_0}\circ G^{b_l\cdot k_0}\circ 
    F^{a_{l-1}\cdot k_0}\circ G^{b_{l-1}\cdot k_0}\circ\cdots\cdots\circ
    F^{a_1\cdot k_0}\circ G^{b_1\cdot k_0}(D_0)
    $$ 
    contains a segment $D_{2l-1}$ which is $\e_0$-close to $D_F^s$ or $D_F^u$. From the definition of $\e_0$,  we must have
    $$
    F^{a_l\cdot k_0}\circ G^{b_l\cdot k_0}\circ 
    F^{a_{l-1}\cdot k_0}\circ G^{b_{l-1}\cdot k_0}\circ\cdots\cdots\circ
    F^{a_1\cdot k_0}\circ G^{b_1\cdot k_0}(D_0)
    ~\neq~D_0.
    $$
    This contradicts to Lemma \ref{lem:FG-solve}. So we must actually have that at least one of $E^{ss}\oplus E^{wu}$ and $E^{ws}\oplus E^{uu}$ is integrable. This finishes the proof of Theorem \ref{thm:T4general}.   
\end{proof}

\begin{remark}
	We actually proved that $\{F^k,G^k\}$ generates a free group on two generators for every $k\geq k_0$, cf.~\cite{HX}. 
\end{remark}

\bibliographystyle{plain}

\vskip5mm

\flushleft{\bf Andrey Gogolev} \\
Department of Mathematics, The Ohio State University, Columbus, OH 43210, USA\\
\textit{E-mail:} \texttt{gogolyev.1@osu.edu}\\

\flushleft{\bf Yi Shi} \\
School of Mathematical Sciences, Peking University, Beijing, 100871,  China\\
\textit{E-mail:} \texttt{shiyi@math.pku.edu.cn}\\

\end{document}